\documentclass[11pt,a4paper]{amsart}
\usepackage[T1]{fontenc}
\usepackage[utf8]{inputenc}
\usepackage{lmodern}
\usepackage[english]{babel}
\usepackage{amsmath,amssymb,amsthm,mathtools,enumitem,microtype}
\usepackage[left=2.6cm,right=2.6cm,top=2.8cm,bottom=2.8cm]{geometry}
\usepackage{needspace,xcolor}
\usepackage[colorlinks=true,linkcolor=blue!45!black,citecolor=blue!45!black,urlcolor=blue!45!black]{hyperref}
\hypersetup{pdftitle={The consistency strength of the failure of amenable C-sequences},
 pdfauthor={Gabriel Fernandes},pdfsubject={Equiconsistency and transfinite indescribability},
 hypertexnames=false}
\newtheorem{theorem}{Theorem}[section]
\newtheorem{lemma}[theorem]{Lemma}

\newtheorem{corollary}[theorem]{Corollary}
\numberwithin{equation}{section}
\theoremstyle{definition}
\newtheorem{definition}[theorem]{Definition}
\theoremstyle{remark}
\newtheorem{remark}[theorem]{Remark}
\newcommand{\A}{\mathcal A}
\newcommand{\SA}{\mathcal{SA}}
\newcommand{\NS}{\mathrm{NS}}
\newcommand{\NAC}{\mathsf{NAC}}
\newcommand{\D}{\mathbb D}
\newcommand{\PP}{\mathbb P}

\newcommand{\EE}{\mathbb E}

\newcommand{\ZFCc}{\mathsf{ZFC}^{-}_{\mathrm{Coll}}}
\newcommand{\restr}{\mathbin{\upharpoonright}}
\DeclareMathOperator{\crit}{crit}
\DeclareMathOperator{\dom}{dom}
\DeclareMathOperator{\tc}{tc}
\DeclareMathOperator{\supp}{supp}
\DeclareMathOperator{\Ord}{Ord}
\DeclareMathOperator{\Inacc}{Inacc}
\DeclareMathOperator{\Reg}{Reg}
\DeclareMathOperator{\Comp}{Comp}
\DeclareMathOperator{\acc}{acc}
\DeclareMathOperator{\otp}{otp}
\DeclareMathOperator{\cf}{cf}
\DeclareMathOperator{\Hull}{Hull}
\newcommand{\TrV}{\operatorname{Tr}^{V}}

\title[Failure of amenable $C$-sequences]{The consistency strength of the failure\texorpdfstring{\\}{}
of amenable $C$-sequences}
\author{Gabriel Fernandes}
\address{Department of Mathematics, Institute of Mathematics and Computer
Sciences, University of S\~ao Paulo, S\~ao Carlos, SP, Brazil}
\email{fernandes@icmc.usp.br}
\thanks{The author was supported in part by the S\~ao Paulo Research
Foundation (FAPESP), Brazil, grant 2025/09425-1.}
\subjclass[2020]{Primary 03E35, 03E55; Secondary 03E05}
\keywords{Amenable $C$-sequences, equiconsistency, transfinite indescribability,
constructible universe, generalized Hechler forcing}
\date{November 1, 2026}
\begin{document}
\raggedbottom
\begin{abstract}
We determine the consistency strength of the existence of a regular
uncountable cardinal carrying no amenable $C$-sequence. It is
equiconsistent with the existence of an inaccessible cardinal $\kappa$
which is $\Pi^1_\xi$-indescribable for every $\xi<\kappa^+$ in
the transfinite second-order hierarchy of indescribability. For the lower bound,
we prove that the failure of amenability at $\kappa$ implies that
$L$ contains such a cardinal at or below $\kappa$, using constructible
guessing sequences, correct constructible levels, and a transfinite
induction. For the upper bound, we prepare the cardinal so that every proper
initial segment of a full Hechler iteration of length $\kappa^+$
preserves weak compactness. Each final $C$-sequence appears at a
bounded stage and is captured at the next stage on a stationary set;
the remaining closed iteration preserves that capture.
\end{abstract}
\maketitle

\section{Introduction}\label{sec:introduction}

An amenable $C$-sequence prevents a single club from having its
initial segments contained in the prescribed clubs on a stationary
set of indices. We determine the consistency strength of the
nonexistence of such sequences in terms of the transfinite second-order
indescribability hierarchy below the successor of the cardinal.

\begin{definition}\label{def:amenability}
Let $\kappa$ be regular uncountable. A $C$-sequence on
$S\subseteq\kappa$ is a sequence
$\vec C=\langle C_\beta:\beta\in S\rangle$ such that
$C_\beta$ is club in $\beta$ at every nonzero limit index.
We set $C_0=\varnothing$ and $C_{\alpha+1}=\{\alpha\}$ when
these indices occur. For a club $D\subseteq\kappa$, define
\[
 \operatorname{Cap}(D,\vec C)
       =\{\beta\in S:D\cap\beta\subseteq C_\beta\}.
\]
The sequence is \emph{amenable} if every such capture set is
nonstationary, and \emph{strongly amenable} if every such capture
set is bounded. Let $\A_\kappa$ and $\SA_\kappa$ be the
families of domains admitting the respective sequences. We write
\[
 \NAC(\kappa)\quad\Longleftrightarrow\quad
 \kappa\text{ is regular uncountable and }\kappa\notin\A_\kappa.
\]
\end{definition}

These conventions agree with the equivalent formulations in
\cite[Definitions~3.1, 4.1 and Fact~4.2]{IR}; amenable sequences
also appear in \cite{BR}. Inamdar and Rinot asked whether
$\NAC(\kappa)$ implies that $\kappa$ is ineffable in $L$
\cite[Conjecture~4.13]{IR}. A related question asks whether
the existence of a cardinal with no amenable $C$-sequence is
equiconsistent with an ineffable cardinal
\cite[Question~5.5]{BNZRad}. A forcing construction from a
$\Pi^2_1$-indescribable cardinal was given in \cite{Fernandes}.

\begin{definition}\label{def:H}
For a cardinal $\kappa$, let $\mathsf H(\kappa)$ assert that
$\kappa$ is inaccessible and
\[
 (\forall\xi<\kappa^+)\,
       \kappa\text{ is }\Pi^1_\xi\text{-indescribable}.
\]
In a statement $M\models\mathsf H(\kappa)$, the successor
cardinal, syntax, subsets and truth predicates are all computed
in $M$. In particular, in $L$ the bound is $(\kappa^+)^L$.
\end{definition}

We use $\Pi^1_\xi$-indescribability in the sense of
\cite[Definitions~3.1, 3.2 and 3.9]{CodyHigher}.
The definition and the associated syntax are given explicitly in
Section~\ref{sec:notation}.

\begin{theorem}\label{thm:main}
The following theories are equiconsistent:
\begin{enumerate}[label=\textup{(\arabic*)}]
\item $\mathsf{ZFC}+\exists\kappa\,\mathsf H(\kappa)$;
\item $\mathsf{ZFC}+\exists\kappa\,\NAC(\kappa)$.
\end{enumerate}
\end{theorem}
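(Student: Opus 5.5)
The proof splits into the two directions, and I treat them separately. For (2)$\Rightarrow$(1) I will show that if $\NAC(\kappa)$ holds then $L\models\mathsf H(\bar\kappa)$ for some $\bar\kappa\le\kappa$. This gives a model of (1), namely $L$ or an initial segment of it. For (1)$\Rightarrow$(2) I start from $L$-like ground model $V\models\mathsf H(\kappa)$, arrange $\mathsf{GCH}$, and force to obtain $\NAC(\kappa)$. This direction follows the strategy announced in the abstract.

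\emph{Lower bound.} First, $\NAC(\kappa)$ already implies that $\kappa$ is weakly compact in $V$, and hence inaccessible in $L$. The idea is that any nonreflecting or square-like object at $\kappa$ yields an amenable $C$-sequence; I would use the known facts from \cite{IR} for this.

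Suppose, toward a contradiction, that $L\models\neg\mathsf H(\kappa)$. Then there is a least $\xi<(\kappa^+)^L$ such that $\kappa$ is not $\Pi^1_\xi$-indescribable in $L$. Let $\varphi$ and $A\in L$ witness this: $(L_\kappa,A)\models\varphi$, but $\varphi$ reflects to no $\beta<\kappa$ in the sense of \cite{CodyHigher}.

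Using a constructible guessing sequence, I will define, for each $\beta<\kappa$, a club $C_\beta\subseteq\beta$. I take the $<_L$-least club in $\beta$ that is definable over the least correct level $L_{\gamma_\beta}$ witnessing that the reflected version of $\varphi$ fails at $\beta$. This failure is certified by a canonical witness living in an $L$-level that is correct for $\Pi^1_{\xi_\beta}$-truth, where $\xi_\beta$ is the $\beta$-analogue of $\xi$ obtained via the canonical reflection functions $\xi\mapsto\xi_\beta$.

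By $\NAC(\kappa)$, some club $D$ has stationary capture set $\operatorname{Cap}(D,\vec C)$. I then work in $L[D]$ inside a condensation hull. Stationarily many $\beta$ have $D\cap\beta\subseteq C_\beta$, so the local witnesses cohere. Using a transfinite induction on $\xi$, with the induction hypothesis that $\Pi^1_\eta$-indescribability holds for $\eta<\xi$, I assemble them into a global witness in $L$ showing that $\varphi$ in fact reflects. This contradicts the choice of $\varphi$ and $A$.

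\textbf{This coherence/condensation step is the main obstacle.} The difficulty is matching the transfinite syntax (codes for $\xi<\kappa^+$) between $\kappa$ and $\beta$ uniformly on a club, and doing so in $L$ rather than $V$. My first attempt would be to restrict to $\beta$ for which the collapse of a hull of $L_{\kappa^+}$ containing $\xi$ has critical point $\beta$, and sends $\xi$ to $\xi_\beta$.

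\emph{Upper bound.} Start with $\mathsf H(\kappa)$ and do a reverse Easton preparation so that, in the resulting model, every proper initial segment $\PP_\alpha$, $\alpha<\kappa^+$, of the $<\kappa$-support-style full iteration of generalized Hechler forcing at $\kappa$ of length $\kappa^+$ preserves weak compactness of $\kappa$. The level $\Pi^1_\xi$ needed at stage $\alpha$ is roughly $\xi\approx\alpha$; this is exactly why all $\xi<\kappa^+$ are required, since each $\PP_\alpha$ has a code of length below $\kappa^+$.

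In the final model, take a $C$-sequence $\vec C$. By the $\kappa^+$-chain condition it appears at some stage $\alpha$. The Hechler generic club $D$ added at stage $\alpha+1$ then satisfies $D\cap\beta\subseteq C_\beta$ for stationarily many $\beta$. To see this, use weak compactness of $\kappa$ in $V^{\PP_\alpha}$, reflecting a density argument to inaccessible $\beta$ where the generic restricted below $\beta$ is generic enough to be eventually contained in $C_\beta$, arranged by modifying the Hechler conditions to guess.

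Finally, the tail iteration is $\kappa$-strategically closed and $\kappa^+$-c.c., so it preserves stationarity of the capture set. Hence $\vec C$ is not amenable, and $\NAC(\kappa)$ holds.
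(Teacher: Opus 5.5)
\textbf{The gap is in your lower bound.} You try to refute $L\models\neg\mathsf H(\kappa)$ directly from $\NAC(\kappa)$, but that implication is false. Take $V=L$ and let $\kappa$ be the least ineffable cardinal. Given any $C$-sequence, ineffability yields $A\subseteq\kappa$ with $A\cap\beta=C_\beta$ on a stationary set. Such an $A$ is club, so $\NAC(\kappa)$ holds. However, ineffability is expressed by a $\Pi^1_3$ sentence over $V_\kappa$, so $\kappa$ is not $\Pi^1_3$-indescribable. Since ineffables are $\Pi^1_2$-indescribable, all your hypotheses (least $\xi$, witnesses $\varphi,A$, indescribability below $\xi$, $\NAC$) hold at $\xi=3$, so no contradiction can follow from them.

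The argument therefore has to split, as the paper does (Corollary~\ref{lb:dichotomy}). If $\kappa$ is ineffable in $L$, it is subtle there, and the Cody--Holy theorem gives some $\bar\kappa<\kappa$ with $L\models\mathsf H(\bar\kappa)$. This is the only source of $\bar\kappa\neq\kappa$, and your plan never produces it. If $\kappa$ is not ineffable in $L$, that hypothesis must be used. In the paper it enters through Jensen's $\diamondsuit^*$ on a nonineffable set: together with $\NAC(\kappa)$ and pressing down, it gives a club $D_0\in V$ almost contained in every constructible club at each point of an $\SA_\kappa^V$-dual set (Lemmas~\ref{fd:dominator} and~\ref{fd:coherent}). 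The ``constructible guessing sequence'' you mention plays no role in your $C_\beta$.

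\textbf{The coherence step also fails as described beyond rank one.} For a first-order matrix, capture of the hull clubs gives coherent collapses and a limit $X\in L$ whose failure transfers by first-order elementarity; this is Lemma~\ref{fd:robust}. Once the matrix is itself second order, this breaks in two places.
\begin{itemize}
\item The points $\gamma<\beta$ at which the collapse of the least correct level is again correct form, in general, only a constructible $\Pi^1_{\xi_\beta}$-club (Lemma~\ref{lc:exact-trace}), not a club. Correctness there is exactly what makes the collapsed witness the true least witness at $\gamma$. So this set cannot be your entry $C_\beta$. If $C_\beta$ is an honest club instead, capture gives no correctness at the captured points.
\item Even a coherent $X\in L$ is not yet a counterexample at $\kappa$, because its failure there is a second-order statement that does not pass through a first-order direct limit. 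The paper instead uses $L_\kappa\models\chi(X)$ and reflects this true instance to a point of the $V$-stationary coherence set, contradicting local minimality.
\end{itemize}
That reflection requires relative reflection $R_\xi$: every $V$-stationary subset of the current dual set $P_\xi$ meets every set in the dual of $I^L_\xi$, locally and globally. This is much stronger than your hypothesis that $\kappa$ is $\Pi^1_\eta$-indescribable in $L$ for $\eta<\xi$. The paper carries $R_\xi$ through the induction together with $V$-clubs $\Phi_\delta(Q)$ whose accumulation points in $P_\xi$ lie in a given constructible higher club $Q$. These are built from $D_0$ and pseudointersections $E_\xi$ obtained from $\diamondsuit^*$ and $\NAC$. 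The entries $\Phi_\delta(Q_\delta)$ then form strongly amenable sequences (Theorem~\ref{lc:strong-amenability}), giving $I^L_\xi(\kappa)\subseteq\SA^V_\kappa$, which feeds the next stage. Working in $L[D]$ cannot supply constructible witnesses, since $D\notin L$ in general.

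\textbf{Minor points.} Inamdar--Rinot give weak compactness of $\kappa$ in $L$, not in $V$. You only need inaccessibility in $L$, which follows from Lemma~\ref{fd:SA}.

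Your upper bound matches the paper's route: preparation, localization, capture at the next coordinate, and a closed tail preserving the capture set. No modification of Hechler forcing is needed, since plain $\D_\kappa$ already dominates the next-point functions on a stationary set of inaccessibles (Theorem~\ref{thm:domination}). The preservation step, which is where $\mathsf H(\kappa)$ is actually used (Theorem~\ref{up:preservation}), is only asserted in your sketch.
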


For the lower bound we prove the following ideal inclusions.

\begin{theorem}\label{thm:lower}
Suppose $\NAC(\kappa)$. Then $\kappa$ is inaccessible in $L$ and
\[
 I_{\mathrm{ineff}}^L(\kappa)\subseteq\A_\kappa^V
 \quad\text{or}\quad
 (\forall\xi\in[1,(\kappa^+)^L))\,
           I_\xi^L(\kappa)\subseteq\SA_\kappa^V. \tag{1.1}\label{intro:ideal}
\]
Here $I_{\mathrm{ineff}}^L$ and $I_\xi^L$ are respectively the
constructible ineffability and $\Pi^1_\xi$-indescribability ideals.
Consequently,
\[
 L\models\text{``$\kappa$ is ineffable or $\mathsf H(\kappa)$''},
 \qquad
 L\models\exists\lambda\leq\kappa\,\mathsf H(\lambda).
                                                   \tag{1.2}\label{intro:cardinal}
\]
\end{theorem}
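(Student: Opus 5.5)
We will argue in three stages, following the abstract: constructible guessing sequences, correct constructible levels, and a transfinite induction on $\xi$.

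\textbf{Stage 1: inaccessibility in $L$.} First we show that $\kappa$ is inaccessible in $L$. If $\kappa$ were a successor cardinal in $L$, or singular in $L$, we would build an amenable $C$-sequence in $L$, for instance from a $\square$-like or Jensen-style coherent sequence on $\kappa$ available in $L$. The point to check is that such a sequence remains amenable in $V$. Suppose some club $D\in V$ had $\operatorname{Cap}(D,\vec C)$ stationary. Coherence would then let us thread $D$ into a club of order type below $\kappa$, or into a club contradicting regularity of $\kappa$ in $V$. The same argument also gives $\kappa$ regular in $L$, because $\kappa$ is regular in $V$.

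\textbf{Stage 2: the dichotomy (1.1).} Next we prove the dichotomy by contraposition. Assume some $S\in I_{\mathrm{ineff}}^L(\kappa)$ has $S\notin\A_\kappa^V$. Assume also that for some least $\xi<(\kappa^+)^L$ there is $S'\in I_\xi^L(\kappa)$ with $S'\notin\SA_\kappa^V$. We aim to derive a contradiction with $\NAC(\kappa)$, that is, to assemble an amenable $C$-sequence on all of $\kappa$.

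The key tool is a constructible guessing sequence. Membership of $S'$ in $I_\xi^L$ is witnessed by a $\Pi^1_\xi$ sentence $\varphi$ and a parameter $A\in L$. For each $\beta\in S'$, the reflection $\varphi\restr\beta$ fails. We choose $C_\beta$ canonically in $L$, as the $<_L$-least club in $\beta$ that witnesses this failure inside a correct level $L_{\gamma_\beta}$. Here $\gamma_\beta$ is chosen so that $L_{\gamma_\beta}$ computes the $\Pi^1_\zeta$-syntax for $\zeta$ below the image of $\xi$. This uses the coding of $\xi<(\kappa^+)^L$ by a well-order on $\kappa$ in $L$, as in the paper's Section~\ref{sec:notation}.

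Now suppose a club $D\in V$ has $D\cap\beta\subseteq C_\beta$ for unboundedly many $\beta$. We condense: take $M\prec L_{(\kappa^+)^L}$ and a correct level so that $D$ yields a constructible club $E$, namely the intersection of the relevant $C_\beta$, lying in $L$. We then get that $\varphi$ fails at $\kappa$ through $E$, contradicting the choice of $\varphi$. This is where the transfinite induction enters. For limit $\xi$ we use the diagonal definition of $\Pi^1_\xi$ formulas via the fixed coding. For successor $\xi$ we use the inductive hypothesis that $I_\zeta^L\subseteq\SA$ for $\zeta<\xi$.

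The ineffability alternative is handled in the same way. A subset of $\kappa$ witnessing non-ineffability, as a sequence $\langle A_\beta\rangle$ in $L$, is turned into $C$-sequence choices. A capturing club would then produce a homogeneous stationary set in $L$.

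\textbf{Stage 3: deriving (1.2).} From (1.1), if $\kappa$ is not ineffable in $L$, then $\kappa\in I^L_{\mathrm{ineff}}$, so $\kappa\in\A_\kappa$, contradicting $\NAC$. Thus either $\kappa$ is ineffable in $L$, or every $I^L_\xi$ is proper. The second case is exactly $\mathsf H(\kappa)$ in $L$, since $\kappa\in I^L_\xi$ would give $\kappa\in\SA\subseteq\A$. In $L$ an ineffable $\kappa$ is $\Pi^1_{\xi}$-indescribable for all $\xi<\kappa^+$? That may be false, so we instead reflect: ineffability is $\Pi^1_2$, and $\exists\lambda<\kappa\,\mathsf H(\lambda)$ should be expressible below, so the set of $\lambda<\kappa$ satisfying $\mathsf H$ is stationary. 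Either way we obtain $\exists\lambda\le\kappa\,\mathsf H(\lambda)$ in $L$.

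\textbf{Main obstacle.} We expect the hardest step to be the condensation step. Converting a $V$-club $D$ that captures on an unbounded set into a constructible club that defeats the $\Pi^1_\xi$ witness requires correct levels $L_{\gamma_\beta}$ that compute the transfinite syntax uniformly. It also requires the limit stages of the induction to cohere with the fixed coding of $\xi$. We would try first to use Jensen-style fine-structural hulls, so that the sequence $\langle\gamma_\beta\rangle$ is continuous on a club.
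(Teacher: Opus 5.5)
\textbf{Stage~2 has a genuine gap.} Stage~1 is fine; the order-type bound of constructible clubs when $\kappa=\mu^{+L}$ already gives bounded capture, and no coherence is needed. The non-ineffable half of Stage~3 is also fine.

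\textbf{The ineffability clause runs in the wrong direction.} Suppose a constructible non-ineffability list on $S$ could be turned into an amenable $C$-sequence on $S$. Then you would have proved $I^L_{\mathrm{ineff}}(\kappa)\subseteq\A_\kappa^V$ outright, so $\NAC(\kappa)$ would make $\kappa$ ineffable in $L$. That cannot be proved. By Cody--Holy, the least $\lambda$ with $L\models\mathsf H(\lambda)$ is not subtle in $L$. The paper's forcing done over $L$ at this $\lambda$ gives $\NAC(\lambda)$ with $\lambda$ non-ineffable in $L$. The concrete failure is that the capturing club lies in $V$, so any homogeneous set it yields is in $V$, while $L$-nonineffability only forbids constructible ones.

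The paper uses a nonineffable $T\notin\A_\kappa^V$ \emph{positively}. Jensen's $\diamondsuit^*(T)$ in $L$ lists at each $\beta\in T$ at most $\beta$ constructible clubs. For every constructible club $E$, the set $E\cap\beta$ is listed for all $\beta\in T$ in a constructible club. The $C$-sequence of diagonal intersections of these lists is captured on a stationary set. This gives a club $D\in V$ with $D\subseteq^*E$ for every constructible club $E$, i.e.\ a dominator of ${}^\kappa\kappa\cap L$ (Lemma~\ref{fd:dominator}). Condensation then localizes it to an $\SA_\kappa^V$-dual set $P_1$ with property (O) (Lemma~\ref{fd:coherent}). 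This dominator drives the whole induction. Your plan has no substitute for it and never actually uses $S\notin\A_\kappa^V$.

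\textbf{Your condensation step works only at rank one, where it is Lemma~\ref{fd:robust}.} For $\xi\ge2$, the direct limit of the hull collapses gives a constructible $X$ agreeing with the local least witnesses. But it transfers only first-order facts, so the failure of the $\Pi^1_\zeta$ matrix $\neg\chi$ does not pass to $\kappa$. The paper argues the other way. Since $\psi$ holds at $\kappa$, the $\Sigma^1_\zeta$ statement $\chi(X)$ is true, and it is reflected down to a captured point $\gamma$, contradicting $\neg\chi(X_\gamma)$. This needs two things.
\begin{enumerate}[label=(\alph*)]
\item The captured points must form a stationary subset of a set $P_\xi$ satisfying the relative reflection $R_\xi$.
\item The collapses there must be correct, so that $X_\gamma$ is the true least counterexample.
\end{enumerate}
Correct points form only a constructible $\Pi^1_\xi$-club, which is not club and cannot be a $C$-sequence entry. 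The paper replaces it by clubs $\Phi_\delta(Q)$ in $V$ whose accumulation points in $P_\xi$ lie in $Q$. These are built from the dominator and the containments $G_\xi,L_\xi$.

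Your hypothesis ``$I^L_\zeta\subseteq\SA_\kappa^V$ for $\zeta<\xi$'' is too weak for (a), since an $\SA$ set can be stationary, and it says nothing about (b). Also, ``the intersection of the relevant $C_\beta$'' need not be constructible, because the set of captured $\beta$ is in $V$. Constructibility comes from the direct limit being a level of $L$.

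\textbf{The ineffable case of Stage~3 is not justified.} $\Pi^1_2$-reflection from an ineffable $\kappa$ cannot produce $\lambda<\kappa$ with $\mathsf H(\lambda)$, since $\mathsf H(\kappa)$ is unknown there. The paper uses that ineffable implies subtle, together with Cody--Holy's theorem placing an $\mathsf H$-cardinal below every subtle cardinal.
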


If the first clause of \eqref{intro:ideal} fails, a constructible nonineffable set which is positive for the
external amenability ideal supplies a constructible guessing
sequence. Stationary capture and the pressing-down lemma turn this sequence
into a function dominating all constructible $\kappa$-functions.
Condensation then localizes the domination on a set whose complement
belongs to $\SA_\kappa^V$. The resulting set is used to start
the induction on $\xi<(\kappa^+)^L$.

At each stage we also prove a reflection statement at smaller regular
cardinals and construct clubs in $V$ almost contained, on the chosen domain,
in the higher clubs computed in $L$.
At successor stages we choose the least constructible counterexamples
and the least levels correct for a fixed fragment of infinitary syntax.
Their Skolem hull collapses induce coherent elementary maps. If a club
captured unboundedly many entries, the direct limit of these maps would
give a constructible counterexample contradicting reflection at the
preceding stage. At limit stages the canonical functions determine the
earlier indices used in the diagonal intersections. Stationary capture
then gives the club needed for the next successor stage.

In the ineffable case of \eqref{intro:cardinal},
\cite[Corollary~3.5]{CodyHoly} gives a cardinal satisfying
$\mathsf H$ strictly below $\kappa$ in $L$.

The opposite consistency implication uses the following forcing result.

\begin{theorem}\label{thm:upper}
If $\mathsf H(\kappa)$, there is a set-forcing extension satisfying
$\NAC(\kappa)$.
\end{theorem}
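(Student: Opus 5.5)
We will follow the outline from the abstract. First we prepare $\kappa$, then force with a Hechler-type iteration of length $\kappa^+$. We will first pass to a preparatory extension (for instance a reverse Easton iteration of Cohen or Hechler forcing at inaccessibles below $\kappa$, possibly followed by $L$-style $\diamondsuit$-type preparation). The aim is to obtain a model $V_0$ in which $\mathsf H(\kappa)$ still holds, $2^\kappa=\kappa^+$, and the following \emph{preservation property} holds: for every $\gamma<\kappa^+$ and every $<\kappa$-support (or Easton-support) iteration $\PP_\gamma$ of generalized Hechler forcing $\D_\kappa$ of length $\gamma$, $\kappa$ remains weakly compact in $V_0^{\PP_\gamma}$. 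The role of $\mathsf H(\kappa)$ is here. A nice name for a proper initial segment $\PP_\gamma$ is coded by a subset of $\kappa$ of complexity bounded by an ordinal $\xi_\gamma<\kappa^+$. Weak compactness after $\PP_\gamma$ is then a $\Pi^1_{\xi_\gamma+1}$-type statement about the ground model. We will verify it by $\Pi^1_{\xi}$-indescribability, reflecting a purported failure (a $\kappa$-tree with no branch, or a non-weakly-compact witness) to some $\alpha<\kappa$ at which the reflected iteration $\PP_\gamma\cap V_\alpha$ is a corresponding iteration of length $<\alpha^+$. Here we use the canonical-function coding of $\gamma$ by a function $\kappa\to\kappa$, as in Cody's transfinite framework. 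The preparation must make the tail at $\alpha$ sufficiently closed that the reflected failure lifts. I expect this step to be the main obstacle: identifying exactly which $\xi$ is needed for each $\gamma$ and ensuring the reflected iteration agrees with the real one below $\alpha$.

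Next let $\PP=\PP_{\kappa^+}$ be the full iteration, which is $\kappa$-strategically closed (or at least $<\kappa$-closed and $\kappa^+$-c.c.) and preserves cardinals. Given a $C$-sequence $\vec C$ on $\kappa$ in $V_0^{\PP}$, the $\kappa^+$-c.c. yields a stage $\gamma<\kappa^+$ with $\vec C\in V_0^{\PP_\gamma}$. In $V_0^{\PP_\gamma}$, $\kappa$ is weakly compact by the preservation property. The Hechler generic $d$ added at stage $\gamma$ dominates all ground functions modulo bounded sets. From $d$ we define the club $D=\{\delta<\kappa: d``\delta\subseteq\delta\}$, or a thinned version built from $d$ and $\vec C$. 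We will show that $\operatorname{Cap}(D,\vec C)$ is stationary in $V_0^{\PP_{\gamma+1}}$. The argument is a density argument: for a condition $p$ and a name for a club $E$, we use weak compactness of $\kappa$ in $V_0^{\PP_\gamma}$, via a $\Pi^1_1$-reflection to an inaccessible $\beta$. This produces $\beta\in E$ together with an extension of $p$ forcing $D\cap\beta\subseteq C_\beta$. We can do this because the Hechler condition's stem can be chosen to steer $D\cap\beta$ into the club $C_\beta$, since $\beta$ reflects enough closure. This is the standard argument that weak compactness yields $\NAC$ after one Hechler step.

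Finally, the remaining iteration $\PP_{\kappa^+}/\PP_{\gamma+1}$ is sufficiently closed (countably closed and $\kappa$-c.c.-like, or $<\kappa$-strategically closed). It therefore preserves the stationarity of $\operatorname{Cap}(D,\vec C)$, and $D$ remains a club. Hence $\vec C$ is not amenable in the final model. Since $\vec C$ was arbitrary, $\NAC(\kappa)$ holds, and $\kappa$ is regular uncountable because it remains inaccessible.
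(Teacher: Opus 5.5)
Your outer framework is exactly the paper's (Lemma~\ref{lem:localization}, Corollary~\ref{cor:capture}, Lemmas~\ref{lem:factorization}, \ref{lem:iteration} and~\ref{lem:stationarity}): localize a final $C$-sequence to a bounded stage $\gamma<\kappa^+$, capture it on a stationary set with the Hechler function added at coordinate $\gamma$ using weak compactness there, and preserve the capture set through the $<\kappa$-closed remainder. The gap is the step you flag yourself, and it is the whole content of the theorem: after a suitable preparation, every proper initial segment $\PP_\gamma$ preserves the weak compactness of $\kappa$. The mechanism you sketch does not give this. Reflecting ``$p\Vdash_{\PP_\gamma}\dot T$ is a $\kappa$-Aronszajn tree'' down to $\alpha$ only says that the reflected iteration forces the reflected name to be an $\alpha$-Aronszajn tree. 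That name is evaluated by the preparation's own stage-$\alpha$ generic. Your sketch gives no mechanism tying this evaluation to $T\restriction\alpha$, which is computed from a generic for $\PP_\gamma$ taken over the fully prepared model; supplying such a tie is precisely the work of a lifting/master-condition argument. Moreover, with a Cohen, single-Hechler or $\diamondsuit$-type preparation, the stage-$\alpha$ forcing is not the reflected iteration at all. Nor do you say how $j$ applied to such a preparation would accommodate, at stage $\kappa$, a generic for $\PP_\gamma$ with $\gamma>1$. You also need not, and do not show, that $\mathsf H(\kappa)$ survives the preparation.

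The missing idea is the lifting argument of Theorem~\ref{up:preservation}. There, transfinite indescribability is converted into \emph{correct embeddings} rather than used for direct reflection.
\begin{itemize}
\item The paper prepares with a lottery iteration $\mathbb S_\kappa$ whose stage-$\gamma$ lottery ranges over all canonical iterations $\EE^{(\gamma)}_\tau$, $\tau<\gamma^+$.
\item It writes validity, order, compatibility and maximality of antichains of $\EE_\theta$ as one well-founded infinitary formula $\psi_\theta$ of size $\kappa$ (Lemma~\ref{up:iteration-formula}), with a normal form of level below $\kappa^+$.
\item It translates the forcing relation of this formula over the preparation (Lemmas~\ref{up:translate} and~\ref{up:afterprep}).
\item It uses $\mathsf H(\kappa)$, through the Hauser-type Lemma~\ref{up:hauser}, to obtain $j:M\to N$ between $\kappa$-models with $N$ correct for these formulas.
\end{itemize}
Correctness makes $g\cap\EE_\theta^{N[G]}$ an $N[G]$-generic, so the stage-$\kappa$ lottery of $j(\mathbb S_\kappa)$ can select $\EE_\theta^{N[G]}$ with this generic. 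After the tail generic is built, the $<\kappa$-directed closure of $\EE_\theta^{M[G]}$ (Lemma~\ref{up:directed}) yields a master condition below $j''g_0$. The weak-compactness embedding criterion then finishes.

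Two smaller points. First, in the capture step a density argument only yields $n_\beta\leq^* d\restriction\beta$ above the stem of the given condition. You must prove stationarity of the domination set first, then use Fodor's lemma to uniformize the bound, and only then thin $E_d$. Second, countable closure of the tail would not suffice to preserve stationary subsets of $\kappa$; you need the $<\kappa$-closure of the quotient.
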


Generalized Hechler forcing at a weakly compact cardinal captures each
ground-model $C$-sequence on a stationary set of inaccessible indices.
We use a lottery preparation to preserve weak compactness through
every proper initial segment of the full Hechler iteration of length
$\kappa^+$. For each such segment, its presentation and the assertions
needed to lift an embedding are expressed by a well-founded formula
of size $\kappa$. This formula has a normal form of level below
$\kappa^+$, so the required reflection follows from $\mathsf H(\kappa)$.
Every $C$-sequence in the final extension belongs to a bounded
intermediate extension. It is captured at the next coordinate, and
the remaining closed iteration preserves the stationary capture set.

Section~\ref{sec:notation} fixes the transfinite conventions.
Sections~\ref{sec:foundations}--\ref{lb:section} establish
Theorem~\ref{thm:lower}. Sections~\ref{sec:capture} and
\ref{sec:iteration} prove the capture and iteration facts.
Section~\ref{up:section} proves Theorem~\ref{thm:upper}.
Section~\ref{sec:conclusion} completes the proofs of the main theorems.

\section{Transfinite syntax and conventions}\label{sec:notation}

For a limit ordinal $\eta$ and $X\subseteq\eta$, put
$\acc(X)=\{\delta<\eta:0<\delta=\sup(X\cap\delta)\}$ and
$\operatorname{cl}_\eta(X)=X\cup\acc(X)$.
All elementary models used below are well-founded and transitive
after collapse. The notation $M^{<\kappa}\subseteq M$ always
specifies external closure, in the stated ambient universe.

We use the canonical reflection functions and transfinite syntax of
\cite[Sections~2--3]{CodyHigher} in Definition~\ref{def:H}. For a regular cardinal $\lambda$, fix bijections
$b_{\lambda,\zeta}:\lambda\to\zeta$ for
$\lambda\leq\zeta<\lambda^+$, with
$b_{\lambda,\lambda}=\mathrm{id}_\lambda$. Put
\[
 F^\lambda_\zeta(\alpha)=
 \begin{cases}
  \zeta,&\zeta<\lambda,\\
  b_{\lambda,\zeta}''\alpha,&\lambda\leq\zeta<\lambda^+,
 \end{cases}
 \qquad
 f^\lambda_\zeta(\alpha)=\otp(F^\lambda_\zeta(\alpha)).
                                                               \tag{2.1}
\]
In the constructible argument all these data are in $L$, and the
bijections are the $<_L$-least ones. In the forcing argument any
fixed family may be used. The resulting indescribability notions
are independent of this choice \cite[Corollary~3.7]{CodyHigher}.

Define the classes $\Pi^1_\xi$ and $\Sigma^1_\xi$, for
$\xi<\lambda^+$, recursively by
\[
 \Pi^1_0=\Sigma^1_0
 =\{\varphi:\varphi\text{ is a first-order formula}\}.
\]
At successor levels, put
\[
 \begin{aligned}
  \Pi^1_{\xi+1}
   &=\{\forall X_1\cdots\forall X_n\,\varphi:
        n<\omega,\ \varphi\in\Sigma^1_\xi\},\\
  \Sigma^1_{\xi+1}
   &=\{\exists X_1\cdots\exists X_n\,\varphi:
        n<\omega,\ \varphi\in\Pi^1_\xi\},
 \end{aligned}
\]
where the $X_i$ are second-order variables. At nonzero limit
levels $\xi$, put
\[
 \begin{aligned}
  \Pi^1_\xi
   &=\left\{\bigwedge_{\zeta<\xi}\varphi_\zeta:
        \varphi_\zeta\in\Pi^1_\zeta\text{ for every }\zeta<\xi\right\},\\
  \Sigma^1_\xi
   &=\left\{\bigvee_{\zeta<\xi}\varphi_\zeta:
        \varphi_\zeta\in\Sigma^1_\zeta\text{ for every }\zeta<\xi\right\}.
 \end{aligned}
\]
Formulas have only finitely many free variables and
second-order parameters. Their first-order parameters may vary
among the subformulas. Every particular formula of level below
$\lambda^+$, together with its required restriction data, has
hereditary size at most $\lambda$.

For a regular cardinal $\alpha<\lambda$, let
\[
 \pi^\lambda_{\xi,\alpha}:F^\lambda_\xi(\alpha)
       \longrightarrow f^\lambda_\xi(\alpha),\qquad
 \pi^\lambda_{\xi,\alpha}(\zeta)
       =\otp(F^\lambda_\xi(\alpha)\cap\zeta)
\]
be the increasing bijection. We use the restriction operation of
\cite[Definition~3.2]{CodyHigher}. If $\xi<\lambda$ and
$\varphi\in\Pi^1_\xi\cup\Sigma^1_\xi$, put
\[
 \varphi|^\lambda_\alpha
   =\varphi[\vec A\mapsto\vec A\cap V_\alpha],
 \qquad
 \vec A\cap V_\alpha=(A_1\cap V_\alpha,\ldots,A_m\cap V_\alpha),
\]
provided that every first-order parameter of $\varphi$ belongs to
$V_\alpha$; otherwise the restriction is undefined. Free variables
and bound variables are unchanged. For $\xi\geq\lambda$, define
the operation recursively. At a successor level,
\[
 \begin{aligned}
  (\forall X_1\cdots\forall X_n\,\psi)|^\lambda_\alpha
    &=\forall X_1\cdots\forall X_n\,(\psi|^\lambda_\alpha),\\
  (\exists X_1\cdots\exists X_n\,\psi)|^\lambda_\alpha
    &=\exists X_1\cdots\exists X_n\,(\psi|^\lambda_\alpha).
 \end{aligned}
\]
At a limit level $\xi\geq\lambda$, put
\[
 \begin{aligned}
  \left(\bigwedge_{\zeta<\xi}\psi_\zeta\right)|^\lambda_\alpha
    &=\bigwedge_{\eta<f^\lambda_\xi(\alpha)}
       \left(\psi_{(\pi^\lambda_{\xi,\alpha})^{-1}(\eta)}
                                  |^\lambda_\alpha\right),\\
  \left(\bigvee_{\zeta<\xi}\psi_\zeta\right)|^\lambda_\alpha
    &=\bigvee_{\eta<f^\lambda_\xi(\alpha)}
       \left(\psi_{(\pi^\lambda_{\xi,\alpha})^{-1}(\eta)}
                                  |^\lambda_\alpha\right).
 \end{aligned}
\]
The conjunction is defined only if, for every
$\eta<f^\lambda_\xi(\alpha)$, its displayed constituent is
defined and belongs to $\Pi^1_\eta$; the disjunction requires
$\Sigma^1_\eta$ instead. At a successor, the matrix must have
a defined restriction. For each fixed formula, these requirements
hold at the regular points of a club, and the restriction has
complexity $f^\lambda_\xi(\alpha)$ there. We use the corresponding
identities for canonical functions and iterated restrictions from
\cite[Lemmas~2.7--2.10 and Proposition~2.11]{CodyHigher}.
Inside $L$, all rank structures and parameter restrictions in these
definitions are computed in $L$; at inaccessible $\alpha$,
$(V_\alpha)^L=L_\alpha$. Section~\ref{lc:section} proves the
stronger composition identity needed for the chosen canonical
bijections in $L$.

For a formula $\varphi$ over $V_\lambda$, write
\[
 \operatorname{Ref}_\lambda(\varphi)
 =\{\alpha<\lambda:\alpha\text{ is regular},\
       \varphi|^\lambda_\alpha\text{ is defined},\
       V_\alpha\models\varphi|^\lambda_\alpha\}.
\]
Inside $L$, this notation uses $(V_\alpha)^L$ and satisfaction in $L$.

For a regular uncountable cardinal $\lambda$ and $\xi<\lambda^+$,
a set $S\subseteq\lambda$ is \emph{$\Pi^1_\xi$-indescribable} if
\[
 (\forall\varphi\in\Pi^1_\xi)\,
 \bigl(V_\lambda\models\varphi
       \ \Longrightarrow\
       S\cap\operatorname{Ref}_\lambda(\varphi)\ne\varnothing\bigr),
\]
with the parameters allowed above. We call $\lambda$
$\Pi^1_\xi$-indescribable when this holds for $S=\lambda$.

Let $\lambda$ be inaccessible in $L$ and
$1\leq\xi<(\lambda^+)^L$. The ideal $I_\xi^L(\lambda)$ consists
of the constructible sets $S\subseteq\lambda$ that are not
$\Pi^1_\xi$-indescribable in $L$. Explicitly, for
$S\in\mathcal P^L(\lambda)$,
\[
 \begin{split}
 S\in I_\xi^L(\lambda)\quad\Longleftrightarrow\quad
 L\models\bigl[\exists\varphi\in\Pi^1_\xi\,
  \bigl(V_\lambda\models\varphi\ \wedge\
         S\cap\operatorname{Ref}_\lambda(\varphi)=\varnothing\bigr)\bigr].
 \end{split}
\]
Here syntax, parameters and satisfaction are computed in $L$.
If $\lambda$ is not $\Pi^1_\xi$-indescribable in $L$,
then $I_\xi^L(\lambda)=\mathcal P^L(\lambda)$. For the
initial step of the induction, put $I_0^L(\lambda)=\NS_\lambda^L$.
When proper, these ideals are normal and contain
$\NS_\lambda^L$ \cite[Theorem~5.6]{CodyHigher}.

We also use the $\Pi^1_\xi$-clubs of
\cite[Definition~5.16]{CodyHigher}. All clauses in the following
recursion are computed in $L$. At level zero they are club subsets of
$\lambda$ in $L$. At a successor level,
\[
 \begin{split}
 Q\text{ is a }\Pi^1_{\eta+1}\text{-club in }\lambda
 \quad\Longleftrightarrow\quad
 &Q\in(I_\eta^L(\lambda))^+\ \wedge\ \exists B\subseteq\lambda
       \text{ club}\ \forall\alpha\in B\cap\Reg(\lambda)\\[-2pt]
 &\bigl[Q\cap\alpha\in(I_{f^\lambda_\eta(\alpha)}^L(\alpha))^+
                                      \ \Longrightarrow\ \alpha\in Q\bigr].
 \end{split}
\]
At a nonzero limit level,
\[
 \begin{split}
 Q\text{ is a }\Pi^1_\xi\text{-club in }\lambda
 \quad\Longleftrightarrow\quad
 &(\forall\eta<\xi)\ Q\in(I_\eta^L(\lambda))^+\\
 &{}\wedge\ \exists B\subseteq\lambda\text{ club}\
       \forall\alpha\in B\cap\Reg(\lambda)\\[-2pt]
 &\bigl[(\forall\eta<f^\lambda_\xi(\alpha))\,
          Q\cap\alpha\in(I_\eta^L(\alpha))^+
                                      \ \Longrightarrow\ \alpha\in Q\bigr].
 \end{split}
\]
We always intersect $B$ with the clubs on which the displayed local
indices and restrictions are defined. Intersecting $Q$ with such a
closure witness $B$ preserves its lower positivities and makes the
closure implication hold at every relevant regular cardinal.
The clubs of closure points used below ensure that these cardinals are
inaccessible in $L$. We use both parts of
\cite[Theorem~5.17]{CodyHigher}: characterization of positive sets
by meeting every higher club, and a formula defining higher clubs
whose restrictions are correct, on one club, for all local subsets.

Truth over $L_\lambda$ in the lower bound uses full second-order
semantics in $L$, except when an internal predicate domain is specified.
For satisfaction inside a set model, we require the first-order domain,
predicate domain, formula codes and satisfaction relation to belong to
the model.

\section{Capture ideals and constructible coherence}\label{sec:foundations}

Throughout the lower-bound argument, $\kappa$ is regular uncountable in
$V$. A superscript $L$ on an ideal, a trace, or an indescribability
assertion specifies its entire computation, including its second-order
semantics. A set or club called \emph{constructible} belongs to $L$; auxiliary
clubs, sets and ideals may belong only to $V$.
Write $\Reg^V(\kappa)$ for the set of regular uncountable cardinals below
$\kappa$. For $S\subseteq\kappa$, put
\[
 \TrV(S)=\{\beta\in \Reg^V(\kappa):S\cap\beta\text{ is stationary in }\beta
                                    \text{ in }V\}.
\]
For any ideal $J$, $J^*$ denotes its dual filter and $J^+$ its positive sets.

The $\kappa$-completeness, inclusions and trace assertion below, as well
as the normality of $\A_\kappa$, are due to Inamdar and Rinot
\cite[Lemma~3.7(1)--(5) and Lemma~4.8(1)--(2)]{IR}.
We include a proof, also recording the normality of $\SA_\kappa$
and the final consequence concerning inaccessibility in $L$.

\begin{lemma}\label{fd:SA}
The families $\A_\kappa$ and $\SA_\kappa$ are normal,
$\kappa$-complete ideals, possibly improper, and
\[
 \NS_\kappa\subseteq\SA_\kappa\subseteq\A_\kappa,
 \qquad \kappa\setminus \Reg^V(\kappa)\in\SA_\kappa.
\]
If $S$ is stationary, then $\kappa\setminus\TrV(S)\in\SA_\kappa$.
If $\kappa\notin\SA_\kappa$, then $\kappa$ is inaccessible in $L$.
\end{lemma}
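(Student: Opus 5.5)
We will work directly from Definition~\ref{def:amenability}. Ideal properties come from gluing $C$-sequences, and the last two assertions come from building explicit strongly amenable sequences.

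\emph{Downward closure and $\NS_\kappa\subseteq\SA_\kappa$.} If $S'\subseteq S\in\A_\kappa$ (resp.\ $\SA_\kappa$), restricting a witnessing sequence to $S'$ shrinks every capture set, so $S'$ is in the family. If $S$ is nonstationary, fix a club $E$ disjoint from $S$. For limit $\beta\in S$ with $\sup(E\cap\beta)=\beta$, we would have $\beta\in\acc(E)\subseteq E$, which is impossible. Hence we may choose $C_\beta$ club in $\beta$ and disjoint from $E$, namely above $\sup(E\cap\beta)$. Given any club $D$, if $\beta\in\operatorname{Cap}(D,\vec C)$ with $\beta>\min(D\cap E)$, then $\min(D\cap E)\in D\cap\beta\subseteq C_\beta$, contradicting $C_\beta\cap E=\varnothing$. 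So every capture set is bounded, and $S\in\SA_\kappa$. The inclusion $\SA_\kappa\subseteq\A_\kappa$ is immediate. For $\kappa\setminus\Reg^V(\kappa)\in\SA_\kappa$, successor and zero indices are captured only by trivial clubs, up to a bounded set. For singular limit $\beta$, choose $C_\beta$ of order type $\cf(\beta)<\beta$. If $D\cap\beta\subseteq C_\beta$ for unboundedly many singular $\beta$, then, with $D$ club, $\otp(D\cap\beta)\le\cf(\beta)<\beta$. But $\otp(D\cap\beta)=\beta$ for $\beta$ in the club of closure points of the enumeration of $D$. It remains to handle singular $\beta$ off that club; there we choose $C_\beta$ disjoint from a fixed club as in the nonstationary case. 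Since that club depends on $D$, I would instead choose $C_\beta$ with $\min C_\beta>\sup$ of something canonical, namely omitting a tail.

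\emph{Normality and $\kappa$-completeness.} Take $S_i\in\A_\kappa$ ($i<\kappa$) with witnesses $\vec C^i$, and form the diagonal union $S=\nabla_i S_i$. For $\beta\in S$, let $i(\beta)<\beta$ be the least $i$ with $\beta\in S_i$, and set $C_\beta=C^{i(\beta)}_\beta\setminus(i(\beta)+1)$. Suppose $\operatorname{Cap}(D,\vec C)$ is stationary. By Fodor, $i(\beta)=i$ is constant on a stationary subset. Replacing $D$ by $D\setminus(i+1)$ places that subset inside $\operatorname{Cap}(D\setminus(i+1),\vec C^i)$. This is not yet right, since $C^i_\beta$ need not contain $D\cap\beta$. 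I would fix it by adjusting the witnesses so that the capture sets are closed under tail modification: replace each $C^i_\beta$ by $C^i_\beta\cup(i+1)$-type modifications, or show that amenability is invariant under removing bounded pieces. For $\SA_\kappa$ the same argument works with ``unbounded'' in place of stationary, using pressing-down on an unbounded set intersected with a club. Here the stationary trace is needed, so I expect to prove normality of $\SA_\kappa$ through a club-relativized Fodor argument. \textbf{This bookkeeping is the main obstacle.} Completeness follows from normality together with $\NS_\kappa\subseteq\SA_\kappa$.

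\emph{Trace and inaccessibility in $L$.} For stationary $S$ and regular $\beta\notin\TrV(S)$, pick $C_\beta$ club in $\beta$ disjoint from $S$. Then any club $D$ capturing $\beta>\min(D\cap\acc(D)\cap S)$ gives a contradiction, so capture sets are bounded. Together with the singular part, this gives $\kappa\setminus\TrV(S)\in\SA_\kappa$. Finally, assume $\kappa\notin\SA_\kappa$. If $\kappa=\mu^{+L}$ or $\kappa$ is not a strong limit in $L$, fix in $L$ for each $\beta<\kappa$ a surjection/injection $e_\beta:\beta\to\mu$ (or $2^\mu\ge\kappa$ coding). Using these, define $C_\beta$ so that any club $D$ captured unboundedly often yields, via the coding, $\kappa$ many distinct subsets of $\mu$ determined by the traces $D\cap\beta$. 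That contradicts regularity of $\kappa$ and $\beta$'s $L$-cardinality. Regular-in-$L$ is clear, since an $L$-cofinal map would make $\kappa$ singular in $V$.
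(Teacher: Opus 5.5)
Your arguments for downward closure, for $\NS_\kappa\subseteq\SA_\kappa$, and for the regular indices in the trace assertion are fine and agree with the paper. The singular indices, however, are not handled.

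\textbf{Singular indices.} Requiring only $\otp(C_\beta)=\cf(\beta)$ excludes capture only at those $\beta$ with $\otp(D\cap\beta)>\cf(\beta)$. At the $\mu$th point $\beta$ of $D$, for regular $\mu<\beta$, one has $\otp(D\cap\beta)=\mu=\cf(\beta)$, so nothing prevents $D\cap\beta\subseteq C_\beta$. When $\kappa$ is a limit cardinal there can be unboundedly many such $\beta$. Handling them ``off that club'' by a choice depending on $D$ is not allowed, since $\vec C$ is fixed before $D$.

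Your closing idea of cutting off an initial part of $C_\beta$ points the right way, but the canonical condition and its use are missing. The condition is $\cf(\beta)<\min(C_\beta)$. Fix $0<\xi\in D$. A captured $\beta>\xi$ has $\xi\in C_\beta$, so $\cf(\beta)<\xi$. But once $\beta$ exceeds some $\eta$ with $\otp(D\cap\eta)>\xi$, you get $\xi<\otp(D\cap\beta)\le\otp(C_\beta)=\cf(\beta)$, a contradiction.

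Your trace assertion and the last clause both rest on this step. For the last clause, your coding sketch is not an argument and is not needed. From $\NS_\kappa\subseteq\SA_\kappa$, $\kappa\setminus\Reg^V(\kappa)\in\SA_\kappa$ and closure under finite unions, $\kappa\notin\SA_\kappa$ makes $\Reg^V(\kappa)$ stationary, hence unbounded. So $\kappa$ is a regular limit cardinal in $V$, hence in $L$ (every cardinal of $V$ is a cardinal of $L$). GCH in $L$ then makes it strongly inaccessible there.

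\textbf{Normality.} Your objection is spurious. Since $C_\beta=C^{i(\beta)}_\beta\setminus(i(\beta)+1)\subseteq C^{i(\beta)}_\beta$, the inclusion $D\cap\beta\subseteq C_\beta$ already gives $\beta\in\operatorname{Cap}(D,\vec C^{\,i(\beta)})$. Fodor's lemma then finishes the case of $\A_\kappa$ without modifying any witnesses.

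For $\SA_\kappa$ your plan does not work: there is no pressing-down lemma for merely unbounded sets, and no stationary trace is involved. The removal of $i(\beta)+1$ serves a different purpose. Fix $0<\xi\in D$. Any captured $\beta>\xi$ satisfies $\xi\in D\cap\beta\subseteq C_\beta$, hence $i(\beta)<\xi$. So the capture set above $\xi$ lies in $\bigcup_{i<\xi}\operatorname{Cap}(D,\vec C^{\,i})$. This is a union of fewer than $\kappa$ bounded sets, which is bounded by regularity of $\kappa$.
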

\begin{proof}
For a limit $\beta>\omega$ with $\cf(\beta)<\beta$, choose
\[
 C_\beta\subseteq\beta\text{ club},\qquad
 \otp(C_\beta)=\cf(\beta)<\min(C_\beta).
\]
Fix a club $D\subseteq\kappa$ and $0<\xi\in D$. If
$\beta>\xi$ and $D\cap\beta\subseteq C_\beta$, then
$\cf(\beta)<\xi$. Choose $\eta<\kappa$ such that
$\otp(D\cap\eta)>\xi$. For a captured $\beta>\eta$ we would have
\[
 \xi<\otp(D\cap\beta)\leq\otp(C_\beta)=\cf(\beta)<\xi.
\]
Thus capture on this domain is bounded. For successor indices,
choose $\xi_0<\xi_1$ in $D$; no $\beta>\xi_1$ is captured by
$C_{\alpha+1}=\{\alpha\}$. The remaining indices form a bounded
set. Hence $\kappa\setminus \Reg^V(\kappa)\in\SA_\kappa$.

Both families are downward closed by restriction of their witnesses.
If $\mu<\kappa$ and $S_i\in\SA_\kappa$ have witnesses
$\vec C^{\,i}$, put
\[
 i(\beta)=\min\{i<\mu:\beta\in S_i\},\qquad
 C_\beta=C^{i(\beta)}_\beta.
\]
Then $\operatorname{Cap}(D,\vec C)\subseteq
\bigcup_{i<\mu}\operatorname{Cap}(D,\vec C^{\,i})$ is bounded.
The same inclusion proves $\kappa$-completeness of $\A_\kappa$,
using the $\kappa$-completeness of $\NS_\kappa$.

If $S\cap E=\varnothing$ for a club $E\subseteq\kappa$, put
$C_\beta=\{\eta<\beta:\eta>\sup(E\cap\beta)\}$ at limit
$\beta\in S$. Given $\xi\in D\cap E$, no $\beta>\xi$ is
captured. Thus $\NS_\kappa\subseteq\SA_\kappa$.

For normality, let
$S=\{\beta<\kappa:(\exists i<\beta)\,\beta\in S_i\}$ and,
for $\beta\in S\cap \Reg^V(\kappa)$, put
\[
 i_\beta=\min\{i<\beta:\beta\in S_i\},\qquad
 C_\beta=C^{i_\beta}_\beta\setminus(i_\beta+1).
\]
Fix $0<\xi\in D$. A captured $\beta>\xi$ satisfies
$i_\beta<\xi$. If $b_i<\kappa$ bounds
$\operatorname{Cap}(D,\vec C^{\,i})$, then
\[
 \operatorname{Cap}(D,\vec C)\subseteq
 (\xi+1)\cup\bigcup_{i<\xi}(b_i+1),
\]
which is bounded. The omitted indices lie in $\kappa\setminus \Reg^V(\kappa)$.
For $\A_\kappa$, a stationary capture set would, by Fodor's lemma,
have a stationary subset on which $i_\beta$ is constant,
contradicting amenability of $\vec C^{\,i}$.

If $S$ is stationary and $\beta\in \Reg^V(\kappa)\setminus\TrV(S)$,
choose $C_\beta$ club in $\beta$ with $C_\beta\cap S=\varnothing$.
For $\xi\in D\cap S$,
\[
 \beta>\xi\quad\Longrightarrow\quad D\cap\beta\nsubseteq C_\beta.
\]
Together with $\kappa\setminus \Reg^V(\kappa)\in\SA_\kappa$, this proves the
trace assertion.

Finally, if $\kappa\notin\SA_\kappa$, the inclusions
$\NS_\kappa\subseteq\SA_\kappa$ and
$\kappa\setminus\Reg^V(\kappa)\in\SA_\kappa$ imply that
$\Reg^V(\kappa)$ is stationary, hence unbounded, in $\kappa$.
Therefore $\kappa$ is strongly inaccessible in $L$.
\end{proof}

Fix the stage-ordered canonical well-order $<_L$ and the theory
\[
 T_0=\mathsf{ZFC}^{-}+\mathrm{Collection}+\mathrm{Choice}+V=L.
\]
Here Power Set is omitted, and Separation and Collection are included
\cite{GHJ}.
We use the following standard consequences of condensation
\cite[Chapter~13]{Jech}. For regular uncountable $\rho$ in $L$,
$H_\rho^L=L_\rho\models T_0$. Every constructible set of hereditary
$L$-size less than $\rho$ belongs to a level $L_\eta\models T_0$
with $\eta<\rho$. Any transitive set model of $T_0$ is an actual
level of $L$: induction identifies each of its internal constructible
levels with the external one. First-order satisfaction for a set
structure is absolute between the transitive models containing it.

Hulls use all first-order Skolem functions, choosing the $<_L$-least
witness. For a structure in a countable language, a regular uncountable
cardinal $\beta$, and a fixed finite tuple $p$, the set
\[
 \{\alpha<\beta:\Hull(\alpha\cup\{\beta,p\})\cap\beta=\alpha\}
                                                        \tag{\(*\)}
\]
is club. This follows by iterating the hull bound countably often;
continuity of finitary hulls proves closedness. It also applies to
expansions by finitely many external predicates, with the construction
performed in the universe containing those predicates.

If $H\prec L_\eta$, $\beta\in H$, and $H\cap\beta=\alpha$ is
limit, condensation and induction on constructible levels give
\[
 \pi:H\cong L_\tau,\quad \pi(\beta)=\alpha,\quad
 H\cap L_\beta=L_\alpha,\quad \pi\restr L_\alpha=\mathrm{id}.
                                                        \tag{\(**\)}
\]
Thus a predicate $A\in H$ on $L_\beta$ collapses to
$A\cap L_\alpha$. Every $<_L$-predecessor of an element of
$L_\tau$ is in $L_\tau$. Consequently an absolute property of
a collapsed object, together with its internal $<_L$-minimality,
identifies the actual least constructible object with that property.

\begin{lemma}\label{fd:minimality}
Let $p_\alpha,p_\beta\in L$ be finite tuples of parameters, and put
\[
 \eta_\gamma=\min\{\eta:L_\eta\models T_0\ \wedge\ p_\gamma\in L_\eta\}
 \qquad(\gamma\in\{\alpha,\beta\}).
\]
If a hull of $L_{\eta_\beta}$ collapses to $L_\tau$ and sends $p_\beta$
to $p_\alpha$, then
$\tau=\eta_\alpha$. The collapsed structure is generated by
the images of the hull generators. Elementary maps between such
generated structures are uniquely determined by their values on
those generators.
\end{lemma}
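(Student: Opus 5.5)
Let $H\prec L_{\eta_\beta}$ be the hull and $\pi:H\cong L_\tau$ its transitive collapse with $\pi(p_\beta)=p_\alpha$. The plan is to transfer the defining minimality property of $\eta_\beta$ through $\pi$, and then to read off generation and uniqueness from the way Skolem hulls are formed.

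\emph{Step 1: $L_\tau$ is a level satisfying $T_0$ and containing $p_\alpha$.} By elementarity $H\models T_0$, hence so does the isomorphic transitive structure $L_\tau$. By the remark preceding the lemma, any transitive set model of $T_0$ is an actual level of $L$, so $L_\tau$ is the genuine $\tau$-th level. Since $p_\beta\in H$, we get $p_\alpha=\pi(p_\beta)\in L_\tau$, and therefore $\eta_\alpha\le\tau$.

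\emph{Step 2: the reverse inequality.} Suppose $\eta_\alpha<\tau$. Then $L_{\eta_\alpha}\in L_\tau$ and $L_\tau$ satisfies ``there is an ordinal $\eta$ with $L_\eta\models T_0$ and $p_\alpha\in L_\eta$.'' This statement is first order in $L_\tau$, using the absoluteness of set satisfaction and of the $L$-hierarchy between transitive models of $T_0$. Pulling it back through $\pi^{-1}$ gives $H\models\exists\eta\,(L_\eta\models T_0\wedge p_\beta\in L_\eta)$. By elementarity this holds in $L_{\eta_\beta}$, with a witness $\eta<\eta_\beta$. By absoluteness again, $L_\eta$ really satisfies $T_0$ and contains $p_\beta$, which contradicts the minimality of $\eta_\beta$. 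Hence $\tau=\eta_\alpha$.

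The \textbf{main obstacle} is the absoluteness invoked in Step 2: $L_\eta\models T_0$ must be expressible inside $L_\tau$ and $L_{\eta_\beta}$ and must agree with the external truth. I would handle it with the stated facts that first-order satisfaction for set structures is absolute between transitive models containing them, and that $T_0$-models compute $L_\eta$ correctly. A companion point is that ``$\tau$ is least'' is not itself first order in $L_\tau$; the argument avoids this by reflecting only the existence of a smaller witness.

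\emph{Step 3: generation.} $H$ is the Skolem hull of its generators, where the Skolem functions are first-order definable from $<_L$. Since $\pi$ is an isomorphism, and the definable $<_L$-least witnesses are preserved because $<_L$ is absolute, $L_\tau$ equals the Skolem hull in $L_\tau$ of the images of the generators.

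\emph{Step 4: uniqueness.} Let $\sigma$ be an elementary map between such generated structures. Every element has the form $h(\vec g)$ for a Skolem term $h$ and generators $\vec g$. Elementarity gives $\sigma(h(\vec g))=h(\sigma(\vec g))$, so $\sigma$ is determined by its values on the generators.
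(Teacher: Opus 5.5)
Your proposal is correct and follows essentially the same argument as the paper. In Step 2 the paper transfers the first-order assertion $\neg\exists\rho\,(L_\rho\models T_0\wedge p_\beta\in L_\rho)$ from $L_{\eta_\beta}$ to $L_\tau$ and contradicts $L_{\eta_\alpha}\in L_\tau$; your pull-back of the existential statement is the contrapositive of that argument. Your Steps 3 and 4 on generation and uniqueness via the identity $\pi(t^H(\vec a))=t^{L_\tau}(\pi(\vec a))$ for Skolem terms are exactly the paper's argument.
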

\begin{proof}
Let $\pi:H\cong L_\tau$ be the collapse. Since
$L_\tau\models T_0$ and $\pi(p_\beta)=p_\alpha$,
$\eta_\alpha\leq\tau$. Elementarity also gives
\[
 L_\tau\models\neg\exists\rho\,
       (L_\rho\models T_0\ \wedge\ p_\alpha\in L_\rho).
\]
If $\eta_\alpha<\tau$, then $L_{\eta_\alpha}\in L_\tau$;
set satisfaction is absolute and contradicts this assertion. Hence
$\eta_\alpha=\tau$. For each canonical Skolem term $t$,
\[
 \pi(t^H(\vec a))=t^{L_\tau}(\pi(\vec a)).
\]
These values exhaust the collapsed hull. Two elementary maps
agreeing on its generators therefore agree everywhere.
\end{proof}

A directed system of elementary embeddings between levels of $L$,
indexed by a nonempty set $A$ of ordinals, consists of structures
$M_\alpha=L_{\eta_\alpha}$ for $\alpha\in A$ and elementary embeddings
\[
 j_{\alpha\beta}:(M_\alpha,\in)\longrightarrow(M_\beta,\in)
 \qquad(\alpha\leq\beta\text{ in }A)
\]
satisfying
\[
 j_{\alpha\alpha}=\mathrm{id}_{M_\alpha},\qquad
 j_{\beta\gamma}\circ j_{\alpha\beta}=j_{\alpha\gamma}
 \qquad(\alpha\leq\beta\leq\gamma\text{ in }A).
\]
The second identity is the coherence condition.
We use the standard direct-limit construction for elementary embeddings
(see \cite[Section~3]{Cummings}), verifying its well-foundedness below.

\begin{lemma}\label{fd:limit}
Let $A\subseteq\kappa$ be unbounded and $\cf^V(\kappa)>\omega$.
Suppose $\langle M_\alpha,j_{\alpha\beta}:\alpha\leq\beta
\text{ in }A\rangle$ is such a system, with
$M_\alpha\models T_0$ and $\alpha\in M_\alpha$ for every $\alpha\in A$.
Assume also that
\[
 j_{\alpha\beta}\restr\alpha=\mathrm{id}_\alpha,
 \qquad j_{\alpha\beta}(\alpha)=\beta
 \qquad(\alpha\leq\beta\text{ in }A).
\]
Its direct limit is well-founded. Its transitive collapse $N$ is a
level of $L$, and the limit maps $j_\alpha:M_\alpha\to N$ satisfy
$j_\alpha(\alpha)=\kappa$ for every $\alpha\in A$.
\end{lemma}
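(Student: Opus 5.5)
The plan is to build the direct limit concretely and derive well-foundedness from the uncountable cofinality of $\kappa$. Elements of the limit are equivalence classes $[\alpha,x]$ with $x\in M_\alpha$. We set $[\alpha,x]\sim[\beta,y]$ when $j_{\alpha\gamma}(x)=j_{\beta\gamma}(y)$ for some $\gamma\ge\alpha,\beta$ in $A$, and $[\alpha,x]\,E\,[\beta,y]$ when $j_{\alpha\gamma}(x)\in j_{\beta\gamma}(y)$. Coherence makes both relations independent of the witness $\gamma$. Since $A$ is directed, \L{}o\'s's argument for directed systems (as in \cite[Section~3]{Cummings}) shows that each $j_\alpha:x\mapsto[\alpha,x]$ is elementary. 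Hence the limit $(D,E)$ satisfies $T_0$, because every $M_\alpha$ does.

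The key step, and the main obstacle, is well-foundedness. Suppose $[\alpha_n,x_n]$, $n<\omega$, is an $E$-descending sequence. For each $n$ choose $\gamma_n\in A$ with $\gamma_n\ge\alpha_n,\alpha_{n+1}$ and $j_{\alpha_{n+1}\gamma_n}(x_{n+1})\in j_{\alpha_n\gamma_n}(x_n)$. Because $\cf^V(\kappa)>\omega$ and $A$ is unbounded, there is a single $\gamma\in A$ above every $\alpha_n$ and every $\gamma_n$. Applying $j_{\gamma_n\gamma}$ and using coherence gives $j_{\alpha_{n+1}\gamma}(x_{n+1})\in j_{\alpha_n\gamma}(x_n)$ for all $n$. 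This is an $\in$-descending sequence in the transitive set $M_\gamma$, which is a contradiction. The whole argument takes place in $V$, and the only point where the cofinality hypothesis enters is the choice of this common upper bound $\gamma$.

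Once well-foundedness is established, extensionality holds because $T_0$ holds, so the Mostowski collapse yields a transitive $N$ with $N\models T_0$. By the fact quoted before Lemma~\ref{fd:minimality}, a transitive set model of $T_0$ is a level of $L$, so $N=L_\theta$ for some $\theta$. We let $j_\alpha$ also denote the composite map into $N$.

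It remains to show $j_\alpha(\alpha)=\kappa$. Coherence together with $j_{\alpha\beta}(\alpha)=\beta$ makes $j_\alpha(\alpha)=j_\beta(\beta)$ for all $\alpha\le\beta$ in $A$; call this ordinal $\delta$. For the inequality $\kappa\le\delta$: given $\nu<\kappa$, pick $\beta\in A$ with $\beta>\nu$. Then $\nu<\beta$ and $j_{\beta\gamma}\restr\beta=\mathrm{id}$ for every $\gamma\ge\beta$. It follows that $\nu\mapsto[\beta,\nu]$ is well defined, order-preserving and independent of $\beta$ on an initial segment, so its image is a set of predecessors of $\delta$ of order type at least $\kappa$. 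Indeed, by induction the collapse sends $[\beta,\nu]$ to $\nu$, since every predecessor of $[\beta,\nu]$ in the limit has the form $[\gamma,\mu]$ with $\mu<\nu$, by elementarity and $j_{\beta\gamma}\restr\beta=\mathrm{id}$. For the inequality $\delta\le\kappa$: any predecessor of $\delta=[\alpha,\alpha]$ is represented by some $[\gamma,\mu]$ with $\mu<j_{\alpha\gamma}(\alpha)=\gamma$, and this element equals $[\max(\gamma,\mu+1)\text{ in }A,\mu]$, which collapses to $\mu<\kappa$. Hence $\delta=\kappa$.
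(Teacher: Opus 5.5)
Your proposal is correct and follows essentially the same route as the paper. Both build the direct limit explicitly and get elementarity by induction on formulas. Both prove well-foundedness by pushing a putative descending sequence into a single transitive $M_\gamma$, using $\cf^V(\kappa)>\omega$. Both identify the collapse as a level of $L$ via $T_0$, and both compute $j_\alpha(\alpha)=\bigcup A=\kappa$ by representing each member at a later stage by an ordinal below that stage's distinguished ordinal.

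Your induction showing that $[\beta,\nu]$ collapses to $\nu$ just spells out the paper's remark that $j_\alpha(\xi)=\xi$ for $\xi<\alpha$. The detour through $\max(\gamma,\mu+1)$ is unnecessary, since $\mu<\gamma$ already.
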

\begin{proof}
On pairs $(\alpha,x)$ with $\alpha\in A$ and $x\in M_\alpha$,
define
\[
 \begin{aligned}
 (\alpha,x)\sim(\beta,y)
 &\iff (\exists\gamma\in A\setminus\max\{\alpha,\beta\})\,
          j_{\alpha\gamma}(x)=j_{\beta\gamma}(y),\\
 [\alpha,x]\ E\ [\beta,y]
 &\iff (\exists\gamma\in A\setminus\max\{\alpha,\beta\})\,
          j_{\alpha\gamma}(x)\in j_{\beta\gamma}(y).
 \end{aligned}
\]
Coherence makes both definitions independent of the comparison stage. Induction
on first-order formulas proves that $x\mapsto[\alpha,x]$ is
elementary. If $[\alpha_{n+1},x_{n+1}]\ E\ [\alpha_n,x_n]$
for $n<\omega$, choose $\gamma\in A$ above all representative
and comparison indices; this is possible since $\cf^V(\kappa)>\omega$.
Then
\[
 j_{\alpha_{n+1}\gamma}(x_{n+1})
       \in j_{\alpha_n\gamma}(x_n)\qquad(n<\omega),
\]
contradicting Foundation in the transitive $M_\gamma$. Thus the
direct limit is well-founded. Its transitive collapse $N$ satisfies
$T_0$, and therefore $N=L_\eta$ for some $\eta$.

Write $j_\alpha:M_\alpha\to N$ for the limit maps. Coherence
gives $j_\alpha(\alpha)=j_\beta(\beta)$. For $\xi<\alpha$,
$j_\alpha(\xi)=\xi$. Conversely, every member of
$j_\alpha(\alpha)$ is represented at a later stage by an ordinal
below that stage's distinguished ordinal. Consequently
\[
 j_\alpha(\alpha)=\bigcup_{\beta\in A}\beta=\kappa.
\]
\end{proof}

For $\xi=1$, $I_1^L(\lambda)$ is the ideal of sets that are not
weakly compact in $L$.

The cardinal implication
$\kappa\notin\SA_\kappa\Longrightarrow
\kappa\text{ is weakly compact in }L$
is due to Inamdar and Rinot
\cite[Lemma~3.4 and Corollary~3.8]{IR}.
The next lemma gives the corresponding ideal inclusion, with
constructible entries in the witnessing sequences.

\begin{lemma}\label{fd:robust}
If $\lambda$ is regular uncountable in $V$ and inaccessible in
$L$, then
\[
 I_1^L(\lambda)\subseteq\SA_\lambda^V.
\]
The witnessing sequence can be chosen with constructible entries
at every index regular in $V$.
\end{lemma}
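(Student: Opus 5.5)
Fix $S\in I_1^L(\lambda)$; we must produce a $C$-sequence on $S$ whose capture sets are all bounded. By Lemma~\ref{fd:SA}, $\lambda\setminus\Reg^V(\lambda)\in\SA_\lambda$ and $\SA_\lambda$ is an ideal, so it suffices to handle $S\cap\Reg^V(\lambda)$. Since $S\in I_1^L(\lambda)$, work in $L$ and fix a $\Pi^1_1$ sentence $\varphi=\forall X\,\psi(X,A)$ with $A\subseteq L_\lambda$ constructible, such that $L_\lambda\models\varphi$ and $S\cap\operatorname{Ref}_\lambda(\varphi)=\varnothing$. Thus for each $\beta\in S$ that is inaccessible in $L$, either $\varphi|^\lambda_\beta$ is undefined (a bounded/nonstationary set of $\beta$, which we discard using $\NS_\lambda\subseteq\SA_\lambda$), or there is a constructible $X\subseteq L_\beta$ with $L_\beta\models\neg\psi(X,A\cap L_\beta)$. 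Let $X_\beta$ be the $<_L$-least such counterexample. Also discard the indices $\beta\in S\cap\Reg^V(\lambda)$ that are not inaccessible in $L$: for these I would set $C_\beta$ as in Lemma~\ref{fd:SA} via a constructible collapse (if $\beta$ is regular in $V$ but singular or a successor in $L$, there is a constructible club of small order type or a constructible witness; I expect these to form a nonstationary set once one uses that $\lambda$ is inaccessible in $L$, by a standard hull argument).

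For $\beta\in S$ regular in $V$ and inaccessible in $L$, define $C_\beta$ constructibly from $X_\beta$: let $\eta_\beta$ be least with $L_{\eta_\beta}\models T_0$ and $X_\beta,A\cap L_\beta\in L_{\eta_\beta}$ (here $\eta_\beta<\beta^{+L}$), and put
\[
 C_\beta=\{\alpha<\beta:\Hull^{L_{\eta_\beta}}(\alpha\cup\{\beta,X_\beta,A\cap L_\beta\})\cap\beta=\alpha\},
\]
which is club in $\beta$ by $(*)$ and lies in $L$. This gives the constructible entries claimed.

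Now suppose a club $D\subseteq\lambda$ has unboundedly many $\beta$ in its capture set $B$. For $\alpha<\beta$ in $B$ with $\alpha\in D\cap\beta\subseteq C_\beta$ and $\alpha\in S$ as well: the hull $H$ at $\alpha$ in $L_{\eta_\beta}$ collapses by $(**)$ to some $L_\tau$, sending $\beta\mapsto\alpha$, $A\cap L_\beta\mapsto A\cap L_\alpha$, and $X_\beta\mapsto X_\beta\cap L_\alpha$, which is then a counterexample at $\alpha$, and by the minimality remark after $(**)$ it is the least one, i.e.\ $X_\alpha$; Lemma~\ref{fd:minimality} gives $\tau=\eta_\alpha$. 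Hence the inverse collapses give elementary $j_{\alpha\beta}:L_{\eta_\alpha}\to L_{\eta_\beta}$ with $j_{\alpha\beta}\restr\alpha=\mathrm{id}$, $j_{\alpha\beta}(\alpha)=\beta$, $j_{\alpha\beta}(X_\alpha)=X_\beta$, and by the uniqueness clause of Lemma~\ref{fd:minimality} these maps cohere. Apply Lemma~\ref{fd:limit} (with $\cf\lambda>\omega$) to get $j_\alpha:L_{\eta_\alpha}\to N=L_\eta$ with $j_\alpha(\alpha)=\lambda$. Then $X^*=j_\alpha(X_\alpha)$ satisfies $X^*\cap L_\beta=X_\beta$ for all $\beta\in B$, so $X^*=\bigcup_{\beta\in B}X_\beta\subseteq L_\lambda$ is constructible, and elementarity gives $L_\lambda\models\neg\psi(X^*,A)$ (the satisfaction over $L_\alpha$ is absolute into $L_{\eta_\alpha}$ and transfers), contradicting $L_\lambda\models\varphi$. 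Hence every capture set is bounded.

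The main obstacle is the last step's bookkeeping: ensuring the captured indices themselves lie in $S$ and in $C_\beta$ simultaneously, which I would handle by thinning $D$ to its intersection with the (constructible) club of closure points and restricting attention to the unbounded set $B\subseteq S$, using $\alpha\in B\cap\beta\subseteq D\cap\beta\subseteq C_\beta$; and verifying that $\neg\psi$ transfers correctly through the direct limit, which needs $N$ to contain $L_\lambda$ with $j_\alpha(A\cap L_\alpha)=A$, again from $j_{\alpha\beta}(A\cap L_\alpha)=A\cap L_\beta$.
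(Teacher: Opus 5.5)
\textbf{Gap: the index set of your directed system.} You define $j_{\alpha\beta}$ as the inverse collapse of the hull at $\alpha$ inside $L_{\eta_\beta}$, where $\beta$ is the larger index itself. This needs $\beta$ captured and $\alpha\in D$, so every pair of indices must lie in both $B=\operatorname{Cap}(D,\vec C)$ and $D$. You justify this with ``$\alpha\in B\cap\beta\subseteq D\cap\beta\subseteq C_\beta$'', but that inclusion is false. Capture only says $D\cap\beta\subseteq C_\beta$; nothing puts a captured $\beta$ into $D$, or even into $\acc(D)$. Thinning $D$ does not help: shrinking $D$ only enlarges the capture set and does not move its members into $D$. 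And $B\cap D$ may well be bounded, so you cannot restrict to it either. Your extra requirement ``and $\alpha\in S$ as well'' makes matters worse, because you define $X_\alpha$ only at indices of $S$.

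\textbf{The fix, which is the paper's route.} Define $X_\alpha$, $p_\alpha=(\alpha,A\cap L_\alpha,X_\alpha)$ and $\eta_\alpha$ at every limit $\alpha$ at which a counterexample exists, not only at members of $S$. Your own hull computation shows that every limit $\alpha\in C_\beta$ is such a point. Then index the system by the limit points of $D$ and route each map through an auxiliary captured ordinal. For limit $\alpha<\alpha'$ in $D$, pick a captured $\beta>\alpha'$, so that $\alpha,\alpha'\in D\cap\beta\subseteq C_\beta$. The two hulls in $L_{\eta_\beta}$ are nested, and they collapse to $L_{\eta_\alpha}$ and $L_{\eta_{\alpha'}}$. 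The map $\pi_{\alpha'}\circ\pi_\alpha^{-1}$ is elementary, fixes $L_\alpha$, and sends $\alpha\mapsto\alpha'$ and $p_\alpha\mapsto p_{\alpha'}$. By the generation clause of Lemma~\ref{fd:minimality}, it does not depend on the choice of $\beta$, which gives coherence. Lemma~\ref{fd:limit} then applies to this unbounded index set. Your limit-predicate argument and the contradiction with $L_\lambda\models\varphi$ go through as you describe.

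\textbf{An alternative patch.} You could keep your definition of $X_\alpha$ if the indices where it is defined form a stationary set; otherwise they lie in $\NS_\lambda\subseteq\SA_\lambda$. In that case index by their intersection with $\acc(D)$. You would still need the auxiliary captured $\beta$ above both indices.

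\textbf{A minor point.} The indices regular in $V$ but not inaccessible in $L$ need no hull argument. A $V$-regular cardinal is regular in $L$, so such indices avoid the constructible club $F$ of strong limit cardinals of $L$ below $\lambda$. Hence $C_\gamma=\{\eta<\gamma:\eta>\sup(F\cap\gamma)\}$ is a constructible entry with bounded capture. Also, your parenthetical case of a $V$-regular $\beta$ that is singular in $L$ cannot occur.
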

\begin{proof}
Let $S\in I_1^L(\lambda)$. In $L$ choose $A\subseteq L_\lambda$
and a first-order $\chi$ such that
\[
 (L_\lambda,\in,A)\models\forall X\,\chi(X,A),\qquad
 (L_\alpha,\in,A\cap L_\alpha)\not\models\forall X\,\chi(X,A)
\]
for every $\alpha\in S$ inaccessible in $L$. For each limit
$\alpha$ at which the second assertion holds, define
\[
 \begin{aligned}
 X_\alpha&=\min_{<_L}\{X\in L\cap\mathcal P(L_\alpha):
             (L_\alpha,\in,X,A\cap L_\alpha)\models\neg\chi\},\\
 p_\alpha&=(\alpha,A\cap L_\alpha,X_\alpha),\\
 \eta_\alpha&=\min\{\eta:p_\alpha\in L_\eta\models T_0\}.
 \end{aligned}
\]
These ordinals exist below $\lambda$. For $\beta\in S$ regular
in $V$ and inaccessible in $L$, use the constructible club
\[
 C_\beta=\{\alpha<\beta:
     \Hull^{L_{\eta_\beta}}(\alpha\cup\{p_\beta\})\cap\beta=\alpha\}.
\]
At a limit $\alpha\in C_\beta$, let
$H=\Hull^{L_{\eta_\beta}}(\alpha\cup\{p_\beta\})$
and let $\pi$ collapse $H$.
Equation $(**)$ and absoluteness of first-order satisfaction give
\[
 \pi(A\cap L_\beta)=A\cap L_\alpha,
 \qquad \pi(X_\beta)=X_\alpha,
 \qquad \operatorname{coll}(H)=L_{\eta_\alpha}.
\]
For the middle identity, any $<_L$-earlier witness lies in the
collapsed level and would contradict its internal minimality. The
last identity follows from Lemma~\ref{fd:minimality}.

Suppose $J=\operatorname{Cap}(D,\vec C)$ is unbounded. For
limit $\alpha<\alpha'$ in $D$, choose $\beta\in J\setminus(\alpha'+1)$.
The inclusion of the two hulls induces
\[
 j_{\alpha\alpha'}:L_{\eta_\alpha}\longrightarrow L_{\eta_{\alpha'}},
 \qquad j_{\alpha\alpha'}(p_\alpha)=p_{\alpha'},
 \qquad j_{\alpha\alpha'}\restr L_\alpha=\mathrm{id}.
\]
Skolem generation makes these maps independent of $\beta$ and gives
$j_{\alpha'\alpha''}\circ j_{\alpha\alpha'}=j_{\alpha\alpha''}$.
Lemma~\ref{fd:limit} yields $X\in L\cap\mathcal P(L_\lambda)$
with $X\cap L_\alpha=X_\alpha$ for cofinally many $\alpha$.
Elementarity of first-order set satisfaction now gives
\[
 (L_\lambda,\in,X,A)\models\neg\chi,
\]
a contradiction.

Choose a constructible club $F\subseteq\lambda$ of strong limit
cardinals of $L$. If $\gamma<\lambda$ is regular in $V$ and
not inaccessible in $L$, then $\gamma\notin F$. Use
$C_\gamma=\{\eta<\gamma:\eta>\sup(F\cap\gamma)\}$.
For $\xi\in D\cap F$, no $\gamma>\xi$ in this domain is captured.
Lemma~\ref{fd:SA} handles the nonregular indices. Completeness
combines these three witnesses.
\end{proof}

In particular, $\kappa\notin\SA_\kappa^V$ implies that
$\kappa$ is weakly compact in $L$.

For functions on an ordinal, $f\leq^*g$ means that
$f(\xi)\leq g(\xi)$ at every sufficiently large argument.
For clubs, $D\subseteq^*E$ means that $D\setminus E$ is bounded.
At a regular uncountable $\lambda$, the following are equivalent:
\[
 \begin{gathered}
 (\exists d\in{}^\lambda\lambda)
          (\forall f\in L\cap{}^\lambda\lambda)\ f\leq^*d;\\
 (\exists D\subseteq\lambda\text{ club})
          (\forall E\in L\text{ club in }\lambda)\ D\subseteq^*E.
 \end{gathered}                                                   \tag{DC}
\]
For the forward implication take the limit closure points of $d$
and apply domination to $\xi\mapsto\min(E\setminus(\xi+1))$.
For the reverse implication take $d(\xi)=\min(D\setminus(\xi+1))$
and apply the club condition to the closure club of each constructible $f$.

The guessing ingredient in the next proof is Jensen's
$\diamondsuit^*$ theorem for nonineffable sets in $L$, used in
\cite[proof of Corollary~4.10]{IR}.
The diagonal intersection of the guessed clubs is the construction
in \cite[proof of Proposition~4.9]{IR}; here it is used to obtain a
club in $V$ almost contained in every constructible club.

\begin{lemma}\label{fd:dominator}
Suppose $\NAC(\kappa)$ and there is
$T\in L\cap\mathcal P(\kappa)$ with $T\notin\A_\kappa^V$
which is nonineffable in $L$. Then a function
$d:\kappa\to\kappa$ eventually dominates every constructible
function from $\kappa$ to $\kappa$.
\end{lemma}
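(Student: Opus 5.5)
By (DC), it suffices to build a club $D\subseteq\kappa$ in $V$ such that $D\subseteq^*E$ for every constructible club $E\subseteq\kappa$. The plan has four steps: obtain a constructible guessing sequence on $T$, capture its entries with a club using $\NAC(\kappa)$, diagonalize the guessed clubs, and then show that the resulting club is almost contained in every constructible club.

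\emph{Step 1: the guessing sequence.} Since $T$ is nonineffable in $L$, Jensen's theorem in $L$ gives a $\diamondsuit^*$-type guessing sequence on $T$. In the form I want, it is a constructible sequence $\langle \mathcal E_\beta:\beta\in T\rangle$, where each $\mathcal E_\beta$ is a family of at most $|\beta|$ clubs in $\beta$ (or subsets of $\beta$). It should have the following property: for every constructible club $E\subseteq\kappa$, the set of $\beta\in T$ with $E\cap\beta\notin\mathcal E_\beta$ is nonstationary in $L$. I would obtain this from the version of $\diamondsuit^*$ used in \cite[proof of Corollary~4.10]{IR}. For each $\beta\in T$ regular in $V$, fix an enumeration $\mathcal E_\beta=\{E^\beta_i:i<\beta\}$ in $L$.

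\emph{Step 2: capture.} Define, as in \cite[proof of Proposition~4.9]{IR}, $C_\beta=\operatorname{cl}_\beta\bigl(\Delta_{i<\beta}E^\beta_i\bigr)$, the diagonal intersection of the guessed clubs, intersected with a tail if needed. This is club in $\beta$ at regular $\beta$. Since $T\notin\A_\kappa^V$ and $\A_\kappa$ is an ideal by Lemma~\ref{fd:SA}, the sequence $\vec C$ (extended trivially off $T$) is not amenable on $T$. Hence there is a club $D\subseteq\kappa$ in $V$ whose capture set $S=\operatorname{Cap}(D,\vec C)$ is stationary. Shrinking further, I may assume that $S$ consists of indices regular in $V$ and lies in $T$.

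\emph{Step 3: pressing down.} Fix a constructible club $E\subseteq\kappa$. On a stationary subset of $S$ (intersect with the constructible club witnessing guessing, which is club in $V$ too), we have $E\cap\beta=E^\beta_{i(\beta)}$ for some $i(\beta)<\beta$. By Fodor's lemma in $V$, $i(\beta)=i^*$ is constant on a stationary, hence unbounded, $S'\subseteq S$. For $\beta\in S'$, the capture gives $D\cap\beta\subseteq C_\beta\subseteq\Delta$, and elements above $i^*$ in the diagonal intersection lie in $E^\beta_{i^*}=E\cap\beta$. So $D\cap(\beta\setminus(i^*+1))\subseteq E$ for unboundedly many $\beta$, and therefore $D\setminus(i^*+1)\subseteq E$. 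Thus $D\subseteq^*E$. The closure operation $\operatorname{cl}$ adds only limit points, and these stay in $E$ because $E$ is closed; the same holds inside a diagonal intersection above $i^*$.

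\emph{Step 4: conclusion.} Taking $d(\xi)=\min(D\setminus(\xi+1))$ and applying (DC) gives the dominating function.

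The main obstacle is Step 1: extracting from Jensen's characterization of nonineffable sets in $L$ a guessing sequence of the exact kind needed. Specifically, it must be stationarily correct in $V$ (not merely in $L$) for every constructible club, with at most $\beta$ guesses at each $\beta$. The $\diamondsuit^*(T)$ form in $L$ is the natural candidate, since its failure-set is nonstationary in $L$ and therefore disjoint from a constructible club, which remains club in $V$. Checking that the diagonal intersection is still club at every $V$-regular index is routine.
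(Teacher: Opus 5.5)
Your proposal is correct and follows the paper's proof essentially step for step: constructible $\diamondsuit^*$ guesses on $T$, stationary capture of the diagonal intersections $\triangle_{i<\beta}E^\beta_i$ via $T\notin\A_\kappa^V$ (with the $V$-singular indices of $T$ handled by Lemma~\ref{fd:SA}), Fodor's lemma to fix the guessing index, and then (DC). The main difference is that you cite Jensen's $\diamondsuit^*$ theorem for nonineffable sets in $L$ as a black box, whereas the paper proves the needed instance directly, from a list with no stationary homogeneous set and condensation; also, your closure $\operatorname{cl}_\beta$ is harmless but unnecessary, since a diagonal intersection of clubs at a regular $\beta$ is already club.
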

\begin{proof}
Work first in $L$. If $S\subseteq\Reg^L(\kappa)$ is
nonineffable, choose a list $\langle a_\beta:\beta\in S\rangle$
with no stationary homogeneous set, and put
\[
 \eta_\beta=\min\{\eta:\beta,a_\beta\in L_\eta\models T_0\},
 \qquad \mathcal D_\beta=\mathcal P(\beta)\cap L_{\eta_\beta}.
\]
Condensation gives $\eta_\beta<\beta^+$, so
$|\mathcal D_\beta|\leq\beta$. For $B\subseteq\kappa$ in $L$,
choose $L_\theta\models T_0$ containing $\kappa,B$ and the list,
with $|L_\theta|=\kappa$, and a bijection $g:\kappa\to L_\theta$.
On a club $E$,
\[
 g''\beta\prec L_\theta,\quad g''\beta\cap\kappa=\beta,
 \quad\pi_\beta:g''\beta\cong L_{\rho_\beta},\quad
 \pi_\beta(B)=B\cap\beta.
\]
If $B\cap\beta\notin\mathcal D_\beta$, then
$\eta_\beta<\rho_\beta$ and $a_\beta\in L_{\rho_\beta}$.
Let $b_\beta=\pi_\beta^{-1}(a_\beta)\in g''\beta$; then
$b_\beta\cap\beta=a_\beta$. If these failures were stationary,
Fodor's lemma applied to $\beta\mapsto g^{-1}(b_\beta)<\beta$
would give a stationary set $U$ and a fixed $b$ with
\[
 a_\beta=b\cap\beta\qquad(\beta\in U),
\]
contradicting the choice of the list. Consequently
\[
 \forall B\in\mathcal P^L(\kappa)\ \exists E_B\in L\text{ club}\
 \forall\beta\in S\cap E_B\quad B\cap\beta\in\mathcal D_\beta.
\]
This proves the required instance of Jensen's $\diamondsuit^*(S)$ theorem.

Apply this to $S=T\cap\Reg^L(\kappa)$. At each $\beta\in T\cap \Reg^V(\kappa)$,
enumerate the clubs in $\mathcal D_\beta$ as
$\langle E_i^\beta:i<\beta\rangle$, using $\beta$ for any
unused indices, and let
\[
 B_\beta=\triangle_{i<\beta}E_i^\beta
 =\{\delta<\beta:(\forall i<\delta)\,\delta\in E_i^\beta\}.
\]
At indices in $T\setminus \Reg^V(\kappa)$, use Lemma~\ref{fd:SA}.
Since $T\notin\A_\kappa^V$, there are a club $D$ and a
stationary $U\subseteq T\cap \Reg^V(\kappa)$ with
$D\cap\beta\subseteq B_\beta$ for $\beta\in U$.

Fix a constructible club $E\subseteq\kappa$. On a stationary
subset $U'$ of $U\cap E_E\cap\acc(E)$, Fodor's lemma gives
a fixed $i<\kappa$ such that $E_i^\beta=E\cap\beta$.
For $\delta\in D\setminus(i+1)$ choose $\beta\in U'\setminus(\delta+1)$.
Then
\[
 \delta\in D\cap\beta\subseteq B_\beta,
 \qquad B_\beta\setminus(i+1)\subseteq E_i^\beta=E\cap\beta,
\]
so $D\subseteq^*E$. Equivalence (DC) gives the required
eventual dominator.
\end{proof}

\begin{lemma}\label{fd:coherent}
Suppose $\kappa\notin\SA_\kappa^V$ and $d$ is as above.
There are a club $D_0\subseteq\kappa$ in $V$ and an
$\SA_\kappa^V$-dual set
$P_1\subseteq\Reg^V(\kappa)\cap\Inacc^L(\kappa)$ such that
for $\gamma\in P_1$, $D_0\cap\gamma$ is club in $\gamma$ in $V$ and
\[
 D_0\cap\gamma\subseteq^* B
       \qquad(B\text{ is club in }\gamma\text{ in }L).
                                                               \tag{O}
\]
\end{lemma}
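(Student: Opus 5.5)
The plan is to take $D_0$ to be the club of closure points of $d$ given by (DC), so that $D_0\subseteq^*E$ for every constructible club $E\subseteq\kappa$. I would then show that the set of \emph{local failures} of (O) lies in $\SA_\kappa^V$, by turning it into a $C$-sequence whose unbounded capture would produce, by a direct limit, a global constructible club violating the domination.

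\textbf{Reductions.} By Lemma~\ref{fd:SA}, $\kappa\setminus\Reg^V(\kappa)\in\SA_\kappa$ and $\NS_\kappa\subseteq\SA_\kappa$. The final paragraph of the proof of Lemma~\ref{fd:robust}, which uses a constructible club of $L$-strong limits, puts the $V$-regulars that are not inaccessible in $L$ into $\SA_\kappa$. So, modulo $\SA_\kappa^V$, I may restrict to $\gamma\in\acc(D_0)\cap\Reg^V(\kappa)\cap\Inacc^L(\kappa)$; at such $\gamma$, $D_0\cap\gamma$ is club in $\gamma$. Let $S$ be the set of these $\gamma$ at which (O) fails, and for $\gamma\in S$ let $B_\gamma$ be the $<_L$-least club $B\subseteq\gamma$ in $L$ with $D_0\cap\gamma\not\subseteq^*B$. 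The intended $P_1$ is the set of such $\gamma$ not in $S$. It therefore suffices to prove $S\in\SA_\kappa^V$.

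\textbf{The witnessing sequence.} Mimicking Lemma~\ref{fd:robust}, put $p_\gamma=(\gamma,B_\gamma)$ and $\eta_\gamma=\min\{\eta:p_\gamma\in L_\eta\models T_0\}$. Let
\[
 C_\gamma=\{\alpha<\gamma:\Hull^{L_{\eta_\gamma}}(\alpha\cup\{p_\gamma\})\cap\gamma=\alpha\},
\]
which is club by $(*)$. At a limit $\alpha\in C_\gamma$, $(**)$ gives a collapse sending $B_\gamma$ to $B_\gamma\cap\alpha$. This set is club in $\alpha$, since $B_\gamma$ lies in the hull and $\alpha$ is a closure point. By Lemma~\ref{fd:minimality}, the collapse is $L_{\eta'}$, where $\eta'$ is minimal for $(\alpha,B_\gamma\cap\alpha)$.

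Suppose a club $D'$ captures unboundedly many $\gamma\in S$. For limit $\alpha<\alpha'$ in $D'$, hull inclusion gives maps $j_{\alpha\alpha'}$ fixing $L_\alpha$ and sending $(\alpha,B\cap\alpha)$ to $(\alpha',B\cap\alpha')$. Lemma~\ref{fd:limit} then yields a level of $L$ containing a constructible $B\subseteq\kappa$ with $B\cap\alpha=B_\gamma\cap\alpha$ for all limit $\alpha\in D'\cap\gamma$ and all captured $\gamma$. Since $B$ is a union of clubs along a chain of closure points, it is a constructible club in $\kappa$. Hence $D_0\setminus\xi\subseteq B$ for some $\xi$. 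For a captured $\gamma>\xi$, the points of $D'$ are cofinal in $\gamma$, so $B\cap\gamma=B_\gamma$. Then $D_0\cap\gamma\setminus\xi\subseteq B_\gamma$, contradicting $\gamma\in S$. So capture is bounded and $S\in\SA_\kappa^V$.

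\textbf{Main obstacle.} The hard step is coherence: I must show that the local object $B_\gamma\cap\alpha$ at a point $\alpha\in D'$ does not depend on the captured $\gamma>\alpha$, so that the $j_{\alpha\alpha'}$ form a directed system. In Lemma~\ref{fd:robust} this came from $<_L$-minimality of an absolute property surviving the collapse. Here the defining property of $B_\gamma$ refers to the nonconstructible $D_0$, so minimality is not visibly preserved. My first attempt is to replace the $V$-property by a constructible one: let $B_\gamma$ be the $<_L$-least club of $\gamma$ in $L$ that is not in the collapsed family $\mathcal D$ of clubs already almost containing $D_0\cap\gamma$, using the $\diamondsuit^*$-style lists of Lemma~\ref{fd:dominator}. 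Alternatively, I would let the hull generators include only $L$-data and absorb the dependence on $D_0$ into the capture condition itself, making the $B_\gamma$ determined by the $L$-hull. Failing both, I would thin $D'$ by a pressing-down argument on the captured indices, so that the restrictions $B_\gamma\cap\alpha$ stabilize along a cofinal subfamily.
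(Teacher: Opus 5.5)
Your outline is the right shape: take $D_0$ to be the limit closure points of $d$, then show that the failures of (O) form a member of $\SA_\kappa^V$ using least constructible counterexamples and a direct limit. But there is a genuine gap exactly at the step you flag. With hulls taken in the plain structure $L_{\eta_\gamma}$, nothing identifies $B_\gamma\cap\alpha$ independently of the captured $\gamma>\alpha$. So the maps $j_{\alpha\alpha'}$ are not shown to be well defined or coherent, and Lemma~\ref{fd:limit} cannot be invoked. None of your three fallbacks closes this.
\begin{itemize}
\item The family of clubs almost containing $D_0\cap\gamma$ is defined from $D_0$. Selecting from it, even inside constructible lists $\mathcal D_\gamma$, reintroduces the same dependence.
\item Pressing-down does not apply. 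To refute strong amenability the capture set is merely unbounded, not stationary. Moreover, the objects $B_\gamma\cap\alpha$ are constructible subsets of $\alpha$, not ordinals below $\alpha$. Making them stabilize along a cofinal family would require something like a branch through a $\kappa$-tree in $V$, and no tree property is available.
\item The option of absorbing $D_0$ into the capture condition is not specific enough to evaluate.
\end{itemize}

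The missing idea is to put the nonconstructible datum into the hull structure as an external predicate, as the remark after $(*)$ allows. The paper takes hulls $\Hull(\alpha\cup\{\beta,g_\beta\})$ in $(L_{\eta_\beta},\in,d\restr\beta)$, where $g_\beta$ is the $<_L$-least constructible $g:\beta\to\beta$ with $g\not\leq^*d\restr\beta$. In your club formulation, use $(L_{\eta_\gamma},\in,D_0\cap\gamma)$.
\begin{itemize}
\item The closure points still form a club, now only in $V$, which is all a witnessing sequence needs.
\item The collapse sends the predicate to $D_0\cap\alpha$.
\item The failure assertion $\forall u<\gamma\,\exists\delta\in(u,\gamma)\,(\delta\in D_0\wedge\delta\notin B_\gamma)$ is first order in the expanded language, and so is the $<_L$-minimality of $B_\gamma$.
\item Initiality of $<_L$ and Lemma~\ref{fd:minimality} then give $B_\gamma\cap\alpha=B_\alpha$, the least failure at $\alpha$, with collapsed level $L_{\eta_\alpha}$. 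The maps are determined by $p_\alpha\mapsto p_{\alpha'}$ and cohere.
\end{itemize}

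This also repairs your final step, which assumes that $D'$ is cofinal below every captured $\gamma$. A captured $\gamma$ need not lie in $\acc(D')$, so $B\cap\gamma=B_\gamma$ is unjustified. Instead, run the contradiction at the points $\alpha\in D'$: there $B\cap\alpha=B_\alpha$, hence $\sup((D_0\cap\alpha)\setminus B)=\alpha$ for cofinally many $\alpha$, contradicting $D_0\subseteq^*B$. The paper phrases everything with non-dominated functions and converts to (O) at the end through next-point functions. Once the predicate is added, your club version works equally well.
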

\begin{proof}
Let
\[
 B=\{\beta\in \Reg^V(\kappa):d''\beta\subseteq\beta\ \wedge\
       (\exists g\in L\cap{}^\beta\beta)\,g\not\leq^*d\restr\beta\}.
\]
For each limit $\alpha$ closed under $d$ for which a witness exists,
put
\[
 g_\alpha=\min_{<_L}\{g\in L\cap{}^\alpha\alpha:
                         g\not\leq^*d\restr\alpha\},\qquad
 \eta_\alpha=\min\{\eta:\alpha,g_\alpha\in L_\eta\models T_0\}.
\]
In the expansion $(L_{\eta_\beta},\in,d\restr\beta)$, use
the club of $\alpha<\beta$ satisfying
\[
 \Hull(\alpha\cup\{\beta,g_\beta\})\cap\beta=\alpha.
\]
The collapse sends the external predicate to $d\restr\alpha$.
The defining condition is the first-order assertion
\[
 g:\alpha\to\alpha\quad\wedge\quad
 \forall\eta<\alpha\ \exists\xi\in[\eta,\alpha)\ d(\xi)<g(\xi).
\]
It is absolute for the supplied graph. Initiality of $<_L$ and
Lemma~\ref{fd:minimality} identify the collapsed function and
level with $g_\alpha,L_{\eta_\alpha}$.

If a club captured unboundedly many entries, the hull inclusions
would give the coherent maps of Lemma~\ref{fd:limit}. Applied to
the reducts to the language of set theory, that lemma yields
$g\in L\cap{}^\kappa\kappa$ with
$g\restr\alpha=g_\alpha$ at cofinally many $\alpha$. Thus
\[
 \forall\eta<\kappa\ \exists\xi\geq\eta\ d(\xi)<g(\xi),
\]
contradicting $g\leq^*d$. Hence $B\in\SA_\kappa^V$.

Let $D_0=\{\delta<\kappa:\delta>0\text{ is limit and }
d''\delta\subseteq\delta\}$, and let $F$ be a constructible
club of strong limit cardinals of $L$. Set
\[
 P_1=(\Reg^V(\kappa)\cap F\cap\acc(D_0))\setminus B.
\]
Lemma~\ref{fd:SA} gives $\kappa\setminus P_1\in\SA_\kappa^V$.
Each $\gamma\in P_1$ is inaccessible in $L$, and
$d\restr\gamma$ dominates every constructible $\gamma$-function.
For a constructible club $E\subseteq\gamma$, apply this to
$\xi\mapsto\min(E\setminus(\xi+1))$. If $u<\gamma$ is its
domination bound and $\delta\in D_0\cap\gamma\setminus(u+1)$,
then
\[
 u\leq\xi<\delta\ \Longrightarrow\
 \xi<\min(E\setminus(\xi+1))\leq d(\xi)<\delta.
\]
Therefore $\delta\in\acc(E)\subseteq E$, proving (O).
\end{proof}

\begin{lemma}\label{fd:relativeone}
The set in Lemma~\ref{fd:coherent} may be refined so that
every member is weakly compact in $L$. If $\beta\leq\kappa$ is regular in $V$,
inaccessible in $L$, and $P_1\cap\beta$ is stationary in $V$,
then
\[
 X\in I_1^L(\beta)\quad\Longrightarrow\quad
 P_1\cap\beta\cap X\in\NS_\beta^V.                         \tag{$R_1$}
\]
Such a $\beta$ is weakly compact in $L$. Moreover, every constructible
$1$-club $Q\subseteq\beta$ contains the points of $P_1$ in
the accumulation set of some club in $\beta$ in $V$.
\end{lemma}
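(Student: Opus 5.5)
The plan has three parts: refine $P_1$, prove $(R_1)$, and then derive the two consequences (weak compactness of $\beta$ in $L$ and the 1-club statement) from it.

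\textbf{Refinement.} By Lemma~\ref{fd:robust}, the set of $\gamma<\kappa$ that are not weakly compact in $L$ belongs to $I_1^L(\kappa)$, hence to $\SA_\kappa^V$. Here we use that $\kappa$ is weakly compact in $L$, so $I_1^L(\kappa)$ is proper and contains the constructible set of non-weakly-compact points. Intersecting $P_1$ with the complement keeps it $\SA_\kappa^V$-dual, and every member is then weakly compact in $L$.

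\textbf{Proof of $(R_1)$.} I would reuse the hull argument of Lemma~\ref{fd:robust} below $\beta$, replacing ``unboundedly many captured indices'' by ``stationarily many points of $P_1\cap\beta$'', and using (O) as the capturing club.
\begin{enumerate}
\item Suppose $X\in I_1^L(\beta)$ and, toward a contradiction, $P_1\cap\beta\cap X$ is stationary in $V$.
\item As in Lemma~\ref{fd:robust}, fix in $L$ a $\Pi^1_1$ sentence $\forall Y\chi(Y,A)$ true in $L_\beta$ and failing at every $\gamma\in X$ inaccessible in $L$. Define $Y_\gamma$, $p_\gamma$, $\eta_\gamma$ and the constructible clubs $C_\gamma$ of hull points for $\gamma\in P_1\cap X$.
\item By (O), $D_0\cap\gamma\subseteq^* C_\gamma$ for each such $\gamma$, with bound $u_\gamma<\gamma$.
\item By Fodor's lemma in $V$, $u_\gamma$ is constant, say $u$, on a stationary $U\subseteq P_1\cap\beta\cap X$. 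So the club $D_0\setminus(u+1)$ has its initial segments inside $C_\gamma$ for all $\gamma\in U$, and $U$ is unbounded in $\beta$.
\item Now run the directed-system argument of Lemma~\ref{fd:robust} with $\beta$ in place of $\lambda$. Lemma~\ref{fd:limit} applies because $\cf^V(\beta)=\beta>\omega$.
\item This produces $Y\in L$ with $(L_\beta,\in,Y,A)\models\neg\chi$, a contradiction.
\end{enumerate}
The hulls here are taken in $L_{\eta_\gamma}$ with constructible parameters only, so the external $d$ plays no role beyond supplying (O).

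\textbf{Consequences.} If $\beta$ were not weakly compact in $L$, then $\beta\in I_1^L(\beta)$, and $(R_1)$ would make $P_1\cap\beta$ nonstationary, contrary to hypothesis. For the 1-club clause, let $Q$ be a constructible $1$-club with witness $B\in L$. Consider $X=\beta\setminus Q$.
\begin{enumerate}
\item If $X\in I_1^L(\beta)$, then by $(R_1)$ there is a club $E$ in $V$ with $E\cap P_1\cap\beta\subseteq Q$. Then $\acc(E\cap D_0)$ does the job.
\item Otherwise I would apply the closure property of $Q$ directly. For $\gamma\in P_1\cap\acc(B)\cap\beta$ with $Q\cap\gamma$ stationary in $L$, the hypothesis gives $\gamma\in Q$.
\item To get stationarity of $Q\cap\gamma$ in $L$, note that $Q\in(I_0^L)^+$ means $Q$ is stationary in $L$. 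I then need $\gamma\in\TrV$-type reflection in $L$. Here (O) helps: if $Q\cap\gamma$ were nonstationary in $L$, a constructible club in $\gamma$ would miss $Q$, and by (O) so would a tail of $D_0\cap\gamma$.
\item Hence, taking the $V$-club $E=\operatorname{cl}(D_0\cap Q)$ when $D_0\cap Q$ is unbounded, every $\gamma\in P_1\cap\acc(E)\cap\acc(B)$ has $Q\cap\gamma$ stationary in $L$, and so lies in $Q$.
\end{enumerate}

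The main obstacle is showing that $D_0\cap Q$ is unbounded, i.e.\ that $D_0$ meets the constructible stationary set $Q$. I would handle it by applying $(R_1)$ to the constructible set $\{\gamma:Q\cap\gamma\text{ nonstationary in }L\}$. Since $Q$ is stationary in $L$ and $\beta$ is weakly compact in $L$, this set lies in $I_1^L(\beta)$. So, off a $V$-nonstationary set, points of $P_1$ reflect $Q$. Combining this with (O) and Fodor's lemma should give cofinally many points of $D_0$ in $Q$.
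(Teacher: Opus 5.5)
Your proof of $(R_1)$ is fine. It is the paper's argument with Lemma~\ref{fd:robust} unrolled rather than quoted: the paper takes the strongly amenable sequence on $X$ from that lemma and contradicts it with the club $(D_0\cap\beta)\setminus(u+1)$. Your derivation of weak compactness of $\beta$ in $L$ from $(R_1)$ is also correct.

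\textbf{The refinement step is wrong.} Let $N$ be the set of $\gamma<\kappa$ that are not weakly compact in $L$. Then $N\notin I_1^L(\kappa)$; in fact $N$ is $\Pi^1_1$-indescribable in $L$ whenever $\kappa$ is weakly compact in $L$. To see this, suppose $L_\kappa\models\varphi$ with $\varphi\in\Pi^1_1$. The least $\gamma\in\operatorname{Ref}_\kappa(\varphi)$ cannot be weakly compact in $L$. Otherwise $(L_\gamma,A\cap L_\gamma)\models\varphi$ would reflect to a smaller point of $\operatorname{Ref}_\kappa(\varphi)$. For instance, if $\kappa$ were the least weakly compact of $L$, then $N$ would be all of $\kappa$.

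So Lemma~\ref{fd:robust} cannot be used to delete $N$. The conclusion you want is true, but only because $\SA_\kappa^V$ is much larger than $I_1^L(\kappa)$ here, and it has to come from $(R_1)$ itself. Pass to $P_1'=P_1\cap\TrV(P_1)$; its complement is in $\SA_\kappa^V$ by Lemma~\ref{fd:SA}. Each $\beta\in P_1'$ is regular in $V$ and inaccessible in $L$, and $P_1\cap\beta$ is stationary. Your own ``Consequences'' argument therefore makes $\beta$ weakly compact in $L$. Finally, $(R_1)$ passes from $P_1$ to its subset $P_1'$.

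\textbf{The $1$-club clause.} Your case split is unnecessary. Once $\beta$ is weakly compact in $L$, every constructible $1$-club $Q\subseteq\beta$ satisfies $\beta\setminus Q\in I_1^L(\beta)$. This is the easy half of \cite[Theorem~5.17]{CodyHigher}: the $\Pi^1_1$ assertion that $Q$ is stationary and its closure witness $B$ is unbounded reflects only to regular points of $B$ where $Q$ reflects, and these lie in $Q$. So your first case always applies, which is exactly how the paper finishes, via $P_1'\cap\acc(F_Q)\subseteq Q$. Your ``main obstacle'' does not arise.

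Your proposed way around that obstacle would also not work as stated. Property (O) says nothing about $D_0$ meeting a set that is stationary in $L$ but possibly nonstationary in $V$. What actually produces points of $D_0\cap Q$ is the closure property of $Q$, applied at the stationarily many $\gamma\in P_1\cap B$ that reflect $Q$. That is again just the statement $\beta\setminus Q\in I_1^L(\beta)$.
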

\begin{proof}
First take the set $P_1$ of Lemma~\ref{fd:coherent}. Suppose
$Y=P_1\cap\beta\cap X$ is stationary, where $X\in I_1^L(\beta)$.
Lemma~\ref{fd:robust} supplies a strongly amenable sequence on
$X$ with constructible entries at the indices in $Y$. By (O),
choose $u_\gamma<\gamma$ such that
\[
 (D_0\cap\gamma)\setminus(u_\gamma+1)\subseteq C_\gamma
 \qquad(\gamma\in Y).
\]
Fodor's lemma gives stationary $Y'\subseteq Y$ and $u<\beta$
with $u_\gamma=u$ on $Y'$. Since $P_1\subseteq\acc(D_0)$,
$D_0\cap\beta$ is club, and
\[
 \bigl((D_0\cap\beta)\setminus(u+1)\bigr)\cap\gamma
       \subseteq C_\gamma\qquad(\gamma\in Y').
\]
This contradicts strong amenability. Taking $X=\beta$ shows that
the ideal cannot be improper.

Replace $P_1$ by $P'_1=P_1\cap\TrV(P_1)$. Its complement
belongs to $\SA_\kappa^V$. Applying the statement just proved
at each $\beta\in P'_1$ shows that $\beta$ is weakly compact in
$L$. The relative statement persists: if $P'_1\cap\beta$ is
stationary, so is $P_1\cap\beta$, and
$P'_1\cap\beta\cap X\subseteq P_1\cap\beta\cap X$.

For a constructible $1$-club $Q\subseteq\beta$,
$\beta\setminus Q\in I_1^L(\beta)$ by
\cite[Theorem~5.17]{CodyHigher}. Thus some club
$F_Q\subseteq\beta$ in $V$ satisfies $P'_1\cap F_Q\subseteq Q$.
Since $\acc(F_Q)\subseteq F_Q$,
\[
 P'_1\cap\acc(F_Q)\subseteq Q.
\]
\end{proof}

The club $F_Q$ need not belong to $L$, so property (O) does not
apply to it.

\section{Correct constructible levels and canonical restrictions}
\label{lc:section}

We adapt the correct constructible levels and traces of Skolem hulls
from \cite[Definitions~3.6--3.7 and Section~3.2]{BW} to the hierarchy of
\cite[Sections~2--3]{CodyHigher}, including the restriction data among
the parameters. Throughout this section we work in $L$, except where
$V$ is mentioned.

Fix an inaccessible $\delta$.  A formula is represented by a well-founded
set-sized syntax tree, with its ordinal complexity labels, immediate subformula-indexing
functions, and finite quantifier blocks.  First-order leaves are finite formulas.  Each formula has finitely many free predicate variables;
assignments and finitely many predicate parameters are recorded separately.
At a limit complexity $\eta$, the immediate subformulas are indexed by $\eta$, as in the
definition of $\Pi^1_\eta$ and $\Sigma^1_\eta$ formulas used here. Labels decrease
along infinitary edges.  Thus validity of a presented code is absolute
between the transitive models considered below.  Formula codes of levels
below $\delta^+$ have hereditary size at most $\delta$.

Throughout this section, levels $N$ used to evaluate infinitary
formulas satisfy
\begin{equation}
 \begin{gathered}
 N=L_\nu\models T_0,\qquad \delta,L_\delta\in N,\qquad
 N\models\text{``$\delta$ is inaccessible and $\delta^+$ exists''},\\
 S_N=\mathcal P^N(L_\delta)\in N,\qquad
 C_N=H^N_{(\delta^+)^N}\in N.
 \label{lc:domains}
 \end{gathered}
\end{equation}
When a formula code belongs to $C_N$, it is always required to be
a valid formula over $L_\delta$. We do not assume
$(\delta^+)^N=(\delta^+)^L$.

\begin{lemma}\label{lc:set-satisfaction}
There is a uniform first-order definition of satisfaction of a formula
code in $C_N$ over the two-sorted set structure $(L_\delta,S_N)$, where predicate
quantifiers range over $S_N$.  A set family of codes and assignments has
a satisfaction relation which is a set of $N$.  If two transitive models satisfying \eqref{lc:domains}
have the same first-order domain, predicate domain, code and assignment, they
compute the same satisfaction value.
\end{lemma}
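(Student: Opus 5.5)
We define satisfaction by recursion on the well-founded syntax tree of the code, inside $N$, using the ordinal complexity labels as the recursion index. Fix the two-sorted structure $(L_\delta,S_N)$. For a code $c\in C_N$ and an assignment $s$ sending finitely many first-order variables into $L_\delta$ and finitely many predicate variables (including the recorded predicate parameters) into $S_N$, we put $\operatorname{Sat}(c,s)$ as follows. At a first-order leaf, $\operatorname{Sat}(c,s)$ is ordinary Tarskian satisfaction in $(L_\delta,\in,s(X_1),\dots,s(X_m))$. This is $\Delta_1$ in parameters $L_\delta$ and the predicates, since $L_\delta\in N$ is a set. At a finite quantifier block $\forall X_1\cdots\forall X_n\psi$, $\operatorname{Sat}(c,s)$ holds iff $\operatorname{Sat}(\psi,s')$ holds for every $s'$ that agrees with $s$ off the $X_i$ and sends the $X_i$ into $S_N$; the existential block is dual. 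At an infinitary conjunction indexed by $\eta$ with indexing function $i\mapsto\psi_i$, it holds iff $\operatorname{Sat}(\psi_i,s)$ holds for all $i<\eta$; disjunction is dual. Since labels decrease along infinitary edges and the tree is well founded, this is a legitimate recursion on the rank of the tree.

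Next, we show that for a set family $\mathcal F\in N$ of pairs $(c,s)$, the relation $\operatorname{Sat}\restr\mathcal F$ is a set in $N$. We close $\mathcal F$ under subformulas together with all modified assignments into $S_N$. This closure is a set of $N$: each code is a member of $C_N$, hence hereditarily of size at most $(\delta)^N$; the set of subformulas of a code is obtained by a set recursion; and the assignments range over finite functions into $L_\delta\cup S_N$, which is a set of $N$ because $S_N\in N$. On this closed set $\mathcal F^*$, the clauses above define a function $\mathcal F^*\to 2$ by recursion on the well-founded subformula relation. $T_0$ proves well-founded recursion on set-sized well-founded relations, using Replacement/Collection and Separation without Power Set. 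Hence the function exists in $N$. The defining formula is uniform: it states that there is a function on the closure obeying the clauses, and this formula is the same for every $N$.

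Absoluteness: let $N,N'$ both satisfy \eqref{lc:domains} with the same $L_\delta$ and $S_N=S_{N'}$, and let the same code and assignment lie in both. The closure $\mathcal F^*$ computed from $\{(c,s)\}$ depends only on $c$, $s$, $L_\delta$ and $S_N$, together with set recursion, which is absolute between transitive models of $T_0$. So $\mathcal F^*$ is the same set in both models. The recursive clauses are $\Delta_0$ in $\mathcal F^*$, $L_\delta$ and $S_N$, so by induction on the well-founded subformula relation the two satisfaction functions agree. Validity of the code is absolute by the remark preceding \eqref{lc:domains}.

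The main obstacle is the closure step: checking that the subformula/assignment closure is a set of $N$ without Power Set. That requires the quantifier clause to range only over the given set $S_N$, never over $\mathcal P(L_\delta)$ as computed afresh. It also requires that the limit clauses use the indexing functions recorded in the code rather than recomputed canonical bijections.
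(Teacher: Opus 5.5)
Your proposal is correct and follows essentially the same route as the paper: you define satisfaction by recursion on the well-founded subformula relation over $(L_\delta,S_N)$, obtain the relation as a set of $N$ by well-founded recursion with Collection in $T_0$, observe that the defining clauses are first order and uniform, and prove absoluteness by induction on subformulas. The differences are cosmetic. You close a whole family of codes and assignments at once, where the paper treats one code and then collects by Replacement. You also evaluate first-order leaves by Tarskian satisfaction, where the paper uses explicit negation and first-order quantifier clauses.
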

\begin{proof}
Fix a code $c\in C_N$. Its subformulas, the finite assignments
to their free variables, $L_\delta$ and $S_N$ are sets in $N$.
Recursion on the well-founded subformula relation defines a unique
relation $T$ satisfying the atomic clauses and
\[
 \begin{aligned}
 T(\neg d,a)&\iff\neg T(d,a),\\
 T(\forall x\,d,a)&\iff
       (\forall u\in L_\delta)\,T(d,a[x\mapsto u]),\\
 T(\forall X\,d,a)&\iff
       (\forall A\in S_N)\,T(d,a[X\mapsto A]),\\
 T\left(\bigwedge_{i\in I}d_i,a\right)&\iff
       (\forall i\in I)\,T(d_i,a).
 \end{aligned}
\]
Existential quantifiers and disjunctions have the dual clauses.
Collection and well-founded recursion give $T\in N$; its value
at the code $c$ is $\operatorname{Sat}(L_\delta,S_N,c,a)$.
The assertion that $T$ satisfies these clauses is first order and
uniform in $c$. Replacement collects the relations for a set of
codes. If two transitive models have the same indicated domains,
code and assignment, induction on the subformula relation gives
$T_1(d,a)=T_2(d,a)$ at every subformula, proving absoluteness.
\end{proof}

Fix $\xi<\delta^+$ and a tuple $p\in H_{\delta^+}$ containing
the formula codes, canonical bijections and parameters to be
preserved.
A level $N$ is \emph{correct through $\xi$ for $p$} if it satisfies
\eqref{lc:domains},
\begin{equation}
 \xi<(\delta^+)^N,\qquad p\in C_N,
 \label{lc:parameters}
\end{equation}
and, for every $c\in C_N\cap
\bigcup_{\eta\leq\xi}(\Pi^1_\eta\cup\Sigma^1_\eta)$ and every
assignment $a\in N$ to its free variables,
\[
 \operatorname{Sat}(L_\delta,S_N,c,a)
 \quad\Longleftrightarrow\quad
 (L_\delta,\mathcal P^L(L_\delta))\models c[a].
\]
The tuple $p$ will be represented by a predicate on $\delta$;
it is not required to be an element of $L_\delta$.

\begin{lemma}\label{lc:correct-levels}
For every $(\delta,\xi,p)$ as above there is a correct level $L_\eta$ of size
$\delta$, with $\eta<\delta^+$.  In a level $N$ correct through
$\xi$, one first-order test correctly decides whether a smaller level
$M=L_\rho\in N$ is correct through $\xi$ for the same parameter tuple.
\end{lemma}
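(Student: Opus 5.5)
The plan is a Löwenheim--Skolem and condensation argument inside $L$, followed by an absoluteness analysis of the correctness test. We work in $L$ throughout, with $\delta$ inaccessible, $\xi<\delta^+$ and $p\in H_{\delta^+}$.

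\textbf{Existence.} Fix a large regular $\theta>\delta^+$. By Lemma~\ref{lc:set-satisfaction}, applied to $L_\theta$ itself, full satisfaction
\[
 \operatorname{Sat}(L_\delta,\mathcal P^L(L_\delta),c,a)
\]
for codes $c\in H_{\delta^+}$ is a first-order definable relation in $L_\theta$, because $\mathcal P^L(L_\delta)\in L_\theta$. Take
\[
 H=\Hull^{L_\theta}\bigl(L_\delta\cup\{\delta,\xi,p\}\cup(\xi+1)\bigr)\prec L_\theta .
\]
This hull has size $\delta$, and since $\xi<\delta^+$ we may also add a bijection $\delta\to\xi$ to the generators. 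Then $\xi+1\subseteq H$ and $H\cap\delta^+$ is an ordinal.

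Let $\pi\colon H\cong L_{\eta'}$ be the collapse, using condensation~$(**)$. The map $\pi$ is the identity on $L_\delta\cup(\xi+1)$. It fixes every element of $H\cap H_{\delta^+}$, since such elements have hereditary size $\le\delta$ and the transitive collapse of each lies in $H$. In particular $\pi$ fixes $p$ and all codes $c\in H\cap H_{\delta^+}$. Every subset of $L_\delta$ lying in $H$ is also fixed, so $S_{L_{\eta'}}=\mathcal P^L(L_\delta)\cap H$.

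Now fix $c\in C_{L_{\eta'}}$ of level $\le\xi$ and an assignment $a$. Both pull back unchanged to $H$. By elementarity of the definable satisfaction, $L_{\eta'}$ computes $\operatorname{Sat}(L_\delta,S_{L_{\eta'}},c,a)$ exactly as $L_\theta$ computes $\operatorname{Sat}(L_\delta,\mathcal P^L(L_\delta),c,a)$. The hull is elementary, so the predicate quantifiers over $S$ behave correctly: an existential witness in $\mathcal P^L(L_\delta)$ can be taken in $H$. Hence $L_{\eta'}$ is correct through $\xi$.

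We still need $\eta'<\delta^+$. Here $|L_{\eta'}|=\delta$ in $L$, so $\eta'<\delta^+$. Finally, condition~\eqref{lc:domains} transfers from $L_\theta$ by elementarity, and $\xi<(\delta^+)^{L_{\eta'}}$ holds because $\pi(\delta^+)>\xi$.

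\textbf{The first-order test.} Inside a correct $N$, given $M=L_\rho\in N$, the test asks two things:
\begin{enumerate}[label=\textup{(\roman*)}]
\item $M$ satisfies~\eqref{lc:domains} and~\eqref{lc:parameters};
\item for every code $c\in C_M$ of level $\le\xi$ and every $a\in M$,
\[
 \operatorname{Sat}(L_\delta,S_M,c,a)\iff\operatorname{Sat}(L_\delta,S_N,c,a),
\]
both sides computed in $N$ by Lemma~\ref{lc:set-satisfaction}.
\end{enumerate}

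Clause~(i) is $\Delta_0$ in $M$, hence absolute. For clause~(ii), the left-hand value is the same as the one $M$ computes internally, by the absoluteness part of Lemma~\ref{lc:set-satisfaction}. Codes in $C_M$ and assignments in $M$ lie in $C_N$ and $N$: we have $C_M\subseteq C_N$ because $M\in N$ and the hereditary sizes are $\le\delta$ in $N$. Correctness of $N$ therefore turns the right-hand side into the true value $(L_\delta,\mathcal P^L(L_\delta))\models c[a]$. So the test holds in $N$ exactly when $M$ is correct.

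\textbf{Main obstacle.} The delicate step is the existence argument's claim that full second-order satisfaction over $\mathcal P^L(L_\delta)$ reflects to the hull for all codes of level $\le\xi$ simultaneously. Infinitary conjunctions of length up to $\xi$ have to be handled, and their conjuncts may not all lie in $H$. I would handle this by proving, by induction on the level $\le\xi$, that every subformula of a code in $H\cap H_{\delta^+}$ is again in $H$. This works because $H$ contains $\xi+1$ and $L_\delta$, and the indexing functions of a code are elements of it. Once that closure holds, the elementarity argument goes through by induction on subformulas.
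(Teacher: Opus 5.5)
Your proposal is correct and is essentially the paper's argument. You take a hull of size $\delta$ containing $L_\delta$ pointwise and the parameters, and collapse it by condensation so that everything of hereditary size at most $\delta$ is fixed (codes, assignments, $p$, $\xi$, subsets of $L_\delta$). You then use the comparison test \eqref{lc:comparison}, justified by $C_M\subseteq C_N$ and the correctness of $N$. The only difference is that the paper works in $L_{\delta^{++}}$ and puts $\mathcal P(L_\delta)$, $H_{\delta^+}$ and the restricted satisfaction relation $T$ into the hull as elements, rather than using a definable $\operatorname{Sat}$ in $L_\theta$.

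The obstacle you flag at the end is not real. The inclusion $\tc(\{c\})\subseteq H$ already puts every subformula of a code $c\in H\cap H_{\delta^+}$ into $H$. Moreover, elementarity is applied only to the single first-order statement about the set satisfaction relation. So no induction on subformulas is needed, and the extra generators $\xi+1$ and the bijection onto $\xi$ are redundant. One small correction: clause (i) of your test is absolute because satisfaction for the set structure $M\in N$ is absolute, not because it is $\Delta_0$.
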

\begin{proof}
Work in $L$. In $L_{\delta^{++}}$, let
\[
 S=\mathcal P(L_\delta),\qquad C=H_{\delta^+}=L_{\delta^+},
 \qquad T=\operatorname{Sat}(L_\delta,S)\restr C.
\]
Here $T$ includes the finite assignments to the free variables
of each valid code. GCH gives
$|\tc(\{S,C,T\})|\leq\delta^+$, so these sets belong to
$L_{\delta^{++}}$. Choose
\[
 H\prec L_{\delta^{++}},\qquad |H|=\delta,\qquad
 L_\delta\subseteq H,\qquad \delta,\xi,p,S,C,T\in H.
\]
Condensation gives $\pi:H\cong N=L_\nu$ with $\nu<\delta^+$.
For $x\in H\cap H_{\delta^+}$, a surjection
$\delta\twoheadrightarrow\tc(\{x\})$ belongs to $H$.
Since $\delta\subseteq H$,
\[
 \tc(\{x\})\subseteq H,\qquad \pi(x)=x.
\]
In particular $\xi,p$ and every formula code and assignment
in $H\cap H_{\delta^+}$ are fixed. Elementarity gives
\[
 \pi(S)=S_N,\qquad\pi(C)=C_N,\qquad
 \pi(T)=\operatorname{Sat}(L_\delta,S_N)\restr C_N.
\]
For every code and assignment in the collapse, its truth value
therefore equals its value in $T$, which is full truth in $L$.
This proves correctness, and in particular correctness through
$\xi$ for $p$.

For $M=L_\rho\in N$ satisfying \eqref{lc:domains} and
\eqref{lc:parameters}, compare, for every formula code
$c\in C_M\cap\bigcup_{\eta\leq\xi}(\Pi^1_\eta\cup\Sigma^1_\eta)$
and assignment $a\in M$,
\begin{equation}
 \operatorname{Sat}(L_\delta,S_M,c,a)
 \quad\Longleftrightarrow\quad
 \operatorname{Sat}(L_\delta,S_N,c,a).
 \label{lc:comparison}
\end{equation}
An internal hereditary-size witness in $M$ remains one in $N$,
so $C_M\subseteq C_N$. Correctness of $N$ identifies the
right side with full truth. Consequently this comparison is
equivalent to correctness of $M$ through $\xi$. By
Lemma~\ref{lc:set-satisfaction}, it is a uniform first-order
test about the sets in $N$.
\end{proof}

The next lemma is the version of the minimal-level argument in
\cite[Lemma~3.10]{BW} for the correctness fragment and parameter codes
specified above.

\begin{lemma}\label{lc:skolem-hull}
The least level correct through $\xi$ for $p$ is the closure of $\delta$
pointwise together with the ordered tuple $(\delta,\xi,p)$ under the
canonical Skolem functions.
\end{lemma}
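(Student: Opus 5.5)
Let $\eta^*$ be least such that $L_{\eta^*}$ is correct through $\xi$ for $p$; it exists and is below $\delta^+$ by Lemma~\ref{lc:correct-levels}. Let $H=\Hull^{L_{\eta^*}}(\delta\cup\{(\delta,\xi,p)\})$. The aim is to show $H=L_{\eta^*}$. The plan is to collapse $H$, show that the collapse is again correct, and then use minimality of $\eta^*$, in the style of Lemma~\ref{fd:minimality}.

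The first step is to show that the collapse of $H$ is a correct level. Since $\delta\subseteq H$, condensation gives $\pi:H\cong L_\tau$ with $\tau\leq\eta^*$. As in the proof of Lemma~\ref{lc:correct-levels}, every element $x\in H$ with $x\in C_{L_{\eta^*}}$ has a surjection from $\delta$ onto $\tc(\{x\})$ inside $H$, so $\pi(x)=x$. In particular $\pi$ fixes $\delta$, $\xi$, $p$, every formula code in $H\cap C_{L_{\eta^*}}$, and every such assignment. By elementarity $\pi(S_{L_{\eta^*}})=S_{L_\tau}$ and $\pi(C_{L_{\eta^*}})=C_{L_\tau}$. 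Lemma~\ref{lc:set-satisfaction} makes satisfaction a set relation $T$ of $L_{\eta^*}$ defined uniformly. Hence $\pi$ carries $T$ restricted to codes in $H$ to the satisfaction relation of $L_\tau$. For codes $c\in C_{L_\tau}$, we have $c=\pi(c)$ and $c\in H$, so $L_\tau$ computes the same truth value as $L_{\eta^*}$, which is full truth in $L$. The conditions \eqref{lc:domains} and \eqref{lc:parameters} transfer by elementarity, using $\xi<(\delta^+)^{L_\tau}$ because $\xi=\pi(\xi)<\pi((\delta^+)^{L_{\eta^*}})$. Therefore $L_\tau$ is correct through $\xi$ for $p$.

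The second step uses minimality to conclude $\tau=\eta^*$. If $\tau<\eta^*$, minimality is contradicted outright. Alternatively one can argue internally as in Lemma~\ref{fd:minimality}: by the first-order test of Lemma~\ref{lc:correct-levels}, $L_{\eta^*}$ satisfies ``no smaller level is correct through $\xi$ for $p$''. Elementarity transfers this to $L_\tau$, and absoluteness of the test rules out $\eta^*<\tau$.

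Once $\tau=\eta^*$, $\pi$ is an elementary embedding from $L_{\eta^*}$ onto $H\subseteq L_{\eta^*}$ fixing $\delta$ pointwise and fixing $(\delta,\xi,p)$. By Lemma~\ref{fd:minimality}, the collapsed structure is generated by the images of the generators, which are the generators themselves. So $L_{\eta^*}$ equals the Skolem closure of $\delta\cup\{(\delta,\xi,p)\}$ computed in $L_{\eta^*}$. Then $\pi$ is the identity, and $H=L_{\eta^*}$.

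The main obstacle is the correctness transfer in the first step. Correctness quantifies over all codes in $C_N$, which is a proper subset of $H$ at most of size $\delta$. The argument needs every code and assignment of the collapse to be fixed by $\pi$; otherwise truth in $L_\tau$ cannot be compared with full truth in $L$. I would secure this through the hereditary-size argument: $\pi(C_{L_{\eta^*}})=C_{L_\tau}$, and each element of $C_{L_\tau}$ is $\pi(x)$ for some $x\in H\cap C_{L_{\eta^*}}$, which is fixed by $\pi$.
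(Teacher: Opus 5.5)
Your proposal is correct and follows the paper's proof essentially step for step: you collapse the hull, use the hereditary-size argument to fix $\delta,\xi,p$ and all codes and assignments, transfer correctness, get $\tau=\eta^*$ from minimality (your internal alternative is unnecessary, since condensation already gives $\tau\leq\eta^*$), and conclude that $\pi$ fixes every Skolem term in the generators, so $H=L_{\eta^*}$. The only slip is directional: it is $\pi^{-1}$, not $\pi$, that maps $L_{\eta^*}$ onto $H$.
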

\begin{proof}
Let $N=L_\nu$ be least among the levels correct through $\xi$
for $p$, and let
\[
 K=\Hull^N(\delta\cup\{\delta,\xi,p\}),\qquad
 \pi:K\cong L_\mu.
\]
For $x\in K\cap C_N$, an internal surjection
$\delta\twoheadrightarrow\tc(\{x\})$ belongs to $K$.
Since $\delta\subseteq K$, it follows that
\[
 \tc(\{x\})\subseteq K,\qquad \pi(x)=x.
\]
In particular $\pi(\delta,\xi,p)=(\delta,\xi,p)$ and
$\pi\restr L_\delta=\mathrm{id}$. Elementarity transfers
\eqref{lc:domains}. Every formula code and assignment in the
collapse lifts to one fixed by $\pi$, so elementarity of set
satisfaction transfers correctness through $\xi$. Thus
$\nu\leq\mu\leq\nu$, and $\mu=\nu$.
For a canonical Skolem term $t$ and a tuple of generators $\vec a$,
\[
 \pi(t^N(\vec a))=t^{L_\nu}(\vec a)=t^N(\vec a).
\]
These values exhaust $K$, and $\pi$ is onto $N$. Hence $K=N$.
\end{proof}

For every infinite cardinal $\lambda$ of $L$, set
\[
 b_{\lambda,\lambda}=\operatorname{id}_\lambda,
 \qquad
 b_{\lambda,\eta}=\text{the $<_L$-least bijection }\lambda\longrightarrow\eta
 \quad(\lambda<\eta<\lambda^+).
\]
Define $F^\lambda_\eta$ and $f^\lambda_\eta$ by (2.1).
For a fixed upper index
$\theta$, include among the parameters the family
\begin{equation}
 \mathcal B_{\lambda,\theta}
   =\langle b_{\lambda,\eta}:\lambda\leq\eta\leq\theta\rangle,
 \label{lc:bijections}
\end{equation}
which has hereditary size at most $\lambda$; it is empty if $\theta<\lambda$.

\begin{lemma}\label{lc:canonical-collapse}
Let $N=L_\nu\models T_0$, and let $H\prec N$ contain
$\lambda,\theta,\mathcal B_{\lambda,\theta}$.  Suppose
$H\cap\lambda=\alpha$ is an infinite cardinal of $L$, and write
$\pi:H\cong L_\tau$ for the collapse.  For every
$\eta\in H\cap[\lambda,\theta]$,
\begin{equation}
 H\cap\eta=b_{\lambda,\eta}[\alpha],\quad
 \pi(\eta)=f^\lambda_\eta(\alpha),\quad
 \pi(b_{\lambda,\eta})=b_{\alpha,\pi(\eta)}.
 \label{lc:canonical-identities}
\end{equation}
Consequently the full family of canonical bijections collapses to
$\mathcal B_{\alpha,\pi(\theta)}$.  For every $\rho<\alpha$ and
every $\eta\in H\cap(\theta+1)$,
\begin{equation}
 f^\alpha_{\pi(\eta)}(\rho)=f^\lambda_\eta(\rho).
 \label{lc:pointwise-indices}
\end{equation}
\end{lemma}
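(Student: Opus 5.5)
Since $b_{\lambda,\eta}\in H$ for each $\eta\in H\cap[\lambda,\theta]$ (it is the $\eta$-th entry of $\mathcal B_{\lambda,\theta}\in H$) and is a bijection $\lambda\to\eta$, elementarity gives $H\cap\eta=b_{\lambda,\eta}[H\cap\lambda]=b_{\lambda,\eta}[\alpha]$. Indeed, for $\zeta\in H\cap\eta$ the preimage $b_{\lambda,\eta}^{-1}(\zeta)$ lies in $H\cap\lambda=\alpha$, and conversely $b_{\lambda,\eta}(\iota)\in H$ for every $\iota<\alpha$. When $\eta=\lambda$ this reads $H\cap\lambda=\alpha$. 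The collapse of an ordinal $\eta\in H$ is $\otp(H\cap\eta)$, so
\[
 \pi(\eta)=\otp(b_{\lambda,\eta}[\alpha])=f^\lambda_\eta(\alpha).
\]
Also $\pi(\lambda)=\alpha$, and $\pi\restr\alpha=\mathrm{id}$ because $\alpha\subseteq H$ is transitive.

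For the third identity, compute $\pi(b_{\lambda,\eta})$ pointwise. For $\iota<\alpha$,
\[
 \pi(b_{\lambda,\eta})(\iota)=\pi(b_{\lambda,\eta}(\iota)),
\]
so $\pi(b_{\lambda,\eta})$ is a bijection $\alpha\to\pi(\eta)$ in $L_\tau$. Elementarity transfers the defining property: in $N$, $b_{\lambda,\eta}$ is the $<_L$-least bijection from $\lambda$ onto $\eta$ (or the identity when $\eta=\lambda$). Hence in $L_\tau$, $\pi(b_{\lambda,\eta})$ is the $<_L$-least bijection from $\alpha$ onto $\pi(\eta)$. The point to check, and the main obstacle, is that this internal minimality is the true $<_L$-minimality. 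We use the remark after $(**)$: $<_L$ is the stage-ordered canonical well-order, so every $<_L$-predecessor of an element of $L_\tau$ lies in $L_\tau$, and being a bijection $\alpha\to\pi(\eta)$ is $\Delta_0$, hence absolute. So no $<_L$-smaller bijection exists in $L$. One further point must be checked: $\pi(\eta)<\alpha^+$ in $L$, so that $b_{\alpha,\pi(\eta)}$ is defined by the given clause. This holds because $\pi(\eta)$ has a bijection with $\alpha$ in $L_\tau$. Also $\alpha$ must be an infinite cardinal of $L$, which is a hypothesis, and $\pi(\eta)=\alpha$ exactly when $\eta=\lambda$. This gives $\pi(b_{\lambda,\eta})=b_{\alpha,\pi(\eta)}$. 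Applying $\pi$ to the sequence $\mathcal B_{\lambda,\theta}$ yields a sequence indexed by $\pi$ of the index set, with entries at $\pi(\eta)$ computed as above. By elementarity its index set is $[\alpha,\pi(\theta)]$ in $L_\tau$, so the collapse is $\mathcal B_{\alpha,\pi(\theta)}$. The case $\theta<\lambda$ is trivial.

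For \eqref{lc:pointwise-indices}, fix $\rho<\alpha$ and $\eta\in H\cap(\theta+1)$. If $\eta<\lambda$, then either $\eta<\alpha$, so $\pi(\eta)=\eta$ and both sides equal $\otp(\eta)=\eta$, or else $\eta\geq\alpha$. The latter is impossible, since $H\cap\lambda=\alpha$. If $\eta=\lambda$, both sides are $\rho$. If $\lambda<\eta$, then
\[
 F^\alpha_{\pi(\eta)}(\rho)=b_{\alpha,\pi(\eta)}[\rho]=\pi(b_{\lambda,\eta})[\rho]=\pi[b_{\lambda,\eta}[\rho]],
\]
using $\pi\restr\rho=\mathrm{id}$ and $b_{\lambda,\eta}[\rho]\subseteq H$. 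Since $\pi$ is order-preserving on ordinals of $H$, the order type is preserved, so $f^\alpha_{\pi(\eta)}(\rho)=f^\lambda_\eta(\rho)$. The case $\eta<\alpha^+$ versus $\eta\geq\alpha^+$ in the definition of $F^\alpha$ needs no separate treatment, since $\pi(\eta)<\alpha^+$ was already shown.
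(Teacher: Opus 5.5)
Your proposal is correct and follows essentially the same route as the paper. Elementarity gives $H\cap\eta=b_{\lambda,\eta}[\alpha]$ and hence $\pi(\eta)=f^\lambda_\eta(\alpha)$; initiality of $<_L$ together with absoluteness of bijectivity identifies $\pi(b_{\lambda,\eta})$ with $b_{\alpha,\pi(\eta)}$; and the pointwise identity follows from $\pi(b_{\lambda,\eta})[\rho]=\pi[b_{\lambda,\eta}[\rho]]$, with the constant-function clause handling $\eta<\lambda$.
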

\begin{proof}
For $\eta\in H\cap[\lambda,\theta]$, let $b=b_{\lambda,\eta}\in H$.
Elementarity and evaluation give
\[
 \begin{aligned}
 i<\alpha&\Longrightarrow b(i)\in H\cap\eta,\\
 \zeta\in H\cap\eta&\Longrightarrow b^{-1}(\zeta)\in H\cap\lambda=\alpha.
 \end{aligned}
\]
Thus $H\cap\eta=b[\alpha]$ and
$\pi(\eta)=\otp(b[\alpha])=f^\lambda_\eta(\alpha)$.
The collapse of $b$ is a bijection
$\alpha\to\pi(\eta)$. If $\eta=\lambda$, it is the identity.
Otherwise, $L_\tau$ regards $\pi(b)$ as the $<_L$-least such
bijection. Every $<_L$-predecessor of $\pi(b)$ lies in $L_\tau$;
bijectivity is absolute. Hence
\[
 \pi(b_{\lambda,\eta})=b_{\alpha,\pi(\eta)},\qquad
 \pi(\eta)<\alpha^+.
\]
The domain of $\pi(\mathcal B_{\lambda,\theta})$ is
$[\alpha,\pi(\theta)]$, proving the assertion about this family.
For $\rho<\alpha$,
\[
 \begin{aligned}
 b_{\alpha,\pi(\eta)}[\rho]
    &=\pi[b_{\lambda,\eta}[\rho]],\\
 f^\alpha_{\pi(\eta)}(\rho)
    &=\otp(\pi[b_{\lambda,\eta}[\rho]])
      =\otp(b_{\lambda,\eta}[\rho])=f^\lambda_\eta(\rho).
 \end{aligned}
\]
For $\eta\in H\cap\lambda$, we have $\eta<\alpha$ and
$\pi(\eta)=\eta$, so the constant-function clause gives the
same identity.
\end{proof}

To represent a tuple $q$ of hereditary size $\lambda$ by a
predicate on $\lambda$, choose the $<_L$-least bijection
$e:\lambda\to\tc(\{q\})$ with $e(0)=q$. Fix an ordinal pairing
function whose restriction to each relevant infinite cardinal
maps its square into itself, and put
\[
 R(q)=\{\langle i,j\rangle:i,j<\lambda\text{ and }e(i)\in e(j)\}.
\]
The distinguished index $0$ represents $q$. The Mostowski collapse
recovers $q$ uniquely. A Skolem hull containing $\lambda$ pointwise
and $R(q)$ as an element contains $\tc(\{q\})$ pointwise. Under
the hypotheses of Lemma~\ref{lc:canonical-collapse}, the collapse
of $e$ is again the least enumeration of the collapsed transitive
closure, by the same $<_L$-minimality argument. Therefore
\begin{equation}
 R(\pi(q))=\pi(R(q))=R(q)\cap\alpha.
 \label{lc:hereditary-code}
\end{equation}
Thus successive hull collapses represent the same restricted
parameter tuple, independently of the intervening hull.

\begin{lemma}
\label{lc:strong-composition}
In Lemma~\ref{lc:canonical-collapse}, suppose that $\lambda,\alpha$
are inaccessible and $H\cap L_\lambda=L_\alpha$, with
$L_\lambda\in H$.
Every formula $c\in H$ for which the required families of
canonical bijections belong to $H$ has a defined restriction, and
\[
 \pi(c)=c|^\lambda_\alpha,
 \qquad \pi(A)=A\cap L_\alpha
 \quad(A\in H,\ A\subseteq L_\lambda).
\]
For every regular $\rho<\alpha$ where the direct restriction is
defined,
\begin{equation}
 (c|^\lambda_\alpha)|^\alpha_\rho=c|^\lambda_\rho.
 \label{lc:strong-composition-equation}
\end{equation}
For a fixed source and parameter tuple, there is a club $B\subseteq\lambda$
such that every regular $\alpha\in B$ yields a hull with these properties.
\end{lemma}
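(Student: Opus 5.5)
The plan is to prove the four assertions in order, by induction on the syntax tree of $c$ for the identity $\pi(c)=c|^\lambda_\alpha$, using Lemma~\ref{lc:canonical-collapse} at every node.

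\emph{Predicates.} Since $H\cap L_\lambda=L_\alpha$ and $L_\lambda\in H$, the collapse argument $(**)$ gives $\pi(L_\lambda)=L_\alpha$ and $\pi\restr L_\alpha=\mathrm{id}$. For $A\in H$ with $A\subseteq L_\lambda$ and $x\in L_\alpha$, we have $x\in\pi(A)\iff x\in A$. Every element of $\pi(A)$ lies in $\pi(L_\lambda)=L_\alpha$. Hence $\pi(A)=A\cap L_\alpha$.

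\emph{The identity for formulas.} Induct along the well-founded subformula relation of $c$. Every code in $H$ has its subformula-indexing function in $H$, so each immediate subformula reachable by an index in $H$ is also in $H$.
\begin{itemize}
\item[(a)] \emph{Levels $\xi<\lambda$.} Here $\xi<\alpha$ and $\pi(\xi)=\xi$. First-order parameters in $H\cap L_\lambda=L_\alpha$ are fixed, and predicate parameters go to $A\cap L_\alpha$ by the previous step. Syntax is hereditarily of size $<\lambda$ and its nodes lie in $L_\alpha$, so they are fixed. This gives exactly the definition of $\varphi|^\lambda_\alpha$.
\item[(b)] \emph{Successor levels.} Finite quantifier blocks commute with $\pi$, so the claim follows from the induction hypothesis for the matrix.
\item[(c)] \emph{Limit levels $\xi\geq\lambda$.} Write $c=\bigwedge_{\zeta<\xi}\psi_\zeta$. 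Then $\pi(c)$ is the conjunction indexed by $\pi(\xi)=f^\lambda_\xi(\alpha)$, with $\eta$-th constituent $\pi(\psi_{\pi^{-1}(\eta)})$. By \eqref{lc:canonical-identities}, $H\cap\xi=b_{\lambda,\xi}[\alpha]=F^\lambda_\xi(\alpha)$. Hence $\pi\restr(H\cap\xi)$ is the increasing enumeration, which equals $\pi^\lambda_{\xi,\alpha}$ restricted to its inverse image. The induction hypothesis then gives $\pi(\psi_\zeta)=\psi_\zeta|^\lambda_\alpha$. Elementarity shows that each constituent lies in $\Pi^1_{\pi(\zeta)}$, where $\pi(\zeta)=\eta$, so the restriction is defined.
\end{itemize}
Disjunctions are handled dually.

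\emph{Composition \eqref{lc:strong-composition-equation}.} The main obstacle is comparing index maps at limit nodes. Again induct on $c$, with the real content at limit levels $\xi\geq\lambda$. By \eqref{lc:canonical-identities}, $c|^\lambda_\alpha$ has level $\pi(\xi)$ and the bijection attached to it is $b_{\alpha,\pi(\xi)}=\pi(b_{\lambda,\xi})$. Then, as in the proof of \eqref{lc:pointwise-indices},
\[
F^\alpha_{\pi(\xi)}(\rho)=\pi[F^\lambda_\xi(\rho)].
\]
Consequently $\pi^\alpha_{\pi(\xi),\rho}\circ\pi\restr F^\lambda_\xi(\rho)=\pi^\lambda_{\xi,\rho}$, since both are the increasing collapse of the same order type. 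The $\eta$-th constituent on the two sides of \eqref{lc:strong-composition-equation} is therefore $(\psi_\zeta|^\lambda_\alpha)|^\alpha_\rho$ and $\psi_\zeta|^\lambda_\rho$ respectively, for the same $\zeta$. These agree by the induction hypothesis. At levels below $\lambda$ the identity is intersection of predicates: $(A\cap L_\alpha)\cap L_\rho=A\cap L_\rho$. I expect the bookkeeping of definedness, namely that the level labels match as $\Pi^1_\eta$, to be the delicate point. It follows from \eqref{lc:pointwise-indices}.

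\emph{The club.} Take the club $(*)$ of $\alpha$ with $\Hull^N(\alpha\cup\{\lambda,\theta,\mathcal B_{\lambda,\theta},L_\lambda,c,\vec A\})\cap\lambda=\alpha$, where $N\models T_0$ is a level containing the source and the parameters. Intersect it with the club of $\alpha$ for which $H\cap L_\lambda=L_\alpha$; this holds at closure points, because $L_\lambda$ is well-ordered in type $\lambda$ by a definable bijection. Every regular $\alpha$ in this intersection is inaccessible in $L$, as $\lambda$ is inaccessible and regular limits of $L$-cardinals are inaccessible. It is also an $L$-cardinal. So the hypotheses above hold there.
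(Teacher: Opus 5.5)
Your proposal is correct and follows essentially the same route as the paper. It inducts on the subformula relation, handles the limit case via $H\cap\xi=F^\lambda_\xi(\alpha)$ and $\pi(b_{\lambda,\xi})=b_{\alpha,\pi(\xi)}$, and proves composition from $\pi[F^\lambda_\xi(\rho)]=F^\alpha_{\pi(\xi)}(\rho)$ together with the matching of the order collapses; the only difference is in the club. There you obtain inaccessibility of regular $\alpha$ from $\alpha$ being a limit of $L$-cardinals, which holds by elementarity since $N$ sees $\lambda$ as a limit cardinal, whereas the paper closes explicitly under $\beta\mapsto(2^\beta)^L$.
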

\begin{proof}
Induct on the subformula relation. For a predicate $A\in H$ on
$L_\lambda$,
\[
 \pi(A)=\pi[A\cap H]=A\cap L_\alpha.
\]
First-order parameters belong to $H\cap L_\lambda=L_\alpha$
and are fixed. If the formula has complexity below $\lambda$,
that complexity belongs to $H\cap\lambda=\alpha$ and its
syntax has size less than $\alpha$; it is unchanged except for
the displayed predicate restrictions. Finite quantifier blocks
commute with $\pi$.

Suppose $c=\bigwedge_{\zeta<\eta}c_\zeta$ and $\eta\geq\lambda$.
Let $F=F^\lambda_\eta(\alpha)$ and
$u=\pi^\lambda_{\eta,\alpha}$. By Lemma~\ref{lc:canonical-collapse},
\[
 \pi(c)=\bigwedge_{\tau<f^\lambda_\eta(\alpha)}
                \pi(c_{u^{-1}(\tau)})
       =\bigwedge_{\tau<f^\lambda_\eta(\alpha)}
                (c_{u^{-1}(\tau)}|^\lambda_\alpha)
       =c|^\lambda_\alpha.
\]
Elementarity gives the required complexity $\tau$ of each
constituent. The disjunction case is identical with $\bigvee$.

For a further regular $\rho<\alpha$, Lemma~\ref{lc:canonical-collapse}
gives
\[
 \pi[F^\lambda_\eta(\rho)]
      =F^\alpha_{\pi(\eta)}(\rho),\qquad
 \pi^\alpha_{\pi(\eta),\rho}(\pi(\zeta))
      =\pi^\lambda_{\eta,\rho}(\zeta)
 \quad(\zeta\in F^\lambda_\eta(\rho)).
\]
The constituent with index $\zeta$ in the direct restriction
therefore has the same index in the iterated restriction.
The induction hypothesis identifies the constituent formulas,
proving \eqref{lc:strong-composition-equation}.

For the club assertion, set
$H_\alpha=\Hull^N(\alpha\cup\{q,\lambda\})$.
Choose an enumeration of $L_\lambda$ and close $\alpha$ under
its bounds, its inverse indices and the hull bounds. Continuity
gives a club $B$ such that
\[
 \alpha\in B\ \Longrightarrow\
 H_\alpha\cap\lambda=\alpha,
 \qquad H_\alpha\cap L_\lambda=L_\alpha.
\]
Include closure under $\beta\mapsto(2^\beta)^L$ below $\lambda$.
If $\alpha\in B$ is regular in $L$, then
$(2^\beta)^L<\alpha$ for all $\beta<\alpha$, so $\alpha$
is inaccessible in $L$. The preceding calculation applies to
every formula code in $H_\alpha$.
\end{proof}

For arbitrary choices of bijections, composition of restrictions holds
on the clubs specified in \cite[Proposition~5.12]{CodyHigher}.

The next lemma adapts \cite[Lemmas~3.11 and~3.37]{BW} to the
hierarchy and restrictions of \cite{CodyHigher}, for $\xi<\delta^+$.

\begin{lemma}\label{lc:exact-trace}
Suppose $\delta$ is $\Pi^1_\xi$-indescribable, $\xi\geq1$, and
$N$ is a level of size $\delta$ correct through $\xi$ for a tuple
$p$ containing the required canonical bijections. Let
\[
 H_\alpha=\operatorname{Hull}^N
             (\alpha\cup\{\delta,\xi,p\}),\qquad
 \pi_\alpha:H_\alpha\cong M_\alpha.
\]
Let $B_N$ be the club of limit ordinals $\alpha<\delta$ such that
\[
 H_\alpha\cap\delta=\alpha,\qquad
 H_\alpha\cap L_\delta=L_\alpha,\qquad
 (\forall\eta<\alpha)\ (2^\eta)^L<\alpha.
\]
The set
\begin{equation}
 Q_\xi(N)=\{\alpha\in B_N:\alpha\text{ is regular and }
 M_\alpha\text{ is correct through }f^\delta_\xi(\alpha)
                      \text{ for }\pi_\alpha(p)\}
 \label{lc:trace-definition}
\end{equation}
is a $\Pi^1_\xi$-club. Its closure implication holds at every
regular cardinal in $B_N$.
\end{lemma}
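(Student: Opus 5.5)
We prove the two clauses of the definition of a $\Pi^1_\xi$-club separately: lower positivity, and the closure implication on $B_N\cap\Reg(\delta)$. The basic tool is the hull collapse. By Lemma~\ref{lc:strong-composition} and Lemma~\ref{lc:canonical-collapse}, for regular $\alpha\in B_N$ we get $M_\alpha=L_{\tau}$ with $\pi_\alpha\restr L_\alpha=\mathrm{id}$ and $\pi_\alpha(\delta)=\alpha$. Moreover $\pi_\alpha(\xi)=f^\delta_\xi(\alpha)$, $\pi_\alpha(A)=A\cap L_\alpha$ for predicates $A\in H_\alpha$, and $\pi_\alpha(c)=c|^\delta_\alpha$ for formula codes $c\in H_\alpha$. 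By elementarity $M_\alpha$ satisfies \eqref{lc:domains} with $\alpha$ in place of $\delta$. So the only issue in membership in $Q_\xi(N)$ is whether $M_\alpha$ is correct through $f^\delta_\xi(\alpha)$, i.e.\ whether $M_\alpha$'s second-order quantifiers over $S_{M_\alpha}=\mathcal P^{M_\alpha}(L_\alpha)$ agree with full $\mathcal P^L(L_\alpha)$ for codes of level at most $f^\delta_\xi(\alpha)$.

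The key step is a characterization by reflection: $\alpha\in Q_\xi(N)$ iff $\alpha$ reflects a certain family of $\Pi^1_\zeta$ facts for $\zeta\le\xi$. For each code $c\in C_N$ of level $\eta\le\xi$ and assignment $a$, correctness of $N$ says $\operatorname{Sat}(L_\delta,S_N,c,a)$ equals full truth. Pulling back, $M_\alpha$ is correct for $\pi_\alpha(c)=c|^\delta_\alpha$ iff $L_\alpha\models c|^\delta_\alpha$ exactly when $N$ says $c$ holds. Note that every code in $C_{M_\alpha}$ arises this way: $M_\alpha=\pi_\alpha[H_\alpha]$, and the codes and assignments are coded by predicates via \eqref{lc:hereditary-code}. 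Hence failure of correctness at $\alpha$ is witnessed by some $c\in H_\alpha$ of level $\eta\le\xi$, with $\pi_\alpha(\eta)\le f^\delta_\xi(\alpha)$, such that $c$ is true over $L_\delta$ but $c|^\delta_\alpha$ fails over $L_\alpha$, or the negation case.

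For positivity, that $Q_\xi(N)\in(I^L_\eta(\delta))^+$ for $\eta<\xi$ (or for the predecessor at successors), we show that for any $\Pi^1_\eta$ sentence $\varphi$ true in $L_\delta$, the set $Q_\xi(N)\cap\operatorname{Ref}_\delta(\varphi)$ is nonempty. To do this we form one $\Pi^1_\xi$ sentence $\Phi$ over $L_\delta$, with predicate $R(\text{code of }N\text{ and }p)$. It asserts $\varphi$ together with ``the structure coded by the predicate is correct through $\xi$''. The latter is a conjunction over all codes $c$ of level at most $\xi$ in $C_N$ of $c\leftrightarrow\operatorname{Sat}_N(c)$; it is arranged as a $\Pi^1_\xi$ formula using the limit-level conjunctions, with the $\Sigma$ halves handled one level down. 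Then $\Pi^1_\xi$-indescribability of $\delta$ gives a regular $\alpha$ in the club of Lemma~\ref{lc:strong-composition} where $\Phi|^\delta_\alpha$ holds. By \eqref{lc:strong-composition-equation} and \eqref{lc:hereditary-code}, $\Phi|^\delta_\alpha$ says precisely that $M_\alpha$ is correct through $f^\delta_\xi(\alpha)$ and $L_\alpha\models\varphi|^\delta_\alpha$.

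For the closure implication, let $\alpha\in B_N$ be regular with $Q_\xi(N)\cap\alpha$ positive for the local lower ideals. If $M_\alpha$ were not correct, take a minimal-level failing code $\pi_\alpha(c)$ of level $\zeta\le f^\delta_\xi(\alpha)$. Its truth in $L_\alpha$ is then a $\Pi^1_{\zeta'}$ statement for some $\zeta'<f^\delta_\xi(\alpha)$, or at $\zeta$ itself with one quantifier block peeled off. By positivity of $Q_\xi(N)\cap\alpha$, this statement reflects to some $\rho\in Q_\xi(N)\cap\alpha$. Composition \eqref{lc:strong-composition-equation} and the coherence $\pi_\rho=\pi^{\alpha}_\rho\circ\pi_\alpha$, where the hull of $M_\alpha$ at $\rho$ is the image of $H_\rho$, then show that correctness at $\rho$ contradicts the failure pulled down from $\alpha$. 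The main obstacle is this index bookkeeping: matching the local indices $f^\delta_\eta(\alpha)$ with the ideals $I^L_{f^\delta_\eta(\alpha)}(\alpha)$ in the successor and limit definitions of $\Pi^1_\xi$-clubs, and checking that the first failing code has level strictly below the positivity level. We would first try to handle this with \eqref{lc:pointwise-indices} and a case split on whether $\xi$ is a successor or a limit.
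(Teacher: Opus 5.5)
Your closure half matches the paper's argument. You induct on the level $\theta\le f^\delta_\xi(\alpha)$, which is your minimal failing code. Lower correctness shows that at a successor $\zeta+1$ the only possible failure is that $M_\alpha$ internally satisfies $\forall X\,\chi$ while some $X\in L$ satisfies the $\Pi^1_\zeta$ formula $\neg\chi(X)$ over $L_\alpha$. You reflect that to some $\gamma\in Q_\xi(N)\cap\alpha$ above the ordinal generators of the code and assignment, and contradict correctness of $M_\gamma$ via the hull map $M_\gamma\to M_\alpha$ and Lemma~\ref{lc:strong-composition}. Make that one-directionality explicit, since it is what puts the reflected statement strictly below the positivity level.

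The genuine gap is the positivity step. Your sentence $\Phi$ must conjoin $\delta$ many instances, including instances of level exactly $\xi$, and the hierarchy gives you no way to do this at level $\xi$. Limit conjunctions are indexed by $\zeta<\xi$ with $\Pi^1_\zeta$ constituents and only finitely many predicate parameters. So top-level conjuncts cannot be placed at positions below $\xi$, and merging them does not stay at level $\xi$.
\begin{itemize}
\item Already at $\xi=1$ the merged matrix $\bigwedge_{i<\delta}\chi_i$ is not a first-order formula; you would need extra machinery such as satisfaction classes.
\item At $\xi=\sigma+1$ with $\sigma$ a limit, pulling the universal quantifiers out of $\bigwedge_{i<\delta}\forall X\,\chi_i$, with $\chi_i\in\Sigma^1_\sigma$, leaves a $\delta$-fold conjunction of $\sigma$-indexed disjunctions. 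That amounts to choosing a disjunct for each $i$, and it has no evident $\Sigma^1_\sigma$ form.
\end{itemize}
So you have not shown that $\Pi^1_\xi$-indescribability applies to $\Phi$, nor that $\Phi|^\delta_\alpha$ expresses exactly the correctness of $M_\alpha$.

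The paper never forms such a sentence. It enumerates the true instances $(c_i,a_i)$, $i<\delta$, of level at most $\xi$ in $N$, including negations of false ones. It replaces each by a true $\Pi^1_{\eta_i}$ instance $d_i$, $\eta_i\le\xi$, together with a club $B_i$ such that $\operatorname{Ref}_\delta(d_i)\cap B_i\subseteq\operatorname{Ref}_\delta(c_i[a_i])$:
\begin{itemize}
\item a universal instance is kept as it is;
\item a true $\Sigma^1_{\zeta+1}$ instance is replaced by its matrix at a witness taken from $N$;
\item a true limit disjunction is replaced by a true constituent, regarded as $\Pi^1_{\zeta+1}$.
\end{itemize}
Each $\operatorname{Ref}_\delta(d_i)$ lies in $(I^L_\xi(\delta))^*$. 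Cut down to a club on which every instance in $H_\alpha$ has index below $\alpha$ and its witness in $H_\alpha$. By normality, the diagonal intersection of the sets $B_i\cap\operatorname{Ref}_\delta(d_i)$ is then a set $D\in(I^L_\xi(\delta))^*$, and Lemma~\ref{lc:strong-composition} gives $D\subseteq Q_\xi(N)$.

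The diagonal intersection does exactly what your single sentence was meant to do, because at $\alpha$ only the instances indexed below $\alpha$ are needed. Since $I^L_\xi(\delta)$ is proper and contains the lower ideals, this yields all the lower positivities at once. There is then no need to reflect $\varphi$ together with correctness.
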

\begin{proof}
Enumerate the true formula--assignment pairs in $N$ of complexity
at most $\xi$ as $\langle(c_i,a_i):i<\delta\rangle$, including
the negation of each false instance. For each $i$, choose a true
$\Pi^1_{\eta_i}$ instance $d_i$, $\eta_i\leq\xi$, and a
club $B_i\subseteq\delta$ in $L$ such that
\[
 \operatorname{Ref}_\delta(d_i)\cap B_i
                 \subseteq\operatorname{Ref}_\delta(c_i[a_i]).
\]
For a universal instance take the instance itself. For a true
existential successor, use a witnessing tuple in $N$ and its
universal matrix. For a limit disjunction, choose a true
constituent $c_\zeta$ and regard it as $\Pi^1_{\zeta+1}$;
$\zeta+1$ is below the limit. The clubs retain these parameters
and give the required restriction identities. At level zero use
the first-order reflection club.

Each $\operatorname{Ref}_\delta(d_i)$ is
$(I_\xi^L(\delta))^*$-large. Let $B\subseteq B_N$ also ensure that each
formula--assignment pair in $H_\alpha$ has index below $\alpha$
and that its chosen witnesses belong to $H_\alpha$. Normality gives
\[
 D=B\cap\triangle_{i<\delta}
       (B_i\cap\operatorname{Ref}_\delta(d_i))
       \in(I_\xi^L(\delta))^*.
\]
For $\alpha\in D$, elementarity of set satisfaction and
Lemma~\ref{lc:strong-composition} show, for each formula code
$c$ in the collapsed domain and each internal assignment $a$,
\[
 \operatorname{Sat}(L_\alpha,S_{M_\alpha},c,a)
   \iff (L_\alpha,\mathcal P^L(L_\alpha))\models c[a].
\]
Hence $D\subseteq Q_\xi(N)$; in particular the latter has all
the lower positivities required of a $\Pi^1_\xi$-club.

For closure, let $\alpha\in B_N$ be a regular cardinal such that
$Q_\xi(N)\cap\alpha$ has the lower positivities required in
Definition~5.16 of \cite{CodyHigher}. We prove correctness of $M_\alpha$ by induction
on $\theta\leq f^\delta_\xi(\alpha)$. Level zero follows from
absoluteness of first-order set satisfaction. At a successor
$\theta=\zeta+1$, lower correctness implies
\[
 L_\alpha\models\forall X\,\chi(X,a)
   \ \Longrightarrow\
 (L_\alpha,S_{M_\alpha})\models\forall X\,\chi(X,a).
\]
Suppose the converse fails and choose $X\in L$ with
$L_\alpha\models\neg\chi(X,a)$, where $\neg\chi\in\Pi^1_\zeta$.
The lower positivity reflects this instance to
$\gamma\in Q_\xi(N)\cap\alpha$, in its restriction club.
Write the internal code and $a$ as finitely many Skolem terms
in ordinal arguments $\vec b\in\alpha^{<\omega}$, and require
$\gamma>\max(\vec b)$. The hull inclusion gives an elementary
map $M_\gamma\to M_\alpha$. It transfers the internally true
universal statement to $M_\gamma$, while correctness of $M_\gamma$
and Lemma~\ref{lc:strong-composition} give
\[
 L_\gamma\models(\forall X\,\chi(X,a))|^\alpha_\gamma,
 \qquad
 L_\gamma\models(\neg\chi(X,a))|^\alpha_\gamma,
\]
a contradiction. Negation proves the existential case.
At a limit, each constituent belongs to the transitive model
$M_\alpha$, and the induction hypothesis identifies its truth
value in both semantics. It therefore identifies the conjunction
or disjunction. Thus $\alpha\in Q_\xi(N)$.
\end{proof}

The uniform defining formula and restriction club in the next lemma
are given by \cite[Theorem~5.17(2)]{CodyHigher}.

\begin{lemma}\label{lc:transported-tests}
Suppose $\kappa$ is $\Pi^1_\xi$-indescribable. Fix the
$\Pi^1_\xi$ formula $\chi^\kappa_\xi(X)$ defining the property of being a higher club,
and include its auxiliary parameters in a fixed constructible tuple.  There
is a constructible club $D_\xi$ such that for every regular $\gamma\in D_\xi$
and every constructible $C\subseteq\gamma$,
\begin{equation}
 L_\gamma\models(\chi^\kappa_\xi)|^\kappa_\gamma(C)
 \quad\Longleftrightarrow\quad
 C\text{ is a }\Pi^1_{f^\kappa_\xi(\gamma)}\text{-club in }L.
 \label{lc:uniform-club-test}
\end{equation}
The closure implication at every relevant regular cardinal is first order
over an inaccessible $L_\gamma$, using the code
\eqref{lc:hereditary-code} for the family of canonical bijections.
It can therefore be required in addition without increasing this
complexity.
\end{lemma}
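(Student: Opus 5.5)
We take $\chi^\kappa_\xi$ from \cite[Theorem~5.17(2)]{CodyHigher} and intersect that theorem's restriction club with the clubs produced by Lemma~\ref{lc:strong-composition}. This gives a constructible club $D_\xi$ on which \eqref{lc:uniform-club-test} holds. The proof then has three steps: fix the parameters, shrink the club, and check that the closure implication is first order over $L_\gamma$.

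\emph{Parameters.} We fix the constructible tuple $q$ consisting of:
\begin{itemize}
\item the code of $\chi^\kappa_\xi$;
\item its auxiliary predicate parameters;
\item the family $\mathcal B_{\kappa,\xi}$ of \eqref{lc:bijections}.
\end{itemize}
We represent $q$ by the predicate $R(q)$ on $\kappa$. In a level $N=L_\nu\models T_0$ containing $q$ and $L_\kappa$, we form the hulls $H_\gamma=\Hull^N(\gamma\cup\{q,\kappa\})$.

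\emph{The club.} Lemma~\ref{lc:strong-composition} supplies a club $B$ such that, for every regular $\gamma\in B$:
\begin{itemize}
\item $\gamma$ is inaccessible in $L$ and $H_\gamma\cap L_\kappa=L_\gamma$;
\item the collapse sends the code to $(\chi^\kappa_\xi)|^\kappa_\gamma$;
\item the collapse sends each predicate $A$ to $A\cap L_\gamma$.
\end{itemize}
By Lemma~\ref{lc:canonical-collapse}, on the same points the collapse sends $\xi$ to $f^\kappa_\xi(\gamma)$ and $\mathcal B_{\kappa,\xi}$ to $\mathcal B_{\gamma,f^\kappa_\xi(\gamma)}$. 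By \eqref{lc:hereditary-code}, the restricted tuple is coded by $R(q)\cap\gamma$. We let $D_\xi$ be $B$ intersected with the club from \cite[Theorem~5.17(2)]{CodyHigher}. At regular $\gamma\in D_\xi$, that theorem states that the restriction of $\chi^\kappa_\xi$ defines the $\Pi^1_{f^\kappa_\xi(\gamma)}$-clubs of $\gamma$ for all constructible $C\subseteq\gamma$.

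The only point to verify here is that the restriction operation used in \cite{CodyHigher} agrees with ours. For arbitrary bijections the composition identities hold only on the clubs of \cite[Proposition~5.12]{CodyHigher}. We intersect with those clubs, and on them \eqref{lc:strong-composition-equation} identifies the two restrictions. The restricted formula is evaluated with full second-order semantics in $L$, which matches the meaning of $L_\gamma\models\cdots$ stated in Section~\ref{sec:notation}. This proves \eqref{lc:uniform-club-test}.

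\emph{First-order closure implication.} Consider the closure implication at a regular $\alpha<\gamma$, namely
\[
 (\forall\eta<f^\gamma_{\xi'}(\alpha))\ C\cap\alpha\in(I^L_\eta(\alpha))^+
 \ \Longrightarrow\ \alpha\in C .
\]
Its hypothesis concerns only subsets of $L_\alpha$ and formulas of hereditary size $\alpha$. Since $\gamma$ is inaccessible in $L$, all of these lie in $L_\gamma$. The index $f^\gamma_{\xi'}(\alpha)$ is computed from $b_{\gamma,\xi'}$, which is read off the code $R(q)\cap\gamma$, a predicate available over $L_\gamma$.

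Next, "$\Pi^1_\eta$-indescribable at $\alpha$" quantifies over subsets of $L_\alpha$, and each of these is an element of $L_\gamma$. By Lemma~\ref{lc:set-satisfaction}, satisfaction for such codes over $(L_\alpha,\mathcal P^L(L_\alpha))$ is a set-level relation. Hence the whole condition is a first-order formula over $(L_\gamma,\in,R(q)\cap\gamma,C)$.

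\emph{Main obstacle.} The hard step is justifying that adding this first-order conjunct, whose parameter comes from the restricted code rather than a fixed element of $L_\kappa$, does not raise the complexity and still restricts correctly. To handle this, we express the conjunct at level $\kappa$ as a first-order clause in the predicate $R(q)$. We then apply Lemma~\ref{lc:strong-composition} to the combined formula, so its restriction at $\gamma$ is the same clause with $R(q)\cap\gamma$. A first-order conjunct placed at the matrix does not change the $\Pi^1_\xi$ level.
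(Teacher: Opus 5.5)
Your proposal is correct and follows essentially the paper's route. Both apply \cite[Theorem~5.17(2)]{CodyHigher} inside $L$, taking $A=C$ for a local $C\subseteq\gamma$ with the auxiliary parameters restricting via \eqref{lc:hereditary-code}, and both observe that over an inaccessible $L_\gamma$ the positivity tests, and hence the closure implication with indices read off the canonical bijections, are first order. Your step comparing restriction operations via \cite[Proposition~5.12]{CodyHigher} is unnecessary: the paper adopts Cody's restriction operation verbatim, his theorem holds for any fixed family of bijections including the $<_L$-least one, and \eqref{lc:strong-composition-equation} concerns iterated restrictions rather than a comparison of two operations.
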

\begin{proof}
Apply \cite[Theorem~5.17(2)]{CodyHigher} inside $L$. It
gives one club $D_\xi$ such that, for every regular
$\gamma\in D_\xi$ and every $A\subseteq\kappa$ in $L$,
\[
 L_\gamma\models\chi^\kappa_\xi(A)|^\kappa_\gamma
 \quad\Longleftrightarrow\quad
 A\cap\gamma\text{ is a }\Pi^1_{f^\kappa_\xi(\gamma)}
                       \text{-club in }\gamma.
\]
For a local $C\subseteq\gamma$ take $A=C$. The auxiliary
parameters are the restrictions of the fixed global parameters,
as represented in \eqref{lc:hereditary-code}.

For the closure assertion, work in an inaccessible $L_\gamma$.
For inaccessible $\alpha<\gamma$ and $\eta<(\alpha^+)^L$,
the code domain, $\mathcal P^L(L_\alpha)$ and their satisfaction
relations all belong to $L_\gamma$. Consequently the test
\[
 Q\cap\alpha\in(I_\eta^L(\alpha))^+
 \quad\Longleftrightarrow\quad
 \forall\varphi\in\Pi^1_\eta\,
 [L_\alpha\models\varphi\ \to\
           Q\cap\alpha\cap\operatorname{Ref}_\alpha(\varphi)
                              \ne\varnothing]
\]
is first order over $L_\gamma$ when its code and predicate
quantifiers are written over those sets. The canonical
bijections determine the indices $\eta$ by
\eqref{lc:canonical-identities}. Substituting these tests
into the successor and limit closure implications in
Section~\ref{sec:notation} gives the required first-order
formula for closure at every relevant local regular cardinal.
\end{proof}

If $B$ is a club in $L$ witnessing the closure condition for $Q$,
replace $Q$ by $Q\cap B$,
also intersecting $B$ with the clubs ensuring the restriction
identities. A positive restriction of $Q\cap B$ to $\alpha$
is unbounded, so $\alpha\in B$; the original closure condition
then gives $\alpha\in Q\cap B$. Thus the closure implication
holds at every relevant regular cardinal. These subsets still
form a base for the same higher club filter.

\section{Coherent witnesses and strongly amenable sequences}
\label{lc:coherent-witnesses}

Let $\kappa$
be regular uncountable in $V$ and inaccessible in $L$.  Fix
$1\leq\xi<(\kappa^+)^L$ and put
$\xi_\beta=f^\kappa_\xi(\beta)$.  A fixed constructible club of closure points
$C$ ensures the canonical-function identities and parameter
restrictions of Lemma~\ref{lc:strong-composition}.  For $\beta=\kappa$, set $\xi_\kappa=\xi$.

Assume that $P\subseteq C$ consists of $V$-regular, $L$-inaccessible
cardinals and that the following two properties hold.  First, whenever
$\beta\in C$ is regular in $V$, inaccessible in $L$, and
$P\cap\beta$ is stationary in $V$,
\begin{equation}
 X\in I^L_{\xi_\beta}(\beta)
 \quad\Longrightarrow\quad
 P\cap\beta\cap X\text{ is nonstationary in }V.
 \tag{$R_\xi$}\label{lc:R}
\end{equation}
We assume the same assertion at $\beta=\kappa$.  Second, for
$\delta\in P\cap\TrV(P)$ and every constructible
$\Pi^1_{\xi_\delta}$-club $Q\subseteq\delta$, there is a
club $\Phi_\delta(Q)\subseteq\delta$ in $V$ such that
\begin{equation}
 \gamma\in P\cap\operatorname{acc}(\Phi_\delta(Q))
 \quad\Longrightarrow\quad\gamma\in Q.
 \tag{$C_\xi$}\label{lc:club-in-V}
\end{equation}
The first property, applied to $X=\beta$ if the ideal is
improper, implies that every such $\beta$ with stationary
$P\cap\beta$ is $\Pi^1_{\xi_\beta}$-indescribable in $L$.

Write a local statement of complexity $\Pi^1_{\xi_\delta+1}$ as
\[
 \psi_\delta(Z,U_\delta)
       =\forall\vec X\,\chi_\delta(\vec X,Z,U_\delta),
 \qquad\chi_\delta\in\Sigma^1_{\xi_\delta}.
\]
If it is false in $L$, let
\[
 X_\delta(Z)=\min_{<_L}\{\vec X\in
       (\mathcal P^L(L_\delta))^{<\omega}:
       L_\delta\models\neg\chi_\delta(\vec X,Z,U_\delta)\}.
\]
Let $p_\delta(Z)$ be the tuple consisting of $\delta,\xi_\delta$,
this witness, the parameters, the formula codes and the required
canonical bijections, and put
\[
 \begin{aligned}
 \nu_\delta(Z)&=\min\{\nu:L_\nu\text{ is correct through }
                          \xi_\delta\text{ for }p_\delta(Z)\},\\
 N_\delta(Z)&=L_{\nu_\delta(Z)},\qquad
 Q_\delta(Z)=Q_{\xi_\delta}(N_\delta(Z)).
 \end{aligned}
\]
Intersect the last set with the constructible clubs ensuring the
restriction identities. At the relevant ordinals,
$f^\delta_{\xi_\delta}(\gamma)=\xi_\gamma$.

The next lemma adapts \cite[proof of Theorem~3.2, Section~3.2]{BW}
to the canonical restrictions and expansions by external predicates.

\begin{lemma}\label{lc:coherent-minima}
At every $\gamma\in Q_\delta(Z)$, the collapsed witness and
level are respectively
\begin{equation}
 X_\gamma(Z\cap L_\gamma),\qquad
 N_\gamma(Z\cap L_\gamma).
 \label{lc:minima-identity}
\end{equation}
The elementary maps between two such lower levels are independent of
the larger source level, whenever their collapsed parameter tuples agree, and they
compose.

The same conclusions hold for the least constructible candidate satisfying a
uniform test of complexity at most $\xi_\delta$, including the
restricted formula defining higher clubs with the stated closure requirement, together with first-order requirements in
finitely many external predicates, provided that the quantifiers of
these additional requirements are restricted to $L_\delta$ or $\delta$, the hull is elementary in that expansion, and
each predicate restricts to its intersection with the smaller domain.
\end{lemma}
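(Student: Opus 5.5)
The plan is to follow the template of Lemma~\ref{fd:robust} and Lemma~\ref{lc:exact-trace}, now carried out with the correct level $N_\delta(Z)$. Fix $\gamma\in Q_\delta(Z)$. Write $H_\gamma=\Hull^{N_\delta(Z)}(\gamma\cup\{\delta,\xi_\delta,p_\delta(Z)\})$ and let $\pi_\gamma:H_\gamma\cong M_\gamma$ be the collapse.

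The first step computes the collapsed parameters. Lemma~\ref{lc:strong-composition} and \eqref{lc:hereditary-code} give $\pi_\gamma(Z)=Z\cap L_\gamma$ and $\pi_\gamma(U_\delta)=U_\delta\cap L_\gamma$. They also give that the formula codes collapse to their restrictions $\cdot|^\delta_\gamma$, and that the canonical bijections collapse to $\mathcal B_{\gamma,\xi_\gamma}$, where Lemma~\ref{lc:canonical-collapse} supplies $\pi_\gamma(\xi_\delta)=f^\delta_{\xi_\delta}(\gamma)=\xi_\gamma$. Since $\gamma\in Q_\delta(Z)$, the level $M_\gamma$ is correct through $\xi_\gamma$ for $\pi_\gamma(p_\delta(Z))$.

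The second step identifies the witness. Put $\vec Y=\pi_\gamma(X_\delta(Z))$. The assertion $\neg\chi_\delta(X_\delta(Z),Z,U_\delta)$ has complexity $\Pi^1_{\xi_\delta}$ and is true in $N_\delta(Z)$'s internal semantics. By elementarity and set satisfaction (Lemma~\ref{lc:set-satisfaction}), $M_\gamma$ internally satisfies the restricted formula at $\vec Y$. Correctness of $M_\gamma$ through $\xi_\gamma$ then makes this true in $L$, so $\vec Y$ is a candidate for $X_\gamma(Z\cap L_\gamma)$. Minimality transfers: $<_L$-predecessors of $\vec Y$ lie in $M_\gamma$, which is a level of $L$. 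For each such predecessor $\vec W$, $M_\gamma$ sees that $\neg\chi$ fails at $\vec W$, and correctness again transfers this to $L$. Hence $\vec Y=X_\gamma(Z\cap L_\gamma)$, and $\pi_\gamma(p_\delta(Z))=p_\gamma(Z\cap L_\gamma)$.

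The third step identifies the level, as in Lemma~\ref{fd:minimality}, now with correctness in place of $T_0$. $M_\gamma$ is correct, so $\nu_\gamma\le\operatorname{ht}(M_\gamma)$. By elementarity, $M_\gamma$ satisfies ``no smaller level is correct for this tuple'', using the first-order test of Lemma~\ref{lc:correct-levels}, which $M_\gamma$ decides correctly because $M_\gamma$ is itself correct. If $L_{\nu_\gamma}$ were a proper member, the test would succeed inside $M_\gamma$, which is a contradiction. Together with Lemma~\ref{lc:skolem-hull}, this gives $M_\gamma=N_\gamma(Z\cap L_\gamma)$, and moreover this level is generated by $\gamma\cup\{\gamma,\xi_\gamma,p_\gamma\}$.

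For the maps, take $\gamma<\gamma'$ in $Q_\delta(Z)$. The inclusion $H_\gamma\subseteq H_{\gamma'}$ induces $\pi_{\gamma'}\circ\pi_\gamma^{-1}$, which fixes $L_\gamma$ and sends the generator tuple $p_\gamma$ to $p_{\gamma'}$. By the generation clause of Lemma~\ref{fd:minimality}, this map is determined by its action on generators, so it does not depend on $\delta$ and the maps compose. For the generalized version, the argument is repeated verbatim in the expanded structure. Hulls are taken in the expansion, and each external predicate collapses to its restriction. The extra requirements are first order with quantifiers bounded by $L_\delta$ or $\delta$, so they transfer under $\pi_\gamma$. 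The higher-club test transfers via \eqref{lc:uniform-club-test}, whose closure clause is first order over $L_\gamma$ by Lemma~\ref{lc:transported-tests}.

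I expect the main obstacle to be the minimality transfer in the second step. For it to work, correctness of $M_\gamma$ must cover all candidates $\vec W<_L\vec Y$, and not only those in $H_\gamma$. I plan to handle this by noting that correctness is a statement about every code and assignment in $M_\gamma$, so it is not restricted to elements of the hull.
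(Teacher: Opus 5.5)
Your proposal is correct and follows essentially the same route as the paper. It transfers the witness and its internal $<_L$-minimality by elementarity plus correctness of the collapsed level, with initiality of $<_L$ placing all predecessors in $M_\gamma$. It transfers level-minimality through the first-order comparison test \eqref{lc:comparison}, and it uses Skolem generation (Lemma~\ref{lc:skolem-hull}) so that each induced map is $t(\vec a,p_\gamma)\mapsto t(\vec a,p_{\gamma'})$.

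Your ordering is slightly more careful than the paper's: you identify $\pi_\gamma(X_\delta(Z))$ before asserting $\pi_\gamma(p_\delta(Z))=p_\gamma(Z\cap L_\gamma)$, whereas the paper states the latter at the outset.
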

\begin{proof}
Let $\pi:H\cong M$ be the collapse at $\gamma\in Q_\delta(Z)$.
By Lemma~\ref{lc:strong-composition},
\[
 \pi(p_\delta(Z))=p_\gamma(Z\cap L_\gamma),\qquad
 \pi(\chi_\delta)=\chi_\gamma.
\]
The assertion that the witness is $<_L$-least is first order when
written using $\operatorname{Sat}$. Elementarity transfers it to $M$,
and correctness of $M$ gives
\[
 L_\gamma\models
   \neg\chi_\gamma(\pi(X_\delta(Z)),Z\cap L_\gamma,U_\gamma).
\]
If $Y<_L\pi(X_\delta(Z))$ were a witness in $L$, initiality of
$<_L$ would give $Y\in M$, and correctness would contradict
internal minimality. Hence
$\pi(X_\delta(Z))=X_\gamma(Z\cap L_\gamma)$.

The first-order comparison test \eqref{lc:comparison} also transfers.
A smaller correct level belongs to $M$ and passes that test there,
contrary to transferred minimality. Thus
$M=N_\gamma(Z\cap L_\gamma)$, proving
\eqref{lc:minima-identity}. With external predicates, the same
argument applies to the first-order requirements because the hull
is elementary in the expanded language and its predicates restrict
to their actual intersections with $L_\gamma$.

Write $p_\gamma$ for the resulting tuple. By
Lemma~\ref{lc:skolem-hull},
\[
 N_\gamma=\Hull^{N_\gamma}(\gamma\cup\{p_\gamma\}).
\]
Every induced map is therefore determined by
\[
 j_{\gamma\gamma'}(t(\vec a,p_\gamma))
       =t(\vec a,p_{\gamma'})\qquad(\vec a\in\gamma^{<\omega}).
\]
This formula is independent of the larger source level. It also
gives $j_{\gamma'\gamma''}\circ j_{\gamma\gamma'}=j_{\gamma\gamma''}$.
\end{proof}

The expanded hulls required in this lemma agree with the original
hulls on a club subset of $\delta$ in $V$. Indeed, let
\[
 H_\alpha=\Hull^N(\alpha\cup\{p\}),\qquad
 H'_\alpha=\Hull^{(N,\vec A)}(\alpha\cup\{p\}),
\]
where $N=\Hull^N(\delta\cup\{p\})$ and $\delta$ is regular
in $V$. Both filtrations are continuous, have union $N$, and
have size less than $\delta$ at stages below $\delta$.
Choose $f:\delta\to\delta$ with
$H'_\alpha\subseteq H_{f(\alpha)}$. At a limit closure point
$\gamma$ of $f$,
\[
 H_\gamma\subseteq H'_\gamma
   =\bigcup_{\alpha<\gamma}H'_\alpha
   \subseteq\bigcup_{\alpha<\gamma}H_{f(\alpha)}
   \subseteq H_\gamma.
\]
Thus $H_\gamma=H'_\gamma$. This club belongs to $V$; it is
included explicitly in the entries of the sequence and is not
assumed to be constructible in applications of (O).

\begin{lemma}\label{lc:constructible-limit}
Suppose a stationary set $H\subseteq\beta$, with
$\operatorname{cf}^V(\beta)>\omega$, carries the local minima of
Lemma~\ref{lc:coherent-minima}.  Suppose every finite collection of
indices has a common transitive source level realizing the canonical maps,
and common source levels exist above every bounded collection of indices.
The directed limit is well founded.  The common image of the distinguished $L_\gamma$
is $L_\beta$, and a distinguished predicate whose local restrictions agree
has limit equal to their union.  This limit predicate belongs to $L$.
\end{lemma}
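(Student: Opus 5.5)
The argument runs parallel to the proof of Lemma~\ref{fd:limit}, with the local minima of Lemma~\ref{lc:coherent-minima} as the directed system. For $\gamma\in H$ write $M_\gamma=N_\gamma(Z\cap L_\gamma)$ and $p_\gamma$ for its collapsed parameter tuple. For $\gamma\leq\gamma'$ in $H$, we choose a common transitive source level realizing both indices and let $j_{\gamma\gamma'}:M_\gamma\to M_{\gamma'}$ be the map induced by the hull inclusion. By Lemma~\ref{lc:coherent-minima} these maps do not depend on the source, they satisfy $j_{\gamma\gamma'}(t(\vec a,p_\gamma))=t(\vec a,p_{\gamma'})$, and they compose. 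Since finite collections of indices have common sources, the coherence identities $j_{\gamma'\gamma''}\circ j_{\gamma\gamma'}=j_{\gamma\gamma''}$ follow. Moreover, $(**)$ and the choice of closure points give $j_{\gamma\gamma'}\restr L_\gamma=\mathrm{id}$ and $j_{\gamma\gamma'}(\gamma)=\gamma'$. Because $H$ is stationary, hence unbounded in $\beta$, we have a directed system indexed by an unbounded subset of $\beta$ in which each $M_\gamma$ is a level of $L$ modelling $T_0$.

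Next we prove well-foundedness exactly as in Lemma~\ref{fd:limit}. Suppose there were a descending $E$-sequence $[\gamma_{n+1},x_{n+1}]\,E\,[\gamma_n,x_n]$. Since $\cf^V(\beta)>\omega$, we can pick $\gamma\in H$ above all representative and comparison indices, which are countably many. This bounded collection has a common source level above it, by hypothesis, and that source realizes every map in play. Then $j_{\gamma_{n+1}\gamma}(x_{n+1})\in j_{\gamma_n\gamma}(x_n)$ for all $n$, contradicting Foundation in the transitive $M_\gamma$. Łoś-type elementarity of the limit maps holds by induction on formulas, so the transitive collapse $N$ models $T_0$. By the consequence of condensation recorded before Lemma~\ref{fd:minimality}, $N$ is a level $L_\eta$ of $L$. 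The same computation as in Lemma~\ref{fd:limit} gives $j_\gamma(\gamma)=\sup H=\beta$. Since $j_\gamma$ fixes ordinals below $\gamma$ and each $j_{\gamma\gamma'}$ maps $L_\gamma$ identically into $L_{\gamma'}$, we get $j_\gamma(L_\gamma)=\bigcup_{\gamma\in H}L_\gamma=L_\beta$.

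For the distinguished predicate, let $A_\gamma\in M_\gamma$ be the local predicate on $L_\gamma$ (for instance $Z\cap L_\gamma$ or the collapsed witness). We assume that $j_{\gamma\gamma'}(A_\gamma)=A_{\gamma'}$ and that $A_{\gamma'}\cap L_\gamma=A_\gamma$, which is the agreement hypothesis. Put $A=j_\gamma(A_\gamma)$; this does not depend on $\gamma$ by coherence. For $x\in L_\beta$ choose $\gamma$ with $x\in L_\gamma$. Then $x\in A$ iff $j_\gamma(x)\in j_\gamma(A_\gamma)$ iff $x\in A_\gamma$, since $j_\gamma(x)=x$. Hence $A=\bigcup_\gamma A_\gamma$. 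Finally, $A\in N=L_\eta\subseteq L$, so the limit predicate is constructible even though the system itself lives in $V$.

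The main obstacle is verifying that the maps supplied by Lemma~\ref{lc:coherent-minima} actually satisfy the hypotheses of Lemma~\ref{fd:limit}. The delicate points are the identity on $L_\gamma$ and the equation $j_{\gamma\gamma'}(\gamma)=\gamma'$ when the hulls are taken in expansions by external predicates. We would settle this using the club of agreement between expanded and unexpanded hulls established after Lemma~\ref{lc:coherent-minima}, together with $(**)$ applied inside a common source level. We would also need to confirm that the comparison stage used in the well-foundedness argument can be taken with a single common source. This is exactly where the hypothesis on bounded collections is used.
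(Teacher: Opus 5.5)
Your proposal is correct and follows essentially the paper's proof. Both use the equivalence and membership relations of Lemma~\ref{fd:limit}, coherence from Lemma~\ref{lc:coherent-minima}, and a single index above the countably many representatives and comparison indices (available since $\operatorname{cf}^V(\beta)>\omega$). Both then use pointwise fixing of $L_\gamma$ to identify the common image with $L_\beta$ and the limit predicate with the union, and condensation to get $N=L_\eta$. The only difference is cosmetic: you obtain the contradiction from Foundation in $M_\gamma$ via coherence, as in Lemma~\ref{fd:limit}, whereas the paper maps the descending sequence into one larger common transitive source level; either works.
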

\begin{proof}
Use the equivalence relation and membership relation from
Lemma~\ref{fd:limit}. Coherence follows from
Lemma~\ref{lc:coherent-minima}. A descending $\omega$-sequence
uses countably many representatives and comparison indices.
These are bounded in $\beta$; by hypothesis one larger transitive
source level realizes all of them. Their images would form a
descending membership sequence there. Thus the direct limit is
well-founded.

Let $j_\gamma:N_\gamma\to N$ be the maps to its transitive
collapse, and let $U$ be the common image of the distinguished
$L_\gamma$. Each $j_\gamma$ fixes $L_\gamma$ pointwise, so
\[
 U=\bigcup_{\gamma\in H}L_\gamma=L_\beta.
\]
If $X$ is the common image of the distinguished predicates,
the same calculation gives
\[
 X\cap L_\gamma=X_\gamma,
 \qquad X=\bigcup_{\gamma\in H}X_\gamma.
\]
The reduct of $N$ to the language of set theory satisfies $T_0$.
Hence $N=L_\eta$ for some $\eta$, and in particular $X\in L$.
\end{proof}

\begin{theorem}\label{lc:strong-amenability}
Assume \eqref{lc:R} and \eqref{lc:club-in-V}.  Let $\beta$
be an ordinal to which \eqref{lc:R} applies, with $P\cap\beta$
stationary.  Let $\psi(A,U)$ be any constructible
$\Pi^1_{\xi_\beta+1}$ statement true over $L_\beta$.
There is a constructible club of closure points $A_\psi\subseteq\beta$ such that,
on every domain
\[
 B\subseteq P\cap\TrV(P)\cap A_\psi\cap\beta
\]
where $\psi|^\beta_\delta(A\cap L_\delta,U\cap L_\delta)$
is false, the entries
\begin{equation}
 K_\delta=\Phi_\delta
  \bigl(Q_\delta(A\cap L_\delta)\cap A_\psi\bigr)
 \qquad(\delta\in B)
 \label{lc:strong-amenability-entries}
\end{equation}
form a strongly amenable sequence.  In the argument of $\Phi_\delta$,
$A_\psi$ means $A_\psi\cap\delta$. The domain $B$ belongs to $V$;
its local codes, parameters, witnesses and traces belong to $L$.
Finitely many further clubs in $\delta$ computed in $L$ may be
intersected with $K_\delta$ to ensure the restriction identities.
\end{theorem}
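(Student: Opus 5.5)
We follow the template of Lemma~\ref{fd:robust} and Lemma~\ref{fd:coherent}, now with correct levels in place of the minimal $T_0$-levels. First we fix $A_\psi$. Work in $L$ with $\psi(A,U)=\forall\vec X\,\chi(\vec X,A,U)$ true over $L_\beta$, $\chi\in\Sigma^1_{\xi_\beta}$. Take $A_\psi$ to be the intersection of three clubs:
\begin{itemize}
\item[(a)] the club $C$ of closure points;
\item[(b)] the club of Lemma~\ref{lc:strong-composition} for a source level containing $\psi$, $A$, $U$ and $\mathcal B_{\beta,\xi_\beta}$, on which $\psi|^\beta_\delta=\psi_\delta$ has the form $\forall\vec X\,\chi_\delta$ with $\chi_\delta\in\Sigma^1_{\xi_\delta}$ and $f^\delta_{\xi_\delta}(\gamma)=\xi_\gamma$;
\item[(c)] the club of Lemma~\ref{lc:transported-tests}, so that the property of being a $\Pi^1_{\xi_\delta}$-club, with its closure implication, is expressed by a uniformly restricted formula.
\end{itemize}
At $\delta\in B$ the local statement fails. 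So $X_\delta=X_\delta(A\cap L_\delta)$, $N_\delta$ and $Q_\delta=Q_\delta(A\cap L_\delta)$ are defined. Since $\delta$ is $\Pi^1_{\xi_\delta}$-indescribable in $L$ (by \eqref{lc:R}, as $\delta\in\TrV(P)$), Lemma~\ref{lc:exact-trace} makes $Q_\delta\cap A_\psi$ a constructible $\Pi^1_{\xi_\delta}$-club. Then \eqref{lc:club-in-V} supplies $K_\delta=\Phi_\delta(Q_\delta\cap A_\psi)$. We intersect $K_\delta$ with the $V$-club on which the expanded hulls agree with the plain ones, and with the finitely many restriction clubs.

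Next, assume toward a contradiction that some club $D\subseteq\beta$ has unbounded capture set $J=\operatorname{Cap}(D,\vec K)$. Let
\[
 G=P\cap\acc(D)\cap A_\psi .
\]
Since $P\cap\beta$ is stationary and $\cf^V(\beta)>\omega$, $G$ is stationary in $\beta$. For $\gamma\in G$ and any $\delta\in J$ with $\delta>\gamma$, we have $D\cap\delta\subseteq K_\delta$, so $\gamma\in P\cap\acc(K_\delta)$. Property \eqref{lc:club-in-V} then gives $\gamma\in Q_\delta$. Lemma~\ref{lc:coherent-minima} now identifies the collapse of the hull at $\gamma$ inside $N_\delta$ with $N_\gamma$, carrying $X_\delta$ to $X_\gamma$. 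For $\gamma<\gamma'$ in $G$, choosing a common $\delta\in J$ above both yields maps $j_{\gamma\gamma'}:N_\gamma\to N_{\gamma'}$. These are independent of $\delta$ and coherent, and satisfy $j_{\gamma\gamma'}\restr L_\gamma=\mathrm{id}$ and $j_{\gamma\gamma'}(\gamma)=\gamma'$.

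The hypotheses of Lemma~\ref{lc:constructible-limit} hold: any bounded set of indices is realized inside one $N_\delta$ with $\delta\in J$, using unboundedness of $J$. So the direct limit is a level $L_\eta$ with $j_\gamma(L_\gamma)=L_\beta$, and the image of the witnesses is a constructible $\vec X$ with $\vec X\cap L_\gamma=X_\gamma$ for $\gamma\in G$. The parameters $A$ and $U$ are recovered as unions of their restrictions, by \eqref{lc:hereditary-code}.

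It remains to show that $L_\beta\models\neg\chi(\vec X,A,U)$, which contradicts the truth of $\psi$. This is the main obstacle, because $\neg\chi$ is a $\Pi^1_{\xi_\beta}$ statement and does not transfer by first-order elementarity alone. My plan is to argue by contradiction using \eqref{lc:R}. Suppose $\chi(\vec X,A,U)$ holds over $L_\beta$. This is a $\Sigma^1_{\xi_\beta}$ fact, so it implies a true $\Pi^1_{\eta}$ instance for some $\eta\le\xi_\beta$, as in the proof of Lemma~\ref{lc:exact-trace}. Its failure set is then in $I^L_{\xi_\beta}(\beta)$, so by \eqref{lc:R} it meets $P\cap\beta$ only in a nonstationary set. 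Shrinking $G$ to a stationary subset, we obtain a $\gamma\in G$ inside the constructible restriction clubs at which $L_\gamma\models\chi_\gamma(\vec X\cap L_\gamma,\dots)$. But $\vec X\cap L_\gamma=X_\gamma$, and correctness of $N_\gamma$ through $\xi_\gamma$ gives $L_\gamma\models\neg\chi_\gamma(X_\gamma,\dots)$.

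The delicate points are these:
\begin{itemize}
\item the composition identity \eqref{lc:strong-composition-equation}, which ensures that the restriction of $\chi$ to $\gamma$ is exactly $\chi_\gamma$;
\item checking that the required restriction clubs may be taken constructible, hence intersect $G$ stationarily.
\end{itemize}
Together they yield the contradiction. Hence every capture set is bounded, and $\vec K$ is strongly amenable.
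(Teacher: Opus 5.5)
Your proposal is correct and follows essentially the same route as the paper. You choose a constructible $A_\psi$ with the composition identities, use \eqref{lc:club-in-V} to place each point of $P\cap\acc(D)$ in the correctness traces $Q_\delta$, and take the coherent direct limit via Lemmas~\ref{lc:coherent-minima} and~\ref{lc:constructible-limit} to obtain a constructible $X$ with $X\cap L_\gamma=X_\gamma(A\cap L_\gamma)$; applying \eqref{lc:R} then yields a contradiction with the least-witness property at some reflecting $\gamma$. The only difference is cosmetic: your detour of proving $L_\beta\models\neg\chi(\vec X,A,U)$ by contradiction is unnecessary. The paper simply notes that $L_\beta\models\chi(X,A,U)$ already follows from $\psi$, since $X\in L$, and reflects this directly.
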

\begin{proof}
Choose a constructible club $A_\psi\subseteq\beta$, closed
under the hull bounds for the code, parameters and canonical
bijections of $\psi$, such that
\[
 f^\beta_{\xi_\beta}(\delta)=\xi_\delta,
 \qquad (\psi|^\beta_\delta)|^\delta_\gamma
                    =\psi|^\beta_\gamma
\]
at the relevant regular cardinals. Replace $A_\psi$ by
$\acc(A_\psi)$. Lemma~\ref{lc:exact-trace} makes
$Q_\delta(A\cap L_\delta)\cap A_\psi$ a constructible
$\Pi^1_{\xi_\delta}$-club in $\delta$.

Suppose $F$ is club in $\beta$ and
$J=\{\delta\in B:F\cap\delta\subseteq K_\delta\}$ is unbounded.
Let $H=P\cap\beta\cap\acc(F)$, which is stationary. For
$\gamma\in H$ and $\delta\in J\setminus(\gamma+1)$,
\[
 \gamma\in P\cap\acc(K_\delta)
 \ \Longrightarrow\
 \gamma\in Q_\delta(A\cap L_\delta)\cap A_\psi
\]
by \eqref{lc:club-in-V}. Lemmas~\ref{lc:coherent-minima}
and~\ref{lc:constructible-limit} yield $X\in L$ with
\begin{equation}
 X\cap L_\gamma=X_\gamma(A\cap L_\gamma)
                  \qquad(\gamma\in H).
 \label{lc:witness-restrictions}
\end{equation}
For a tuple of predicates the intersection is taken componentwise.

Write $\psi=\forall\vec X\,\chi(\vec X,A,U)$, where
$\chi\in\Sigma^1_{\xi_\beta}$. Since $X\in L$ and
$L_\beta\models\psi$, we have $L_\beta\models\chi(X,A,U)$.
Moreover
\[
 \beta\setminus\operatorname{Ref}_\beta(\chi(X,A,U))
                         \in I^L_{\xi_\beta}(\beta).
\]
Indeed, at a successor choose existential witnesses and use the
lower universal matrix; at a limit choose a true constituent of
lower complexity and include its restriction club. At level zero
use the first-order reflection club.
By \eqref{lc:R}, some $\gamma\in H$ reflects this instance.
Composition and \eqref{lc:witness-restrictions} then give
\[
 \begin{aligned}
 L_\gamma&\models
    \chi_\gamma(X_\gamma(A\cap L_\gamma),A\cap L_\gamma,U\cap L_\gamma),\\
 L_\gamma&\models
    \neg\chi_\gamma(X_\gamma(A\cap L_\gamma),A\cap L_\gamma,U\cap L_\gamma),
 \end{aligned}
\]
where the second line is the defining property of the least
witness. This contradiction proves bounded capture. Intersecting
an entry with finitely many additional clubs in its index ordinal in $V$ preserves
bounded capture.
\end{proof}

\section{The consistency lower bound: a transfinite induction in \texorpdfstring{$L$}{L}}
\label{lb:section}

In this section every indescribability ideal, canonical function,
infinitary formula and higher club is computed in $L$. Stationarity without a superscript, clubs used for capture, and
the ideals $\A_\kappa$ and $\SA_\kappa$ are computed in $V$.

\begin{theorem}\label{lb:relative-bound}
Suppose that $\kappa$ is regular and uncountable in $V$, that
$\NAC(\kappa)$ holds, and that there is a set $T\in L\cap\mathcal P(\kappa)$
such that
\begin{equation}\label{lb:T}
 T\notin\A_\kappa^V
 \quad\text{and}\quad
 L\models\text{``$T$ is not an ineffable subset of $\kappa$''}.
\end{equation}
For every $1\leq\xi<(\kappa^+)^L$, put
\[
 W_\xi=\{\beta<\kappa:
 L\models\text{``$\beta$ is
 $\Pi^1_{f^\kappa_\xi(\beta)}$-indescribable''}\}.
\]
Then
\[
 I^L_\xi(\kappa)\subseteq\SA_\kappa^V
 \qquad\text{and}\qquad
 \kappa\setminus W_\xi\in\SA_\kappa^V.
\]
Consequently $L\models\mathsf H(\kappa)$.
\end{theorem}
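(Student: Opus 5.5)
The plan is a transfinite induction on $\xi\in[1,(\kappa^+)^L)$. The inductive hypothesis at stage $\xi$ is that there is an $\SA_\kappa^V$-dual set $P_\xi\subseteq C$ of $V$-regular, $L$-inaccessible cardinals which satisfies \eqref{lc:R} (at every relevant $\beta\le\kappa$) and \eqref{lc:club-in-V} with $\xi_\beta=f^\kappa_\xi(\beta)$. Once this is available, the two conclusions follow quickly.

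\begin{itemize}
\item For $I^L_\xi(\kappa)\subseteq\SA_\kappa^V$: given $X\in I^L_\xi(\kappa)$, \eqref{lc:R} at $\beta=\kappa$ makes $P_\xi\cap X$ nonstationary. Since $\kappa\setminus P_\xi\in\SA_\kappa^V$ and $\NS_\kappa\subseteq\SA_\kappa$ by Lemma~\ref{fd:SA}, we get $X\in\SA_\kappa^V$.
\item For $\kappa\setminus W_\xi\in\SA_\kappa^V$: replace $P_\xi$ by $P_\xi\cap\TrV(P_\xi)$, which is still dual by Lemma~\ref{fd:SA}. The remark after \eqref{lc:club-in-V} then shows that every $\beta$ in this set is $\Pi^1_{\xi_\beta}$-indescribable in $L$, so the set is contained in $W_\xi$.
\item Finally, $\kappa\notin\A_\kappa\supseteq\SA_\kappa$ makes each $I^L_\xi(\kappa)$ proper. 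Together with inaccessibility in $L$ (Lemma~\ref{fd:SA}), this gives $L\models\mathsf H(\kappa)$.
\end{itemize}

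\emph{Base case.} Hypothesis \eqref{lb:T} and Lemma~\ref{fd:dominator} give a dominator $d$. Lemmas~\ref{fd:coherent} and~\ref{fd:relativeone} then produce $P_1$ with $(R_1)$ and \eqref{lc:club-in-V} at level $1$. For \eqref{lc:club-in-V}, take $\Phi_\delta(Q)=F_Q$; only $V$-clubs are needed.

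\emph{Successor step $\xi\to\xi+1$.} I would prove $(R_{\xi+1})$ by contradiction. Suppose $X\in I^L_{\xi_\beta+1}(\beta)$ and $P_\xi\cap\beta\cap X$ is stationary, witnessed by a true $\Pi^1_{\xi_\beta+1}$ statement $\psi$ that fails at every point of $X$. Theorem~\ref{lc:strong-amenability} applied to $B=X\cap P_\xi\cap\TrV(P_\xi)\cap A_\psi$ yields a strongly amenable sequence on $B$.

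\begin{itemize}
\item At $\beta=\kappa$, this gives $X\in\SA_\kappa^V$ directly.
\item At $\beta<\kappa$, we need more: a stationary capture by a single club. To get it, refine the set further. Let $P_{\xi+1}$ consist of the points $\beta$ where the stationarity of $P_\xi\cap\beta$ and the failure of $(R_{\xi+1})$ at $\beta$ are both excluded. The bad $\beta$'s are exactly indices carrying such a strongly amenable (hence amenable) local sequence. Gluing these local sequences with Lemma~\ref{fd:SA}-style normality should show that the set of bad $\beta$ lies in $\SA_\kappa^V$.
\end{itemize}

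For \eqref{lc:club-in-V} at level $\xi+1$, use \cite[Theorem~5.17]{CodyHigher}: the complement of a $\Pi^1_{\xi_\delta+1}$-club lies in $I^L_{\xi_\delta+1}(\delta)$. The just-proved $(R_{\xi+1})$ at $\delta$ then gives a $V$-club $F$ with $P\cap F\subseteq Q$, and $\Phi_\delta(Q)=F$ works as in Lemma~\ref{fd:relativeone}.

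\emph{Limit step $\xi$.} The intended approach uses diagonal intersection.

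\begin{itemize}
\item Write $\Delta$ for the diagonal intersection of $\langle P_{b_{\kappa,\xi}(i)}:i<\kappa\rangle$. Normality of $\SA_\kappa$ (Lemma~\ref{fd:SA}) makes $P_\xi=\Delta\cap C$ dual.
\item At $\beta\in P_\xi$, the canonical functions identify $F^\kappa_\xi(\beta)$ with the earlier indices used. Hence $\xi_\beta$ is a limit of the $\xi_{\zeta,\beta}$ for these $\zeta$.
\item The ideal $I^L_{\xi_\beta}(\beta)$ is generated by the $I^L_\eta(\beta)$ for $\eta<\xi_\beta$ through the limit-conjunction reflection. So $(R_\xi)$ follows from the lower instances, using Fodor on the component index in the style of the normality proof.
\item \eqref{lc:club-in-V} follows by diagonally intersecting the $V$-clubs for the lower components, using the limit clause of higher clubs.
\end{itemize}

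\emph{Main obstacle.} I expect the hardest part to be the successor step at $\beta<\kappa$: turning local strong amenability into an $\SA_\kappa^V$-null bad set. This requires uniform constructible choice of witnesses across $\beta$ and coherence via Lemma~\ref{lc:coherent-minima}. My first attempt would be to rerun the direct-limit argument of Theorem~\ref{lc:strong-amenability} globally, with $\beta$ varying, using the expanded-hull club remark to handle the external predicate $P_\xi$.
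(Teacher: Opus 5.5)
Your reduction of the theorem to inductive data $(R_\xi)$, $(C_\xi)$ on an $\SA_\kappa^V$-dual set matches the paper, and so do your base case and limit step. The gap is the successor step, where you expected it, and the mechanism you propose there does not work.

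Theorem~\ref{lc:strong-amenability} only tells you that the stationary set $X\cap P_\xi\cap\TrV(P_\xi)\cap A_\psi\cap\beta$ carries a strongly amenable sequence indexed by ordinals $\delta<\beta$, i.e.\ that it lies in $\SA_\beta$. That is not yet a contradiction. Normality of $\SA_\kappa$ also cannot turn it into an $\SA_\kappa^V$-witness for the set of bad $\beta$. Such a witness needs, for each bad $\beta$, one club $C_\beta\subseteq\beta$ such that capture by a club of $\kappa$ at unboundedly many $\beta$ is impossible. Lemma~\ref{fd:SA} only merges sequences that are already indexed by points of $\kappa$, whereas your local sequences live below each $\beta$. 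Rerunning the direct limit over the bad $\beta$ gives at best a global $\Pi^1_{\xi+1}$ witness whose failure set meets $P_\xi$ stationarily below many $\beta$, which gives no evident contradiction. At $\beta=\kappa$ you likewise get only $X\in\SA_\kappa^V$, not the nonstationarity of $P_{\xi+1}\cap X$. That nonstationarity is what your own hypothesis $(R_{\xi+1})$ at $\kappa$ demands, and the paper uses it in Lemma~\ref{lb:global-lemma}.

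The missing idea is a single club $D_\xi$ in $V$ that is almost contained in every entry $K_\delta$, uniformly in $\delta$. With bounds $u_\delta$ such that $(D_\xi\cap\delta)\setminus(u_\delta+1)\subseteq K_\delta$, Fodor makes $u_\delta$ constant on a stationary $Y'$. Then $(D_\xi\cap\beta)\setminus(u+1)$ captures $Y'$, contradicting strong amenability at every relevant $\beta$, including $\kappa$ (Lemma~\ref{lb:successor}).

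Building $D_\xi$ requires the nonineffable $T\notin\A_\kappa^V$ at every rank, not only at rank one. Property (O) from rank one concerns constructible clubs only and gives no containment in higher clubs. The paper proceeds in three steps:
\begin{enumerate}
\item It enumerates the constructible $\Pi^1_{\xi_\beta}$-clubs in the $\diamondsuit^*$ families $\mathcal D_\beta$. It replaces them by $V$-clubs $B^\beta_i$ with $P_\xi\cap B^\beta_i\subseteq Q^\beta_i$, and uses non-amenability of their diagonal intersections on $T$ to obtain $E_\xi$ satisfying $(G_\xi)$ (Lemma~\ref{lb:global-lemma}).
\item A coherent-minima direct-limit argument against $(G_\xi)$ shows that the failures of the local version $(L_\xi)$ form an $\SA_\kappa^V$ set (Lemma~\ref{lb:bad-ordinals}).
\item The clubs $\Phi_\delta(Q)$ are not arbitrary clubs supplied by reflection. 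They are the specific sets $\operatorname{cl}_\delta(F_\xi\cap\delta\cap S_\xi\cap Q)\cap B_\delta(Q)$ tied to $E_\xi$. Hence $D_\xi=\operatorname{cl}_\kappa(E_\xi\cap P_\xi)$ satisfies $D_\xi\cap\delta\subseteq^*\Phi_\delta(Q_\delta)$ (Lemma~\ref{lb:one-bound}).
\end{enumerate}
Your choices $\Phi_\delta(Q)=F_Q$ and $\Phi_\delta(Q)=F$ satisfy $(C_\xi)$ but destroy exactly this uniformity.
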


The auxiliary sets and pseudointersections are chosen in $V$.
Replacing $T$ by $T\cap\operatorname{Reg}^L(\kappa)$ preserves
\eqref{lb:T}: the deleted set is contained in the complement of the
$V$-regular cardinals and is therefore strongly amenable. We make this
replacement before applying the $\diamondsuit^*$ construction on $T$.

Write $\xi_\beta=f^\kappa_\xi(\beta)$ for $\beta<\kappa$ and
$\xi_\kappa=\xi$. We shall choose
$P_\xi\in(\SA_\kappa^V)^*$. By Lemma~\ref{fd:SA}, this is
a proper normal filter, since
$\SA_\kappa^V\subseteq\A_\kappa^V$ and $\NAC(\kappa)$.

By Lemma~\ref{fd:SA}, if $P\in(\SA_\kappa^V)^*$ and $D$ is
club in $\kappa$ in $V$, then
$P\cap\TrV(P)\cap D\in(\SA_\kappa^V)^*$.

Lemmas~\ref{fd:dominator}, \ref{fd:robust} and~\ref{fd:coherent} give a constructible diamond-star sequence on $T$, an eventual
dominator for the constructible functions, and a club $D_0\subseteq\kappa$ together
with a dual set $P_1$ of regular cardinals of $V$ that are inaccessible
in $L$. The set may be chosen so that
\begin{equation}\label{lb:clubs-in-L}
 D_0\cap\gamma\subseteq^* B
 \quad\text{whenever }\gamma\in P_1
 \text{ and }B\text{ is club in }\gamma\text{ in }L.
\end{equation}
Lemma~\ref{fd:relativeone} gives the relative reflection statement
at rank one.

At rank $\xi\geq1$ we maintain a constructible club of closure points
$C_\xi$ and a dual set $P_\xi\subseteq P_1\cap W_\xi\cap C_\xi$.
The relative reflection assertion is
\begin{equation}\label{lb:relative}
 \begin{gathered}
 \beta\in C_\xi,\quad
 \beta\text{ regular in }V\text{ and inaccessible in }L,\\
 P_\xi\cap\beta\text{ stationary},\quad
 X\in I^L_{\xi_\beta}(\beta)
 \quad\Longrightarrow\quad
 P_\xi\cap\beta\cap X\text{ is nonstationary}.
 \end{gathered}
 \tag{$R_\xi$}
\end{equation}
We also maintain its global version at $\beta=\kappa$, with rank $\xi$.
Since $P_\xi$ is dual and nonstationary sets are strongly amenable,
this global version gives
\begin{equation}\label{lb:ideal-from-relative}
 I^L_\xi(\kappa)\subseteq\SA_\kappa^V.
\end{equation}
In particular, the constructible ideal is proper, so $\kappa$ is
$\Pi^1_\xi$-indescribable in $L$.

The other inductive data are clubs $F_\xi,E_\xi\subseteq\kappa$ in $V$
and an auxiliary set $S_\xi\subseteq\kappa$ in $V$, with
\begin{equation}\label{lb:auxiliary-data}
 P_\xi\subseteq S_\xi\cap\operatorname{acc}(F_\xi),
 \qquad E_\xi\subseteq F_\xi\cap D_0.
\end{equation}
The auxiliary sets define the following clubs in $V$. If
$\delta\in P_\xi\cap\TrV(P_\xi)$ and $Q$ is a constructible
$\Pi^1_{\xi_\delta}$-club in $\delta$, set
\begin{equation}\label{lb:club-in-V}
 \Phi^\xi_\delta(Q)=
 \operatorname{cl}_\delta(F_\xi\cap\delta\cap S_\xi\cap Q)
       \cap B_\delta(Q),
\end{equation}
where $B_\delta(Q)$ is a club in $\delta$ in $L$ on which the required
closure and restriction identities hold. We require that
$\Phi^\xi_\delta(Q)$ be club in $\delta$ in $V$ and that
\begin{equation}\label{lb:recognition}
 \gamma\in P_\xi\cap\operatorname{acc}(\Phi^\xi_\delta(Q))
       \quad\Longrightarrow\quad\gamma\in Q.
\end{equation}
Finitely many further clubs in $\delta$ computed in $L$ may be
intersected with $B_\delta(Q)$.

The club $E_\xi$ will satisfy
\begin{equation}\label{lb:global-containment}
 E_\xi\cap P_\xi\subseteq^* C
 \quad\text{for every constructible $\Pi^1_\xi$-club $C$ in $\kappa$}.
 \tag{$G_\xi$}
\end{equation}
After a further dual refinement, its local counterpart will hold at
every $\gamma\in P_{\xi+1}$:
\begin{equation}\label{lb:local-containment}
 E_\xi\cap\gamma\cap P_\xi\subseteq^* C
 \quad\text{for every constructible $\Pi^1_{\xi_\gamma}$-club $C$ in $\gamma$}.
 \tag{$L_\xi$}
\end{equation}
The sets and clubs are decreasing modulo bounded sets. To ensure that
the bounds needed at an ordinal $\beta$ lie below $\beta$, refine by
a club in $V$ as follows. Suppose $u_\zeta^P,u_\zeta^E<\kappa$ bound
the previous almost-inclusions used at a stage $\xi\geq\kappa$.
Using $b=b_{\kappa,\xi}$, close the new set under
\[
 h_\xi(i)=\max\{u_{b(i)}^P,u_{b(i)}^E\}+1,\qquad
 J_\xi=\{\beta<\kappa:\beta\text{ is limit and }h_\xi''\beta\subseteq\beta\}.
\]
For $\beta\in J_\xi$ and $i<\beta$, both bounds at the index $b(i)$
lie below $\beta$. If the refined set is stationary in $\beta$, then
$\beta\in J_\xi$ by closedness. The club $J_\xi$ need not belong to
$L$ and is not used in \eqref{lb:clubs-in-L}. When $\xi<\kappa$,
work above $\xi$ and a common bound for the fewer than $\kappa$ requirements.

For each earlier rank, choose in $L$ the $<_L$-least formula
codes, witnesses for the restriction identities, and
clubs with the required closure properties. The canonical
bijections are fixed independently. If the relevant ideal is
improper, use fixed default values for the higher-club test;
such a test is used only after properness has been proved.
For each $\theta<(\kappa^+)^L$, Replacement in $L$ gives the
sequence of these choices through $\theta$. Its diagonal
closure club is therefore constructible. The sets $P_\eta$,
clubs $E_\eta$ and their inclusion bounds, which may belong
only to $V$, are not used to make these choices.
We prove $R_\xi$ using the canonical bijections and restriction
identities; only after \eqref{lb:ideal-from-relative} establishes
indescribability do we use the corresponding formula from
\cite[Theorem~5.17]{CodyHigher}.

We also refine the constructible clubs of closure points by the limit points of the
constructible inaccessibles below $\kappa$. This is a constructible club, since
Lemma~\ref{fd:robust} gives weak compactness in $L$. Consequently every
ordinal in such a club that is regular in $L$ is inaccessible in $L$.
When verifying a higher club's closure in $L$, we will consider all
these regular cardinals of $L$, including ones that are singular in $V$.

For the induction put $P_0=\kappa$ and $E_0=D_0$. The relative assertion
$R_0$ is the preservation of a constructible club witnessing nonstationarity,
and $L_0$ is~\eqref{lb:clubs-in-L}. At rank one take $F_1=D_0$ and
$S_1=\kappa$. To verify~\eqref{lb:club-in-V}, $R_1$ makes
$D_0\cap Q$ cofinal. If $\gamma\in P_1\cap\acc(D_0\cap Q)$ and
$Q\cap\gamma$ were nonstationary in $L$, a constructible club
disjoint from it would contradict~\eqref{lb:clubs-in-L}. The constructible closure
factor then gives $\gamma\in Q$.

\begin{lemma}\label{lb:global-lemma}
Once $R_\xi$ and the rank-$\xi$ auxiliary data have been constructed, there is
a club $E_\xi\subseteq F_\xi\cap D_0$ in $V$ satisfying $G_\xi$.
\end{lemma}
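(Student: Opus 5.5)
We construct $E_\xi$ by imitating the rank-one argument of Lemma~\ref{fd:dominator}: we build a strongly amenable sequence whose entries are the constructible traces $Q_\delta$ of Section~\ref{lc:coherent-witnesses}, and then use $\kappa\notin\SA_\kappa^V$ to obtain one club $E$ captured on a dual set. The first step uses \cite[Theorem~5.17(2)]{CodyHigher}, applied in $L$ via Lemma~\ref{lc:transported-tests}. This is legitimate because \eqref{lb:ideal-from-relative} already gives $\Pi^1_\xi$-indescribability of $\kappa$ in $L$. It yields the $\Pi^1_\xi$ formula $\chi^\kappa_\xi$ defining higher clubs, together with its restriction club $D_\xi$. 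For each constructible $\Pi^1_\xi$-club $C\subseteq\kappa$, the set $\kappa\setminus C$ lies in $I^L_\xi(\kappa)$. Hence, by $R_\xi$ at $\kappa$, the set $P_\xi\setminus C$ is nonstationary. That alone gives only a club $F_C$ with $P_\xi\cap F_C\subseteq C$. The task is to replace the $C$-dependent club $F_C$ by a single club, up to bounded error.

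For the uniformization I would consider the $\Pi^1_{\xi+1}$ statement
\[
\psi(A,U)\;=\;\forall X\,\bigl[\chi^\kappa_\xi(X)\rightarrow \text{``}X\text{ is a higher club''}\bigr],
\]
or more concretely the statement that a given constructible parameter $A$ lies in every higher club from its rank onward. The instance to use is the diagonal: the $<_L$-least constructible higher club $X_\delta$ at $\delta$ that fails to almost contain a prescribed object. At each $\delta\in P_\xi\cap\TrV(P_\xi)$, consider the local higher clubs of rank $\xi_\delta$. Using the correct levels $N_\delta$ and the trace $Q_\delta=Q_{\xi_\delta}(N_\delta)$ of Lemma~\ref{lc:exact-trace}, together with the recognition clubs $\Phi^\xi_\delta$ from \eqref{lb:club-in-V}, define the entries
\[
K_\delta=\Phi^\xi_\delta\bigl(Q_\delta\cap A_\psi\bigr)\cap E^{\mathrm{loc}}_\delta,
\]
where $E^{\mathrm{loc}}_\delta$ is the diagonal intersection of the constructible $\Pi^1_{\xi_\delta}$-clubs appearing in $N_\delta$ (these number at most $\delta$ in $L$). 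This is the analogue of $B_\beta=\triangle_i E^\beta_i$ in Lemma~\ref{fd:dominator}. Theorem~\ref{lc:strong-amenability}, with Lemma~\ref{lc:coherent-minima} applied to the least-candidate test given by the restricted formula $\chi^\kappa_\xi$, shows that the entries built from least counterexamples form a strongly amenable sequence on the set of $\delta$ where the local statement fails. By normality of $\SA_\kappa^V$ (Lemma~\ref{fd:SA}), that set therefore lies in $\SA^V_\kappa$.

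On the remaining dual set, the sequence $\langle K_\delta\rangle$ cannot be strongly amenable, because the complement of a dual set is small and $\kappa\notin\SA^V_\kappa$. So some club $E'$ is captured on an unbounded set, and after a normality argument on a stationary set $U$. Put $E_\xi=E'\cap F_\xi\cap D_0\cap\acc(E')$. To check $G_\xi$, fix a constructible $\Pi^1_\xi$-club $C$. Its restrictions $C\cap\delta$ are, on a constructible club, local higher clubs by \eqref{lc:uniform-club-test}, and on a stationary subset of $U$ they belong to $N_\delta$. Fodor's lemma then gives a fixed index $i$ with $C\cap\delta$ equal to the $i$-th constituent of the diagonal intersection. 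Exactly as in Lemma~\ref{fd:dominator}, every $\gamma\in E_\xi\cap P_\xi$ above $i$ lies in $K_\delta$ for some larger captured $\delta$. The recognition property \eqref{lb:recognition} then yields $\gamma\in C\cap\delta$, which gives $E_\xi\cap P_\xi\subseteq^*C$.

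The main obstacle is showing that $C\cap\delta$ really appears among the local clubs enumerated at stationarily many captured $\delta$. This is a diamond-type guessing statement at rank $\xi$, whereas the paper's $\diamondsuit^*$ is available only on the nonineffable set $T$. I would first try to obtain it from the coherent minima directly: if $C\cap\delta\notin N_\delta$ on a stationary set, take least such constructible higher clubs, and Lemma~\ref{lc:constructible-limit} assembles them into a constructible $\Pi^1_\xi$-club. Reflecting via $R_\xi$ produces a contradiction at some $\gamma$, in the same way as in Theorem~\ref{lc:strong-amenability}.
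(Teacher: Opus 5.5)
There is a genuine gap, and it sits at the step you flag as the main obstacle; your proposed repair does not close it. The coherent-minima argument of Theorem~\ref{lc:strong-amenability} yields a contradiction only because the local failures are restrictions of a \emph{true} global $\Pi^1_{\xi+1}$ statement. The assertion ``$C\cap\delta\in N_\delta$'' has no true global counterpart: a level of size $\kappa$ does not contain all $(\kappa^+)^L$ constructible $\Pi^1_\xi$-clubs. So assembling least non-guessed clubs into a constructible $\Pi^1_\xi$-club contradicts nothing. In Lemma~\ref{fd:dominator} the contradiction came from the list $\langle a_\beta\rangle$ with no stationary homogeneous set, i.e.\ from nonineffability, and $\diamondsuit^*$-type guessing does fail on sets ineffable in $L$. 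So the guessing has to come from the hypothesis on $T$. Your $\psi$ is also never pinned down, and as written it holds everywhere, so the appeal to Theorem~\ref{lc:strong-amenability} and the factor $\Phi^\xi_\delta(Q_\delta\cap A_\psi)$ do no work: recognition only gives $\gamma\in Q_\delta$, not $\gamma\in C$.

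The missing idea is that you never need capture on a dual set. Put the sequence on $T\cap P_\xi\cap\TrV(P_\xi)$. This set is still $\A_\kappa^V$-positive, because $T\notin\A_\kappa^V$ and the discarded part lies in $\SA_\kappa^V\subseteq\A_\kappa^V$. Then use the $\diamondsuit^*(T)$ families $\mathcal D_\beta$: each constructible $C$ is guessed at every point of $T$ in a constructible club, hence on a stationary part of any $V$-stationary capture set inside $T$. This also repairs your capture step. Failure of strong amenability gives only an unbounded capture set, and no normality argument makes it stationary; non-amenability on this $\A_\kappa^V$-positive domain gives the stationary set that Fodor's lemma needs.

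Second, your entries are not clubs. Higher clubs need not be closed (the regular cardinals below a weakly compact cardinal form a $\Pi^1_1$-club), so the diagonal intersection $E^{\mathrm{loc}}_\delta$ is not club in $\delta$, and $\langle K_\delta\rangle$ is not a $C$-sequence. Each guessed higher club $Q^\beta_i\in\mathcal D_\beta$ must first be converted into a club $B^\beta_i\subseteq\beta$ in $V$ with $P_\xi\cap B^\beta_i\subseteq Q^\beta_i$. This is where $R_\xi$ at $\beta$ (available because $P_\xi\cap\beta$ is stationary) and \cite[Theorem~5.17(1)]{CodyHigher} enter, and it is why $G_\xi$ controls only $E_\xi\cap P_\xi$.

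Take the entries $\triangle_{i<\beta}B^\beta_i$ and a club $D$ captured on a stationary $H\subseteq T\cap P_\xi\cap\TrV(P_\xi)$. Your own verification for a fixed $C$ then goes through: global $R_\xi$ and the uniform restriction club make $C\cap\beta$ a local higher club, the guessing club puts it in $\mathcal D_\beta$ with index below $\beta$, and Fodor's lemma fixes that index. This gives $(D\cap P_\xi)\setminus(u+1)\subseteq C$, and $E_\xi=D\cap F_\xi\cap D_0$ works.
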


\begin{proof}
For $\beta\in T\cap P_\xi\cap\TrV(P_\xi)$, enumerate
\[
 \{Q\in\mathcal D_\beta:Q\text{ is a constructible }
                  \Pi^1_{\xi_\beta}\text{-club in }\beta\}
       =\{Q_i^\beta:i<\mu_\beta\},\qquad \mu_\beta\leq\beta.
\]
By $R_\xi$ and \cite[Theorem~5.17(1)]{CodyHigher}, choose
clubs $B_i^\beta\subseteq\beta$ in $V$ with
$P_\xi\cap B_i^\beta\subseteq Q_i^\beta$. Complete the
family with $B_i^\beta=\beta$, and put
\[
 R_\beta=\triangle_{i<\beta}B_i^\beta,
 \qquad (R_\beta\cap P_\xi)\setminus(i+1)\subseteq Q_i^\beta.
\]
The domain remains $\A_\kappa^V$-positive. Hence a club $D$
and a stationary set $H$ satisfy
\[
 H\subseteq T\cap P_\xi\cap\TrV(P_\xi),\qquad
 D\cap\beta\subseteq R_\beta\quad(\beta\in H).
\]

Fix a constructible $\Pi^1_\xi$-club $C\subseteq\kappa$.
Let $\chi_\xi(C)$ be its uniform defining formula. Then
\[
 X_C=\kappa\setminus\operatorname{Ref}_\kappa(\chi_\xi(C))
                      \in I_\xi^L(\kappa).
\]
By $R_\xi$, $P_\xi\cap X_C$ is nonstationary. Intersect $H$
with a club avoiding this set, the uniform restriction club of
\cite[Theorem~5.17(2)]{CodyHigher}, and the
$\diamondsuit^*$ guessing club for $C$. On the resulting
stationary set $H_C$, there is $u_\beta<\beta$ such that
\[
 (D\cap\beta\cap P_\xi)\setminus(u_\beta+1)\subseteq C\cap\beta.
\]
Fodor's lemma gives stationary $H'_C\subseteq H_C$ and a fixed
$u<\kappa$ with $u_\beta=u$. Since $H'_C$ is unbounded,
$(D\cap P_\xi)\setminus(u+1)\subseteq C$. The same $D$
works for every $C$. Set $E_\xi=D\cap F_\xi\cap D_0$.
\end{proof}

\begin{lemma}\label{lb:bad-ordinals}
Among the ordinals in $P_\xi\cap\TrV(P_\xi)$, the failures
of $L_\xi$ form a member of $\SA_\kappa^V$.
\end{lemma}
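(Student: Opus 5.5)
The plan is to follow the template of Lemma~\ref{fd:coherent} and Theorem~\ref{lc:strong-amenability}: pick canonical least counterexamples at the bad ordinals, show their hulls cohere, and show that unbounded capture would assemble them into a global constructible counterexample that $G_\xi$ and $R_\xi$ rule out.

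Let $\mathcal B$ be the set of $\gamma\in P_\xi\cap\TrV(P_\xi)$ at which $L_\xi$ fails. So for each $\gamma\in\mathcal B$ there is a constructible $\Pi^1_{\xi_\gamma}$-club $C\subseteq\gamma$ with $E_\xi\cap\gamma\cap P_\xi\nsubseteq^* C$. We may restrict attention to $\gamma$ in the constructible closure club $C_\xi$ and in $\acc(E_\xi)$; the remaining indices are nonstationary, so they lie in $\SA_\kappa^V$ by Lemma~\ref{fd:SA}.

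At $\gamma\in\mathcal B$, let $C_\gamma$ be the $<_L$-least constructible subset of $\gamma$ satisfying the uniform higher-club test $(\chi^\kappa_\xi)|^\kappa_\gamma$ of Lemma~\ref{lc:transported-tests}, with the closure requirement built in, together with the first-order requirement in the external predicates $E_\xi$ and $P_\xi$:
\[
 \forall u<\gamma\ \exists\alpha\in(E_\xi\cap P_\xi\cap\gamma)\setminus(u+1)\ \ \alpha\notin C.
\]
Its quantifiers range over $\gamma$. Put $p_\gamma=(\gamma,\xi_\gamma,C_\gamma,\dots)$ and let $N_\gamma$ be the least level correct through $\xi_\gamma$ for $p_\gamma$. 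For the entry at $\gamma$ take
\[
 K_\gamma=\Phi^\xi_\gamma\bigl(Q_{\xi_\gamma}(N_\gamma)\bigr)\cap H^{\mathrm{ext}}_\gamma .
\]
Here $H^{\mathrm{ext}}_\gamma$ is the club in $V$ on which hulls in the expansion $(N_\gamma,E_\xi\cap\gamma,P_\xi\cap\gamma)$ agree with the plain hulls, as in the remark after Lemma~\ref{lc:coherent-minima}. By \eqref{lb:recognition}, every $\alpha\in P_\xi\cap\acc(K_\gamma)$ lies in $Q_{\xi_\gamma}(N_\gamma)$. By the extended form of Lemma~\ref{lc:coherent-minima}, which covers external predicates and the higher-club test, the collapse at such $\alpha$ sends $C_\gamma$ to $C_\alpha=C_\gamma\cap\alpha$ and $N_\gamma$ to $N_\alpha$. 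In particular $\alpha\in\mathcal B$, and the induced maps cohere.

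Now suppose a club $F$ captures unboundedly many $\gamma\in\mathcal B$. Let $H=P_\xi\cap\acc(F)\cap\acc(E_\xi)$; it is stationary because $P_\xi$ is dual. Lemma~\ref{lc:constructible-limit} then gives a constructible $C\subseteq\kappa$ with $C\cap\alpha=C_\alpha$ for every $\alpha\in H\cap\mathcal B$, and the direct limit $L_\eta$ is a level of $L$. Elementarity of that limit, combined with correctness of the collapsed levels, yields two facts:
\begin{itemize}
\item the limit level satisfies the higher-club test for $C$ at rank $\xi$;
\item $C$ misses cofinally many points of $E_\xi\cap P_\xi$, since the external requirement passes to the union.
\end{itemize}

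The main obstacle is upgrading ``the limit level thinks $C$ is a $\Pi^1_\xi$-club'' to ``$C$ is a $\Pi^1_\xi$-club in $L$'', because the limit level need not be correct. I would argue as in the closure part of Lemma~\ref{lc:exact-trace}. The positivity requirements $C\in(I_\eta^L(\kappa))^+$ for $\eta<\xi$ follow from $R_\xi$ at the lower ranks: $C\supseteq C_\alpha$ on the stationary set $H$, and each $C_\alpha$ is a genuine higher club at $\alpha$. For the closure implication, a failure would be a true statement of complexity at most $\xi$ over $L_\kappa$. By $R_\xi$, that statement reflects to some $\alpha\in H$, where it contradicts the fact that $C_\alpha$ is a genuine $\Pi^1_{\xi_\alpha}$-club, using the composition identity \eqref{lc:strong-composition-equation}. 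Once $C$ is a constructible $\Pi^1_\xi$-club, $G_\xi$ gives $E_\xi\cap P_\xi\subseteq^*C$, which contradicts the second fact above. Hence capture is bounded, the sequence $\langle K_\gamma:\gamma\in\mathcal B\rangle$ witnesses $\mathcal B\in\SA_\kappa^V$, and $\kappa$-completeness of $\SA_\kappa^V$ absorbs the discarded indices.
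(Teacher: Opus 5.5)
Your proposal is correct and follows the paper's proof essentially step for step. The shared steps are: the $<_L$-least counterexample $C_\gamma$ chosen with full closure and the external first-order requirement; entries $\Phi^\xi_\gamma$ of the correctness trace, intersected with the expanded-hull agreement club; a constructible direct limit $C$ with $C\cap\alpha=C_\alpha$; lower positivity of $C$ from reflection at the lower ranks into $H$ combined with positivity of $C_\alpha$; and the final contradiction with $G_\xi$.

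The only divergence is the closure step, where no reflection is needed. For regular $\rho$ with $C\cap\rho$ positive, any $\alpha\in H$ above $\rho$ has $C\cap\rho=C_\alpha\cap\rho$, so the full closure of $C_\alpha$ gives $\rho\in C_\alpha\subseteq C$. One small fix: put $\TrV(P_\xi)$ into $H$, as the paper does, so that these $\alpha$ genuinely lie in $\mathcal B$.
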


\begin{proof}
By the argument at the end of Section~\ref{lc:section}, intersecting
a higher club with a club in $L$ witnessing its closure gives a higher
club whose closure implication holds at every relevant regular cardinal.
Passing to this subset preserves failure of almost inclusion.

Let
\[
 U=\{\delta\in P_\xi\cap\TrV(P_\xi):
       \exists C\in L\,[C\text{ is a }\Pi^1_{\xi_\delta}
       \text{-club in }\delta\ \wedge\
       \sup((E_\xi\cap\delta\cap P_\xi)\setminus C)=\delta]\}.
\]
For $\delta\in U$, choose the $<_L$-least such $C_\delta$
with the closure property just specified, and the least correct
level containing its parameters. Expand this level by
$E_\xi\cap\delta$ and $P_\xi\cap\delta$. Let $A_\delta$ be
the club in $\delta$ in $V$ where the two hull filtrations agree, and let
$Q_\delta$ be its constructible correctness trace. Use
\[
 K_\delta=\Phi^\xi_\delta(Q_\delta)\cap A_\delta.
\]
Suppose $J=\operatorname{Cap}(D,\vec K)$ is unbounded.
The set $H=P_\xi\cap\TrV(P_\xi)\cap\acc(D)$ is stationary.
For $\alpha\in H$ choose $\delta\in J\setminus(\alpha+1)$.
Then
\[
 \alpha\in\acc(K_\delta)\cap P_\xi
 \ \Longrightarrow\ \alpha\in Q_\delta\cap A_\delta.
\]
The uniform higher-club test is correct at this restriction.
The other defining condition is the first-order assertion
\[
 \forall u<\delta\ \exists\gamma\in(u,\delta)\
        (\gamma\in E_\xi\cap P_\xi\ \wedge\ \gamma\notin C_\delta),
\]
which transfers in the expanded hull. Lemmas
\ref{lc:coherent-minima} and~\ref{lc:constructible-limit} give
$C\in L\cap\mathcal P(\kappa)$ such that
\[
 C\cap\alpha=C_\alpha\qquad(\alpha\in H).
\]

We verify the required lower positivities of $C$ in $L$. If
$\xi=\eta+1$, let $L_\kappa\models\varphi\in\Pi^1_\eta$.
By $R_\eta$ and $P_\xi\subseteq^*P_\eta$, choose
$\alpha\in H\cap\operatorname{Ref}_\kappa(\varphi)$ in
the clubs ensuring the composition identities. Since
$C_\alpha\in(I_{f^\kappa_\eta(\alpha)}^L(\alpha))^+$,
there is a further reflector $\rho\in C_\alpha$ in the
composition club. Hence
\[
 L_\rho\models(\varphi|^\kappa_\alpha)|^\alpha_\rho
                    =\varphi|^\kappa_\rho,
 \qquad \rho\in C.
\]
Thus $C\in(I_\eta^L(\kappa))^+$. If $\xi$ is a limit,
the same argument applies separately to each $\eta<\xi$,
choosing $\alpha$ with $\eta\in F^\kappa_\xi(\alpha)$.
The level-zero case uses first-order reflection in $L$.

For closure, let $\rho$ be regular in $L$ in the global
restriction club, and suppose $C\cap\rho$ has the lower
positivities required by Definition~5.16 of \cite{CodyHigher}.
This $\rho$ is inaccessible in $L$. Choose $\alpha\in H$
above $\rho$. Lemma~\ref{lc:strong-composition} identifies
the local indices at $\rho$, and
\[
 C\cap\rho=C_\alpha\cap\rho
 \ \Longrightarrow\ \rho\in C_\alpha\subseteq C
\]
by the closure of $C_\alpha$. Thus $C$ is a constructible
$\Pi^1_\xi$-club. But for every $\alpha\in H$,
\[
 \sup((E_\xi\cap\alpha\cap P_\xi)\setminus C)=\alpha,
\]
so $\sup((E_\xi\cap P_\xi)\setminus C)=\kappa$, contrary to
$G_\xi$. Therefore $\vec K$ is strongly amenable and
$U\in\SA_\kappa^V$.
\end{proof}

Remove the ordinals where $L_\xi$ fails and refine to a dual preliminary set
\[
 P_{\xi+1}\subseteq
 P_\xi\cap\TrV(P_\xi)
 \cap\operatorname{acc}(E_\xi\cap P_\xi)
\]
on which $L_\xi$ and the required restriction identities hold. Put
$D_\xi=\operatorname{cl}_\kappa(E_\xi\cap P_\xi)$.

\begin{lemma}\label{lb:one-bound}
For every $\delta\in P_{\xi+1}$ and every correctness trace $Q_\delta$
used in the rank-$\xi$ construction,
\[
 D_\xi\cap\delta\subseteq^*\Phi^\xi_\delta(Q_\delta).
\]
The assertion also holds when the entry includes finitely many
additional clubs in $\delta$ computed in $L$ ensuring the restriction identities.
For the application of Theorem~\ref{lc:strong-amenability}, replace
$Q_\delta$ by $Q_\delta\cap A_\psi$ before applying $\Phi^\xi_\delta$,
as in \eqref{lc:strong-amenability-entries}.
\end{lemma}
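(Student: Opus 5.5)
We unfold the definition of $\Phi^\xi_\delta(Q_\delta)$ in \eqref{lb:club-in-V} and check the inclusion separately against each factor:
\[
 \operatorname{cl}_\delta(F_\xi\cap\delta\cap S_\xi\cap Q_\delta)
 \quad\text{and}\quad B_\delta(Q_\delta),
\]
together with any further constructible clubs (including $A_\psi\cap\delta$). Since $D_\xi\cap\delta=\operatorname{cl}_\delta(E_\xi\cap\delta\cap P_\xi)$ when $\delta\in\acc(E_\xi\cap P_\xi)$, and the closure operator is monotone, it suffices to show
\[
 (E_\xi\cap\delta\cap P_\xi)\setminus(u+1)\subseteq F_\xi\cap S_\xi\cap Q_\delta\cap B
\]
for some $u<\delta$. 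Here $B$ is the intersection of $B_\delta(Q_\delta)$ with the finitely many extra constructible clubs. Taking closures then gives $D_\xi\cap\delta\setminus(u+1)\subseteq\operatorname{cl}_\delta(F_\xi\cap\delta\cap S_\xi\cap Q_\delta)$. The accumulation points above $u$ land in the club $B$ automatically, because $B$ is closed and contains a tail of $E_\xi\cap P_\xi\cap\delta$.

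The individual inclusions go as follows.

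\begin{itemize}
\item By \eqref{lb:auxiliary-data}, $E_\xi\subseteq F_\xi$ and $P_\xi\subseteq S_\xi$, so membership in $F_\xi\cap S_\xi$ is free.
\item For $B$: it is a finite intersection of clubs in $\delta$ lying in $L$, hence a club in $L$. Every club in $L$ is a constructible $\Pi^1_{\xi_\delta}$-club, since clubs in $L$ belong to the $\Pi^1_{\xi_\delta}$-club filter when the ideal is proper, and $R_\xi$ at $\delta$ gives properness. So $L_\xi$ at $\delta\in P_{\xi+1}$ yields $E_\xi\cap\delta\cap P_\xi\subseteq^*B$. Alternatively, use $E_\xi\subseteq D_0$ together with \eqref{lb:clubs-in-L}.
\item For $Q_\delta$ (or $Q_\delta\cap A_\psi$): Lemma~\ref{lc:exact-trace} makes the correctness trace a constructible $\Pi^1_{\xi_\delta}$-club in $\delta$. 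In the $A_\psi$ variant, we use that intersecting a higher club with a constructible club again gives a higher club, with the closure adjustment described at the end of Section~\ref{lc:section}. Then $L_\xi$ at $\delta$ gives the almost inclusion directly.
\end{itemize}

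Taking $u$ to be the maximum of the finitely many bounds finishes the argument.

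The main obstacle is justifying that $Q_\delta$ really is a constructible $\Pi^1_{\xi_\delta}$-club to which $L_\xi$ applies. Lemma~\ref{lc:exact-trace} needs $\delta$ to be $\Pi^1_{\xi_\delta}$-indescribable in $L$. This follows from $R_\xi$ at $\delta$, because $P_\xi\cap\delta$ is stationary when $\delta\in\TrV(P_\xi)$ and $P_{\xi+1}\subseteq\TrV(P_\xi)$. The lemma also needs the correct level to be of size $\delta$ in $L$; we take it from Lemma~\ref{lc:correct-levels}.

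A second point is that the entries may also be intersected with the $V$-club $A_\delta$ where expanded hulls agree. That club is not constructible, so $L_\xi$ does not cover it. Under the statement's wording we only need the finitely many constructible clubs, so we restrict the claim to those, as stated.
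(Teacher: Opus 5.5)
Your proposal is correct and follows the paper's proof: $L_\xi$ at $\delta$ puts a tail of $E_\xi\cap\delta\cap P_\xi$ into $F_\xi\cap S_\xi\cap Q_\delta$ (the first two factors being free by \eqref{lb:auxiliary-data}), and taking closures gives the inclusion into the first factor of $\Phi^\xi_\delta(Q_\delta)$. For the constructible factors, the paper uses exactly your alternative, $D_\xi\subseteq D_0$ together with \eqref{lb:clubs-in-L}; your primary route through $L_\xi$, treating clubs in $L$ as $\Pi^1_{\xi_\delta}$-clubs because $\delta\in W_\xi$, is equally valid.
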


\begin{proof}
By $L_\xi$, choose $u<\delta$ such that
\[
 (E_\xi\cap\delta\cap P_\xi)\setminus(u+1)
             \subseteq F_\xi\cap\delta\cap S_\xi\cap Q_\delta.
\]
Taking closure above $u+1$ gives
\[
 (D_\xi\cap\delta)\setminus(u+1)
   \subseteq\operatorname{cl}_\delta
              (F_\xi\cap\delta\cap S_\xi\cap Q_\delta).
\]
Moreover, $E_\xi\subseteq D_0$ and $D_0$ is closed, so
$D_\xi\subseteq D_0$. For the finite intersection $B$ of all
clubs in $\delta$ computed in $L$ used as factors, \eqref{lb:clubs-in-L} gives
$v<\delta$ with $(D_0\cap\delta)\setminus(v+1)\subseteq B$.
Thus $\max\{u,v\}+1$ bounds the required inclusion.
\end{proof}

\begin{lemma}\label{lb:successor}
The rank-$\xi$ data produce $R_{\xi+1}$ and, after dual refinement,
the set and auxiliary data defining the clubs in $V$ at rank $\xi+1$.
\end{lemma}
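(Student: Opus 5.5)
We derive $R_{\xi+1}$ from the strong amenability criterion of Theorem~\ref{lc:strong-amenability}, applied to the rank-$\xi$ data. Then we build the rank-$(\xi+1)$ auxiliary data from $D_\xi$ and $P_{\xi+1}$.

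\emph{Step 1: reduce $R_{\xi+1}$ to a strong amenability statement.} Fix $\beta\in C_{\xi+1}$ that is regular in $V$, inaccessible in $L$, with $P_{\xi+1}\cap\beta$ stationary (or $\beta=\kappa$). Fix $X\in I^L_{\xi_\beta+1}(\beta)$; note that $f^\kappa_{\xi+1}(\beta)=f^\kappa_\xi(\beta)+1$ on the closure club. In $L$ choose a true $\Pi^1_{\xi_\beta+1}$ statement $\psi(A,U)$ over $L_\beta$ whose reflection set misses $X$. The set $X\cap P_{\xi+1}\cap\beta$ is then contained in the set of $\delta$ where $\psi|^\beta_\delta$ fails. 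Since $P_{\xi+1}\subseteq P_\xi\cap\TrV(P_\xi)$, the hypotheses $R_\xi$ and $C_\xi$ (the latter via \eqref{lb:recognition}) apply with $P=P_\xi$. So Theorem~\ref{lc:strong-amenability} gives a constructible club $A_\psi$ such that the entries $K_\delta=\Phi^\xi_\delta(Q_\delta(A\cap L_\delta)\cap A_\psi)$ form a strongly amenable sequence on $B=X\cap P_{\xi+1}\cap A_\psi\cap\beta$.

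\emph{Step 2: stationarity of $B$ contradicts strong amenability.} Suppose $B$ is stationary in $\beta$. Lemma~\ref{lb:one-bound} gives $u_\delta<\delta$ with $(D_\xi\cap\delta)\setminus(u_\delta+1)\subseteq K_\delta$ for each $\delta\in B$. Fodor's lemma then yields a stationary $B'\subseteq B$ with constant bound $u$. Because $P_{\xi+1}\subseteq\acc(E_\xi\cap P_\xi)$ and $P_{\xi+1}\cap\beta$ is stationary, $D_\xi\cap\beta$ is club in $\beta$. The club $(D_\xi\cap\beta)\setminus(u+1)$ therefore captures every index in the unbounded set $B'$, a contradiction. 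Hence $B$ is nonstationary, and since $A_\psi$ is club this proves $R_{\xi+1}$ at $\beta$. At $\beta=\kappa$ the same argument gives the global version, and \eqref{lb:ideal-from-relative} follows at rank $\xi+1$. If $I^L_{\xi_\beta+1}$ is a priori improper, take $X=\beta$; this shows properness, so $P_{\xi+1}$ may be refined into $W_{\xi+1}$.

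\emph{Step 3: auxiliary data at rank $\xi+1$.} Put $F_{\xi+1}=D_\xi$ and $S_{\xi+1}=P_{\xi+1}$, and refine $P_{\xi+1}$ by $\acc(D_\xi)$, by $C_{\xi+1}$, and by $J_{\xi+1}$. Only the refinement by $C_{\xi+1}$ is constructible; the others are clubs in $V$. Lemma~\ref{fd:SA} keeps the refined set dual. For a constructible $\Pi^1_{\xi_\delta+1}$-club $Q$ at $\delta\in P_{\xi+1}\cap\TrV(P_{\xi+1})$, define $\Phi^{\xi+1}_\delta(Q)$ by \eqref{lb:club-in-V}. We must show two things: that this club is unbounded in $\delta$, and that it satisfies \eqref{lb:recognition}.

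For unboundedness, $Q$ is $I^L_{\xi_\delta}$-positive. By $R_\xi$ at $\delta$, and because $Q$ meets every $\Pi^1_{\xi_\delta}$-club (Theorem~5.17(1) of \cite{CodyHigher}), $Q\cap P_\xi\cap D_\xi$ is stationary in $\delta$. Running the same argument with $R_{\xi+1}$ should give stationarity of $Q\cap P_{\xi+1}$.

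For recognition, let $\gamma\in P_{\xi+1}\cap\acc(\Phi)$. Then $Q\cap\gamma\cap S_{\xi+1}$ is unbounded in $\gamma$. We must show that $Q\cap\gamma$ is $I^L_{\xi_\gamma}(\gamma)$-positive; the closure implication of $Q$ on the constructible factor $B_\delta(Q)$ then gives $\gamma\in Q$.

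The main obstacle is this positivity argument, since unboundedness alone is too weak. The plan is to argue by contradiction. If $Q\cap\gamma\in I^L_{\xi_\gamma}(\gamma)$, Theorem~5.17(1) of \cite{CodyHigher} yields a constructible $\Pi^1_{\xi_\gamma}$-club disjoint from $Q\cap\gamma$. By $L_\xi$ at $\gamma$, $E_\xi\cap\gamma\cap P_\xi$ is almost contained in that club. This contradicts cofinality of $Q\cap D_\xi\cap P_{\xi+1}$ in $\gamma$, because $D_\xi$ is the closure of $E_\xi\cap P_\xi$ and members of $P_{\xi+1}$ are limits of $E_\xi\cap P_\xi$. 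For this reason we use $\operatorname{cl}(D_\xi\cap\gamma\cap P_{\xi+1}\cap Q)$, with the set $S_{\xi+1}=P_{\xi+1}\subseteq P_\xi$ placed inside the closure. The remaining work is to check that this closure still lands inside the higher club.
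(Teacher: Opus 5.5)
Your proposal is correct and follows the paper's proof. You obtain $R_{\xi+1}$ from Theorem~\ref{lc:strong-amenability}, Lemma~\ref{lb:one-bound} and Fodor's lemma exactly as the paper does, and you get recognition at rank $\xi+1$ from $L_\xi$ and \cite[Theorem~5.17(1)]{CodyHigher}. The one deviation is that you take $F_{\xi+1}=D_\xi$ and $S_{\xi+1}=P_{\xi+1}$ instead of the paper's $E_\xi$ and $P_\xi$, and this is harmless. The precise reason your cofinal set $Q\cap\gamma\cap D_\xi\cap P_{\xi+1}$ is eventually inside $C$ is that $D_\xi\subseteq E_\xi$ (because $E_\xi$ is closed) and $P_{\xi+1}\subseteq P_\xi$; your stated reason, that members of $P_{\xi+1}$ are limits of $E_\xi\cap P_\xi$, is not what is needed. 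The step you leave as remaining work is immediate: $\gamma\in\acc(\Phi^{\xi+1}_\delta(Q))\subseteq B_\delta(Q)$, so the closure implication of $Q$ applies at $\gamma$.
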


\begin{proof}
Let $\beta$ satisfy the hypotheses of $R_{\xi+1}$, and suppose
$Y=P_{\xi+1}\cap\beta\cap X$ is stationary for some
$X\in I^L_{(\xi+1)_\beta}(\beta)$. On the restriction club,
$(\xi+1)_\beta=\xi_\beta+1$ by
\cite[Lemma~2.10]{CodyHigher}. Choose a true
$\psi\in\Pi^1_{\xi_\beta+1}$ with
$X\cap\operatorname{Ref}_\beta(\psi)=\varnothing$.
Theorem~\ref{lc:strong-amenability} supplies a strongly
amenable sequence $\vec K$ on $Y\cap A_\psi$.
By Lemma~\ref{lb:one-bound}, choose $u_\delta<\delta$ with
\[
 (D_\xi\cap\delta)\setminus(u_\delta+1)\subseteq K_\delta
                    \qquad(\delta\in Y\cap A_\psi).
\]
Fodor's lemma gives stationary $Y'\subseteq Y\cap A_\psi$
and a constant value $u$. Since the preliminary set lies in
$\acc(D_\xi)$ and reflects stationarily to $\beta$,
$D_\xi\cap\beta$ is club. Therefore
\[
 Y'\subseteq\operatorname{Cap}
       ((D_\xi\cap\beta)\setminus(u+1),\vec K),
\]
contradicting strong amenability. This also proves the case
$\beta=\kappa$.

Intersect the preliminary set with its stationary trace. If
the local ideal at a remaining $\beta$ were improper, take
$X=\beta$ in the just established relative assertion; its
conclusion would contradict stationarity below $\beta$.
Hence the refined set lies in $W_{\xi+1}$.

Put $F_{\xi+1}=E_\xi$, $S_{\xi+1}=P_\xi$, and
\[
 \Phi^{\xi+1}_\delta(Q)
   =\operatorname{cl}_\delta(E_\xi\cap\delta\cap P_\xi\cap Q)
                 \cap B_\delta(Q).
\]
The new relative assertion makes the set inside the closure
cofinal in $\delta$. If $\gamma\in P_{\xi+1}\cap
\acc(\Phi^{\xi+1}_\delta(Q))$, then
$E_\xi\cap\gamma\cap P_\xi\cap Q$ is unbounded in $\gamma$.
Were $Q\cap\gamma\in I^L_{\xi_\gamma}(\gamma)$, a constructible
$\Pi^1_{\xi_\gamma}$-club $C$ disjoint from $Q\cap\gamma$
would exist by \cite[Theorem~5.17(1)]{CodyHigher}. But $L_\xi$
gives
\[
 E_\xi\cap\gamma\cap P_\xi\subseteq^* C,
\]
contradicting that unboundedness. Thus $Q\cap\gamma$ is
positive at the predecessor index. The factor $B_\delta(Q)$
ensures the index identity and the closure condition for $Q$,
so $\gamma\in Q$.
\end{proof}

Let $\lambda<(\kappa^+)^L$ be a nonzero limit, and suppose that all
lower stages have been constructed. Define the auxiliary diagonal sets
\[
 P^\Delta=\{\alpha<\kappa:
      (\forall\zeta\in F^\kappa_\lambda(\alpha))\ \alpha\in P_\zeta\},
 \qquad
 E^\Delta=\{\alpha<\kappa:
      (\forall\zeta\in F^\kappa_\lambda(\alpha))\ \alpha\in E_\zeta\}.
\]
If $\lambda<\kappa$, these are intersections of fewer than $\kappa$
sets, and we work above $\lambda$. If $\lambda\geq\kappa$, they are
normal diagonals indexed by the constructible bijection $b_{\kappa,\lambda}$.
Thus $P^\Delta$ is dual and $E^\Delta$ contains a club in $\kappa$ in $V$.
Choose such a club $F_\lambda\subseteq E^\Delta$, and refine the auxiliary
set by $\operatorname{acc}(F_\lambda)$ and the required constructible
club of closure points.

The restriction identities used at this stage are as follows. On the constructible club
$C_\lambda$, the rank $\lambda_\beta$ is a limit; for every
$\zeta\in F^\kappa_\lambda(\beta)$,
\[
 F^\kappa_\zeta(\beta)
      =F^\kappa_\lambda(\beta)\cap\zeta,
 \qquad
 f^\kappa_\zeta(\beta)=\pi_{\lambda,\beta}(\zeta),
\]
where $\pi_{\lambda,\beta}$ is the order collapse of
$F^\kappa_\lambda(\beta)$. This set of indices is successor closed,
and $\beta$ belongs to each relevant lower club of closure points. For
regular $\beta\in C_\lambda$, there is also a constructible club
$A^\lambda_\beta\subseteq\beta$ on which
\begin{equation}\label{lb:local-indices}
 \pi_{\lambda,\beta}[F^\kappa_\lambda(\alpha)]
     =F^\beta_{\lambda_\beta}(\alpha),
 \qquad
 f^\kappa_\lambda(\alpha)
     =f^\beta_{\lambda_\beta}(\alpha).
\end{equation}
These are \cite[Lemmas~2.7--2.9, Proposition~2.11,
and Corollary~2.12]{CodyHigher}, together with closure and diagonal
refinements. Write $P$ for the preliminary refined auxiliary set.

\begin{lemma}\label{lb:limit-relative}
The set $P$ satisfies $R_\lambda$ locally and globally.
\end{lemma}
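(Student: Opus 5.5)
Fix $\beta$ satisfying the hypotheses of $R_\lambda$: $\beta\in C_\lambda$ (or $\beta=\kappa$), regular in $V$, inaccessible in $L$, with $P\cap\beta$ stationary. Fix $X\in I^L_{\lambda_\beta}(\beta)$ and suppose, toward a contradiction, that $Y=P\cap\beta\cap X$ is stationary. The plan is to reduce the limit case to the earlier ranks $\zeta<\lambda$, where $R_\zeta$ is already available. No new strong-amenability argument should be needed, since a limit-rank $\Pi^1_{\lambda_\beta}$ formula is a conjunction of lower-rank formulas.

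First choose in $L$ a true $\varphi=\bigwedge_{\eta<\lambda_\beta}\varphi_\eta\in\Pi^1_{\lambda_\beta}$ over $L_\beta$ with $X\cap\operatorname{Ref}_\beta(\varphi)=\varnothing$. For $\delta\in Y$ in the restriction club, the restriction $\varphi|^\beta_\delta$ is a conjunction indexed by $f^\beta_{\lambda_\beta}(\delta)$ and fails in $L_\delta$. So some constituent index $\eta_\delta<f^\beta_{\lambda_\beta}(\delta)$ fails there. Using \eqref{lb:local-indices}, $\pi_{\lambda,\beta}^{-1}$ transports this to an actual rank $\zeta_\delta\in F^\kappa_\lambda(\delta)$. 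This rank is coded by an ordinal below $\delta$: it is $i<\delta$ with $b(i)=\zeta_\delta$, for the relevant canonical bijection at $\beta$ when $\beta=\kappa$, and via $b_{\beta,\lambda_\beta}$ and the club $A^\lambda_\beta$ when $\beta<\kappa$. Fodor's lemma in $V$ then gives a stationary $Y'\subseteq Y$ on which this code is constant. Hence there is a single lower index $\eta<\lambda_\beta$, with corresponding $\zeta<\lambda$ and $\zeta_\beta=\eta$, such that the constituent $\varphi_\eta$ fails at every $\delta\in Y'$. The limit case $\lambda<\kappa$ is the same but easier, since the index set has size below $\kappa$.

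Next apply $R_\zeta$. The constituent $\varphi_\eta$ is a true $\Pi^1_\eta$ formula at $\beta$, so $X'=\beta\setminus\operatorname{Ref}_\beta(\varphi_\eta)\in I^L_{\zeta_\beta}(\beta)$. The composition identities \cite[Lemmas~2.7--2.10]{CodyHigher}, intersected with $A^\lambda_\beta$, make its restriction at $\delta$ the $\eta_\delta$-th constituent of $\varphi|^\beta_\delta$. Therefore $Y'\subseteq X'$ modulo a nonstationary set. Since $\zeta\in F^\kappa_\lambda(\delta)$ for $\delta\in Y'\subseteq P^\Delta$, the set $Y'$ is contained in $P_\zeta$. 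The hypotheses of $R_\zeta$ at $\beta$ hold: $\beta$ lies in $C_\zeta$ because it is in the diagonal of the closure clubs, and $P_\zeta\cap\beta\supseteq Y'$ is stationary. So $R_\zeta$ yields that $P_\zeta\cap\beta\cap X'$ is nonstationary, contradicting stationarity of $Y'$. For $\beta=\kappa$ the same argument uses the global $R_\zeta$, together with the membership $\zeta\in F^\kappa_\lambda(\delta)$ for the fixed $\zeta$ on a tail of $\delta$.

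The main obstacle is the bookkeeping in the pressing-down step. One must check that the failing index at $\delta$, computed inside $L$ via the $<_L$-least bijections, corresponds coherently to a single global rank $\zeta<\lambda$, and that $\delta$ lies in the closure and composition clubs of that $\zeta$. I would handle this by diagonally intersecting the constructible clubs $C_\zeta$ along $b_{\kappa,\lambda}$ and using \eqref{lb:local-indices} and Lemma~\ref{lc:canonical-collapse}, so that $f^\kappa_\zeta(\delta)=\pi_{\lambda,\delta}(\zeta)$ and the constituent identities hold uniformly on $Y'$.
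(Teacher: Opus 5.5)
Your proof is correct and is essentially the paper's argument in contrapositive form. The paper applies $R_\zeta$ to every constituent index $\zeta\in F^\kappa_\lambda(\beta)$ and diagonally intersects the resulting clubs $B_\zeta$ along $b_{\kappa,\lambda}$ (together with $A^\lambda_\beta$) to get an explicit club missing $P\cap X$; you instead press down on the index of a failing constituent and apply $R_\zeta$ once, with the same ingredients ($P\subseteq P^\Delta$, $\beta\in C_\zeta$, and \eqref{lb:local-indices}), which is the Fodor form of the same normality argument.
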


\begin{proof}
Suppose $P\cap\beta$ is stationary and
$X\in I^L_{\lambda_\beta}(\beta)$. Choose
\[
 L_\beta\models\psi=\bigwedge_{\tau<\lambda_\beta}\psi_\tau,
 \qquad\psi_\tau\in\Pi^1_\tau,
 \qquad X\cap\operatorname{Ref}_\beta(\psi)=\varnothing.
\]
For $\zeta\in F^\kappa_\lambda(\beta)$, put
\[
 \tau_\zeta=\pi_{\lambda,\beta}(\zeta),\qquad
 X_{\tau_\zeta}=\beta\setminus
               \operatorname{Ref}_\beta(\psi_{\tau_\zeta}).
\]
Then $X_{\tau_\zeta}\in I^L_{\tau_\zeta}(\beta)$.
The set $P\cap\beta$ is eventually contained in $P_\zeta$,
so $P_\zeta\cap\beta$ is stationary. The relative $R_\zeta$
gives a club $B_\zeta\subseteq\beta$ such that
\[
 B_\zeta\cap P_\zeta\cap X_{\tau_\zeta}=\varnothing.
\]
When $\lambda\geq\kappa$, write $b=b_{\kappa,\lambda}$ and let
\[
 B=A^\lambda_\beta\cap\triangle_{i<\beta}B_{b(i)}
\]
intersected with the clubs of definition for this formula.
For $\zeta\in F^\kappa_\lambda(\beta)$,
$i=b^{-1}(\zeta)<\beta$ and
$P^\Delta\cap(i,\beta)\subseteq P_\zeta$.
When $\lambda<\kappa$, use
$B=A^\lambda_\beta\cap\bigcap_{\zeta<\lambda}B_\zeta$
and work above $\lambda$.

Let $\alpha\in P\cap B$. For every
$\tau\in F^\beta_{\lambda_\beta}(\alpha)$,
\eqref{lb:local-indices} gives $\zeta\in F^\kappa_\lambda(\alpha)$
with $\pi_{\lambda,\beta}(\zeta)=\tau$. In the diagonal case,
$b^{-1}(\zeta)<\alpha$, so
\[
 \alpha\in P_\zeta\cap B_\zeta,
 \qquad L_\alpha\models\psi_\tau|^\beta_\alpha.
\]
Consequently
\[
 L_\alpha\models\psi|^\beta_\alpha
     =\bigwedge_{\eta<f^\beta_{\lambda_\beta}(\alpha)}
       \psi_{(\pi^\beta_{\lambda_\beta,\alpha})^{-1}(\eta)}
                   |^\beta_\alpha.
\]
Thus $P\cap B\cap X=\varnothing$, proving $R_\lambda$.
For $\beta=\kappa$, take all $\zeta<\lambda$, with
$\tau_\zeta=\zeta$, and use the same diagonal intersection.
\end{proof}

Refine $P$ by its stationary trace. Applying $R_\lambda$ to
$X=\beta$ at any ordinal where the local ideal would be improper
shows that a dual refinement $P_\lambda$ is contained in $W_\lambda$.
The assertion $R_\lambda$ at $\beta=\kappa$ proves
$\Pi^1_\lambda$-indescribability of $\kappa$ in $L$. We may therefore
choose the uniform formula defining $\Pi^1_\lambda$-clubs. Let $S_\lambda$ be the
preliminary auxiliary set, so that $P_\lambda\subseteq S_\lambda
\subseteq P^\Delta$.

\begin{lemma}\label{lb:limit-club-in-V}
The sets $F_\lambda,S_\lambda$ satisfy
\eqref{lb:club-in-V} and \eqref{lb:recognition} at rank $\lambda$.
\end{lemma}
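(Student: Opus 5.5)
We have to show two things at rank $\lambda$. Put
\[
 \Phi^\lambda_\delta(Q)=\operatorname{cl}_\delta(F_\lambda\cap\delta\cap S_\lambda\cap Q)\cap B_\delta(Q)
\]
for $\delta\in P_\lambda\cap\TrV(P_\lambda)$ and a constructible $\Pi^1_{\lambda_\delta}$-club $Q\subseteq\delta$. First, $\Phi^\lambda_\delta(Q)$ is club in $\delta$ in $V$. Second, every $\gamma\in P_\lambda\cap\acc(\Phi^\lambda_\delta(Q))$ belongs to $Q$. The argument copies the successor case of Lemma~\ref{lb:successor}: $L_\xi$ is replaced by the lower containments $L_\zeta$, combined diagonally through the canonical indices. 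Throughout, $\delta$ and $\gamma$ lie in the constructible club $C_\lambda$, so \eqref{lb:local-indices} and the identities $F^\kappa_\zeta(\gamma)=F^\kappa_\lambda(\gamma)\cap\zeta$ hold.

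\emph{Club.} It suffices that $F_\lambda\cap S_\lambda\cap Q$ is cofinal in $\delta$; taking closures and intersecting with the constructible club $B_\delta(Q)$ then gives a club. By Theorem~5.17(1) of \cite{CodyHigher}, $\delta\setminus Q\in I^L_{\lambda_\delta}(\delta)$. Lemma~\ref{lb:limit-relative} gives $R_\lambda$ at $\delta$, which applies because $P_\lambda\cap\delta$ is stationary. Hence $P_\lambda\cap\delta\setminus Q$ is nonstationary, and so $P_\lambda\cap Q\cap\delta$ is stationary in $\delta$. Since $P_\lambda\subseteq S_\lambda\cap\acc(F_\lambda)\subseteq S_\lambda\cap F_\lambda$, cofinality follows.

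\emph{Recognition.} Let $\gamma\in P_\lambda\cap\acc(\Phi^\lambda_\delta(Q))$. Then $F_\lambda\cap\gamma\cap S_\lambda\cap Q$ is unbounded in $\gamma$. We must show $Q\cap\gamma\in(I^L_\eta(\gamma))^+$ for every $\eta<\lambda_\gamma=f^\delta_{\lambda_\delta}(\gamma)$; the closure factor $B_\delta(Q)$ then yields $\gamma\in Q$.

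Fix such an $\eta$. By \eqref{lb:local-indices} at $\delta$, combined with the identities at $\gamma$, there is $\zeta\in F^\kappa_\lambda(\gamma)$ with $\eta=f^\kappa_\zeta(\gamma)=\zeta_\gamma$. Suppose $Q\cap\gamma\in I^L_{\zeta_\gamma}(\gamma)$. By Theorem~5.17(1) of \cite{CodyHigher}, there is a constructible $\Pi^1_{\zeta_\gamma}$-club $C$ in $\gamma$ disjoint from $Q\cap\gamma$.

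Now use the lower containment. Since $\gamma\in P_\lambda\subseteq P^\Delta$, we have $\gamma\in P_\zeta$; refining $P_\lambda$ diagonally, we may also assume $\gamma\in P_{\zeta+1}$. Then $L_\zeta$ at $\gamma$ gives
\[
 E_\zeta\cap\gamma\cap P_\zeta\subseteq^* C .
\]
On the other side, $F_\lambda\subseteq E^\Delta$. Passing to a diagonal refinement by the bijection $b_{\kappa,\lambda}$, the points of $F_\lambda\cap S_\lambda\cap\gamma$ above $b^{-1}(\zeta)$ lie in $E_\zeta\cap P_\zeta$. When $\lambda<\kappa$, we work above $\lambda$ instead. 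This bound lies below $\gamma$, because $\zeta\in F^\kappa_\lambda(\gamma)=b[\gamma]$. Hence cofinally many points of $Q\cap\gamma$ lie in $C$, contradicting $C\cap Q=\varnothing$. So $Q\cap\gamma$ is positive for every lower index, and $\gamma\in Q$.

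\emph{Main obstacle.} The delicate step is the index bookkeeping. We need the diagonal membership $\gamma\in P_{\zeta+1}$, and the $E_\zeta$-bounds must land below $\gamma$ uniformly. This is exactly where the definitions of $P^\Delta$ and $E^\Delta$, the clubs $J_\xi$ controlling the inclusion bounds, and the refinement of $P_\lambda$ by $\acc(F_\lambda)$ must be invoked. I would handle it first by refining $S_\lambda$ to $\{\alpha:(\forall\zeta\in F^\kappa_\lambda(\alpha))\,\alpha\in P_{\zeta+1}\}$. This set is dual by normality of $\SA_\kappa^V$, and it makes the successor-rank $L_\zeta$ available at every point of $P_\lambda$.
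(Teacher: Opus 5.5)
Your proposal is correct and follows essentially the paper's argument. You use $R_\lambda$ to make $F_\lambda\cap\delta\cap S_\lambda\cap Q$ cofinal, and then, for each index below $\lambda_\gamma$, pick $\zeta\in F^\kappa_\lambda(\gamma)$, place the tail above $b^{-1}(\zeta)$ inside $E_\zeta\cap P_\zeta$, and play $L_\zeta$ against a constructible club disjoint from $Q\cap\gamma$. The only difference is that your extra diagonal refinement securing $\gamma\in P_{\zeta+1}$ (and the mention of the clubs $J_\xi$) is unnecessary. The paper notes that $F^\kappa_\lambda(\gamma)$ is successor closed on $C_\lambda$, so $\zeta+1\in F^\kappa_\lambda(\gamma)$, and $\gamma\in P^\Delta$ already gives $\gamma\in P_{\zeta+1}$.
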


\begin{proof}
Let $\delta\in P_\lambda\cap\TrV(P_\lambda)$ and let $Q$ be
a constructible $\Pi^1_{\lambda_\delta}$-club in $\delta$.
Since $\delta\setminus Q\in I^L_{\lambda_\delta}(\delta)$,
$R_\lambda$ gives
\[
 \sup(F_\lambda\cap\delta\cap S_\lambda\cap Q)=\delta.
\]
Its closure, intersected with $A^\lambda_\delta$ and a
club in $L$ witnessing the closure condition for $Q$, is a club
$\Phi^\lambda_\delta(Q)$ in $\delta$ in $V$.

Let $\gamma\in P_\lambda\cap\acc(\Phi^\lambda_\delta(Q))$
and fix $\tau<\lambda_\gamma$. Choose
$\zeta\in F^\kappa_\lambda(\gamma)$ with
$f^\kappa_\zeta(\gamma)=\tau$. Successor closure of this
index set gives $\zeta+1\in F^\kappa_\lambda(\gamma)$, hence
$\gamma\in P_{\zeta+1}$. Property $L_\zeta$ therefore holds
at $\gamma$. A tail of
$F_\lambda\cap\gamma\cap S_\lambda\cap Q$ lies in
$E_\zeta\cap P_\zeta$ and is unbounded. If
$Q\cap\gamma\in I^L_\tau(\gamma)$, choose a constructible
$\Pi^1_\tau$-club $C$ disjoint from $Q\cap\gamma$. Then
\[
 E_\zeta\cap\gamma\cap P_\zeta\subseteq^*C
\]
contradicts the unbounded tail in $Q$. Thus
\[
 (\forall\tau<\lambda_\gamma)\quad
               Q\cap\gamma\in(I^L_\tau(\gamma))^+.
\]
Equation~\eqref{lb:local-indices} identifies these with the local
indices in the closure condition for $Q$. The club in $L$ used as a
closure factor gives $\gamma\in Q$.
\end{proof}

After constructing $\Phi^\lambda_\delta$, apply
Lemma~\ref{lb:global-lemma} to obtain $E_\lambda$ and
Lemma~\ref{lb:bad-ordinals} to obtain $L_\lambda$.

\begin{proof}[Proof of Theorem~\ref{lb:relative-bound}]
The preceding recursion constructs, for every
$1\leq\xi<(\kappa^+)^L$, the assertions $R_\xi,G_\xi,L_\xi$
and a set $P_\xi\in(\SA_\kappa^V)^*$ with $P_\xi\subseteq W_\xi$.
Lemma~\ref{fd:relativeone} gives the initial step;
Lemmas~\ref{lb:global-lemma} and~\ref{lb:bad-ordinals} give
$G_\xi,L_\xi$; Lemma~\ref{lb:successor} handles successors;
and Lemmas~\ref{lb:limit-relative} and
\ref{lb:limit-club-in-V} handle limits. Normality preserves
membership in $(\SA_\kappa^V)^*$ at each refinement.

For $X\in I_\xi^L(\kappa)$, the global $R_\xi$ gives
$X\cap P_\xi\in\NS_\kappa^V$. Hence
\[
 X=(X\cap P_\xi)\cup(X\setminus P_\xi)\in\SA_\kappa^V,
 \qquad \kappa\setminus W_\xi\subseteq\kappa\setminus P_\xi
                                      \in\SA_\kappa^V.
\]
Since $\kappa\notin\SA_\kappa^V$, the ideal
$I_\xi^L(\kappa)$ is proper for every such $\xi$. Together
with inaccessibility in $L$, this gives
\[
 L\models\text{``$\kappa$ is inaccessible and }
        (\forall\xi<\kappa^+)\,
               \kappa\text{ is }\Pi^1_\xi\text{-indescribable''}.
\]
\end{proof}

\begin{corollary}\label{lb:dichotomy}
If $\kappa$ is regular and uncountable and $\NAC(\kappa)$ holds, then
\[
 L\models\text{``$\kappa$ is ineffable''}
 \quad\text{or}\quad L\models\mathsf H(\kappa).
\]
In either case there is a $\lambda\leq\kappa$ such that
$L\models\mathsf H(\lambda)$.
\end{corollary}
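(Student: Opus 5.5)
The plan is a case split on whether the first clause of \eqref{intro:ideal} holds, i.e.\ whether $I_{\mathrm{ineff}}^L(\kappa)\subseteq\A_\kappa^V$.

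First, by Lemma~\ref{fd:SA}, $\NAC(\kappa)$ gives $\kappa\notin\SA_\kappa^V$, since $\SA_\kappa\subseteq\A_\kappa$. Hence $\kappa$ is inaccessible in $L$, and by Lemma~\ref{fd:robust} it is even weakly compact in $L$. In particular, the $L$-ineffability ideal on $\kappa$ is meaningful.

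\textbf{Case 1: the inclusion holds.} Then $\kappa\notin\A_\kappa^V$ forces $\kappa\notin I_{\mathrm{ineff}}^L(\kappa)$. If $\kappa$ were not ineffable in $L$, the ideal would be improper and $\kappa$ itself would belong to it. So $L\models$ ``$\kappa$ is ineffable''.

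\textbf{Case 2: the inclusion fails.} Then there is $T\in L\cap\mathcal P(\kappa)$ such that $T$ is not ineffable in $L$ and $T\notin\A_\kappa^V$. This is exactly hypothesis \eqref{lb:T}. Theorem~\ref{lb:relative-bound} then gives $L\models\mathsf H(\kappa)$, and also the second clause of \eqref{intro:ideal} as a byproduct.

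This establishes the dichotomy. For the existence of $\lambda\le\kappa$:
\begin{itemize}
\item In the second case, take $\lambda=\kappa$.
\item In the ineffable case, apply \cite[Corollary~3.5]{CodyHoly} inside $L$. An ineffable cardinal $\kappa$ reflects $\mathsf H$ below itself, so the set of $\lambda<\kappa$ with $\mathsf H(\lambda)$ is stationary, in particular nonempty. Since the definitions are computed in $L$, this yields $L\models\exists\lambda<\kappa\,\mathsf H(\lambda)$.
\end{itemize}

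The only delicate step is checking that the statement of \cite[Corollary~3.5]{CodyHoly} really yields $\Pi^1_\xi$-indescribability for all $\xi<\lambda^+$ in the same hierarchy used here. The sketch of that argument: ineffability of $\kappa$ implies $\Pi^1_\xi$-indescribability for all $\xi<\kappa^+$, and the latter is a $\Pi^1_2$-expressible property over $V_\kappa$ via codes in $H_{\kappa^+}$. Ineffability then reflects it to stationarily many $\lambda$. Independence of the choice of canonical bijections, \cite[Corollary~3.7]{CodyHigher}, ensures that the $<_L$-least bijections used here agree with the notion in that corollary.
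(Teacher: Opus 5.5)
Your dichotomy is correct and is essentially the paper's argument. The paper splits on whether $\kappa$ is ineffable in $L$, and when it is not, it applies Theorem~\ref{lb:relative-bound} with $T=\kappa$. You split instead on the inclusion $I_{\mathrm{ineff}}^L(\kappa)\subseteq\A_\kappa^V$, as in the proof of Theorem~\ref{thm:lower}. The two splits are equivalent: if $\kappa$ is not ineffable in $L$, the ideal is improper and contains $\kappa$, which is not in $\A_\kappa^V$. For the existence of $\lambda$ you also cite the same result as the paper, \cite[Corollary~3.5]{CodyHoly} applied in $L$.

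The flaw is your sketch of why that citation applies, which you single out as the delicate step. Each of its claims fails.
\begin{itemize}
\item Ineffability does not yield $\Pi^1_\xi$-indescribability for all $\xi<\kappa^+$. Ineffability is itself a $\Pi^1_3$ property over $V_\kappa$, so the least ineffable cardinal is not even $\Pi^1_3$-indescribable.
\item $\mathsf H(\lambda)$ cannot be a $\Pi^1_2$ property over $V_\lambda$ whose restrictions express $\mathsf H$ at smaller cardinals. If it were, the least $\lambda$ with $\mathsf H(\lambda)$, being $\Pi^1_2$-indescribable, would reflect it to a smaller cardinal.
\end{itemize}
So the conclusion cannot come from reflecting $\mathsf H$ via the $\Pi^1_2$-indescribability of an ineffable cardinal.

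The correct bridge, which the paper uses, is that a cardinal ineffable in $L$ is subtle in $L$. The Cody--Holy corollary is a theorem about subtle cardinals, and it is proved by a coherence argument, not by reflection. Roughly: if club many $\alpha$ failed, code a failing formula and parameter at each such $\alpha$ as a subset of $\alpha$. Subtlety then produces $\alpha<\beta$ whose codes cohere, so $\alpha$ reflects the formula chosen at $\beta$, contradicting that choice.

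Replace your sketch with ``$\kappa$ is ineffable, hence subtle, in $L$.'' Also add that the rank-one instance gives the inaccessibility of $\lambda$, which $\mathsf H$ requires.
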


\begin{proof}
If $L\models$ ``$\kappa$ is not ineffable'', take $T=\kappa$
in Theorem~\ref{lb:relative-bound}. Its hypotheses hold since
$\NAC(\kappa)$ means $\kappa\notin\A_\kappa^V$. Thus
$L\models\mathsf H(\kappa)$.
If $\kappa$ is ineffable in $L$, it is subtle there.
By \cite[Corollary~3.5]{CodyHoly}, applied inside $L$,
\[
 L\models\exists\lambda<\kappa\ \forall\eta<\lambda^+\quad
              \lambda\text{ is }\Pi^1_\eta\text{-indescribable}.
\]
The rank-one instance implies inaccessibility,
so this $\lambda$ satisfies $\mathsf H$ in $L$.
\end{proof}

\section{Stationary capture by generalized Hechler forcing}
\label{sec:capture}

We use $\Pi^1_1$ reflection to prove local domination by the Hechler
generic function. Applying this to the next-point functions of a
$C$-sequence gives stationary capture.

The reflection lemma, local domination theorem and capture corollary
in this section are \cite[Lemma~3.8, Theorem~3.10 and Corollary~3.12]{Fernandes},
respectively. Their proofs are included for completeness.
Generalized Hechler forcing and its basic properties were studied by
Cummings and Shelah \cite[Section~3 and Lemma~7]{CS}; we use the
presentation in \cite[Definition~48]{BBFM}.

\begin{definition}\label{def:hechler}
For a regular uncountable cardinal \(\kappa\), let
\(\D_\kappa\) be generalized Hechler forcing. Its conditions are
\(p=(s_p,h_p)\), where \(s_p\in{}^{<\kappa}\kappa\),
\(h_p\in{}^\kappa\kappa\), and \(s_p\subseteq h_p\).
We call \(s_p\) the \emph{stem}, put \(\ell(p)=\dom(s_p)\),
and call \(h_p\) the \emph{side function}. Thus \(q\leq p\)
means \(s_p\subseteq s_q\) and \(h_p\leq h_q\) pointwise.
If \(\beta\leq\kappa\) is regular uncountable and
\(p\in\D_\kappa\) satisfies \(\ell(p)<\beta\) and
\(h_p[\beta]\subseteq\beta\), put
\[
 p\restriction\beta=(s_p,h_p\restriction\beta).
\]
Then \(p\restriction\beta\in\D_\beta\).
\end{definition}

\begin{definition}
\label{def:eventual-domination}\label{def:generic-function}
If \(G\subseteq\D_\kappa\) is generic, its \emph{Hechler generic
function} is \(d_G=\bigcup_{p\in G}s_p\). In forcing statements,
\(d\) denotes the canonical name for this function.
\end{definition}

\begin{definition}\label{def:compatibility}
\label{def:predensity}
For a forcing order \(\mathbb P\), let
\(\Comp_{\mathbb P}(p,q)\) mean that \(p,q\in\mathbb P\) are
compatible. A family \(A\subseteq\mathbb P\) is \emph{predense
below \(p\)} if every \(q\leq p\) is compatible with a member of
\(A\).
We omit the subscript on \(\Comp\) when the order is clear.
\end{definition}

\begin{remark}\label{rem:compatibility}
For \(p=(s,h),q=(t,k)\in\D_\kappa\),
\begin{align}
 \Comp_{\D_\kappa}(p,q)\quad\Longleftrightarrow\quad
 &\bigl[s\subseteq t\ \land\
       (\forall\xi<\dom(t))\,h(\xi)\leq t(\xi)\bigr]\notag\\
 &{}\lor\bigl[t\subseteq s\ \land\
       (\forall\xi<\dom(s))\,k(\xi)\leq s(\xi)\bigr].
 \label{eq:compat}
\end{align}
A common extension must have a stem extending both \(s\) and \(t\),
so the stems are comparable and its values on the longer stem give
the corresponding inequality. Conversely, the longer stem and the
pointwise maximum of the side functions form a common extension;
the inequality ensures normalization. All these tests use only
coordinates below the length of the longer stem. Consequently, if
both restrictions to a regular uncountable \(\beta\leq\kappa\)
are defined, then
\[
 \Comp_{\D_\kappa}(p,q)
 \quad\Longleftrightarrow\quad
 \Comp_{\D_\beta}(p\restriction\beta,q\restriction\beta).
\]
\end{remark}

\begin{lemma}\label{lem:hechler-basic}
For a regular uncountable \(\kappa\), if
\(G\subseteq\D_\kappa\) is generic, then
\(d_G\) is a function from \(\kappa\) to \(\kappa\)
that eventually dominates every function \(f\) from
\(\kappa\) to \(\kappa\) in the ground universe. Moreover,
\[
 G=\{p\in\D_\kappa:s_p\subseteq d_G
                     \text{ and }h_p\leq d_G\},
\]
where \(\D_\kappa\) on the right is the ground-model order.
In particular, \(V[G]=V[d_G]\).
\end{lemma}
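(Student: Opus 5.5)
The plan is to prove the three assertions of Lemma~\ref{lem:hechler-basic} by density arguments in the ground model, using the compatibility description of Remark~\ref{rem:compatibility}.

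\emph{Totality.} For $\alpha<\kappa$, the set of conditions with $\ell(q)>\alpha$ is dense. Given $p=(s_p,h_p)$, extend the stem by $h_p\restr[\ell(p),\alpha+1)$, keeping $h_p$ as side function. This preserves $s\subseteq h$, so the result is a condition below $p$. Conditions in $G$ are pairwise compatible, so their stems are comparable, and hence $d_G=\bigcup_{p\in G}s_p$ is a function with domain $\kappa$.

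\emph{Domination.} Fix a ground-model $f\in{}^\kappa\kappa$. The set of conditions with $h_q\geq f$ pointwise is dense: given $p$, use side function $\max(h_p,f)$ modified to agree with $s_p$ below $\ell(p)$. For any $q\in G$ and $\xi\geq\ell(q)$, choose $r\in G$ below $q$ with $\ell(r)>\xi$. Then $d_G(\xi)=s_r(\xi)=h_r(\xi)\geq h_q(\xi)$. So $d_G$ dominates $h_q$ beyond $\ell(q)$, and for $q\in G$ with $h_q\geq f$ we get $f\leq^* d_G$.

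\emph{Recovering $G$.} The inclusion from left to right follows from the argument just given: for $p\in G$, $s_p\subseteq d_G$, and for $\xi\geq\ell(p)$ the value $d_G(\xi)$ is attained as $h_r(\xi)\geq h_p(\xi)$ with $r\leq p$ in $G$. For the converse, suppose $s_p\subseteq d_G$ and $h_p\leq d_G$ but $p\notin G$. By genericity there is $q\in G$ incompatible with $p$. Extend $q$ within $G$ to $r$ with $\ell(r)>\ell(p)$; then $s_p\subseteq s_r$ since both are initial segments of $d_G$. Remark~\ref{rem:compatibility} now says $p$ and $r$ are compatible as long as $h_p(\xi)\leq s_r(\xi)=d_G(\xi)$ for $\xi<\ell(r)$, and this is exactly our hypothesis. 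Since $r\leq q$, this contradicts the incompatibility of $p$ and $q$. Therefore $G$ is definable from $d_G$ and the ground-model order, which gives $V[G]=V[d_G]$.

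The only delicate step is the converse inclusion, specifically making sure the compatibility criterion applies with the longer stem $s_r$. The rest consists of routine density checks.
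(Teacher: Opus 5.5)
Your proof is correct and is essentially the paper's: the same dense sets $D_\alpha$ and $D_f$, the same computation $h_p(\xi)\le h_r(\xi)=s_r(\xi)=d_G(\xi)$ with $r\le p$ in $G$ for the forward inclusion, and the same use of \eqref{eq:compat} with the dense set $\{q:q\le p\text{ or }q\perp p\}$ for the converse. One small correction: conditions with $h_q\ge f$ \emph{pointwise} are not dense, because below $\ell(q)$ the side function must equal the stem; however, your construction and your later argument only need $h_q\ge f$ on $[\ell(q),\kappa)$, which is exactly the paper's $D_f$.
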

\begin{proof}
The eventual-domination assertion is standard
\cite[Lemma~7]{CS}; see also \cite[Section~4.2]{BBFM}.
For $\alpha<\kappa$ and a ground-model $f\in{}^\kappa\kappa$,
the sets
\[
 \begin{aligned}
 D_\alpha&=\{p:\ell(p)>\alpha\},\\
 D_f&=\{p:(\forall\xi\in[\ell(p),\kappa))\,h_p(\xi)\geq f(\xi)\}
 \end{aligned}
\]
are dense. To enter $D_\alpha$, extend the stem using $h_p$;
to enter $D_f$, replace $h_p(\xi)$ by
$\max\{h_p(\xi),f(\xi)\}$ for $\xi\geq\ell(p)$.
Genericity gives $d_G\in{}^\kappa\kappa$ and, for $p\in G\cap D_f$,
$f(\xi)\leq d_G(\xi)$ whenever $\xi\geq\ell(p)$.

If $p\in G$ and $\xi<\kappa$, choose $q\in G$ below $p$
with $\ell(q)>\xi$. Then
$h_p(\xi)\leq h_q(\xi)=s_q(\xi)=d_G(\xi)$.
Thus $s_p\subseteq d_G$ and $h_p\leq d_G$.
Conversely, if a ground-model $p$ satisfies these inequalities,
\eqref{eq:compat} gives
\[
 \forall q\in G\quad\Comp(p,q).
\]
The dense set $\{q:q\leq p\text{ or }q\perp p\}$ must be
met in its first part. Upward closure of $G$ yields $p\in G$.
This proves the formula for $G$ and hence $V[G]=V[d_G]$.
\end{proof}

\begin{samepage}
\begin{lemma}\label{lem:hechler-cc}
If \(\kappa\) is strongly inaccessible, then \(\D_\kappa\) is
\(\kappa^+\)-cc \cite[Section~4.2]{BBFM}.

\end{lemma}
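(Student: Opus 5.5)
The plan is the standard $\Delta$-system argument for conditions sharing a stem, since two conditions with the same stem are compatible. Let $\{p_i:i<\kappa^+\}\subseteq\D_\kappa$ be arbitrary. Each stem $s_{p_i}$ lies in ${}^{<\kappa}\kappa$. Strong inaccessibility gives
\[
 \bigl|{}^{<\kappa}\kappa\bigr|=\sum_{\alpha<\kappa}\kappa^{|\alpha|}=\kappa,
\]
because $\kappa^{|\alpha|}\leq(2^{|\alpha|})^{\kappa}$ would be the wrong bound. The correct computation is as follows. Each $s\in{}^{\alpha}\kappa$ with $\alpha<\kappa$ has range bounded below $\kappa$ by regularity, so it lies in ${}^\alpha\gamma$ for some $\gamma<\kappa$. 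Then $|{}^\alpha\gamma|\leq2^{|\alpha|\cdot|\gamma|}<\kappa$ by the strong limit property. Summing over the $\kappa$ many pairs $(\alpha,\gamma)$ gives $|{}^{<\kappa}\kappa|=\kappa$.

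By pigeonhole, there are $i\ne j$ with $s_{p_i}=s_{p_j}=s$. Then $(s,\max\{h_{p_i},h_{p_j}\})$ is a condition, where the maximum is taken pointwise. It extends both, since $s\subseteq h_{p_i},h_{p_j}$ gives $s\subseteq\max\{h_{p_i},h_{p_j}\}$, and side functions only increase. Alternatively, apply \eqref{eq:compat} directly with $t=s$: the inequality clause reads $h(\xi)\leq s(\xi)$ for $\xi<\dom(s)$, which holds because $h_{p_i}\restr\dom(s)=s$. So no antichain has size $\kappa^+$. In fact $\D_\kappa$ is $\kappa$-centered, a union of $\kappa$ centered pieces indexed by stems.

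There is no real obstacle. The only point needing care is the counting of stems, which is where inaccessibility, not mere regularity, is used.
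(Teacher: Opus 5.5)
Your proof is correct and is essentially the paper's argument: $\kappa^{<\kappa}=\kappa$ leaves only $\kappa$ possible stems, and conditions with a common stem are compatible via the pointwise maximum of their side functions, so every antichain has at most $\kappa$ members. The clause about $\kappa^{|\alpha|}\leq(2^{|\alpha|})^{\kappa}$ being ``the wrong bound'' is a non sequitur and should be deleted, but the stem count you give after it is correct.
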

\end{samepage}
\begin{proof}
Since $\kappa^{<\kappa}=\kappa$, there are $\kappa$ possible
stems. For a fixed stem $s$, every finite family
$\{(s,h_i):i<n\}$ has common extension
\[
 (s,h),\qquad h(\xi)=\max_{i<n}h_i(\xi).
\]
The function $h$ agrees with $s$ on its domain. Thus conditions
with the same stem form a centered family, and an antichain has
at most $\kappa$ members.
\end{proof}

\begin{lemma}\label{lem:hechler-closure}
For every regular uncountable \(\kappa\), the order
\(\D_\kappa\) is \(<\kappa\)-closed.
\end{lemma}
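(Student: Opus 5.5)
We show directly that every descending sequence $\langle p_i:i<\mu\rangle$ in $\D_\kappa$ with $\mu<\kappa$ has a lower bound. Write $p_i=(s_i,h_i)$. Since $q\leq p$ means $s_p\subseteq s_q$ and $h_p\leq h_q$ pointwise, the stems form a $\subseteq$-increasing chain and the side functions are pointwise nondecreasing in $i$.

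First set $s=\bigcup_{i<\mu}s_i$. This is a function because the stems are comparable. Its domain is $\sup_{i<\mu}\ell(p_i)$, which is below $\kappa$ because $\kappa$ is regular and $\mu<\kappa$. So $s\in{}^{<\kappa}\kappa$.

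Next set $h'(\xi)=\sup_{i<\mu}h_i(\xi)$ for each $\xi<\kappa$. Regularity of $\kappa$ again gives $h'(\xi)<\kappa$, so $h'\in{}^\kappa\kappa$.

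The one point that needs care is the normalization $s\subseteq h$. We define $h=s\cup(h'\restriction[\dom(s),\kappa))$ and check that $h\geq h_i$ pointwise for every $i$.
\begin{itemize}
\item[] If $\xi\geq\dom(s)$, then $h(\xi)=h'(\xi)\geq h_i(\xi)$ by definition of $h'$.
\item[] If $\xi<\dom(s)$, pick $j<\mu$ with $\xi<\ell(p_j)$, and let $k\geq\max\{i,j\}$. Since $h_i\leq h_k$ and $s_k\subseteq h_k$, we get $h_i(\xi)\leq h_k(\xi)=s_k(\xi)=s(\xi)=h(\xi)$.
\end{itemize}
In fact this shows that on $\dom(s)$ the values $h_i(\xi)$ are eventually constant and equal to $s(\xi)$, so $h'$ already agrees with $s$ there and $h=h'$. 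In either case $(s,h)$ is a condition: $s\subseteq h$, $h\in{}^\kappa\kappa$, and $s\in{}^{<\kappa}\kappa$.

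Finally, $(s,h)\leq p_i$ for every $i<\mu$, because $s_i\subseteq s$ and $h_i\leq h$. The case $\mu=0$ is trivial. Strong inaccessibility is not needed; regularity of $\kappa$ alone bounds both the stem length and the suprema.
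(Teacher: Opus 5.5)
Your proof is correct and follows the paper's argument: take the union of the stems and the pointwise supremum of the side functions, use regularity of $\kappa$ to bound the stem length and each supremum, and use normalization to see that the side functions are eventually equal to the stem value at every coordinate of $\dom(s)$. Your observation that $h=h'$ identifies your lower bound with the paper's, and the paper treats the empty sequence the same way, via the greatest condition $(\varnothing,\mathbf 0)$.
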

\begin{proof}
For later use, recall the lower bound witnessing closure in
\cite[Section~4.2]{BBFM}: for a decreasing sequence
\(\langle(s_i,h_i):i<\mu\rangle\), with \(0<\mu<\kappa\), put
\[
 s=\bigcup_{i<\mu}s_i,\qquad
 h(\xi)=\sup_{i<\mu}h_i(\xi)\quad(\xi<\kappa).
\]
Regularity bounds the domain and values below \(\kappa\), and
normalization fixes \(h_i(\xi)\) once \(\xi\) enters a stem.
Thus \((s,h)\) is a normalized lower bound. The empty sequence
has the greatest condition \((\varnothing,\mathbf0)\) as a
lower bound.
\end{proof}

For local domination we need predensity in \(\D_\beta\), including
conditions whose side functions are arbitrary maps from \(\beta\)
to \(\beta\). The next lemma obtains this by \(\Pi^1_1\)
reflection.

\begin{lemma}\label{lem:reflection}
Let \(\kappa\) be weakly compact, let \(p\in\D_\kappa\), and
let \(\langle A_\alpha:\alpha<\kappa\rangle\) be a sequence of
maximal antichains below \(p\). Choose surjections
\(e_\alpha:\kappa\to A_\alpha\), and write
\[
 a_{\alpha,i}=e_\alpha(i)
             =(s_{\alpha,i},h_{\alpha,i})
                    \qquad(\alpha,i<\kappa).
\]
Let \(\delta:\kappa\times\kappa\to\kappa\) be any function,
and write \(\delta_{\alpha,i}=\delta(\alpha,i)\).
The set of strongly inaccessible \(\beta<\kappa\) satisfying the
following conditions is stationary in \(\kappa\):
\begin{enumerate}[label=\textup{(\roman*)}]
\item \(\ell(p)<\beta\) and \(h_p[\beta]\subseteq\beta\);
\item for all \(\alpha,i<\beta\),
      \(\ell(a_{\alpha,i})<\beta\),
      \(\delta_{\alpha,i}<\beta\), and
      \(h_{\alpha,i}[\beta]\subseteq\beta\);
\item for every \(\alpha<\beta\), the family
\[
 A_\alpha^\beta=
 \{a_{\alpha,i}\restriction\beta:i<\beta\}
\]
is predense below \(p\restriction\beta\) in \(\D_\beta\).
\end{enumerate}
\end{lemma}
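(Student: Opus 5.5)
Conditions (i) and (ii) hold on a club, and (iii) comes from $\Pi^1_1$ reflection applied to a suitable structure on $V_\kappa$.

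\textbf{Step 1: the club.} Let $E$ be the set of closure points $\beta<\kappa$ of the function
\[
 (\alpha,i,\xi)\mapsto\max\{\ell(a_{\alpha,i}),\,\delta_{\alpha,i},\,h_{\alpha,i}(\xi),\,h_p(\xi)\}
\]
that also lie above $\ell(p)$. Then $E$ is club, and every $\beta\in E$ satisfies (i) and (ii). Since $\kappa$ is weakly compact, it is Mahlo, so the strongly inaccessible members of $E$ form a stationary set. It remains to show that, for an arbitrary club $C\subseteq\kappa$, some inaccessible $\beta\in C\cap E$ also satisfies (iii).

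\textbf{Step 2: the reflected statement.} Code $p$, the sequence $\langle a_{\alpha,i}\rangle$ and $C$ as predicates on $V_\kappa$. Maximality of $A_\alpha$ below $p$ means every $q\le p$ is compatible with some $a_{\alpha,i}$. Quantifying over the side function $h$ of $q=(s,h)$ as a second-order variable, this is the $\Pi^1_1$ sentence
\[
 \forall h\,\forall\alpha\,\forall s\,\bigl[(s,h)\le p\ \to\ \exists i\ \Comp\bigl((s,h),a_{\alpha,i}\bigr)\bigr].
\]
By Remark~\ref{rem:compatibility}, the matrix is first order: it only inspects values of $h$ and $h_{\alpha,i}$ below the length of the longer stem. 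Take the conjunction of this sentence with the first-order facts that $\kappa$ is inaccessible and that $C$ is unbounded. By $\Pi^1_1$-indescribability there is $\beta<\kappa$ such that $V_\beta$ satisfies the restricted sentence with parameters cut down to $\beta$. Unboundedness of $C\cap\beta$ gives $\beta\in C$. The restricted inaccessibility clause makes $\beta$ inaccessible. We may also require $\beta\in E$, either by adding to the conjunction the first-order statement that $E\cap\beta$ is unbounded, or by intersecting with the club of $\beta$ at which $V_\beta$ is correct about the parameter restrictions.

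\textbf{Step 3: reading off (iii), the main step.} Let $q=(s,h)\in\D_\beta$ with $q\le p\restriction\beta$ and $\alpha<\beta$. Here $h$ is an arbitrary map $\beta\to\beta$, and this is exactly where the second-order quantifier matters: such $h$ need not come from any condition of $\D_\kappa$. The reflected sentence, read in $V_\beta$ with $h$ ranging over all subsets of $V_\beta$, yields $i<\beta$ such that $(s,h)$ is compatible with $a_{\alpha,i}$ in the restricted sense. The compatibility test uses only coordinates below $\beta$ and only the restricted parameter $h_{\alpha,i}\restriction\beta$. By Remark~\ref{rem:compatibility} this is compatibility of $q$ with $a_{\alpha,i}\restriction\beta$ in $\D_\beta$, which proves (iii).

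The obstacle is to make sure that the restricted predicates genuinely encode $a_{\alpha,i}\restriction\beta$ for every $i<\beta$, and that the order relation $(s,h)\le p$ restricts to $\le p\restriction\beta$. Both follow from (i) and (ii) at $\beta\in E$: all stems and side values with indices below $\beta$ lie in $V_\beta$, so the predicate restrictions are exactly the restricted conditions.
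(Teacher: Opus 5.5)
Your argument is correct and is essentially the paper's proof. You code $p$, the antichain data and the club into a predicate on $V_\kappa$, reflect a $\Pi^1_1$ sentence whose universal predicate quantifier ranges over arbitrary side functions, and read off predensity in $\D_\beta$ from the first-order compatibility test of Remark~\ref{rem:compatibility}. The only cosmetic difference is that you secure (i)--(ii) with an external club of closure points, whereas the paper obtains them from totality of the reflected sections of the coding predicate.

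One correction is needed. Regularity of $\kappa$ is not first order over $V_\kappa$: a first-order clause would only make $\beta$ a strong limit, possibly singular. So your inaccessibility conjunct must be the $\Pi^1_1$ assertion that every predicate coding a map from an ordinal into the ordinals is bounded, as in the paper. Adding it keeps the whole sentence $\Pi^1_1$.
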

\begin{proof}
Fix a club \(B\subseteq\kappa\), and let \(K\) be the club
of uncountable beth fixed points below \(\kappa\). Code into
one predicate \(R\subseteq V_\kappa\), using finite tagged
tuples, the following data:
\[
 \ell(p),\quad s_p,\quad h_p,\quad
 \langle\ell(a_{\alpha,i}),s_{\alpha,i},
              h_{\alpha,i},\delta_{\alpha,i}:\alpha,i<\kappa\rangle,
 \quad K\cap B.
\]
Stems are coded as sets, while each side function is coded by
its graph as a section of \(R\). Require in the reflection
sentence that all these sections are total on their indicated
ordinal indices, that the coded stems exist and are normalized,
that every \(a_{\alpha,i}\) extends \(p\), and that the section
coding \(K\cap B\) is unbounded. These are first-order
requirements, with no parameter for the ordinal indexing the rank structure.

Add the assertion that the ordinal indexing the rank structure is a limit
ordinal greater than \(\omega\)
and the two universal second-order requirements below. Here
\(H\) ranges over predicates, \(\Ord\) denotes the ordinals
of the rank structure, and coded data are read from \(R\):
\[
\begin{split}
 (\forall H)(\forall\eta)\bigl[H:\eta\to\Ord
   \ \Longrightarrow\
    (\exists\zeta)(\forall\xi<\eta)\,H(\xi)<\zeta\bigr],
\end{split}
\]
\[
\begin{split}
 (\forall H)\bigl[H:\Ord\to\Ord\ \Longrightarrow\
 (\forall\alpha)(\forall s)\bigl[
   &(\exists\eta)\,s:\eta\to\Ord,
       \quad s\subseteq H,\quad s_p\subseteq s,\\
   &h_p\leq H
       \ \Longrightarrow\
       (\exists i)\,\Comp_R((s,H),a_{\alpha,i})\bigr]\bigr].
\end{split}
\]
In the antecedent of the second display the requirements are
conjoined. The compatibility test \(\Comp_R\) is the
first-order formula of \eqref{eq:compat}, using the coded stem
and side-function section. The stem \(s\) is a first-order
variable, whereas \(H\) is a second-order graph. All quantifiers
other than the displayed universal predicate quantifiers are
first-order. The resulting sentence is therefore \(\Pi^1_1\).

It holds over \((V_\kappa,\in,R)\): the first display follows
from regularity of \(\kappa\), and the second from maximality
of the antichains below \(p\). Reflect it to \(V_\beta\) by
weak compactness. Unboundedness of the \(K\cap B\) section
and closedness of \(K\cap B\) give \(\beta\in K\cap B\).
The first display says, with full second-order semantics, that
every map from an ordinal below \(\beta\) into \(\beta\) is
bounded. Hence \(\beta\) is regular; since it is an uncountable
beth fixed point, it is strongly inaccessible.

Totality of the reflected sections gives all bounds in (i)
and (ii). The coded stems in \(V_\beta\) are the original
stems, because they are entries of their tagged tuples.
Finally, every \((s,H)\leq p\restriction\beta\) in
\(\D_\beta\) has \(s\in V_\beta\) and graph
\(H\subseteq V_\beta\). The second display therefore applies
to every such condition, including arbitrary side functions
\(H:\beta\to\beta\), and yields an \(i<\beta\) witnessing
compatibility with \(a_{\alpha,i}\restriction\beta\).
This is (iii). Since \(B\) was arbitrary, the desired set is
stationary.
\end{proof}

\begin{definition}\label{def:domination}
For a regular uncountable \(\kappa\), a set
\(I\subseteq\Inacc(\kappa)\), functions
\(\vec f=\langle f_\beta:\beta\in I\rangle\) with
\(f_\beta:\beta\to\beta\), and a function \(d\) from \(\kappa\) to \(\kappa\), put
\[
 T(\vec f,d)=
 \{\beta\in I:d[\beta]\subseteq\beta
          \ \land\ f_\beta\leq^*d\restriction\beta\}.
\]
\end{definition}

\begin{theorem}\label{thm:domination}
Suppose that \(\kappa\) is weakly compact in \(V\), let
\(I=\Inacc^V(\kappa)\), and let
\(\vec f=\langle f_\beta:\beta\in I\rangle\in V\), where
\(f_\beta:\beta\to\beta\) for each \(\beta\in I\). Then
\[
 \D_\kappa\Vdash
 T(\vec f,d)\text{ is stationary in }\kappa.
\]
\end{theorem}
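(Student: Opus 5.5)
We argue by density. Fix a condition $p\in\D_\kappa$ and a $\D_\kappa$-name $\dot C$ for a club in $\kappa$. We shall find $q\le p$ and an inaccessible $\beta$ with $q\Vdash \beta\in\dot C\cap T(\vec f,d)$. Since $\D_\kappa$ is $\kappa^+$-cc by Lemma~\ref{lem:hechler-cc}, we can represent $\dot C$ through antichains.

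\textbf{Coding $\dot C$ by antichains.} For each $\alpha<\kappa$ let $A_\alpha$ be a maximal antichain below $p$ deciding the value $\min(\dot C\setminus(\alpha+1))$. Each $a\in A_\alpha$ decides this value as some ordinal $\delta_a$. Enumerate $A_\alpha$ as $\langle a_{\alpha,i}:i<\kappa\rangle$, with repetitions allowed since $|A_\alpha|\le\kappa$. Put $\delta_{\alpha,i}=\delta_{a_{\alpha,i}}$. Apply Lemma~\ref{lem:reflection} to $p$, these antichains and $\delta$. It gives stationarily many strongly inaccessible $\beta$ satisfying (i)--(iii). We intersect this stationary set with the club of limit points; this is possible because any club works in the proof of that lemma.

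\textbf{Choosing $\beta$ and building $q$.} Fix such a $\beta$. Define
\[
 q=(s_p,h),\qquad
 h(\xi)=\begin{cases}\max\{h_p(\xi),f_\beta(\xi)\},&\ell(p)\le\xi<\beta,\\ h_p(\xi),&\text{otherwise.}\end{cases}
\]
We may need to adjust the values at $\xi\ge\beta$ so that the condition is still a legitimate condition; normalization holds because $\ell(p)<\beta$. Then $q\le p$, and $q\restriction\beta$ is defined because $h[\beta]\subseteq\beta$ and $f_\beta$ maps $\beta$ into $\beta$. We claim $q\Vdash \beta\in\dot C\cap T(\vec f,d)$.

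\textbf{Verification.} We establish three facts.
\begin{enumerate}[label=(\alph*)]
\item $q\Vdash f_\beta\le^*d\restriction\beta$. This holds because $d\ge h_q$ by Lemma~\ref{lem:hechler-basic}.
\item $q\Vdash d[\beta]\subseteq\beta$.
\item $q\Vdash\beta\in\dot C$. It suffices that $q$ forces $\dot C\cap\beta$ to be unbounded in $\beta$, since $\dot C$ is closed.
\end{enumerate}

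For (c), fix $\alpha<\beta$. Every $r\le q$ with $r\restriction\beta$ defined is compatible in $\D_\beta$ with some $a_{\alpha,i}\restriction\beta$, where $i<\beta$, by (iii). Remark~\ref{rem:compatibility} transfers this to compatibility in $\D_\kappa$. Hence the set $\{a_{\alpha,i}:i<\beta\}$ is predense below $q$ among conditions whose restriction to $\beta$ is defined.

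For (b), and to make the previous paragraph apply to all extensions, we use density. The conditions $r\le q$ with $h_r[\beta]\subseteq\beta$ and $\ell(r)<\beta$ are dense below $q$, but only modulo the stem growing past $\beta$. So we instead argue generically: in $V[G]$ with $q\in G$, choose $r\in G$ with $\ell(r)<\beta$, which is possible as stems grow continuously. Then $r$ is compatible with some $a_{\alpha,i}$ for $i<\beta$. This forces the next point of $\dot C$ above $\alpha$ to be $\delta_{\alpha,i}<\beta$, by (ii).

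\textbf{Main obstacle.} The real difficulty is showing that $q$ forces $d[\beta]\subseteq\beta$, and, correspondingly, that the antichain meets $G$ at an index below $\beta$ rather than at a large one. For this I would use the genericity of $G\cap\D_\beta$ over $V$. The plan is to verify that the restriction map $r\mapsto r\restriction\beta$, defined on the dense set of conditions with bounded side functions on $\beta$, sends maximal antichains to predense sets. This uses (iii), applied to the antichains deciding the values $d(\xi)$ for $\xi<\beta$; these antichains can be included among the $A_\alpha$. Then $d\restriction\beta$ is $\D_\beta$-generic below $q\restriction\beta$, so its values lie below $\beta$. The same genericity gives $G\cap\{a_{\alpha,i}:i<\beta\}\ne\varnothing$.
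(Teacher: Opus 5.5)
There is a genuine gap. The condition $q=(s_p,h)$ you fix does not force $\beta\in\dot C\cap T(\vec f,d)$, so (b) and (c) cannot be proved for it. Its stem has length $\ell(p)<\beta$, and $h(\ell(p))<\beta$. Hence $r=(s_p\cup\{(\ell(p),\beta)\},h')$, with $h'(\ell(p))=\beta$ and $h'=h$ elsewhere, is a condition below $q$ that forces $d(\ell(p))=\beta$. So $q\not\Vdash d[\beta]\subseteq\beta$. Moreover, if $\dot C$ names the club $E_d$ of closure points of $d$, then $r\Vdash\beta\notin\dot C$, so (c) fails for this $q$ as well.

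Your generic argument for (c) also has a logical slip. That some $r\in G$ is compatible with $a_{\alpha,i}$ for an $i<\beta$ does not put $a_{\alpha,i}$ into $G$. Property (iii) only gives predensity of $\{a_{\alpha,i}:i<\beta\}$ among conditions whose restriction to $\beta$ is defined, and, as you noticed, these are not dense below $q$. A generic containing $q$ can pick up a stem value $\geq\beta$ and then meet every $A_\alpha$ only at large indices. For the same reason, the plan in your last paragraph, making $d\restriction\beta$ generic over $\D_\beta$ below $q\restriction\beta$, cannot succeed.

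The missing idea is to use (iii) to build a master condition rather than to argue about the generic. Starting from your condition as $q_0$, construct a decreasing sequence $\langle q_\alpha:\alpha<\beta\rangle$ with $\ell(q_\alpha)<\beta$ and $h_{q_\alpha}[\beta]\subseteq\beta$.
\begin{enumerate}[label=\textup{(\arabic*)}]
\item At stage $\alpha$, (iii) and Remark~\ref{rem:compatibility} give $i_\alpha<\beta$ with $q_\alpha$ compatible with $a_{\alpha,i_\alpha}$ in $\D_\kappa$.
\item Take the common extension with the longer stem and the pointwise maximum of the side functions. By (ii), it still has stem length below $\beta$ and a side function mapping $\beta$ into $\beta$.
\item Lengthen its stem past $\alpha$ by copying its own side function on the new coordinates; this gives $q_{\alpha+1}$.
\item At limits below $\beta$, take unions of stems and pointwise suprema of side functions, using the regularity of $\beta$.
\end{enumerate}
The lower bound $q$ of the whole sequence has stem of length exactly $\beta$, with values below $\beta$. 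It therefore forces $d\restriction\beta=s_q$ and $d[\beta]\subseteq\beta$, and it still forces $f_\beta\leq d$ on $[\ell(p),\beta)$. It also extends every $a_{\alpha,i_\alpha}$, so it forces each $\delta_{\alpha,i_\alpha}\in(\alpha,\beta)$ into $\dot C$. These points are cofinal in $\beta$, and closedness gives $q\Vdash\beta\in\dot C$. This is the paper's argument.
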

\begin{proof}
Work in \(V\). By Lemma~\ref{lem:hechler-closure}, names for
ordinal sequences of length less than \(\kappa\) can be decided
by a descending recursion with a lower bound. Thus \(\D_\kappa\)
adds no such sequences, preserves regularity of \(\kappa\), and
preserves the inaccessible cardinals below it.

Let \(p\in\D_\kappa\) force that \(\dot E\subseteq\kappa\)
is club. For each \(\alpha<\kappa\), every condition below
\(p\) has an extension deciding an ordinal above \(\alpha\)
that belongs to \(\dot E\), because \(p\) forces unboundedness.
Choose a maximal antichain
\(A_\alpha\) below \(p\) consisting of such conditions, enumerate
it as in Lemma~\ref{lem:reflection}, and choose
\(\delta_{\alpha,i}\) so that
\[
 \alpha<\delta_{\alpha,i}<\kappa,\qquad
 a_{\alpha,i}\Vdash\delta_{\alpha,i}\in\dot E.
\]
Choose a strongly inaccessible \(\beta\) satisfying the conclusions
of that lemma, and put \(\ell=\ell(p)\). Replace the side function
of \(p\) by
\[
 h^0(\xi)=
 \begin{cases}
 \max\{h_p(\xi),f_\beta(\xi)\},&\ell\leq\xi<\beta,\\
 h_p(\xi),&\xi<\ell\text{ or }\beta\leq\xi<\kappa.
 \end{cases}
\]
Then \(q_0=(s_p,h^0)\leq p\), and
\(h^0[\beta]\subseteq\beta\). In \(\D_\beta\), each
\(A_\alpha^\beta\), \(\alpha<\beta\), remains predense below
\(q_0\restriction\beta\).

Construct a decreasing sequence
\(\langle q_\alpha=(t_\alpha,k_\alpha):\alpha<\beta\rangle\)
with this initial condition, maintaining
\begin{equation}\label{eq:invariant}
 \dom(t_\alpha)<\beta,\qquad
 k_\alpha[\beta]\subseteq\beta,\qquad q_\alpha\leq q_0.
\end{equation}
At stage \(\alpha<\beta\), predensity gives \(i_\alpha<\beta\)
such that \(q_\alpha\restriction\beta\) is compatible with
\(a_{\alpha,i_\alpha}\restriction\beta\). By
\eqref{eq:compat}, the corresponding global conditions are
compatible as well. The longer stem and the pointwise maximum of their
side functions give a common extension with stem length below
\(\beta\) and side function mapping \(\beta\) into \(\beta\).
Extend its stem, if necessary, to
length at least \(\alpha+1\), using the same side function on
the new coordinates. Call the result \(q_{\alpha+1}\). It
satisfies \eqref{eq:invariant} and extends
\(a_{\alpha,i_\alpha}\).

At a limit \(\rho<\beta\), use the union of the previous stems
and the pointwise supremum of the previous side functions.
Regularity of \(\beta\) bounds the stem length and every value at
coordinates below \(\beta\) by \(\beta\). At coordinates at
least \(\beta\), regularity of \(\kappa\) bounds the values by
\(\kappa\). Once a coordinate belongs to a stem, its later
side-function values equal that stem value, as in
Lemma~\ref{lem:hechler-closure}; this gives normalization.
Hence the resulting condition satisfies \eqref{eq:invariant}.

Finally set
\[
 t^*=\bigcup_{\alpha<\beta}t_\alpha,\qquad
 k^*(\xi)=\sup_{\alpha<\beta}k_\alpha(\xi)
                   \quad(\xi<\kappa).
\]
The domain of \(t^*\) is exactly \(\beta\), since every stem
has length below \(\beta\) and
\(\dom(t_{\alpha+1})\geq\alpha+1\). To bound its values below
\(\beta\), fix \(\xi<\beta\) and choose \(\alpha<\beta\) with
\(\xi<\dom(t_{\alpha+1})\). The value
\(v=t_{\alpha+1}(\xi)=k_{\alpha+1}(\xi)\) is below
\(\beta\); all later side functions have this value at \(\xi\),
and all earlier ones have value at most \(v\).
Consequently
\[
 t^*(\xi)=k^*(\xi)=v<\beta.
\]
For \(\xi\geq\beta\), the supremum involves only
\(\beta<\kappa\) values, so \(k^*(\xi)<\kappa\).
Therefore \(q=(t^*,k^*)\) is a normalized condition in
\(\D_\kappa\) extending every \(q_\alpha\). It forces
\[
 d\restriction\beta=t^*,\qquad d[\beta]\subseteq\beta,
 \qquad(\forall\xi\in[\ell,\beta))\,
                       f_\beta(\xi)\leq d(\xi).
\]
Also \(q\leq a_{\alpha,i_\alpha}\) for each \(\alpha<\beta\),
so it forces all the ordinals \(\delta_{\alpha,i_\alpha}\) into
\(\dot E\). They are below \(\beta\) and cofinal in
\(\beta\), since \(\alpha<\delta_{\alpha,i_\alpha}\).
Closedness of \(\dot E\) now gives
\[
 q\Vdash\beta\in\dot E\cap T(\vec f,d).
\]
Since this construction applies below every condition forcing a
club, \(\D_\kappa\) forces \(T(\vec f,d)\) to be stationary.
\end{proof}

\begin{definition}\label{def:closure-points}
For a regular uncountable \(\kappa\) and a function \(d\) from \(\kappa\) to \(\kappa\),
put
\[
 E_d=\{\delta<\kappa:\delta>0\text{ is limit and }
                              d[\delta]\subseteq\delta\}.
\]
\end{definition}

\begin{corollary}\label{cor:capture}
Suppose that \(\kappa\) is weakly compact in \(V\), let
\(I=\Inacc^V(\kappa)\), and let
\(\langle C_\beta:\beta\in I\rangle\in V\) consist of clubs
\(C_\beta\subseteq\beta\). If \(G\subseteq\D_\kappa\) is
generic over \(V\) and \(d=d_G\), then in \(V[d]\) there are
a club \(D\subseteq\kappa\) and a stationary set
\(T\subseteq I\) such that
\[
 D\cap\beta\subseteq C_\beta\qquad(\beta\in T).
\]
In particular, every ground-model \(C\)-sequence whose domain
contains \(I\) is non-amenable in \(V[d]\).
\end{corollary}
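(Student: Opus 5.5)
The plan is to apply Theorem~\ref{thm:domination} to the next-point functions of the given clubs.

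\emph{Choice of functions.} For $\beta\in I$ define $f_\beta:\beta\to\beta$ by
\[
 f_\beta(\xi)=\min\bigl(C_\beta\setminus(\xi+1)\bigr).
\]
This lies below $\beta$ because $C_\beta$ is unbounded in $\beta$. The sequence $\vec f=\langle f_\beta:\beta\in I\rangle$ belongs to $V$.

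\emph{Stationary domination.} Theorem~\ref{thm:domination} gives that in $V[d]$ the set $T(\vec f,d)$ is stationary in $\kappa$. At each $\beta$ in this set we have $d[\beta]\subseteq\beta$, and there is $u_\beta<\beta$ with $f_\beta(\xi)\leq d(\xi)$ for all $\xi\in[u_\beta,\beta)$.

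In $V[d]$ the ordinal $\kappa$ is still regular. This is shown at the start of the proof of Theorem~\ref{thm:domination} via $<\kappa$-closure, Lemma~\ref{lem:hechler-closure}. Every $\beta\in I$ is still regular as well. Since $\beta\mapsto u_\beta$ is regressive on a stationary set, Fodor's lemma in $V[d]$ gives a stationary $T_0\subseteq T(\vec f,d)$ and a fixed $u<\kappa$ with $u_\beta=u$ on $T_0$.

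\emph{The capturing club.} Put
\[
 D=\{\delta\in E_d:\delta>u\}.
\]
This is club in $V[d]$. Let $T=T_0\setminus(u+1)$, which is still stationary.

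Fix $\beta\in T$ and $\delta\in D\cap\beta$. For every $\xi\in[u,\delta)$,
\[
 \xi<f_\beta(\xi)\leq d(\xi)<\delta,
\]
where the last inequality holds because $\delta$ is closed under $d$. So $C_\beta\cap\delta$ is unbounded in $\delta$. Since $\delta<\beta$ and $C_\beta$ is closed in $\beta$, it follows that $\delta\in C_\beta$. Hence $D\cap\beta\subseteq C_\beta$ for all $\beta\in T$.

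\emph{Non-amenability.} Let $\vec C$ be a ground-model $C$-sequence whose domain contains $I$. In $V[d]$ the set $\operatorname{Cap}(D,\vec C)$ contains the stationary set $T$. Therefore $\vec C$ is not amenable in $V[d]$.

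The only point needing care is the persistence of regularity and stationarity facts in $V[d]$, which is already recorded in the proof of Theorem~\ref{thm:domination}.
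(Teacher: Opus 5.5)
Your proposal is correct and follows the paper's proof essentially step for step: next-point functions, Theorem~\ref{thm:domination}, pressing-down to a uniform bound $u$, and the club $E_d\setminus(u+1)$ combined with closedness of $C_\beta$. The one step the paper spells out and you only assert is that $E_d$ is club in $V[d]$; this is the routine closure-point argument using the regularity of $\kappa$ in $V[d]$.
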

\begin{proof}
In \(V\), for each \(\beta\in I\) form the next-point function
\[
 n_\beta(\xi)=\min(C_\beta\setminus(\xi+1))
                              \qquad(\xi<\beta).
\]
Unboundedness gives \(\xi<n_\beta(\xi)<\beta\). Apply
Theorem~\ref{thm:domination} to these functions. In \(V[d]\),
\(\kappa\) is regular and
\[
 S=\{\beta\in I:d[\beta]\subseteq\beta
                  \ \land\ n_\beta\leq^*d\restriction\beta\}
\]
is stationary. For \(\beta\in S\), let
\(\gamma_\beta<\beta\) be the least bound witnessing
\(n_\beta\leq^*d\restriction\beta\). The pressing-down lemma, applied
in \(V[d]\), gives a stationary \(T\subseteq S\) and an
ordinal \(\gamma<\kappa\) such that
\(\gamma_\beta=\gamma\) for all \(\beta\in T\).

The set \(E_d\) is closed: if \(\delta<\kappa\) is a limit
of its members, then for each \(\xi<\delta\) there is
\(\varepsilon\in E_d\cap\delta\) with \(\xi<\varepsilon\),
giving \(d(\xi)<\varepsilon<\delta\). It is also unbounded.
Above any prescribed bound, choose a strictly increasing sequence
\(\langle\eta_n:n<\omega\rangle\) such that
\(\sup(d[\eta_n])<\eta_{n+1}\). Regularity of \(\kappa\)
permits each choice and gives
\(\delta=\sup_{n<\omega}\eta_n<\kappa\); this \(\delta\)
belongs to \(E_d\). Therefore
\(D=E_d\setminus(\gamma+1)\) is club.

Fix \(\beta\in T\) and \(\delta\in D\cap\beta\). Then
\[
 \xi<n_\beta(\xi)\leq d(\xi)<\delta
                     \qquad(\gamma\leq\xi<\delta).
\]
For every \(\eta<\delta\), choose
\(\xi\in[\max\{\eta,\gamma\},\delta)\); this is possible
because \(\delta>\gamma\) is limit. The point
\(n_\beta(\xi)\) lies in \(C_\beta\cap\delta\) and is
greater than \(\eta\). Hence \(C_\beta\cap\delta\) is
unbounded in \(\delta\), and closedness of \(C_\beta\) gives
\(\delta\in C_\beta\). This proves the asserted inclusion.
The last assertion follows from $T\subseteq\operatorname{Cap}(D,\vec C)$.
\end{proof}

\section{Hechler iterations}\label{sec:iteration}

Throughout this section \(\kappa\) is strongly inaccessible, and all
iterations and cardinalities are computed in a fixed ground universe
\(V\). We use the full Hechler orders in the intermediate universes.

The factorization, closure, chain-condition, size, localization and
stationary-set preservation facts below are recorded in
\cite[Lemmas~4.3--4.7]{Fernandes}.
Closure of iterations and their quotients is standard
\cite[Propositions~7.9 and~7.12]{Cummings}.
Earlier uses of generalized Hechler iterations include
\cite[Section~4.2]{BBFM} and \cite[Theorem~4.7]{Schuerz};
the latter follows a preparation of a strongly unfoldable cardinal
by a long Hechler iteration.
For an analogous calculation of the size of a dense presentation in a
related iteration, see \cite[Lemma~3.10]{Schuerz}.

\begin{definition}\label{def:iteration}
For \(\lambda\leq\kappa^+\), let
\(\langle\PP_\alpha,\dot{\mathbb Q}_\alpha:\alpha<\lambda\rangle\)
be the \(<\kappa\)-support iteration with
\[
 \PP_\alpha\Vdash
       \dot{\mathbb Q}_\alpha=\D_\kappa^{V[G_\alpha]}.
\]
A condition is a partial function whose domain has size less than
\(\kappa\). Its domain is its support; omitted coordinates denote the
greatest condition. We require
\(p\restriction\alpha\Vdash p(\alpha)\in\dot{\mathbb Q}_\alpha\)
at supported coordinates. The order is
\[
 q\leq p\quad\Longleftrightarrow\quad
 \supp(p)\subseteq\supp(q)\ \text{ and }\
 (\forall\alpha\in\supp(p))\,
 q\restriction\alpha\Vdash q(\alpha)\leq p(\alpha).
\]
Trivial coordinates may be inserted and remain in the domain.
At limits of cofinality less than \(\kappa\), supports may be
unbounded. These presentations are preorders; taking the quotient by
mutual extension gives partial orders.

We choose full iterand names, so that any name forced below \(r\)
to be an iterand condition has a representative in the chosen name
domain which agrees with it below \(r\), without strengthening
\(r\). Such set-sized domains exist: mix with the greatest condition
off \(r\), then represent both component graphs by names using check
names for pairs of ordinals below \(\kappa\).
\end{definition}

\begin{definition}\label{def:decided-stems}
Let \(\mathcal E_\lambda\) consist of those \(p\in\PP_\lambda\)
for which, at every \(\alpha\in\supp(p)\), there is a ground-model
stem \(s_\alpha\in{}^{<\kappa}\kappa\) such that
\[
 p\restriction\alpha\Vdash
     p(\alpha)=\langle\check s_\alpha,\dot h_\alpha\rangle
\]
for some side-function name \(\dot h_\alpha\).
\end{definition}

\begin{lemma}\label{lem:factorization}
For \(\theta\leq\lambda\leq\kappa^+\), the inclusion
\(\PP_\theta\subseteq\PP_\lambda\) is complete. If
\(\PP_\theta\) adds no ordinal sequences of length less than
\(\kappa\), its quotient is forcing equivalent, in
\(V[G_\theta]\), to the full \(<\kappa\)-support Hechler iteration on
\([\theta,\lambda)\).
\end{lemma}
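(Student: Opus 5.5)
The plan is to argue directly from the definition of the $<\kappa$-support iteration, following the standard treatment \cite[Propositions~7.9 and~7.12]{Cummings}. There are two parts. First, show that $\PP_\theta$ sits completely inside $\PP_\lambda$ via the restriction projection. Second, under the hypothesis that $\PP_\theta$ adds no short sequences, identify the quotient $\PP_\lambda/G_\theta$ with the iteration of full Hechler orders computed over $V[G_\theta]$ on the interval $[\theta,\lambda)$.

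For completeness, regard $\PP_\theta$ as the set of conditions of $\PP_\lambda$ with support contained in $\theta$. The order and incompatibility are computed the same way in both posets, since the order clauses at coordinates $\alpha<\theta$ only involve $q\restriction\alpha$. The restriction map $p\mapsto p\restriction\theta$ is order preserving. It is also a projection: if $r\leq p\restriction\theta$ in $\PP_\theta$, then $r\cup(p\restriction[\theta,\lambda))$ is a condition. Its support has size $<\kappa$, and at coordinates $\alpha\geq\theta$ the requirement $r\cup p\restriction\alpha\Vdash p(\alpha)\in\dot{\mathbb Q}_\alpha$ holds because the new condition strengthens $p\restriction\alpha$. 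It extends both $r$ and $p$. Hence every maximal antichain of $\PP_\theta$ remains maximal in $\PP_\lambda$, and the inclusion is complete.

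For the quotient, work in $V[G_\theta]$. Map each $p\in\PP_\lambda$ with $p\restriction\theta\in G_\theta$ to the tail $p\restriction[\theta,\lambda)$, reinterpreted as a $\PP_\theta$-name-evaluated sequence of names in the tail iteration. Concretely, $p(\alpha)$ is a $\PP_\alpha$-name, and $\PP_\alpha\cong\PP_\theta*\dot{\PP}_{[\theta,\alpha)}$ by induction on $\alpha$. So $p(\alpha)$ corresponds to a $\PP_{[\theta,\alpha)}$-name in $V[G_\theta]$ for an element of $\D_\kappa^{V[G_\alpha]}$. The support of the tail is a ground-model set of size $<\kappa$, so the image is a condition of the $<\kappa$-support iteration defined in $V[G_\theta]$. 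The iterands agree, because both sides use $\D_\kappa$ as computed in the same intermediate model $V[G_\theta][G_{[\theta,\alpha)}]=V[G_\alpha]$. We then check that this map is order preserving and that its range is dense, modulo the quotient by mutual extension.

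The main obstacle is density. A condition of the tail iteration built in $V[G_\theta]$ may have a support that is a new subset of $\lambda$ of size $<\kappa$, not lying in $V$, and its coordinates are names in $V[G_\theta]$. This is where the hypothesis is used. Since $\PP_\theta$ adds no ordinal sequences of length $<\kappa$, the support, enumerated in order type $<\kappa$, is in $V$. A name for it can be decided by some $r\in G_\theta$. Each coordinate name can be pulled back to a $\PP_\alpha$-name below $r$. We then use the full iterand names convention of Definition~\ref{def:iteration} to pick a representative in the chosen name domain agreeing below $r$, without strengthening $r$. The resulting $p$ with $p\restriction\theta=r$ maps to an element equivalent to the given one. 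Density arguments of this sort are carried out by induction on $\lambda$, with limits of cofinality $<\kappa$ (where supports may be unbounded) handled by the same absoluteness of short supports.
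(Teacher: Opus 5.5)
Your proposal is correct and follows the paper's route. You get completeness from the projection $r\cup(p\restr[\theta,\lambda))$, and you identify the quotient with the tail iteration by the standard recursive translation of coordinate names, using the no-short-sequences hypothesis exactly as the paper does, to see that $([\eta]^{<\kappa})^{V[G_\theta]}=([\eta]^{<\kappa})^V$ so both sides admit the same supports. One point to tighten: a forcing equivalence needs the translation to preserve incompatibility as well as order, which the standard factorization argument supplies.
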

\begin{proof}
For $p\in\PP_\lambda$ and $r\leq p\restr\theta$, let
\[
 q=r\cup(p\restr[\theta,\lambda)).
\]
Induction on the coordinates gives $q\in\PP_\lambda$,
$q\leq p$ and $q\restr\theta=r$. Thus restriction is a
projection, proving completeness of the inclusion.
The recursive translation of coordinate names identifies
\[
 \PP_\lambda/G_\theta\simeq
 \text{the }<\kappa\text{-support iteration on }[\theta,\lambda)
 \text{ in }V[G_\theta]
\]
as in \cite[Proposition~7.11]{Cummings}. Since $\PP_\theta$ adds no
ordinal sequences of length less than $\kappa$, for every ordinal $\eta$,
\[
 ([\eta]^{<\kappa})^{V[G_\theta]}=([\eta]^{<\kappa})^V.
\]
Hence the translated quotient admits exactly the supports of the
full tail iteration, also when a support is unbounded in a limit
of cofinality below $\kappa$.
\end{proof}

\begin{lemma}\label{lem:iteration}
For every \(\lambda\leq\kappa^+\), \(\PP_\lambda\) is
\(<\kappa\)-closed and \(\kappa^+\)-Knaster, and
\(\mathcal E_\lambda\) is dense. It preserves the strong
inaccessibility of \(\kappa\) and its successor. Every tail is
forcing equivalent to a \(<\kappa\)-closed order in its intermediate
universe.
\end{lemma}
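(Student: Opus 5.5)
We prove the four assertions of Lemma~\ref{lem:iteration} by a single induction on \(\lambda\leq\kappa^+\), carrying the hypotheses for \(\PP_\alpha\), \(\alpha<\lambda\). At each stage we use the fact that \(<\kappa\)-closure of \(\PP_\alpha\) implies that no ordinal sequences of length less than \(\kappa\) are added. Consequently \(\kappa\) remains strongly inaccessible in \(V[G_\alpha]\), and \(\D_\kappa^{V[G_\alpha]}\) is \(<\kappa\)-closed there by Lemma~\ref{lem:hechler-closure} and \(\kappa^+\)-cc there by Lemma~\ref{lem:hechler-cc}.

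\emph{Closure.} Given a decreasing sequence \(\langle p_i:i<\mu\rangle\) in \(\PP_\lambda\) with \(\mu<\kappa\), we form the union of the supports, which has size less than \(\kappa\) by regularity. We then define a lower bound \(p\) coordinatewise by recursion on \(\alpha\). At each coordinate, \(p\restriction\alpha\) is a lower bound of the restrictions, so it forces the sequence \(\langle p_i(\alpha)\rangle\) to be decreasing in \(\D_\kappa^{V[G_\alpha]}\). We let \(p(\alpha)\) be a name for the canonical lower bound \((\bigcup s_i,\sup h_i)\) of Lemma~\ref{lem:hechler-closure}, taken from the full name domain. This is the standard argument \cite[Proposition~7.9]{Cummings}. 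The tail claim follows the same way: by Lemma~\ref{lem:factorization} the quotient is the full tail iteration in \(V[G_\theta]\), and the same argument runs there, because \(\kappa\) is still inaccessible and \(\D_\kappa\) is still \(<\kappa\)-closed in \(V[G_\theta]\).

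\emph{Density of \(\mathcal E_\lambda\).} Given \(p\), we build a decreasing sequence \(p=q_0\geq q_1\geq\cdots\) of length \(\omega\) together with an increasing sequence of supports. At each step, we go through the coordinates of the current support in order and strengthen so as to decide the stem of \(p(\alpha)\) as a ground-model object. This is possible because stems lie in \({}^{<\kappa}\kappa\) and no new such sequences are added, using closure of the initial segment. Coordinates newly added to the support are handled in later steps by bookkeeping, so every coordinate is treated. At the limit we take the lower bound. Its stems at each coordinate are the unions of the decided stems, which are ground-model objects. One point requires care: the decision at coordinate \(\alpha\) strengthens \(p\restriction\alpha\), and this strengthening may alter earlier coordinates. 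The \(\omega\)-length interleaving absorbs this, since each coordinate gets decided cofinally often and the limit stem is the union of the decided values.

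\emph{The \(\kappa^+\)-Knaster property.} This is the main obstacle. Given \(\kappa^+\) conditions, we first refine them to \(\mathcal E_\lambda\). We apply the \(\Delta\)-system lemma to the supports, which is valid because \(\kappa^{<\kappa}=\kappa\), and obtain a root \(r\) of size less than \(\kappa\). The number of possible functions \(r\to{}^{<\kappa}\kappa\) assigning stems is \(\kappa^{<\kappa}=\kappa\), so we may pass to \(\kappa^+\) many conditions with the same stems on \(r\). Any two of these are compatible. We define a common extension by recursion on coordinates. On the root, we use the shared stem together with a name for the pointwise maximum of the two side functions; this is a condition because both side-function names extend the same stem. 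Off the root, we copy whichever condition has that coordinate in its support.

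\emph{Preservation.} Preservation of the inaccessibility of \(\kappa\) follows from closure, and preservation of \(\kappa^+\) follows from the chain condition.
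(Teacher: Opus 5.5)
Your proposal is correct and follows the paper's proof essentially step by step: coordinatewise Hechler lower bounds for closure (transferred to tails via Lemma~\ref{lem:factorization}), an $\omega$-length stem-deciding recursion whose lower bound has ground-model stems, and a $\Delta$-system with uniformized root stems and pointwise maxima of side functions for the Knaster property. The only slip is wording: at step $n+1$ you must decide the stems of the current condition $q_n(\alpha)$, not the original $p(\alpha)$, and your limit argument in fact uses the former.
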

\begin{proof}
Let $\langle p_i:i<\rho\rangle$ be decreasing, with
$\rho<\kappa$, and put $S=\bigcup_{i<\rho}\supp(p_i)$.
Then $|S|<\kappa$. Construct $q$ on $S$ by recursion. Once
$q\restr\alpha$ is defined and below every $p_i\restr\alpha$,
choose $q(\alpha)$ so that
\[
 q\restr\alpha\Vdash
 q(\alpha)=\left(\bigcup_{i<\rho}s_{p_i(\alpha)},
                  \xi\mapsto\sup_{i<\rho}h_{p_i(\alpha)}(\xi)\right).
\]
Omitted coordinates denote the greatest condition. The maximum
principle supplies the name; Lemma~\ref{lem:hechler-closure}
proves normalization and $q\leq p_i$. Consequently
$\PP_\lambda$ is $<\kappa$-closed, adds no ordinal sequences
of length $<\kappa$, and preserves inaccessibility of $\kappa$.

For density of $\mathcal E_\lambda$, construct
$p_{n+1}\leq p_n$ so that
\[
 p_{n+1}\restr\alpha\Vdash s_{p_n(\alpha)}=\check s_{n,\alpha}
                      \qquad(\alpha\in\supp(p_n)).
\]
Each stage requires fewer than $\kappa$ strengthenings in the
corresponding initial segments; use the closure just proved at
limits. For the coordinatewise lower bound $q$ of this
$\omega$-sequence,
\[
 s_{q(\alpha)}=\bigcup\{s_{n,\alpha}:\alpha\in\supp(p_n)\}
\]
is a ground-model stem. Thus $q\in\mathcal E_\lambda$ and
$q\leq p_0$.

Given $\kappa^+$ conditions, first strengthen them into
$\mathcal E_\lambda$. The generalized $\Delta$-system lemma
and $\kappa^{<\kappa}=\kappa$ give $I\subseteq\kappa^+$,
$|I|=\kappa^+$, and a common root $R$ such that
\[
 \supp(p_i)\cap\supp(p_j)=R,\qquad
 s_{p_i(\alpha)}=s_{p_j(\alpha)}\quad(\alpha\in R,\ i,j\in I).
\]
For $i,j\in I$, define a common extension on the union of the
supports. At a coordinate outside $R$, use the unique entry.
At $\alpha\in R$, retain the common stem and use the name
\[
 h_{q(\alpha)}(\xi)=
       \max\{h_{p_i(\alpha)}(\xi),h_{p_j(\alpha)}(\xi)\}.
\]
The earlier restriction forces normalization and extension of
both entries. Thus $p_i,p_j$ are compatible, proving the
$\kappa^+$-Knaster property and preservation of $\kappa^+$.
Lemma~\ref{lem:factorization} identifies each quotient with
a full tail iteration, to which the same closure argument applies.
\end{proof}

\begin{lemma}\label{lem:iteration-size}
For \(\theta\leq\kappa^+\), the order \(\PP_\theta\) has a
\(<\kappa\)-closed dense suborder of size at most \(2^\kappa\).
\end{lemma}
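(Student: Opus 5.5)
We plan to prove, by induction on \(\theta\leq\kappa^+\), that a suitable subfamily of \(\mathcal E_\theta\) is dense and closed under the lower bounds of Lemma~\ref{lem:iteration}. The conditions in this subfamily will have \emph{nice} coordinate names, and there will be at most \(2^\kappa\) of them. Fix \(\theta\). By Lemma~\ref{lem:iteration}, \(\PP_\alpha\) is \(\kappa^+\)-cc for every \(\alpha\leq\theta\), and \(\kappa^{<\kappa}=\kappa\). The key observation is that a side-function name \(\dot h\) for a function \(\kappa\to\kappa\) in \(V[G_\alpha]\) can be replaced, below a given condition, by a nice name. Such a name is determined by choosing, for each of the \(\kappa\) pairs \((\xi,\eta)\in\kappa\times\kappa\), an antichain of \(\PP_\alpha\) of size at most \(\kappa\) deciding whether \(\dot h(\xi)=\eta\).

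For the induction we let \(\mathcal N_\alpha\subseteq\PP_\alpha\) be the set of conditions \(p\in\mathcal E_\alpha\) satisfying two requirements. First, at each \(\beta\in\supp(p)\), the entry \(p(\beta)\) is \(\langle\check s_\beta,\dot h_\beta\rangle\) with \(\dot h_\beta\) a nice name built from antichains contained in \(\mathcal N_\beta\). Second, as in Definition~\ref{def:iteration}, we use the full-name representation, mixing with the greatest condition off \(p\restriction\beta\). We then check three things by recursion on \(\alpha\).

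First, \(\mathcal N_\alpha\) is dense. Given \(p\), we first use density of \(\mathcal E_\alpha\). At each supported coordinate \(\beta\), we replace \(\dot h_\beta\) by an equivalent nice name whose antichains are refined into the inductively dense \(\mathcal N_\beta\). This is done by maximal antichains inside the dense set, which are of size at most \(\kappa\) by the chain condition. The replacement forces the same value below \(p\restriction\beta\), so the new condition is equivalent to \(p\), not merely below it. Second, \(\mathcal N_\alpha\) is \(<\kappa\)-closed. The lower bound in Lemma~\ref{lem:iteration} has stem the union of ground stems, which is again a ground stem. Its side function is the supremum of fewer than \(\kappa\) nice names. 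We must show that this supremum can be rewritten as a nice name with antichains in \(\mathcal N_\beta\) without strengthening the bound. For this we apply the density step at coordinate \(\beta\) to the name for the supremum, which yields an equivalent condition, and hence one that is still a lower bound. Third, we count. By induction, \(|\mathcal N_\beta|\leq2^\kappa\) for \(\beta<\theta\). The number of nice names over \(\mathcal N_\beta\) is then at most \(((2^\kappa)^{\kappa})^{\kappa}=2^\kappa\). A condition is a function with support of size \(<\kappa\) in \(\theta\leq\kappa^+\), whose values are pairs consisting of a stem and a nice name. There are \((\kappa^+)^{<\kappa}\cdot(\kappa\cdot2^\kappa)^{<\kappa}=2^\kappa\) such functions.

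We expect the main obstacle to be uniformity at limit \(\theta\), together with the fact that \(\mathcal N_\beta\) must be fixed for each \(\beta\) once and for all. The coordinate-\(\beta\) names used in \(\mathcal N_\alpha\) must refer only to \(\mathcal N_\beta\) and not to \(\mathcal N_\alpha\). This is why \(\mathcal N_\alpha\) is defined coherently as \(\{p:p\restriction\beta\in\mathcal N_\beta\text{ and } p(\beta)\text{ is nice over }\mathcal N_\beta\}\), which makes \(\mathcal N_\beta=\mathcal N_\alpha\cap\PP_\beta\). With this choice, limits of cofinality \(<\kappa\) with unbounded supports cause no new difficulty. One could instead try to quotient by mutual extension so that equivalent conditions coincide. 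We plan to avoid that and to treat the structure only as a preorder, since the statement asks only for a dense suborder.
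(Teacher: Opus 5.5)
Your proposal is correct and follows essentially the same route as the paper. Both build recursively defined dense suborders whose coordinate names are nice names over the earlier suborders, prove density by deciding stems and replacing each side-function name by a nice representative, count via $(2^\kappa)^\kappa=2^\kappa$, and get closure by taking the lower bound from Lemma~\ref{lem:iteration} and moving it back into the suborder (you insist this replacement be equivalent to the lower bound, whereas the paper simply strengthens it, which suffices since any extension of a lower bound is again a lower bound).
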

\begin{proof}
Put \(\mu=2^\kappa\). Then
\(\mu^\kappa=\mu^{<\kappa}=\mu\) and \(\kappa^+\leq\mu\).
Recursively choose dense sets \(D_\alpha\) of size at most
\(\mu\) and families \(\mathcal H_\alpha\) of coordinate
names. Over \(D_\alpha\), a nice name for a function from
\(\kappa\) to \(\kappa\) is given by \(\kappa\) antichains,
each of size at most \(\kappa\), and their decided values. There
are at most \(\mu^\kappa=\mu\) such names. Together with the
\(\kappa\) possible decided stems, these supply
\(\mathcal H_\alpha\) of size at most \(\mu\). Use names
globally forced to be normalized conditions with the prescribed
stem, including the greatest condition.

At stage \(\eta\), let \(D_\eta\) contain the conditions whose
coordinate names belong to these families. Counting supports and
names gives
\[
 |D_\eta|\leq|[\eta]^{<\kappa}|\,\mu^{<\kappa}\leq\mu.
\]
Given a condition, first decide all its stems. At each coordinate,
mix its name below the initial restriction with a fixed normalized
condition having the same stem elsewhere. Replace the mixed name
by its representative in \(\mathcal H_\alpha\). Recursion on
coordinates gives a stronger member of \(D_\eta\), proving
density. Finally, a short decreasing sequence in \(D_\theta\)
has a lower bound in \(\PP_\theta\), which can be strengthened
into \(D_\theta\). Thus the dense suborder is itself closed.
\end{proof}

\begin{lemma}\label{lem:localization}
Every subset of \(\kappa\), and every \(C\)-sequence on
\(\kappa\), in a \(\PP_{\kappa^+}\)-extension belongs to a
bounded intermediate extension.
\end{lemma}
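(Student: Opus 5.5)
The plan is the standard chain-condition argument for localizing small objects in a long iteration of length $\kappa^+$ with supports of size less than $\kappa$. Let $G$ be $\PP_{\kappa^+}$-generic over $V$. First I reduce the $C$-sequence case to the subset case. By Lemma~\ref{lem:iteration}, $\kappa$ remains strongly inaccessible in $V[G]$, so a $C$-sequence $\langle C_\beta:\beta<\kappa\rangle$ on $\kappa$ is coded by the single set
\[
 Z=\{\langle\beta,\gamma\rangle:\gamma\in C_\beta\}\subseteq\kappa,
\]
formed with a ground-model pairing function $\kappa\times\kappa\to\kappa$. Decoding is absolute between $V[G_\theta]$ and $V[G]$, and being club in $\beta$ is absolute upward. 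So if $Z\in V[G_\theta]$, then the sequence lies in $V[G_\theta]$ and is a $C$-sequence there. It therefore suffices to treat $X\subseteq\kappa$.

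Fix a $\PP_{\kappa^+}$-name $\dot X$ for a subset of $\kappa$, and replace it by a nice name. For each $\xi<\kappa$, choose a maximal antichain $A_\xi\subseteq\PP_{\kappa^+}$ of conditions deciding whether $\check\xi\in\dot X$. By the $\kappa^+$-Knaster property of Lemma~\ref{lem:iteration}, each $|A_\xi|\leq\kappa$. Every condition has support of size less than $\kappa$. Hence the set
\[
 S=\bigcup_{\xi<\kappa}\bigcup_{a\in A_\xi}\supp(a)
\]
has size at most $\kappa$, so $S\subseteq\theta$ for some $\theta<\kappa^+$, by regularity of $\kappa^+$ in $V$.

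Every condition in $\bigcup_\xi A_\xi$ then lies in $\PP_\theta$, since its support is below $\theta$. The key point is that each $A_\xi$, which is maximal in $\PP_{\kappa^+}$, is also a maximal antichain of $\PP_\theta$. Let $r\in\PP_\theta$. Viewed in $\PP_{\kappa^+}$, $r$ is compatible with some $a\in A_\xi$ through a common extension $q$. Then $q\restr\theta$ is a common extension of $r$ and $a$ in $\PP_\theta$. This uses that restriction to $\theta$ is the projection from Lemma~\ref{lem:factorization}, and that $a\restr\theta=a$.

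The name $\dot X'=\{(\check\xi,a):a\in A_\xi,\ a\Vdash\check\xi\in\dot X\}$ is therefore a $\PP_\theta$-name. Since $G\cap A_\xi$ is a singleton $\{a\}$ with $a\in G_\theta=G\cap\PP_\theta$, we get $\dot X^G=\dot X'^{G_\theta}\in V[G_\theta]$.

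The main obstacle I expect is the bookkeeping identification of $\PP_\theta$ as a complete suborder, with the generic $G_\theta=G\cap\PP_\theta$. The coordinate names of conditions supported below $\theta$ are $\PP_\alpha$-names for $\alpha<\theta$, so the two readings agree. Lemma~\ref{lem:factorization} supplies completeness of the inclusion. Compatibility computed in $\PP_\theta$ versus $\PP_{\kappa^+}$ is handled by the projection property, as above.
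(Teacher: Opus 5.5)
Your proposal is correct and follows the paper's route: nice names built from maximal antichains of size at most $\kappa$ (by the chain condition), a bound $\theta<\kappa^+$ on the union of their supports, and coding the $C$-sequence as a subset of $\kappa$ through a ground-model pairing function. One small correction: for the $C$-sequence you need each $C_\beta$, club in $\beta$ in $V[G]$, to remain club in $V[G_\theta]$, which is the \emph{downward} direction, not ``upward''. This holds because closedness and unboundedness of a fixed set are $\Delta_0$ and hence absolute in both directions, as the paper notes.
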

\begin{proof}
For a name $\dot X$ for a subset of $\kappa$, choose maximal
antichains $A_\xi$ deciding $\xi\in\dot X$, for $\xi<\kappa$.
The chain condition gives $|A_\xi|\leq\kappa$. Therefore
\[
 S=\bigcup\{\supp(p):\xi<\kappa,\ p\in A_\xi\},\qquad
 |S|\leq\kappa,\qquad \theta=\sup(S)+1<\kappa^+.
\]
Every condition in each $A_\xi$ is a $\PP_\theta$-condition,
including all dependencies of its coordinate names. Hence the
nice name
\[
 \{(\check\xi,p):\xi<\kappa,\ p\in A_\xi,
                         \ p\Vdash\xi\in\dot X\}
\]
is a $\PP_\theta$-name for $X$. For a $C$-sequence, apply the
same argument to its domain and the relation
$\{(\beta,\alpha):\alpha\in C_\beta\}$, coded by a
ground-model bijection $\kappa\times\kappa\to\kappa$.
Closedness and unboundedness of each fixed entry are absolute
between the intermediate and final extensions.
\end{proof}

\begin{lemma}\label{lem:stationarity}
If \(\kappa\) is regular uncountable, every \(<\kappa\)-closed
forcing preserves every stationary subset of \(\kappa\).
\end{lemma}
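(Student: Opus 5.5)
The plan is the standard argument that a club in the extension contains a ground-model club modulo a small set, and I will run it by a fusion of length $\omega$. Let $\mathbb P$ be $<\kappa$-closed, let $S\subseteq\kappa$ be stationary in $V$, and suppose toward a contradiction that some $p\in\mathbb P$ forces $\dot E$ to be a club in $\kappa$ disjoint from $\check S$. First I note that $\mathbb P$ adds no new sequences of ordinals of length less than $\kappa$: decreasing sequences of length $<\kappa$ have lower bounds, so any name for such a sequence can be decided coordinatewise. Hence $\kappa$ stays regular, and the relevant closure points below are computed in $V$.

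The main step builds, in $V$, a continuous decreasing sequence $\langle p_i:i<\kappa\rangle$ below $p$ together with an increasing continuous sequence of ordinals $\langle\gamma_i:i<\kappa\rangle$. The condition $p_{i+1}$ will decide some element $\varepsilon_i\in\dot E$ with $\gamma_i<\varepsilon_i<\gamma_{i+1}$. For this I use that $p_i$ forces $\dot E$ unbounded, so I can extend $p_i$ to decide an element of $\dot E$ above $\gamma_i$, and then set $\gamma_{i+1}$ above that element. At limit $i<\kappa$, $<\kappa$-closure supplies a lower bound $p_i$, and I set $\gamma_i=\sup_{j<i}\gamma_j$; this is where closure is used, and regularity of $\kappa$ keeps $\gamma_i<\kappa$. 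The set $\{\gamma_i:i<\kappa\}$ is then a club in $V$.

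Since $S$ is stationary in $V$, I pick a limit $i$ with $\gamma_i\in S$. The condition $p_i$ lies below every $p_{j+1}$ with $j<i$, so it forces $\varepsilon_j\in\dot E$ for all $j<i$. These ordinals are cofinal in $\gamma_i$, so closedness of $\dot E$ gives $p_i\Vdash\gamma_i\in\dot E\cap\check S$, a contradiction. I expect no real obstacle. The only delicate point is the limit stage, and it is handled exactly because each limit has length $i<\kappa$ and $\mathbb P$ is $<\kappa$-closed (not merely $\kappa$-strategically closed). If the order in an application is only a preorder or is given by a dense subset, as in Lemma~\ref{lem:iteration-size}, I would apply the argument to that closed dense suborder, which yields the same forcing extension.
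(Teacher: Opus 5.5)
Your proof is correct and is essentially the paper's argument. Both build a descending recursion of length $\kappa$ below $p$, take lower bounds at limit stages $i<\kappa$, force a new point of $\dot E$ above the previous bound at each successor step, and finish by choosing a point of $S$ in a ground-model club of limit values; your $\{\gamma_i:i<\kappa\text{ limit}\}$ plays the role of the paper's closure club $B$.

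Only your opening framing should be corrected. The recursion has length $\kappa$, not $\omega$. It also does not show that the new club almost contains a ground-model club. That claim fails, for instance, when shooting a club through $\kappa$ with closed bounded conditions, which is $<\kappa$-closed. What the argument shows is that every club name can be forced to meet $\check S$.
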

\begin{proof}
Let $S\subseteq\kappa$ be stationary and
$p\Vdash\dot C\subseteq\kappa$ is club. Recursively choose
\[
 p_{\alpha+1}\leq p_\alpha\leq p,\qquad
 \alpha<\gamma_\alpha<\kappa,\qquad
 p_{\alpha+1}\Vdash\gamma_\alpha\in\dot C,
\]
taking lower bounds at limit stages below $\kappa$.
The set
\[
 B=\{\delta<\kappa:\delta>0\text{ is limit and }
                  (\forall\alpha<\delta)\,\gamma_\alpha<\delta\}
\]
is club. Choose $\delta\in S\cap B$. Then
$\sup_{\alpha<\delta}\gamma_\alpha=\delta$ and
\[
 p_\delta\Vdash
 \{\gamma_\alpha:\alpha<\delta\}\subseteq\dot C\cap\delta
 \quad\Longrightarrow\quad\delta\in\dot C\cap\check S.
\]
Thus every name for a club is forced to meet $S$.
\end{proof}

\providecommand{\InfH}{\mathsf H}
\providecommand{\InfPrep}{\mathbb S}

\section{The consistency upper bound}
\label{sec:upper}\label{up:section}

We prepare $\kappa$ so that every proper initial segment of the full
Hechler iteration of length $\kappa^+$ preserves weak compactness.
For each $\theta<\kappa^+$, we obtain the required correctness for
$\EE_\theta$ from indescribability at a level below $\kappa^+$,
using the embedding lemma proved below.

A \(\kappa\)-model below is a transitive \(M\models\ZFCc\) with
\(\kappa\in M\), \(|M|=\kappa\), and
\({}^{<\kappa}M\subseteq M\) externally.
The well-ordering principle is included in \(\ZFCc\).
At an inaccessible \(\kappa\), such a model contains every element
of \(V_\kappa\), by induction and closure under sequences of length less than $\kappa$.
It also contains \(V_\kappa\) as a set, since Collection and
Replacement collect the actual \(V_\alpha\)'s for \(\alpha<\kappa\).
It correctly regards \(\kappa\) as inaccessible.

Use well-founded relational syntax graphs with at most \(\kappa\)
nodes, finite labels, and conjunctions and disjunctions with at most
\(\kappa\) immediate subformulas.
We use node occurrences: a shared graph can be unfolded into finite
root-to-node paths without exceeding \(\kappa\) nodes.
Branches are finite, but the graph rank can be any ordinal below
\(\kappa^+\).
There are finitely many variables requiring an assignment at the root,
of either order, and finitely many second-order parameters.
Definition 3.1 in \cite{CodyHigher} permits any number of first-order parameters;
these are fixed constant labels, not variables requiring assignments.
Each finite leaf uses finitely many constants. Since each node has
only finitely many quantifier ancestors, its variable assignments
are finite as well. Fixed second-order parameters can be replaced
by additional free predicate variables from the outset; correctness
for all assignments then includes their original values.

Distinguish the relational graph \(S\) from its restriction data \(B\).
The graph has nodes indexed by a subset of \(\kappa\), a root,
connective and variable labels, and local codes for constants.
The data \(B\) include node enumerations, tupling, and the bijections
\(b_{\kappa,\rho}:\kappa\to\rho\) for all limit ranks
\(\rho\geq\kappa\) used by the graph, with the local index maps.
Both have hereditary size at most \(\kappa\).
High ordinal rank labels belong to \(B\); they are not relational
labels required to be fixed by a collapse.

\begin{lemma}\label{up:graphcut}
Fix a formula \(S\), its local restriction data, and codes
by \(\kappa\) for its node occurrences.
There is a club \(D\subseteq\kappa\) such that, for every uncountable
regular \(\alpha\in D\), the graph of \(S|^\kappa_\alpha\) is the
induced graph on node codes below \(\alpha\), with ranks reindexed
by the canonical collapses.
\end{lemma}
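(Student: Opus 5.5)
The plan is to realize the restriction as the transitive collapse of a Skolem hull, and to read off the graph of the collapsed formula directly from that hull. Code the graph \(S\), the restriction data \(B\) and the node coding \(c:\mathrm{nodes}\to\kappa\) by one predicate \(R\subseteq\kappa\), via the coding \eqref{lc:hereditary-code}. Fix a transitive \(N\) of size \(\kappa\) containing \(V_\kappa\), \(S\), \(B\) and \(c\). Let \(D\) be the club of \(\alpha<\kappa\) such that
\[
 H_\alpha=\Hull^N(\alpha\cup\{S,B,c,\kappa\}),\qquad H_\alpha\cap\kappa=\alpha,\qquad H_\alpha\cap V_\kappa=V_\alpha,
\]
intersected with the clubs on which the restriction of \(S\) is defined, as stated in Section~\ref{sec:notation}. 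That this set is club follows from \((*)\) together with the closure argument in the proof of Lemma~\ref{lc:strong-composition}, which applies verbatim outside \(L\) with an enumeration of \(V_\kappa\) in place of \(<_L\).

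First I identify the nodes. Since \(c\in H_\alpha\) and \(H_\alpha\cap\kappa=\alpha\), a node \(x\) lies in \(H_\alpha\) exactly when \(c(x)<\alpha\). The collapse \(\pi_\alpha\) therefore carries the nodes of \(H_\alpha\) bijectively onto the nodes of \(\pi_\alpha(S)\), and it fixes their codes. Edges, connective labels, variable labels and the root are given by relations in \(H_\alpha\), so they are transported unchanged. Local codes of constants are ordinals below \(\alpha\) or elements of \(V_\alpha\), hence are fixed by \(\pi_\alpha\). Consequently \(\pi_\alpha(S)\) is exactly the induced graph on node codes below \(\alpha\).

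Next I identify the ranks. The rank labels live in \(B\). A rank \(\rho<\kappa\) that occurs in \(H_\alpha\) is below \(\alpha\) and is fixed. A limit rank \(\rho\geq\kappa\) is handled by Lemma~\ref{lc:canonical-collapse}, whose proof does not use \(V=L\) once the bijections \(b_{\kappa,\rho}\) are taken from \(B\). It gives \(\pi_\alpha(\rho)=f^\kappa_\rho(\alpha)\), and the immediate subformulas indexed by \(H_\alpha\cap\rho=b_{\kappa,\rho}[\alpha]\) are reindexed by \(\pi^\kappa_{\rho,\alpha}\). This matches the limit clause defining \(\varphi|^\kappa_\alpha\), so the ranks are reindexed by the canonical collapses, as the lemma requires.

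It remains to show that the collapse coincides with the syntactic restriction. The idea is to repeat the induction on the subformula relation from Lemma~\ref{lc:strong-composition}: predicates \(A\) are sent to \(A\cap V_\alpha\), first-order parameters are fixed, and quantifier blocks commute with \(\pi_\alpha\). This yields \(\pi_\alpha(S)=S|^\kappa_\alpha\) at regular \(\alpha\in D\).

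I expect the main obstacle to be the interaction between the unfolded node occurrences and the limit clause. The restriction at a limit node keeps exactly the constituents whose indices lie in \(F^\kappa_\rho(\alpha)\). The induced subgraph, by contrast, keeps the constituents whose node codes lie below \(\alpha\). These two selections need not agree for an arbitrary coding. To make them agree, I would build into \(D\) closure under the function sending an index \(i<\kappa\) to the code of the \(b_{\kappa,\rho}(i)\)-th child, and under its inverse, uniformly over the \(\kappa\) many nodes. This can be done by a single diagonal intersection over the node codes, which is possible because there are only \(\kappa\) node occurrences. A similar diagonal closure ensures that every child of a node with code below \(\alpha\) whose index lies in \(F^\kappa_\rho(\alpha)\) has code below \(\alpha\), and conversely.
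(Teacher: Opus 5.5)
Your argument is correct, but it follows a different route from the paper's.

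\textbf{The paper's route.} The paper never forms a hull. It packs the following into one finitary function $F$ on $\kappa$:
\begin{itemize}
\item the maps $(n,i)\mapsto c_n(b_{\kappa,\rho_n}(i))$ and $(n,c_n(\zeta))\mapsto b_{\kappa,\rho_n}^{-1}(\zeta)$;
\item the predecessor, matrix, label and constant-index maps;
\item bounds for the subformulas of rank below $\kappa$, which have size less than $\kappa$.
\end{itemize}
Its closure points $D_0$ give directly, for $n<\alpha$, that $c_n(\zeta)<\alpha$ if and only if $\zeta\in F^\kappa_{\rho_n}(\alpha)$. A diagonal intersection $\triangle_{n<\kappa}D_n$ of clubs witnessing \cite[Lemmas~2.7 and~2.10]{CodyHigher} at each node then supplies the index identities $f^\kappa_\zeta(\alpha)=\pi^\kappa_{\rho_n,\alpha}(\zeta)$. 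An induction on the subformula relation finishes the proof.

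\textbf{Your route.} You identify $S|^\kappa_\alpha$ with the collapse $\pi_\alpha(S)$. All the index and rank identities then come for free from $\pi_\alpha(\zeta)=\otp(H_\alpha\cap\zeta)$ and $H_\alpha\cap\rho=b_{\kappa,\rho}[\alpha]$, with no appeal to Cody's club lemmas. Successor ranks above $\kappa$ are also handled automatically, by $\pi_\alpha(\zeta+k)=\pi_\alpha(\zeta)+k$.

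\textbf{Trade-offs.} Your version costs an ambient transitive model, Skolem functions, and the extra requirement $H_\alpha\cap V_\kappa=V_\alpha$, which uses the inaccessibility of $\kappa$. The paper's argument is purely combinatorial on the coding and needs none of this. In return, your version exhibits the restricted syntax as the collapse of the original. That is the form in which restrictions reappear in the proof of Lemma~\ref{up:hauser}, where the reflected sentence must match the collapsed diagram.

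\textbf{Your last paragraph.} The obstacle you anticipate does not actually arise in your framework. Node occurrences have a unique parent and a unique index, so $c_n$ and $c_n^{-1}$ are definable from $S$ and $n$. Together with $b_{\kappa,\rho_n}\in H_\alpha$, the single condition $H_\alpha\cap\kappa=\alpha$ already forces $c_n(\zeta)<\alpha\iff\zeta\in H_\alpha\cap\rho_n=F^\kappa_{\rho_n}(\alpha)$. The same reasoning, via a definable bound on node codes, puts every node of a rank-below-$\kappa$ subformula below $\alpha$. The extra diagonal closure you propose is therefore harmless but redundant; it is exactly the paper's club $D_0$.
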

\begin{proof}
Let $I\subseteq\kappa$ index the occurrences of subformulas.
For a conjunction or disjunction at $n\in I$ of complexity
$\rho_n\geq\kappa$, let $c_n(\zeta)$ be the index of its
$\zeta$th constituent. Include the maps
\[
 (n,i)\longmapsto c_n(b_{\kappa,\rho_n}(i)),\qquad
 (n,c_n(\zeta))\longmapsto b_{\kappa,\rho_n}^{-1}(\zeta).
\]
Also include the maps for immediate predecessors, quantifier
matrices, variable labels and constant indices. For each
subformula of complexity $\rho_n<\kappa$, its syntax and
first-order parameters have size less than $\kappa$; include
bounds for their indices, parameter ranks and $\rho_n+1$.
Combine these maps into one finitary function $F$ on $\kappa$.
Then
\[
 D_0=\{\alpha<\kappa:F''\alpha^{<\omega}\subseteq\alpha\}
\]
is club, and for $\alpha\in D_0$ and $n<\alpha$,
\[
 c_n(\zeta)<\alpha\quad\Longleftrightarrow\quad
                \zeta\in F^\kappa_{\rho_n}(\alpha).
\]
For each $n$, let $D_n$ ensure the identities of
\cite[Lemmas~2.7 and 2.10]{CodyHigher}. In particular,
\[
 F^\kappa_\zeta(\alpha)
        =F^\kappa_{\rho_n}(\alpha)\cap\zeta,\qquad
 f^\kappa_\zeta(\alpha)
        =\pi^\kappa_{\rho_n,\alpha}(\zeta)
\]
when $\zeta\in F^\kappa_{\rho_n}(\alpha)$ at a limit
complexity. Set $D=D_0\cap\triangle_{n<\kappa}D_n$.
Induction on the subformula relation identifies the restricted
syntax with the induced graph on $I\cap\alpha$. Closure under
immediate predecessors ensures that every such occurrence
has all its predecessors present.
\end{proof}

\begin{lemma}\label{up:normalization}
Every well-founded second-order syntax graph of size at most
\(\kappa\), with finitely many free variables and second-order
parameters, has a logically equivalent hierarchy normal form of level
less than \(\kappa^+\).
There is a tuple of normalization data of hereditary size at most
\(\kappa\), including rank functions and enumerations for the syntax
graphs and their ordinal bounds.
The equivalence holds in full semantics and in the internal
semantics of a model with Pairing and Separation, using its internal
collection of predicates.
\end{lemma}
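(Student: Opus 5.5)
The plan is to construct the normal form by recursion on the rank of the syntax graph. Fix the rank function $r:S\to\rho$ with $\rho<\kappa^+$; $\rho<\kappa^+$ because the graph has at most $\kappa$ nodes. For each node $n$ we define a formula $\varphi_n$ together with a level $\ell(n)<\kappa^+$ so that $\varphi_n\in\Pi^1_{\ell(n)}$. The free variables of $\varphi_n$ are the finitely many variables assigned along the root-to-node path, and $\varphi_n$ is equivalent to the subformula at $n$.

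First we push negations to the leaves (negation normal form), dualizing conjunctions, disjunctions and quantifiers. This preserves the rank and keeps the node set of size $\kappa$. First-order quantifiers over a non-first-order matrix, and mixed blocks, are handled by keeping first-order quantifiers inside the matrix. At a second-order node we simply pass to the next level; at a node of finite first-order complexity we stay at level $0$.

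The recursion proceeds node by node.
\begin{itemize}
\item At a leaf, take level $0$.
\item At a second-order universal quantifier over a matrix of level $\ell$, first regard the matrix as $\Sigma^1_{\ell+1}$ by adding a vacuous existential quantifier. The node then gets level $\ell+2$, which lies in $\Pi^1_{\ell+2}$. The existential case is dual.
\item At a conjunction $\bigwedge_{i\in I}\psi_i$ with $|I|\le\kappa$, let $\mu=\sup_i(\ell(i)+1)$ and take a limit $\lambda\ge\mu$, for instance $\mu+\omega$, with $\lambda<\kappa^+$. We then reindex the conjunction to have exactly $\lambda$ constituents, indexed so that constituent $\zeta$ lies in $\Pi^1_\zeta$. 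To do this, choose an injection $e:I\to\lambda$ with $e(i)\ge\ell(i)$, which is possible since $|\lambda|\le\kappa$ and the intervals $[\ell(i),\lambda)$ have size $\kappa$ when $\lambda\geq\kappa$. Any $\Pi^1_{\ell(i)}$ formula is equivalent to a $\Pi^1_\zeta$ formula for $\zeta\ge\ell(i)$ by padding with vacuous quantifiers and trivial conjunctions. The unused indices are filled with $\top$ at the correct level. Disjunctions are treated dually.
\end{itemize}
The levels are bounded by $\kappa^+$ by regularity of $\kappa^+$, since each node has at most $\kappa$ children.

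The normalization data consist of the rank function, the maps $e$, the padding recipes, the enumerations of the children, and the chosen limits $\lambda_n$. Each item has hereditary size $\le\kappa$, and there are $\kappa$ nodes, so the whole tuple does as well.

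Equivalence is proved by induction on rank. Each step uses only the following: vacuous quantification, adding the tautology $\top$ and reindexing a conjunction are equivalences, and negation pushing obeys De Morgan dualities. These hold for any collection of predicates over which second-order quantifiers range, provided it is nonempty. In the internal semantics of a model with Pairing and Separation, the collection contains at least $\varnothing$, and the dual clauses $\neg\forall X\leftrightarrow\exists X\neg$ are just the definitions used in Lemma~\ref{lc:set-satisfaction}. So the argument uses no comprehension beyond what that model provides.

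The main obstacle I expect is the padding bookkeeping at limit levels. A padded constituent must lie literally in $\Pi^1_\zeta$ for the chosen $\zeta$, and the finitely-many-free-variables condition must hold at every node. For padding across a limit gap, I would first try an explicit recursion $\mathrm{pad}_{\ell\to\zeta}$. At successors it adds a vacuous quantifier pair. At limits $\zeta$ it forms $\bigwedge_{\eta<\zeta}$ whose constituents are $\mathrm{pad}_{\ell\to\eta}$ for $\eta\ge\ell$ and $\top$ padded to level $\eta$ otherwise. This recipe is part of the normalization data.
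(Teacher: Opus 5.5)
\textbf{Main gap: first-order quantifiers over non-first-order subformulas.} In the hierarchy of Section~\ref{sec:notation}, first-order quantifiers occur only inside the level-$0$ leaves. There is no clause forming $\forall x\,\varphi$ from $\varphi\in\Pi^1_\eta\cup\Sigma^1_\eta$ with $\eta\geq 1$. Such nodes do occur: Lemma~\ref{up:finitewrap} applies first-order quantifiers directly to copies of $S$.

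Your phrase ``keeping first-order quantifiers inside the matrix'' is not a construction. Formulas such as $\forall x\,\exists X\,\chi(x,X)$ or $\forall x\,\bigvee_i\varphi_i(x)$ cannot be rewritten by moving $\forall x$ past $\exists X$ or past $\bigvee$. A Skolem-style recoding $\exists Y\,\forall x\,\chi(x,Y_x)$ would need choice and closure of the predicate collection under coding families, which the internal semantics does not provide.

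The missing idea is the paper's first step. Replace every first-order quantifier outside a finite formula by a second-order quantifier over singletons: $\forall x\,\varphi(x)\equiv\forall X\,(\operatorname{Singleton}(X)\to\varphi^X)$, and dually with $\wedge$ for $\exists$. Afterwards only second-order quantifier blocks, connectives and finite leaves remain. Pairing and Separation are exactly what make every $\{x\}$ an internal predicate, so this replacement is an equivalence in the internal semantics. You use those axioms only for nonemptiness of the predicate collection, which is a symptom of the missing step.

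\textbf{Secondary problems with the padding.} First, with $\lambda=\mu+\omega$ an injection $e\colon I\to\lambda$ with $e(i)\geq\ell(i)$ need not exist. If $\kappa$ many constituents share the maximal level, $[\ell(i),\lambda)$ is countable. If $\mu<\kappa=|I|$, then $\lambda<\kappa$. You need $\lambda\geq\mu+\kappa$, and $|\lambda|\leq\kappa$ is the wrong inequality for an injection from $I$.

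Second, a vacuous quantifier pair takes $\Pi^1_\eta$ to $\Pi^1_{\eta+2}$, not $\Pi^1_{\eta+1}$. Empty blocks give only $\Pi^1_\eta\subseteq\Sigma^1_{\eta+1}\subseteq\Pi^1_{\eta+2}$. A one-level step would have to be pushed down through the quantifier blocks. For infinite $\eta$ this ends at a limit level, where a conjunction would have to be written as $\forall\vec X$ applied to a disjunction of the same level; your recipe does not supply that. So $\mathrm{pad}_{\ell\to\ell+1}$ is undefined. Your limit clause, which uses $\mathrm{pad}_{\ell\to\eta}$ for every $\eta\in[\ell,\zeta)$, inherits the problem.

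The paper avoids arbitrary padding altogether:
\begin{itemize}
\item each $\varphi_i'\in\Pi^1_{r_i}$ is placed at index exactly $r_i$ of a fresh conjunction $\widehat\varphi_i\in\Pi^1_{\zeta_i}$, where the $\zeta_i>r_i$ are distinct limits and tautologies $\top_\eta$ fill all other indices;
\item the $\widehat\varphi_i$ are then placed at the indices $\zeta_i$ of an outer conjunction of limit level $\tau>\sup_i\zeta_i$.
\end{itemize}
In your scheme this amounts to choosing the $e(i)$ to be distinct limit ordinals above $\ell(i)$. You then pad to a limit $\zeta$ by putting the formula at index $\ell$ and $\top_\eta$ elsewhere, so only tautologies need to exist at every level.
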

\begin{proof}
First replace each first-order quantifier outside finite
formulas using singleton predicates. For example,
\[
 \begin{aligned}
 \forall x\,\varphi(x)&\equiv
       \forall X\,(\operatorname{Singleton}(X)\to\varphi^X),\\
 \exists x\,\varphi(x)&\equiv
       \exists X\,(\operatorname{Singleton}(X)\wedge\varphi^X),
 \end{aligned}
\]
where $\operatorname{Singleton}(X)$ is
$\exists x\,\forall y\,(y\in X\leftrightarrow y=x)$, and
each finite formula in $\varphi^X$ refers to that unique member.
Rename bound variables to avoid capture. Pairing and Separation
supply the same singleton predicates in the internal semantics.

Normalize a formula and its negation simultaneously by
well-founded recursion. Finite quantifier blocks and dualization
require finitely many successor levels. For an arbitrary
conjunction $\bigwedge_{i\in I}\varphi_i$, suppose the recursive
step gives $\varphi_i'\in\Pi^1_{r_i}$ equivalent to $\varphi_i$.
Choose distinct nonzero limits $\zeta_i>r_i$ and a limit
$\tau>\sup_{i\in I}\zeta_i$, all below $\kappa^+$. Let
$\top_\eta$ be a tautology in $\Pi^1_\eta$, and define
\[
 \begin{aligned}
 \widehat\varphi_i&=\bigwedge_{\eta<\zeta_i}\theta^i_\eta,
 &\theta^i_\eta&=
     \begin{cases}\varphi_i',&\eta=r_i,\\\top_\eta,&\eta\ne r_i,\end{cases}\\
 \varphi'&=\bigwedge_{\eta<\tau}\theta_\eta,
 &\theta_\eta&=
     \begin{cases}\widehat\varphi_i,&\eta=\zeta_i\ (i\in I),\\
                   \top_\eta,&\eta\notin\{\zeta_i:i\in I\}.
     \end{cases}
 \end{aligned}
\]
Then $\varphi'\in\Pi^1_\tau$ and
$\varphi'\equiv\bigwedge_{i\in I}\varphi_i$ in both semantics.
Tautologies of all required levels are obtained recursively
from $x=x$ by dummy quantifier blocks and conjunctions.
The disjunction case uses the dual construction with false
formulas at unused indices. A change from $\Sigma^1_r$ to
$\Pi^1_{r+1}$, or conversely, uses an empty quantifier block.

Because $|I|\leq\kappa$ and $\kappa^+$ is regular, all
chosen ranks remain below $\kappa^+$. Each original occurrence
introduces at most $\kappa$ occurrences, so the resulting
syntax has size at most $\kappa\cdot\kappa=\kappa$.
An infinite branch would either lie in one of the well-founded
added formulas or project to an infinite branch of the original
syntax; both are impossible. Rank functions, enumerations and
the chosen indices have hereditary size at most $\kappa$.
\end{proof}

\begin{lemma}\label{up:finitewrap}
Let \(S\) be a formula of level \(\rho<\kappa^+\), and let
$C[S]$ be obtained from finitely many copies of $S$ and its
dual by substitutions of finite formulas, Boolean connectives,
and finitely many first- and second-order quantifiers.
It has a hierarchy normal form of level less than
\(\rho+\omega^2<\kappa^+\), and can be normalized so that on a club
of inaccessible \(\alpha\),
\[
                  C[S]|^\kappa_\alpha
                        \ \equiv\ C[S|^\kappa_\alpha].
\]
The parameters, syntax graphs and diagram predicates restrict to
their corresponding interpretations over $V_\alpha$.
The original local restriction data of \(S\) are unchanged.
\end{lemma}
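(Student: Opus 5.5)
The argument is a recursion on the finite wrapping context of $C[S]$. We treat $C[\cdot]$ as a finite tree whose leaves are occurrences of $S$, of its dual $\neg S$, or of finite formulas, and whose internal nodes are Boolean connectives or quantifier blocks of either order. First push all negations to the leaves, which is possible since the dual of $S$ is itself a copy available to us. Then replace each first-order quantifier of the context by a second-order quantifier over singleton predicates, exactly as in the proof of Lemma~\ref{up:normalization}. After this, every internal node of the context is a finite conjunction, a finite disjunction, or a finite block of second-order quantifiers.

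Next we normalize bottom-up. Each copy of $S$ lies in $\Pi^1_\rho$, and its dual lies in $\Sigma^1_\rho$; the singleton relativizations add only finitely many successor levels. For a finite conjunction or disjunction whose constituents have levels $r_1,\dots,r_k$, we raise every constituent to the common level $\max r_i+1$ using empty quantifier blocks. We then absorb the finite connective into the matrix of a successor formula by prenex manipulations, since finite Boolean combinations of $\Pi^1_{r+1}$ formulas are again $\Pi^1_{r+1}$ after renaming bound predicate variables. Each quantifier block adds one successor step. Because the context tree is finite, only finitely many successor steps are added above $\rho$, so the level is below $\rho+\omega$. If an interleaving of limits is needed to keep constituent indices in their required classes, we pad with the tautology construction of Lemma~\ref{up:normalization}, and this costs at most one further limit; that is the reason for the bound $\rho+\omega^2$. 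The bound lies below $\kappa^+$ since $\rho<\kappa^+$.

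For the restriction identity, the key observation is that the new syntax nodes are finite and sit above the copies of $S$. The restriction operation of Section~\ref{sec:notation} commutes with finite quantifier blocks and with successor-level connectives, since it leaves those nodes unchanged and only restricts predicate parameters. Only the limit nodes inside $S$, together with any padding limits, are reindexed. We take the club $D$ of Lemma~\ref{up:graphcut} for $S$ and intersect it with the club of inaccessibles and with the clubs from \cite[Lemmas~2.7--2.10]{CodyHigher} for the finitely many added levels $\rho+n$ and $\rho+\omega\cdot m+n$. On this intersection, $f^\kappa_{\rho+n}(\alpha)=f^\kappa_\rho(\alpha)+n$, and induction on the context tree gives $C[S]|^\kappa_\alpha\equiv C[S|^\kappa_\alpha]$. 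Finite formulas, constants and diagram predicates restrict to their intersections with $V_\alpha$ by definition, and we never alter the original data of $S$ because we only add nodes above its copies.

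The main obstacle is the padding step. When a padding limit conjunction is introduced above level $\rho$, its restriction at $\alpha$ must still place the copy of $S|^\kappa_\alpha$ at the correct index. We would handle this by choosing the padding index sets from the canonical bijections $b_{\kappa,\rho+\omega\cdot m}$ and adding their index-closure clubs, following the argument of Lemma~\ref{up:graphcut}.
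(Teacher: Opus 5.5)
\textbf{There is a genuine gap in how you normalize the finite connectives.} You claim that finite Boolean combinations of $\Pi^1_{r+1}$ formulas are again $\Pi^1_{r+1}$ ``after renaming bound predicate variables.'' This is not a syntactic fact in this hierarchy. Pulling the blocks out of $\forall\vec X\,\varphi\wedge\forall\vec Y\,\psi$ leaves the matrix $\varphi\wedge\psi$, which must itself lie in $\Sigma^1_r$, so the problem recurs one level down.

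Because your raising moves are empty quantifier blocks, this recursion runs into the copies of $S$ themselves. For example, let $\rho$ be a limit and let $S,S'\in\Pi^1_\rho$ be two copies of $S$ under different assignments. Raising and prenex moves alone can normalize $S\wedge S'$ only after rebuilding it as $\bigwedge_{\zeta<\rho}(S_\zeta\wedge S'_\zeta)$, recursively through all of $S$. A disjunction of two conjunctions is worse: it becomes a conjunction indexed by pairs $(\zeta,\eta)$, which must be reindexed so that every constituent sits at an index not below its rank. Also, empty blocks cannot lift a finite formula across a limit level, so the padding you treat as exceptional is needed in the generic case.

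This rewriting destroys the copies of $S$. Your later argument then fails: you rely on restriction meeting only finite nodes above unchanged copies of $S$, and on the data of $S$ never being altered. The restriction identity for the merged syntax would need a separate club argument, which you do not supply. Your level count is also inconsistent: first you get a level below $\rho+\omega$, then ``one further limit,'' which would give $\rho+\omega\cdot 2$, not $\rho+\omega^2$.

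\textbf{The missing idea is the paper's: never absorb a connective into $S$.} Normalize every finite conjunction or disjunction by the padding construction of Lemma~\ref{up:normalization}. Place each intact constituent at its own fresh limit index above its rank, with tautologies or false formulas at the other indices. Each connective then costs finitely many blocks of length $\omega$, so the finitely many operations give a level below $\rho+\omega n<\rho+\omega^2$; this is why the bound is $\omega^2$.

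For the restriction identity, take a club on which, simultaneously for the countably many offsets $\tau<\omega^2$ that occur,
$F^\kappa_{\rho+\tau}(\alpha)=F^\kappa_\rho(\alpha)\cup[\rho,\rho+\tau)$ and $f^\kappa_{\rho+\tau}(\alpha)=f^\kappa_\rho(\alpha)+\tau$. For $\rho<\kappa$, take $\alpha>\rho+\omega^2$. For $\rho\geq\kappa$, require $\alpha$ to exceed the inverse indices $b_{\kappa,\xi}^{-1}(\eta)$, $\eta\in[\rho,\xi)$, at each added limit $\xi$. Intersect this with the club of Lemma~\ref{up:graphcut} for the copies of $S$ and the added syntax.

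At inaccessible $\alpha$ in this club, each padded constituent restricts to the correspondingly padded $S|^\kappa_\alpha$ at the matching local index. Deleting tautologies, false disjuncts and dummy quantifiers then gives $C[S]|^\kappa_\alpha\equiv C[S|^\kappa_\alpha]$. The index bookkeeping in your last paragraph is the right ingredient, but it has to be the main mechanism, not an occasional repair.
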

\begin{proof}
Normalize only the finitely many operations applied to the
copies of $S$. Substituting a finite formula leaves their
complexities unchanged. A finite quantifier block or change
of class increases a rank by finitely many successors.
The conjunction construction in Lemma~\ref{up:normalization},
applied to finitely many formulas, can be performed below
$\rho+\omega m$ for some $m<\omega$. Therefore there is
$n<\omega$ such that the resulting formula has rank
\[
 r<\rho+\omega n<\rho+\omega^2<\kappa^+.
\]

Only countably many offsets $\tau<\omega^2$ occur in the
added syntax. On a club, simultaneously for those offsets,
\[
 F^\kappa_{\rho+\tau}(\alpha)
       =F^\kappa_\rho(\alpha)\cup[\rho,\rho+\tau),\qquad
 f^\kappa_{\rho+\tau}(\alpha)=f^\kappa_\rho(\alpha)+\tau.
\]
If $\rho<\kappa$, take $\alpha>\rho+\omega^2$. Otherwise,
at each added limit $\xi$, choose $\alpha$ above all inverse
indices $b_{\kappa,\xi}^{-1}(\eta)$ for
$\eta\in[\rho,\xi)$. Lemma~2.7 of \cite{CodyHigher}
identifies the initial part with $F^\kappa_\rho(\alpha)$,
and Lemma~2.10 handles successor complexities. There are
countably many such clubs and bounds.

Apply Lemma~\ref{up:graphcut} to the copies of $S$ and to
the added syntax, and close under the maps identifying their
variables and constants. At an inaccessible $\alpha$ in the
resulting club, each added constituent has the prescribed
local complexity. Deleting tautologies, false disjuncts and
dummy quantifiers gives
\[
 C[S]|^\kappa_\alpha\equiv C[S|^\kappa_\alpha].
\]
The equivalence follows by induction on the finite sequence of operations.
\end{proof}

Represent an element of \(H_{\kappa^+}\) by a rooted well-founded
extensional relation on a subset of \(\kappa\), with root \(0\)
and all nodes reached from the root by finite predecessor chains.
Well-foundedness is first-order over \(V_\kappa\) from its relation
predicate, since all \(\omega\)-sequences of ordinals below the
regular \(\kappa\) belong to \(V_\kappa\).
Equality is isomorphism of rooted codes.
Membership is isomorphism onto the predecessor-hereditary segment
of a node immediately below the second root.
These are finite second-order tests.

Aggregate the relational codes for all the at most \(\kappa\)
first-order constants in a syntax into one predicate
\(\Gamma\subseteq\kappa^3\).
Each leaf accesses finitely many sections \(\Gamma_i\) through
first-order indices \(i<\kappa\).
The finitely many free variables use finitely many additional codes.
Thus only finitely many second-order parameters are introduced.

In a transitive $\kappa$-model, an internally
well-founded code on \(\kappa\) is actually well-founded, since an
external descending \(\omega\)-sequence would belong to the model.
Its internal Mostowski collapse therefore agrees with the external one.
The represented objects are exactly its internal \(H_{\kappa^+}\),
when this exists as a set.
No equality with the actual \(H_{\kappa^+}\) is assumed.

The embedding characterization underlying the next argument is given
in \cite[Theorem~1.3]{Hauser}; we prove the local strengthened
conclusion needed here.
\begin{lemma}\label{up:hauser}
Suppose \(\InfH(\kappa)\).

Let $S\in\Pi^1_\rho\cup\Sigma^1_\rho$, where $\rho<\kappa^+$,
let $B$ be its restriction data, and let $M$ be a
$\kappa$-model containing $S,B$.
There are a \(\kappa\)-model \(N\), an elementary \(j:M\to N\),
and \(e_M:\kappa\twoheadrightarrow M\) belonging to \(N\), such that
\[
                         \crit(j)=\kappa,\qquad j,M\in N.
\]
The sets \(V_\kappa^N\), \(\mathcal P^N(V_\kappa)\), and a satisfaction
table for \(S\) on these domains exist in \(N\).
For every assignment of its predicates belonging to \(N\), the
model \(N\) agrees with the universe about \(S\).
Here the syntax is the original \(S\in M\in N\), not \(j(S)\).
\end{lemma}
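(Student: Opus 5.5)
Our plan is to obtain $N$ and $j$ by reflecting a single well-founded formula of size $\kappa$ that describes $M$, the formula $S$ and the failure of the desired conclusion, and then to realize the reflected instance as an embedding. This is the usual Hauser-style argument for weak compactness, run at a transfinite level.

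First we code the data. Using a bijection $e_M:\kappa\to M$, represent $M$ by a predicate $\Gamma_M\subseteq V_\kappa$: a rooted well-founded extensional relation on $\kappa$ together with its membership table. Code $S$ and $B$ as in Lemma~\ref{up:graphcut}, through node codes below $\kappa$. Then consider the statement
\[
 \Psi:\quad \neg\exists(N,j,e)\ \bigl[\text{$N$ is a $\kappa$-model}\wedge j:M\to N\text{ elementary}\wedge \crit(j)=\kappa\wedge j,M,e\in N\wedge \text{$N$ agrees with $V$ on $S$}\bigr].
\]
Assume $\Psi$ holds in $V$ and aim for a contradiction. Since $N$ is an object of $H_{\kappa^+}$, the existential over $N,j,e$ becomes a second-order existential over codes on $\kappa$. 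The requirement that $N$ agree with $V$ about $S$ for all assignments in $N$ says: for every assignment coded in $N$, the satisfaction table of $S$ computed internally in $N$ over $(V_\kappa^N,\mathcal P^N(V_\kappa))$ matches true satisfaction of $S$. True satisfaction of $S$ is expressed by copies of $S$ and its dual, so the whole condition is a $C[S]$ as in Lemma~\ref{up:finitewrap}, wrapped with finitely many quantifiers. By Lemmas~\ref{up:normalization} and~\ref{up:finitewrap}, $\Psi$ (equivalently, the true $\Pi^1$ statement obtained from $M$'s structure) has a normal form of level $r<\rho+\omega^2<\kappa^+$. By $\InfH(\kappa)$, this normal form is $\Pi^1_r$-indescribable, so it reflects to an inaccessible $\alpha$ lying in the clubs of Lemmas~\ref{up:graphcut} and~\ref{up:finitewrap}. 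Moreover $\Gamma_M\cap V_\alpha$ codes the collapse $\bar M$ of $e_M''\alpha$; on a club we have $e_M''\alpha\prec M$ and $e_M''\alpha\cap\kappa=\alpha$.

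Now we build the contradiction at $\alpha$, reflecting the negation in the standard direction. Let $k:\bar M\to M$ be the inverse collapse, so $\crit(k)=\alpha$ and $k(\alpha)=\kappa$. Take $N^*$ to be a $\kappa$-model containing $V_\kappa$, $k$, $\bar M$ and the full $\mathcal P(V_\alpha)$ (of size $\kappa$); collapsing down to size $\alpha$ inside is not needed. The restricted statement $\Psi|^\kappa_\alpha$, evaluated over $V_\alpha$, asserts that no $\alpha$-model $\bar N\supseteq V_\alpha$-data admits $\bar j:\bar M\to\bar N$ with critical point $\alpha$ that agrees with $V_\alpha$ about $S|^\kappa_\alpha$. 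We contradict this by exhibiting such an object in $V_{\alpha}$-coded form. For this we use the restriction identity $C[S]|^\kappa_\alpha\equiv C[S|^\kappa_\alpha]$, and we use the fact that $\bar M$ collapses $S$ to $S|^\kappa_\alpha$; this is Lemma~\ref{up:graphcut} applied to the node codes, which is why the conclusion concerns the original $S$ and not $j(S)$.

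The cleaner route, which we commit to, is to run the argument in the direction of Hauser's theorem. We take $N$ to be the transitive collapse of a hull of size $\kappa$ of $H_{\kappa^+}$ containing $M$, $e_M$, $V_\kappa$ and the true satisfaction table of $S$ on $\mathcal P(V_\kappa)\cap N$. We then obtain $j$ as an ultrapower by the $M$-ultrafilter $U=\{X\in M:\kappa\in j(X)\}$, whose existence and good behaviour follow from $\Pi^1_1$-indescribability, iterated so that $j$ is internal to $N$. Agreement of $N$ with $V$ about $S$ is arranged by closing the hull under Skolem functions for all subformulas of $S$. This closure is possible because $S$ has $\le\kappa$ nodes and each subformula's witnesses can be chosen one per assignment; $N$ is ${}^{<\kappa}$-closed by taking a $<\kappa$-closed chain of hulls of length $\kappa$.

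We expect the main obstacle to be placing $j$ inside $N$ while keeping $N$ correct about $S$. Correctness requires the witnesses for the transfinitely many existential subformulas of $S$, and putting $j\in N$ requires that the ultrafilter data lie in $N$. The first thing to try is to reflect one formula that asserts both simultaneously, and here the level $\rho+\omega^2<\kappa^+$ bound is what makes $\InfH$ sufficient.
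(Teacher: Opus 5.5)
Your committed route has a genuine gap that cannot be repaired in that form. A correct $\kappa$-model $N\ni M$ is easy to get: collapse a hull of a large $H_\theta$ that has the full satisfaction table of $S$ as an element. An embedding with critical point $\kappa$ from an $M$-ultrafilter needs only weak compactness. The lemma, however, demands both properties for the \emph{codomain} of $j$, with $j,M\in N$, and that combination already forces $S$ to reflect. Suppose $V_\kappa\models S$ and $j:M\to N$ is as in the conclusion. Then $N\models$ ``$V_\kappa\models S$'', $j(S)|^{j(\kappa)}_\kappa=S$, and $\kappa$ is regular in $N$. Hence $\kappa\in j(\operatorname{Ref}_\kappa(S))$ and so $\operatorname{Ref}_\kappa(S)\neq\varnothing$; this is the easy direction of Hauser's characterization. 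An argument using only $\Pi^1_1$-indescribability would therefore make every weakly compact cardinal $\Pi^1_2$-indescribable. The individual steps also fail:
\begin{itemize}
\item $U=\{X\in M:\kappa\in j(X)\}$ presupposes the $j$ you are constructing.
\item An ultrapower of $M$ lands in $\operatorname{Ult}(M,U)$, not in your pre-chosen hull.
\item The $\mathcal P(V_\kappa)$ of an ultrapower need not be that of any correct model. That would require weak amenability, which weak compactness does not provide for arbitrary $\kappa$-models, so correctness does not transfer.
\item ``Iterating so that $j$ is internal to $N$'' does not place an embedding in its own codomain.
\end{itemize}

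The route you sketched first and then abandoned is the paper's, and it is where $\mathsf H(\kappa)$ enters. Your sketch drops the decisive step, though. The reflected sentence at $\beta$ quantifies over codes $R\subseteq V_\beta$, that is, over $\beta$-models. A $\kappa$-sized $N^*$ therefore cannot refute it, and the reduction to size $\beta$ is indispensable.

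The paper proceeds as follows.
\begin{itemize}
\item It states the failure as $\Phi(E,F,T,S)\wedge\forall R,U\,[\operatorname{Model}(R,U,S)\wedge C_S(R)\to\operatorname{NoEmb}_R]$: no $S$-correct coded $\kappa$-model containing $M$ contains a target.
\item It normalizes this below $\rho+\omega^2$ by Lemma~\ref{up:finitewrap} and reflects it to an inaccessible $\beta$ with $e''\beta\prec M$ and $e''\beta\cap V_\kappa=V_\beta$.
\item The collapse $M^*$ of $e''\beta$ gives $i^*:M^*\to M$ with $\crit(i^*)=\beta$ and $i^*\in M$. Here $M$ itself serves as the correct host, since $V_{\beta+1}\subseteq M$.
\item Take a hull $Y\prec M$ of size $\beta$, closed under ${<}\beta$-sequences, containing $V_\beta\cup M^*\cup i^*{}''M^*$ pointwise and $i^*,M^*,S_\beta,\mathcal P(V_\beta)$ and the full satisfaction table $\mathcal T_\beta$ as elements. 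It collapses to $N_\beta$ with $i=\pi(i^*):M^*\to N_\beta$ and $i,M^*\in N_\beta$.
\item $N_\beta$ is correct for $S_\beta$ because $\pi(\mathcal T_\beta)$ is the restriction of true satisfaction, even though $N_\beta$ is too small to contain $\mathcal P(V_\beta)$ pointwise.
\item A second hull $Y^+$ then yields a coded $\beta$-model witnessing $\neg\operatorname{NoEmb}$, contradicting the reflected sentence.
\end{itemize}
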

\begin{proof}
Suppose a fixed $M$ has no target as in the conclusion.
Choose a bijection $e:\kappa\to M$. Define predicates coding
membership, the Skolem functions and the complete elementary diagram:
\[
 \begin{aligned}
 E(i,j)&\iff e(i)\in e(j),\\
 F(n,\vec i,j)&\iff f_n^M(e(\vec i))=e(j),\\
 T(\ulcorner\varphi\urcorner,\vec i)&\iff
                              M\models\varphi(e(\vec i)).
 \end{aligned}
\]
Here $\langle f_n:n<\omega\rangle$ enumerates Skolem functions
for a fixed well-order of $M$. Mark the indices of
$\kappa,V_\kappa,S$, and the inverse indices of all
$x\in V_\kappa$. Close under the Skolem maps and
\[
 \xi\mapsto\operatorname{rank}(e(\xi))+1
       \quad(e(\xi)\in V_\kappa),\qquad
 \alpha\mapsto\sup(e^{-1}[V_\alpha])+1.
\]
There is a club $D$ on which
\[
 e''\beta\prec M,\qquad e''\beta\cap V_\kappa=V_\beta.
\]
Thus the marked $\kappa,V_\kappa$ collapse to $\beta,V_\beta$.

Let $\Phi(E,F,T,S)$ assert that this diagram presents a
well-founded extensional structure satisfying $\ZFCc$,
with the marked objects and the following external closure:
\[
 \forall\delta<\kappa\ \forall f:\delta\to\kappa\quad
                       \langle e(f(\xi)):\xi<\delta\rangle\in M.
\]
This is a finite first-order formula over $V_\kappa$ using
the diagram predicates. Explicitly, well-foundedness is
$\neg\exists f:\omega\to\kappa\ \forall n\,
E(f(n+1),f(n))$, and extensionality is
\[
 \forall i,j<\kappa\,
 [\forall k<\kappa\,(E(k,i)\leftrightarrow E(k,j))\to i=j].
\]
The predicates $T$ obey the atomic, negation and existential
satisfaction clauses, and $T(n)$ holds for every axiom code
$n$ of $\ZFCc$. The closure requirement uses the diagram's
index for the function with the displayed values. Each
$f:\delta\to\kappa$ belongs to $V_\kappa$, so this
quantification is first order at an inaccessible $\kappa$.
The marked copy of $S$ is required to equal the external
relational syntax, with ordinal rank labels kept in $B$.

For an auxiliary membership code $R$ and its full diagram $U$,
let $Q$ denote its transitive collapse. The formula
$\operatorname{Model}(R,U,S)$ asserts, using that diagram,
\[
 \begin{gathered}
 Q\models\ZFCc,\qquad \kappa\in Q,\qquad |Q|=\kappa,
 \qquad{}^{<\kappa}Q\subseteq Q,\\
 M,E,F,T,S\in Q,\qquad
 V_\kappa^Q,\ \mathcal P^Q(V_\kappa),\
   \operatorname{Sat}(V_\kappa^Q,\mathcal P^Q(V_\kappa),S)\in Q.
 \end{gathered}
\]
These conditions are specified by the same finite diagram
clauses. In particular the marked source is required to be
the transitive collapse of the actual code $E$.
For $x\in V_\kappa$, its index $a$ in $Q$ is determined
by a function $f:\tc(\{x\})\to\Ord$ satisfying
\[
 f(x)=a,\qquad
 R(b,f(y))\iff(\exists z\in y)\ b=f(z)
 \qquad(y\in\tc(\{x\}),\ b\in\Ord).
\]
Such $f$ belongs to $V_\kappa$ by regularity. This is a
finite first-order definition of the representation of $x$.
If $c$ is an index for a predicate, its external interpretation is
\[
 \{x\in V_\kappa:\exists a\,
       (R(a,c)\wedge a\text{ represents }x)\}.
\]
Use these definitions for the external parameters in $S$.
Let $C_S(R)$ assert, for every finite assignment $\vec A$
from $\mathcal P^Q(V_\kappa)$ (and first-order assignments
in the appropriate domain),
\[
 Q\models\operatorname{Sat}
       (V_\kappa^Q,\mathcal P^Q(V_\kappa),S,\vec A)
        \quad\Longleftrightarrow\quad V_\kappa\models S[\vec A].
\]
The right side is a copy of $S$ with the displayed finite
definitions substituted for predicate membership. The left
side is an assertion in $U$ about a set satisfaction relation.
Only finite assignments at each subformula are used.

Let $\operatorname{NoEmb}_R$ be the assertion in $U$ that
there are no $N,j,e_M\in Q$ such that
\[
 \begin{gathered}
 j:M\to N\text{ is elementary},\qquad\crit(j)=\kappa,\\
 N\text{ is transitive},\quad N\models\ZFCc,\quad
 \kappa\in N,\quad |N|=\kappa,\\
 \quad{}^{<\kappa}N\subseteq N,\quad j,M,e_M\in N,\\
 e_M:\kappa\twoheadrightarrow M,
 \end{gathered}
\]
and $N$ possesses the stated domains and satisfaction relation,
which agrees with $Q$'s relation for $S$ on all assignments
from $N$. These are first-order assertions in the set model
$Q$, and are therefore read from its complete diagram.
The assumed failure of the lemma implies
\begin{equation}\label{up:failure}
 \Phi(E,F,T,S)\ \wedge\
 \forall R,U\,
 [\operatorname{Model}(R,U,S)\wedge C_S(R)
                                      \to\operatorname{NoEmb}_R].
\end{equation}
Indeed, if $Q$ recognized such a target, its transitive sets
and diagrams would make the embedding actually elementary.
Its enumerations give the actual cardinalities, and the
closure of $Q$ under sequences of length less than $\kappa$ makes
the target closed under such sequences in $V$. Set satisfaction is absolute for the required
domains. Finally $C_S(R)$ would give external correctness for
$S$, contrary to the choice of $M$.

Sentence~\eqref{up:failure} is obtained by finitely many
logical operations from copies of $S$, its dual and finite
diagram formulas. By Lemma~\ref{up:finitewrap}, it has a
normal form of complexity $\eta<\rho+\omega^2<\kappa^+$.
Conjoin the $\Pi^1_1$ assertion of inaccessibility and the
unboundedness of $D$, and reflect to an inaccessible
$\beta\in D$, using $\mathsf H(\kappa)$.
The bounded-range assertion
\[
 \forall H\ \forall x\,
 [H:x\to\Ord\ \to\ \exists\zeta\,\forall y\in x\ H(y)<\zeta]
\]
together with uncountability, asserts that the ordinal indexing the rank
structure is a regular strong-limit cardinal; $H$ is second order. These are finitely many
additional operations. By Lemma~\ref{up:finitewrap}, the
reflected sentence is exactly the same construction with
$S_\beta=S|^\kappa_\beta$ and the restricted diagrams.

Collapsing $e''\beta$ gives a $\beta$-model $M^*$ and
\[
 i^*:M^*\to M,\qquad \crit(i^*)=\beta,\quad i^*(\beta)=\kappa.
\]
Both $M^*$ and $i^*$ belong to $M$, since
${}^{<\kappa}M\subseteq M$ in $V$ and $V_\kappa\subseteq M$. Let $\mathcal T_\beta$ denote the
full satisfaction relation for
$(V_\beta,\mathcal P(V_\beta),S_\beta)$; this set has size
and rank below $\kappa$, and belongs to $M$.
Choose $Y\prec M$ with $|Y|=\beta$, ${}^{<\beta}Y\subseteq Y$,
containing $V_\beta\cup M^*\cup i^*{}''M^*$ pointwise and
\[
 i^*,M^*,S_\beta,\mathcal P(V_\beta),\mathcal T_\beta,
             e_*:\beta\twoheadrightarrow M^*
\]
as elements. The enumeration $e_*$ is induced by the restricted
source diagram. Closure under sequences of length less than $\kappa$ puts these choices in $M$.
For $\pi:Y\cong N_\beta$,
\[
 \pi\restr(M^*\cup V_\beta)=\mathrm{id},\quad
 \pi(S_\beta)=S_\beta,\quad \pi(e_*)=e_*,\quad
 i=\pi(i^*):M^*\to N_\beta.
\]
The collapsed power set and $\mathcal T_\beta$ give the internal
satisfaction relation. Since each predicate in $Y$ is a
subset of the pointwise fixed $V_\beta$, its truth values
agree with full truth. Hence $N_\beta$ is correct for
$S_\beta$, and $\crit(i)=\beta$ with $i,M^*,e_*\in N_\beta$.

Take a second $Y^+\prec M$, with $|Y^+|=\beta$ and
${}^{<\beta}Y^+\subseteq Y^+$,
containing $\tc(\{N_\beta,i,M^*\})$ pointwise, together with
an enumeration of $N_\beta$, the restricted diagram codes,
$e_*,S_\beta,\mathcal P(V_\beta),\mathcal T_\beta$ as elements.
Its collapse $\mathcal R_\beta$ fixes these witnesses and
restricted codes and is again correct for $S_\beta$. Let
$R_\beta$ be a membership code for $\mathcal R_\beta$ on
$\beta$, and let $U_\beta$ be its complete diagram. Then
\[
 \operatorname{Model}(R_\beta,U_\beta,S_\beta)\wedge
 C_{S_\beta}(R_\beta)\wedge
 \neg\operatorname{NoEmb}_{R_\beta},
\]
since it recognizes $N_\beta,i,e_*$ as a target and witnesses
for $M^*$. This contradicts \eqref{up:failure} at $\beta$.
\end{proof}

\begin{remark}\label{up:level}
For a level-\(\rho\) syntax this proof uses only one
\(\Pi^1_\eta\)-indescribability assumption, for some
\(\eta<\rho+\omega^2\), uniformly in the source.
In particular \(\Pi^1_{\rho+\omega^2}\)-indescribability suffices,
by \cite[Corollary 5.15]{CodyHigher}.
\end{remark}

\begin{corollary}\label{up:Hembedding}
Suppose \(\InfH(\kappa)\).
Let \(\psi\) be a well-founded infinitary first-order formula
over \(H_{\kappa^+}\), with syntax of size at most \(\kappa\).
Let a \(\kappa\)-model \(M\) contain \(\psi\), a hierarchy normalization
\(S\) of its relation-code interpretation, and all coding and local
restriction data.
There are \(j:M\to N\) and \(e_M\) as above such that \(N\)
is correct for \(\psi\) on every assignment from its internal
\(H_{\kappa^+}\).
\end{corollary}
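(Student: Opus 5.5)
The corollary reduces directly to Lemma~\ref{up:hauser}, applied to the normalization $S$ and not to $\psi$ itself. The work is in translating $\psi$, over $H_{\kappa^+}$, into a second-order formula over $V_\kappa$ whose correctness in $N$ is equivalent to correctness for $\psi$ on internal $H_{\kappa^+}$.

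\emph{Step 1: relation-code interpretation.} Replace each first-order quantifier of $\psi$ over $H_{\kappa^+}$ by a second-order quantifier over rooted well-founded extensional codes on subsets of $\kappa$. Atomic $\in$ and $=$ become the finite second-order tests of isomorphism and predecessor-segment isomorphism described before Lemma~\ref{up:hauser}. The at most $\kappa$ constants of $\psi$ are aggregated into one predicate $\Gamma\subseteq\kappa^3$, and the free variables become finitely many further code predicates. The result is a well-founded second-order syntax graph of size at most $\kappa$. Lemma~\ref{up:normalization} then gives a logically equivalent hierarchy normal form $S$ of level $\rho<\kappa^+$, valid in full semantics and in the internal semantics of a model with Pairing and Separation. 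By hypothesis $M$ contains $\psi$, $S$ and all coding and restriction data.

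\emph{Step 2: apply Lemma~\ref{up:hauser}.} Lemma~\ref{up:hauser}, applied to $S$, its restriction data and $M$, yields $j:M\to N$ and $e_M\in N$ with $\crit(j)=\kappa$ and $j,M\in N$. It also gives that $N$ agrees with $V$ about $S$ for every assignment of predicates from $\mathcal P^N(V_\kappa)$.

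\emph{Step 3: transfer correctness back to $\psi$.} Fix an assignment $\vec a$ from $(H_{\kappa^+})^N$. Since $N$ is a $\kappa$-model, ${}^{<\kappa}N\subseteq N$ and $N$ correctly computes well-foundedness of codes on $\kappa$, as argued before Lemma~\ref{up:hauser}. Hence each $a_i$ has a code $A_i\in\mathcal P^N(V_\kappa)$ whose internal and external collapses agree, and the code of the constants is fixed. The equivalences then chain as follows:
\begin{itemize}
\item $N\models\psi[\vec a]$ holds over internal $H_{\kappa^+}$ iff $N\models S[\vec A]$ in its internal predicate semantics, by internal equivalence of the translation and the normalization;
\item $N\models S[\vec A]$ iff $V_\kappa\models S[\vec A]$ in full semantics, by Lemma~\ref{up:hauser};
\item $V_\kappa\models S[\vec A]$ iff $H_{\kappa^+}\models\psi[\vec a]$, by full-semantics equivalence.
\end{itemize}

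\emph{Main obstacle.} The delicate point is the first equivalence. Quantifiers of $\psi$ inside $N$ range over $(H_{\kappa^+})^N$, while the translated quantifiers range over codes in $\mathcal P^N(V_\kappa)$. These two ranges must correspond exactly, so I would verify in $N$ that every internally well-founded extensional rooted code collapses to an element of $(H_{\kappa^+})^N$ and that every such element has a code. This uses $N\models\ZFCc$ with choice and the remark that the represented objects are exactly the internal $H_{\kappa^+}$. I would also check that the isomorphism tests for $=$ and $\in$ are evaluated the same way internally, which holds because the witnessing isomorphisms are themselves predicates, or elements of $V_\kappa$, that lie in $N$.
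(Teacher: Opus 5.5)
Your proposal is correct and follows the paper's proof: you apply Lemma~\ref{up:hauser} to $S$ and chain the same three equivalences, and the middle equivalence is exactly the correctness that lemma supplies. The paper also notes that $K^N=(H_{\kappa^+})^N$ and its satisfaction relations are sets in $N$, because $j(V_\kappa)=V^N_{j(\kappa)}\in N$ with $j(\kappa)$ internally inaccessible; you should add this so that the internal correctness statement is meaningful and usable in Lemma~\ref{up:afterprep}.
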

\begin{proof}
Apply Lemma~\ref{up:hauser} to the given normal form $S$.
Let $K^N=(H_{\kappa^+})^N$. For an assignment $\vec a\in K^N$,
choose its rooted relation codes $\vec A$ in $N$. The
normalization and relation-code interpretation give
\[
 \begin{aligned}
 (K^N,\in)\models\psi[\vec a]
 &\iff(V_\kappa^N,\mathcal P^N(V_\kappa))\models S[\vec A]\\
 &\iff(V_\kappa,\mathcal P(V_\kappa))\models S[\vec A]\\
 &\iff(H_{\kappa^+},\in)\models\psi[\vec a].
 \end{aligned}
\]
The middle equivalence is the correctness from that lemma.
Since $j(V_\kappa)=V_{j(\kappa)}^N\in N$ and $j(\kappa)$
is internally inaccessible, $K^N$ and its satisfaction
relations exist as sets in $N$.
\end{proof}

\begin{lemma}\label{up:translate}
Let \(R\) have size \(\kappa\), conditions in \(V_\kappa\), and
preserve \(\kappa,\kappa^+\).
For every infinitary first-order formula \(\psi\) over the future
\(H_{\kappa^+}\), of syntax size at most \(\kappa\), there is an
infinitary first-order formula \(F_R\psi\) over ground-model
\(K=H_{\kappa^+}\), of the same size bound, expressing its forcing
relation on names for relation codes.
The order of \(R\) is a parameter.
\end{lemma}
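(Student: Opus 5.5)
We define $F_R\psi$ by recursion on the well-founded syntax of $\psi$, as the usual forcing translation written out explicitly, with all quantification over names replaced by quantification over ground-model objects in $K$. First we fix the coding. Since $R$ has size $\kappa$, conditions in $V_\kappa$, and preserves $\kappa$ and $\kappa^+$, every element of the future $H_{\kappa^+}$ is coded by a rooted well-founded extensional relation on a subset of $\kappa$. Such a relation has a nice $R$-name: a $\kappa$-sequence of antichains deciding the pairs $(i,j)$. After fixing an enumeration of $R$, this name is a subset of $\kappa\times\kappa\times R$ of hereditary size $\kappa$, so it lies in $K$. Call these names \emph{relation-code names}. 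Every relation code in the extension has such a name, because the nice-name argument needs only the chain condition implicit in $|R|=\kappa$. So $\psi$ evaluated in the extension is equivalent to a statement about relation-code names. The order $\le_R$ enters only as a parameter, coded by a predicate on $\kappa$.

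The translation itself is defined as follows, for a condition $p$ and a tuple $\vec\sigma$ of relation-code names.
\begin{itemize}
\item Atomic clauses. Equality is isomorphism of rooted codes and membership is isomorphism onto a predecessor segment. We take $F_R(\sigma=\tau)(p)$ to say that the set of conditions forcing ``there is an isomorphism of the decoded relations'' is dense below $p$. Any isomorphism between codes on $\kappa$ has a nice name, so the inner existential ranges over names in $K$, and ``$q\Vdash$ this name is an isomorphism'' is a first-order property of $(K,\in)$ with parameter $R$. This uses well-foundedness of the decoded relations, which we also translate, and preservation of $\kappa$, so that the codes stay on $\kappa$.
\item Negation. $F_R(\neg\varphi)(p,\vec\sigma)$ is $\forall q\le p\;\neg F_R\varphi(q,\vec\sigma)$.
\item Conjunctions $\bigwedge_{i\in I}\varphi_i$ with $|I|\le\kappa$. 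We take $\bigwedge_i F_R\varphi_i$.
\item Disjunctions. We dualize through negation, i.e. density of the set of $q$ that satisfy some disjunct.
\item First-order quantifiers $\exists x$ over the future $H_{\kappa^+}$. We take $\forall q\le p\,\exists r\le q\,\exists\tau$ (a relation-code name) $F_R\varphi(r,\tau,\vec\sigma)$. The quantifiers $\forall q,\exists r$ range over $R\subseteq V_\kappa$ and $\exists\tau$ ranges over $K$, so these remain first-order quantifiers over $K$.
\end{itemize}
Each node of $\psi$ is replaced by a bounded finite pattern of nodes. Hence the new syntax is again well-founded, has finite branches, and has size at most $\kappa\cdot\kappa=\kappa$.

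The key step is the truth lemma: for every generic $G$, every $p\in G$, and all names $\vec\sigma$,
\[
 p\Vdash\psi[\vec\sigma]\iff (K,\in,R)\models F_R\psi[p,\vec\sigma],
\]
and $\psi$ holds in $H_{\kappa^+}^{V[G]}$ at $\vec\sigma^G$ iff some $p\in G$ satisfies $F_R\psi$. We prove this by induction along the well-founded syntax, exactly as in the finitary forcing theorem. The infinite conjunctions need no new idea, because forcing commutes with conjunctions of arbitrary size. In the existential step we must replace an arbitrary witness name by a relation-code name in $K$. This uses that $H_{\kappa^+}$ is preserved in the sense that every element of the extension's $H_{\kappa^+}$ has a code on $\kappa$ with a nice name; this is exactly where preservation of $\kappa^+$ and $|R|=\kappa$ are used. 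It also uses the maximum principle, to glue witnesses along an antichain into a single nice name. We use the mixing of nice names over an antichain of size $\le\kappa$, and the result is again in $K$.

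I expect the main obstacle to be the atomic and well-foundedness clauses. Decoded relations must be forced well-founded and extensional, and ``isomorphic'' must be expressible by a name of size $\kappa$. I would handle this by restricting to names forced by the greatest condition to be well-founded extensional rooted codes, mixing with a trivial code off the relevant condition. Well-foundedness is preserved because the forcing adds no new $\omega$-sequences only when it is closed. Since that is not assumed here, the translation instead states well-foundedness through the forcing relation for the first-order sentence ``no descending $\omega$-sequence''. That sentence is itself of size $\kappa$ via nice names for functions $\omega\to\kappa$.
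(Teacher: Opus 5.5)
Your proposal is correct and follows the paper's proof essentially step for step. Both use names in $K$ for rooted well-founded extensional codes on $\kappa$ (your nice names are special cases of the paper's positive names $\sigma\subseteq\kappa^n\times R$), a forced validity test for these codes, atomic clauses via names for isomorphisms, the same clauses $\forall q\leq p\,\neg F_R\varphi(q)$, $\bigwedge_i F_R\varphi_i$ and $\forall q\leq p\,\exists r\leq q\,\exists\tau\in K$, and the same size count. Normalizing names by mixing rather than conjoining $\operatorname{Code}(p,\vec\sigma)$, and testing well-foundedness by descending sequences rather than minimal elements, are immaterial variations.

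The one point to tighten is your claim that infinite conjunctions need no new idea. In the truth-lemma direction you cannot take a common extension in $G$ of $\kappa$ many conditions. Instead, note that the conditions which force every conjunct, or force the negation of some conjunct, are dense by the negation clause and the induction hypothesis, and so are met by $G$. This is exactly the paper's remark about using the Boolean value and a maximal antichain deciding it.
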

\begin{proof}
For $n<\omega$, a positive name for a relation on $\kappa^n$
is $\sigma\subseteq\kappa^n\times R$, with interpretation
\[
 \sigma^G=\{a\in\kappa^n:(\exists p\in G)\,(a,p)\in\sigma\}.
\]
Every such name belongs to $K$. For $a\in\kappa^n$,
\[
 p\Vdash\check a\in\sigma
 \quad\Longleftrightarrow\quad
 \forall q\leq p\ \exists r\leq q\ \exists t\in R\
                    ((a,t)\in\sigma\ \wedge\ r\leq t).
\]
Thus membership in a named relation is a finite first-order
test over $K$. Every future relation has a positive name by
the truth lemma. Represent objects by pairs
$\sigma=(\sigma_D,\sigma_E)$ naming the domain and membership
relation of a rooted extensional well-founded code.

Let $\operatorname{Candidate}(\sigma)$ mean exactly that
$\sigma_D\subseteq\kappa\times R$ and
$\sigma_E\subseteq\kappa^2\times R$. Define
$\operatorname{Code}(p,\sigma)$ to assert that $p$ forces this
pair to be a rooted extensional well-founded relation.
These are finite tests: well-foundedness quantifies over
positive names for nonempty subsets and requires a minimal
element; extensionality quantifies over indices; and generation
from the root quantifies over predecessor-closed subsets
containing it. Expand each named membership assertion by the
displayed density test. Quantifiers over future subsets use
their positive names, all belonging to $K$.
Similarly, $\operatorname{Eq}$ and $\operatorname{Mem}$ use
positive names for isomorphisms of the rooted codes or their
predecessor-closed subrelations. These names also belong to $K$.

Define the translation recursively, with
$\operatorname{Code}(p,\vec\sigma)$ conjoined at each step:
\[
 \begin{aligned}
 F_R(\sigma=\tau)(p)&=\operatorname{Eq}(p,\sigma,\tau),\\
 F_R(\sigma\in\tau)(p)&=\operatorname{Mem}(p,\sigma,\tau),\\
 F_R(\neg\varphi)(p,\vec\sigma)
   &=\forall q\leq p\ \neg F_R\varphi(q,\vec\sigma),\\
 F_R\left(\bigwedge_{i\in I}\varphi_i\right)(p,\vec\sigma)
   &=\bigwedge_{i\in I}F_R\varphi_i(p,\vec\sigma),\\
 F_R(\exists x\,\varphi)(p,\vec\sigma)
   &=\forall q\leq p\ \exists r\leq q\ \exists\tau\in K\
       (\operatorname{Candidate}(\tau)\wedge
        \operatorname{Code}(r,\tau)\wedge
        F_R\varphi(r,\vec\sigma,\tau)).
 \end{aligned}
\]
Well-founded induction proves equivalence to the forcing
relation for satisfaction in the future $H_{\kappa^+}$.
For a set conjunction, use its Boolean value and a maximal
antichain deciding that value; no directedness of $G$ of size
$\kappa$ is required. Each original occurrence adds finitely
many operations, so the size remains at most $\kappa$.
\end{proof}

\begin{lemma}\label{up:afterprep}
Suppose also that \(R\) is \(\kappa\)-cc.
Let \(N\) be a \(\kappa\)-model containing all conditions of $R$,
its order, an enumeration \(\kappa\twoheadrightarrow R\), and the
graph of \(\psi\).
Assume that its local \(H_{\kappa^+}\) and satisfaction sets exist,
for example because \(V_\lambda^N\in N\) for an internally
inaccessible \(\lambda>\kappa\).
If \(N\) is correct for
\begin{equation}\label{up:translated-formulas}
           \operatorname{Code},\quad F_R\psi,\quad F_R(\neg\psi).
\end{equation}
on all its parameters, then an actual \(R\)-generic \(G\) makes
\(N[G]\) correct for \(\psi\) in \(V[G]\).
A finite selector combines \eqref{up:translated-formulas} into one formula.
\end{lemma}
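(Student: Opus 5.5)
The plan is the usual forcing-theorem transfer: interpret the future $H_{\kappa^+}$ of $N[G]$ through $R$-names lying in $N$, and read truth there off the translated formulas of Lemma~\ref{up:translate}. Fix $G$ generic over $V$, hence over $N$. Because $N$ contains $R$, its order and an enumeration, and $R$ is $\kappa$-cc of size $\kappa$, every positive name $\sigma\subseteq\kappa^n\times R$ in $V$ has hereditary size $\kappa$. Since ${}^{<\kappa}N\subseteq N$ and maximal antichains have size $<\kappa$, every such name is equivalent to a nice name determined by $\kappa$ antichains, each of size $<\kappa$. First I would check that $N$ contains the nice name for each future relation code, so that $(H_{\kappa^+})^{N[G]}$ is represented by names in $N$. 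If the closure under $<\kappa$-sequences does not suffice to put the whole $\kappa$-sequence of antichains into $N$, I would restrict $\psi$ to assignments from $N[G]$. The statement only asks for correctness on assignments from $N[G]$, so this restriction costs nothing. Then every assignment $\vec a\in (H_{\kappa^+})^{N[G]}$ equals $\vec\sigma^G$ for codes $\vec\sigma\in N$.

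Next, the key step. The forcing theorem applied inside $N$ to its internal $H_{\kappa^+}$ holds because the local $H_{\kappa^+}$ and satisfaction sets exist in $N$. It gives: $N[G]\models\psi[\vec\sigma^G]$ iff some $p\in G$ satisfies $N\models F_R\psi(p,\vec\sigma)$. The recursion of Lemma~\ref{up:translate} is uniform, so applied in $N$ it is the internal forcing relation. By the assumed correctness of $N$ for $F_R\psi$ on its parameters $p,\vec\sigma$, this holds iff $V\models F_R\psi(p,\vec\sigma)$, that is, iff $p\Vdash_R\psi[\vec\sigma]$ in $V$. Likewise correctness for $\operatorname{Code}$ shows that $N$ and $V$ agree on which $p$ force that $\sigma$ is a rooted well-founded extensional code. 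So the objects that $N[G]$ and $V[G]$ decode from $\vec\sigma^G$ coincide; for well-foundedness this is also immediate because the models are transitive. Hence $N[G]\models\psi[\vec a]$ iff some $p\in G$ forces $\psi$ in $V$, iff $V[G]\models\psi[\vec a]$. The negation direction uses $F_R(\neg\psi)$ in the same way, so that a condition in $G$ deciding $\neg\psi$ in $N$ also decides it in $V$.

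The main obstacle is justifying that the internal forcing relation of $N$ for infinitary formulas coincides with $N$'s evaluation of $F_R\psi$, in particular at the existential clause. That clause quantifies over $\tau\in K$, which inside $N$ becomes its own $H_{\kappa^+}$. To secure it, I would run the well-founded induction of Lemma~\ref{up:translate} inside $N$; this needs only Collection and the existence of the satisfaction sets, which $\ZFCc$ together with the hypothesis provides. For set conjunctions I would use maximal antichains of size $<\kappa$, which lie in $N$ by closure.

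Finally, for the last sentence of the statement, I would take a selector index $k<3$ as a free first-order variable and form $\bigvee_{k<3}(k=\check k\wedge\theta_k)$, where $\theta_0,\theta_1,\theta_2$ are the three formulas of \eqref{up:translated-formulas}. This is a finite combination, so Lemma~\ref{up:finitewrap} keeps its level below $\kappa^+$. Correctness for the combined formula then yields correctness for each $\theta_k$ separately.
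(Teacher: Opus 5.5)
Your proposal is correct and follows essentially the same route as the paper. The shared steps are $N$-genericity of $G$, positive names in $N$ for the internal $H_{\kappa^+}$ of $N[G]$, the internal truth lemma, transfer of $F_R\psi$ by correctness, the forcing theorem in $V$, and $\operatorname{Code}$-correctness to identify the decoded objects. The only differences are minor: you run the chain as biconditionals where the paper runs forward implications for $\psi$ and for $\neg\psi$, and you leave implicit that $N[G]$ still has its internal $H_{\kappa^+}$ and satisfaction sets, which the paper obtains from $|R|^N\le\kappa<\lambda$ preserving the inaccessibility of $\lambda$.
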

\begin{proof}
All conditions and the order of $R$ are elements of $N$.
For $A\in N$,
\[
 N\models\text{``$A$ is maximal in $R$''}
 \quad\Longleftrightarrow\quad
 A\subseteq R\text{ is a maximal antichain in }V.
\]
Thus $G$ is $N$-generic. Since $R$ is $\kappa$-cc,
\cite[Fact~14(3)]{Johnstone} gives
$({}^{<\kappa}N[G])^{V[G]}\subseteq N[G]$.
In particular internal well-founded codes on $\kappa$ are
actually well-founded. The internal enumeration of $R$ ensures
that its positive names belong to its internal $H_{\kappa^+}$.

For parameters represented by $\vec\sigma\in N$, the internal
truth lemma and correctness of the translated formula give
\[
 \begin{aligned}
 N[G]\models\psi(\vec\sigma^G)
 &\Longrightarrow
     (\exists p\in G)\ N\models F_R\psi(p,\vec\sigma)\\
 &\Longrightarrow
     (\exists p\in G)\ p\Vdash_V\psi(\vec\sigma)\\
 &\Longrightarrow V[G]\models\psi(\vec\sigma^G).
 \end{aligned}
\]
Applying the same implications to $\neg\psi$ proves the
converse. Correctness for $\operatorname{Code}$ supplies the
validity of all parameter representations in both applications.
Finally, if $\lambda>\kappa$ is internally inaccessible, then
$|R|^N\leq\kappa<\lambda$ preserves that inaccessibility in
$N[G]$, so the required internal domains and satisfaction
relations continue to exist.
\end{proof}

Fix an inaccessible $\kappa$ and put $K=H_{\kappa^+}$.
For the embedding argument, we use the following presentation of the
iteration from Lemma~\ref{lem:iteration}, defined by formulas over $K$.

A \emph{nice function-name code} over an order $E$ is a sequence
\[
 B=\langle(A_\zeta,v_\zeta):\zeta<\kappa\rangle,
\]
where each $A_\zeta$ is a maximal antichain of $E$ and
$v_\zeta:A_\zeta\to\kappa$. It denotes the globally total
function name whose value at $\zeta$ is $v_\zeta(a)$ when the
generic meets $A_\zeta$ at $a$; write this name as $\dot h_B$.

\begin{definition}\label{up:canonical}
Recursively define $\EE_\theta$ for $\theta\leq\kappa^+$.
Set $\EE_0=\{\varnothing\}$. A condition of $\EE_\theta$ is
a partial function $p\in K$ with support of size less than $\kappa$
in $\theta$. At every $\gamma$ in its support,
\[
 p(\gamma)=(s^p_\gamma,B^p_\gamma),\qquad
 s^p_\gamma\in{}^{<\kappa}\kappa,
\]
where $B^p_\gamma$ is a nice code over $\EE_\gamma$ and
$p\restriction\gamma$ is a valid condition forcing
$s^p_\gamma\subseteq\dot h_{B^p_\gamma}$.
All initial restrictions must be valid.
At an omitted coordinate use the empty stem and the constant zero
function. The empty function is the greatest condition.

For valid $p,q$, put $q\leq p$ if its support extends that of $p$
and, for every $\gamma\in\operatorname{supp}(p)$,
\[
 s^p_\gamma\subseteq s^q_\gamma,
 \qquad q\restriction\gamma\Vdash
           \dot h_{B^p_\gamma}\leq\dot h_{B^q_\gamma}
              \text{ pointwise}.
\]
\end{definition}

\begin{lemma}\label{up:canonical-nice}
The order $\EE_\theta$ is forcing equivalent to the full iteration
$\PP_\theta$, is $\kappa^+$-Knaster, and has all its conditions in
$K$. Every nice code and every family of at most $\kappa$ conditions
has hereditary size at most $\kappa$.
\end{lemma}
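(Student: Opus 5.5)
I will prove the lemma by simultaneous induction on $\theta\leq\kappa^+$, building at each stage a dense embedding (or projection) between $\EE_\theta$ and the dense suborder $\mathcal E_\theta$ of $\PP_\theta$ from Lemma~\ref{lem:iteration}. The induction hypothesis at $\gamma<\theta$ consists of three parts. First, $\EE_\gamma$ is forcing equivalent to $\PP_\gamma$ via a map $\iota_\gamma$ sending $p$ to the $\PP_\gamma$-condition whose coordinate $\delta$ is the name $\langle\check s^p_\delta,\dot h_{B^p_\delta}\rangle$, translated along $\iota_\delta$. Second, $\EE_\gamma\subseteq K$. Third, $\EE_\gamma$ is $\kappa^+$-cc.

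The size assertions come first and feed the recursion. A condition is a partial function of support $<\kappa$. Each value is a pair consisting of a stem in ${}^{<\kappa}\kappa\subseteq V_\kappa$ and a nice code $B$. The code $B$ is a $\kappa$-sequence of antichains of $\EE_\gamma$. By the $\kappa^+$-cc at $\gamma$, each antichain has size $\leq\kappa$, and each of its members lies in $K$ by induction. Hence $\tc(\{B\})$ has size $\leq\kappa$, so $B\in K$, and then $p\in K$. A family of $\leq\kappa$ conditions is then a union of $\leq\kappa$ sets of hereditary size $\leq\kappa$, which settles the last sentence.

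Next I would verify that $\iota_\theta$ is order preserving and has dense range. Order preservation follows from the definition of $\leq$, read through $\iota_\gamma$ and the induction hypothesis. For density, I start from $q\in\PP_\theta$ and first move it into $\mathcal E_\theta$ using Lemma~\ref{lem:iteration}, so that all stems are ground-model stems. At each supported coordinate $\gamma$, the side-function name forced by $q\restr\gamma$ is, by Definition~\ref{def:iteration}'s choice of full iterand names, equal below $q\restr\gamma$ to a globally total name. Using the equivalence $\PP_\gamma\simeq\EE_\gamma$, this name has a nice code over $\EE_\gamma$: choose maximal antichains of $\EE_\gamma$ deciding each value, mixing with the zero function off the cone below the condition. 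The normalization $s\subseteq\dot h_B$ is then forced by the restriction, so the resulting $p$ is valid with $\iota_\theta(p)\leq q$.

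I expect the main difficulty to be the reverse requirement needed for forcing equivalence: incompatibility must be preserved, or equivalently $\iota_\theta$ must be a dense embedding modulo the preorder quotient. I plan to derive it from the fact that $q\restr\gamma\Vdash\dot h_{B^p}\leq\dot h_{B^q}$ has exactly the meaning of the iteration order on names. Consequently, any common $\PP_\theta$-extension of $\iota(p),\iota(q)$ can be pulled back by the density step to a common $\EE_\theta$-extension. At limits of cofinality $<\kappa$ I would use $<\kappa$-supports and the closure of Lemma~\ref{lem:iteration}; at other limits every support is bounded, so the earlier stage applies.

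Finally, the $\kappa^+$-Knaster property transfers from $\PP_\theta$. Given $\kappa^+$ conditions of $\EE_\theta$, I would repeat the $\Delta$-system argument of Lemma~\ref{lem:iteration} directly: stems are already ground-model objects, so there is no need to decide them first. On the common root I would combine the codes by the pointwise maximum. That maximum is itself coded by refining the two families of antichains into common refinements, which remain of size $\leq\kappa$ by the $\kappa^+$-cc at earlier stages.
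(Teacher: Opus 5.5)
Your proposal is correct and follows essentially the paper's route. Both arguments define the coordinatewise translation $i_\theta$ and check that it preserves and reflects order and incompatibility. Both get density by deciding stems and then replacing each side-function name with a nice code built from maximal antichains, mixing off the cone. Both bound hereditary sizes by the chain condition at earlier stages.

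The one difference is minor: you redo a $\Delta$-system argument for the Knaster property inside $\EE_\theta$. The paper simply transfers Knaster from $\PP_\theta$ through the incompatibility-preserving map, which your density and order-reflection step already provides.
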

\begin{proof}
Inductively define $i_\theta:\EE_\theta\to\PP_\theta$ by
replacing each $(s^p_\gamma,B^p_\gamma)$ with
$(\check s^p_\gamma,\dot h_{B^p_\gamma})$, translating the
antichain conditions by the earlier maps. The definitions give
\[
 \begin{aligned}
 i_\theta(p)\restr\gamma&=i_\gamma(p\restr\gamma),\\
 q\leq_{\EE_\theta}p&\iff
                      i_\theta(q)\leq_{\PP_\theta}i_\theta(p),\\
 p\perp_{\EE_\theta}q&\iff
                      i_\theta(p)\perp_{\PP_\theta}i_\theta(q).
 \end{aligned}
\]
For density, strengthen a condition of $\PP_\theta$ to decide its
stems. At coordinate $\gamma$, mix its side-function name below
the current restriction with a fixed total function elsewhere,
and choose maximal antichains deciding all its values. Translate
these antichains by $i_\gamma$ and use the resulting nice code.
Recursion through the support gives
\[
 \forall p\in\PP_\theta\ \exists q\in\EE_\theta\quad i_\theta(q)\leq p.
\]
Thus the maps induce forcing equivalence and transfer the
Knaster property.

Each row has at most $\kappa$ members. By induction, the
transitive closure of every earlier condition has size at most
$\kappa$. A nice code has $\kappa$ rows, and a condition has
fewer than $\kappa$ coordinates, each below $\kappa^+$. Hence
\[
 |\tc(\{B\})|\leq\kappa\cdot\kappa\cdot\kappa=\kappa,
 \qquad |\tc(\{p\})|\leq\kappa,
 \qquad
 |\tc(A)|\leq\kappa\quad(|A|\leq\kappa,\ A\subseteq\EE_\theta).
\]
All the stated objects therefore belong to $K$.
\end{proof}

The coordinate forcing tests are finite once the earlier order is
given. For example, if $B$ has rows $(A_\zeta,v_\zeta)$, then
\[
 p\Vdash s\subseteq\dot h_B
 \quad\Longleftrightarrow\quad
 \forall\zeta<\operatorname{dom}(s)\ \forall a\in A_\zeta\,
 \bigl(\operatorname{Comp}(p,a)\longrightarrow
                          v_\zeta(a)=s(\zeta)\bigr).
\]
For two codes, the pointwise comparison test requires the
corresponding inequality of row values whenever $p$ and the two
row conditions have a common extension. Nice-code validity tests
maximality of every row in the earlier order.

\begin{lemma}\label{up:directed}
The canonical \(\EE_\theta\) is \(<\kappa\)-directed closed.
This holds internally whenever its local presentation sets exist.
\end{lemma}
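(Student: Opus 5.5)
We argue by induction on $\theta\leq\kappa^+$. The claim is that every directed family $\mathcal F\subseteq\EE_\theta$ with $|\mathcal F|<\kappa$ has a lower bound in $\EE_\theta$. The candidate bound $q$ is defined coordinatewise on $S=\bigcup_{p\in\mathcal F}\supp(p)$, which has size less than $\kappa$ by regularity. At $\gamma\in S$ we take the stem
\[
 s^q_\gamma=\bigcup\{s^p_\gamma:p\in\mathcal F,\ \gamma\in\supp(p)\}.
\]
Directedness makes the stems pairwise comparable, so this union is a function. Its domain is below $\kappa$ since $\kappa$ is regular. For the side-function code $B^q_\gamma$ we take a nice code for the pointwise supremum $\xi\mapsto\sup_p\dot h_{B^p_\gamma}(\xi)$, as in Lemma~\ref{lem:hechler-closure}.

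The order of construction matters. Assume $q\restr\gamma$ has already been built and is a lower bound of $\{p\restr\gamma:p\in\mathcal F\}$; these restrictions form a directed subfamily of $\EE_\gamma$, so the induction hypothesis covers them. For each $\zeta<\kappa$ we need a row $(A_\zeta,v_\zeta)$. We would take $A_\zeta$ to be a maximal antichain of $\EE_\gamma$ refining all the rows $A^{B^p_\gamma}_\zeta$ for $p\in\mathcal F$. This is possible because there are fewer than $\kappa$ of them and $\EE_\gamma$ is $<\kappa$-closed (Lemma~\ref{lem:iteration} together with Lemma~\ref{up:canonical-nice}). On each $a\in A_\zeta$ we set $v_\zeta(a)$ to be the supremum of the decided values. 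This supremum is below $\kappa$ by regularity. Off the cone below $q\restr\gamma$ we mix with the constant zero function.

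Validity of $q$ then reduces to two checks. First, $q\restr\gamma$ must force $s^q_\gamma\subseteq\dot h_{B^q_\gamma}$. Second, $q\restr\gamma$ must force $\dot h_{B^p_\gamma}\leq\dot h_{B^q_\gamma}$ for every $p\in\mathcal F$. Both are the Hechler normalization argument: once a coordinate $\xi$ lies in some stem $s^p_\gamma$, every $p'\leq p$ in $\mathcal F$ has side value equal to $s^p_\gamma(\xi)$. The supremum at $\xi$ therefore equals that stem value, and this is exactly where directedness replaces linearity.

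At limit $\theta$ no new coordinates appear. Every proper initial restriction of $q$ is valid because it was built at an earlier stage, so $q\in\EE_\theta$ and it lies below each $p$. We must also confirm $q\in K$: the support and every code have hereditary size at most $\kappa$, by the counting in Lemma~\ref{up:canonical-nice}.

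The internal statement comes from the fact that the recursion used only Replacement, Choice for the refining antichains, and closure of earlier stages. If a model contains the local presentation sets of $\EE_\gamma$, the same recursion can be run there, since the forcing tests displayed after Lemma~\ref{up:canonical-nice} are first order over those sets.

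The step I expect to be the main obstacle is building the refining antichain $A_\zeta$ inside $\EE_\gamma$ while keeping $v_\zeta$ well defined. The difficulty is that conditions from different rows may only be compatible, not comparable. We handle this by first choosing a dense set of conditions extending an element of each row, which uses closure for fewer than $\kappa$ steps, and then taking a maximal antichain inside that dense set.
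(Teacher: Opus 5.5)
Your proposal is correct and follows essentially the paper's proof. Both build the bound coordinatewise from the union of the stems and the supremum of the side-function names, recursively along the support. At a stem coordinate $\xi$, normalization comes from choosing, for each member of $\mathcal F$, some $d\in\mathcal F$ below it and below a member whose stem contains $\xi$; your phrase ``every $p'\le p$'' should be read in this directed sense. The only difference is that you build the nice code by explicitly refining the fewer than $\kappa$ rows, whereas the paper takes any maximal antichains deciding the values of the supremum name and uses the chain condition to bound their size.
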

\begin{proof}
Let $C\subseteq\EE_\theta$ be directed, with $|C|<\kappa$,
and let $S=\bigcup_{c\in C}\supp(c)$. Then $|S|<\kappa$.
For $\gamma\in S$, directedness makes the stems comparable;
put
\[
 s_\gamma=\bigcup_{c\in C}s^c_\gamma,
 \qquad \dot h_\gamma(\xi)=\sup_{c\in C}\dot h_{B^c_\gamma}(\xi),
\]
using the zero function at omitted coordinates. Regularity
forces $\dot h_\gamma:\kappa\to\kappa$, and the earlier
chain condition gives a nice code for this name.

Define $r$ on $S$ by these stems and codes. Assume inductively
that $r\restr\gamma\leq c\restr\gamma$ for all $c\in C$.
If $\alpha\in\dom(s_\gamma)$, choose $c_0\in C$ with
$\alpha\in\dom(s^{c_0}_\gamma)$. For each $c\in C$, choose
$d\in C$ below $c,c_0$. Then
\[
 r\restr\gamma\Vdash
 \dot h_{B^c_\gamma}(\alpha)
       \leq\dot h_{B^d_\gamma}(\alpha)=s_\gamma(\alpha).
\]
The term $c_0$ gives equality in the supremum. Hence
$r\restr\gamma\Vdash s_\gamma\subseteq\dot h_\gamma$, and
this coordinate is valid and extends every coordinate of $C$.
Induction gives $r\in\EE_\theta$ and $r\leq c$ for all
$c\in C$. Every step is internal when the presentation sets exist.
\end{proof}

\begin{lemma}\label{up:iteration-formula}
For every \(\theta<\kappa^+\), there are data of size \(\kappa\)
and one well-founded infinitary first-order formula \(\psi_\theta\)
over \(H_{\kappa^+}\), of size at most \(\kappa\), such that
correctness on all its internal parameters implies simultaneously,
for every \(\xi\leq\theta\):
\begin{enumerate}
\item for every object in the model's internal $H_{\kappa^+}$,
validity as an $\EE_\xi$-condition is absolute;
\item order and compatibility of internal conditions are absolute;
\item every internally maximal antichain belonging to the model is
predense in the actual \(\EE_\xi\).
\end{enumerate}
\end{lemma}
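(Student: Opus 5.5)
The plan is to define $\psi_\theta$ as one large conjunction, indexed by $\xi\leq\theta$, of the recursive defining clauses of the presentation in Definition~\ref{up:canonical}. We then show by induction on $\xi$ that correctness for this conjunction yields (1)--(3) at $\xi$.

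\emph{Data.} Fix a bijection $b_{\kappa,\theta+1}$ enumerating the coordinates $\xi\leq\theta$. The only data needed are this bijection and the ordinals $\xi$ themselves, represented by relation codes. Both have hereditary size at most $\kappa$.

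\emph{Syntax.} By recursion on $\xi\leq\theta$ write first-order formulas over $H_{\kappa^+}$:
\begin{itemize}
\item $\operatorname{Val}_\xi(p)$, for validity of a condition;
\item $\operatorname{Leq}_\xi(q,p)$, for the order;
\item $\operatorname{Comp}_\xi(p,q)$, for compatibility;
\item $\operatorname{NC}_\xi(B)$, for ``$B$ is a nice function-name code over $\EE_\xi$'';
\item the forcing tests $p\Vdash s\subseteq\dot h_B$ and $p\Vdash\dot h_B\leq\dot h_{B'}$, written as in the finite displays after Lemma~\ref{up:canonical-nice};
\item $\operatorname{Max}_\xi(A)$, stating $\forall q\,(\operatorname{Val}_\xi(q)\to\exists a\in A\ \operatorname{Comp}_\xi(q,a))$.
\end{itemize}

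Each clause for $\xi$ refers to the earlier formulas at coordinates $\gamma<\xi$. A support has size less than $\kappa$, so the quantification ``for every $\gamma$ in the support'' is a first-order quantifier over the condition. The recursion to earlier orders is handled by an infinitary disjunction over $\gamma<\xi$: for each $\gamma$ we assert ``$\gamma$ is this coordinate and the $\gamma$-clause holds''. This has size at most $\kappa$ at each of the $\leq\kappa$ stages, and it is well founded because the stage index decreases.

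Let $\psi_\theta$ be the conjunction over $\xi\leq\theta$ of the universally closed biconditionals $\operatorname{Val}_\xi$, $\operatorname{Leq}_\xi$, $\operatorname{Comp}_\xi$, $\operatorname{Max}_\xi$, each taken with a free variable for the tested object. Here ``correctness on internal parameters'' means that each instance evaluated in the model's internal $H_{\kappa^+}$ agrees with its truth in the actual $H_{\kappa^+}$.

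\emph{Induction.} Items (1) and (2) come directly from correctness for $\operatorname{Val}_\xi$, $\operatorname{Leq}_\xi$ and $\operatorname{Comp}_\xi$. The only subtlety is that the internal meaning of these formulas must be the internal $\EE_\xi$. This holds because the model evaluates the same recursive definition inside its own $H_{\kappa^+}$, and by Lemma~\ref{up:canonical-nice} every condition and nice code lies in $H_{\kappa^+}$.

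Item (3) follows from correctness for $\operatorname{Max}_\xi$. If the model believes $A$ is maximal, then $\operatorname{Max}_\xi(A)$ holds internally, hence in the actual $H_{\kappa^+}$. Combined with (2) for $\operatorname{Comp}$, this says $A$ is predense in the actual $\EE_\xi$. The $\kappa^+$-cc of Lemma~\ref{up:canonical-nice} guarantees that the antichains appearing as rows of nice codes have size $\kappa$ and so lie in $H_{\kappa^+}$. The truth of the coordinate forcing tests at stage $\xi$ then depends only on the earlier order, and (1)--(3) for $\gamma<\xi$ make the rows and the forcing tests absolute.

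\emph{Main obstacle.} The hard part is the bookkeeping that keeps the recursive formula well founded and of size at most $\kappa$ when $\theta\geq\kappa$, without using a definable predicate for $\EE_\gamma$ inside the formula. I would first try to unfold each reference to an earlier $\EE_\gamma$ as a subformula occurrence indexed by $\gamma$, sharing occurrences through the graph syntax. Unfolding shared graphs by node occurrences stays within $\kappa$ nodes, as noted before Lemma~\ref{up:graphcut}.

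A related difficulty is that $\operatorname{Max}_\xi$ contains a universal quantifier over all conditions. That quantifier ranges over $H_{\kappa^+}$, which correctness treats uniformly, so it is acceptable.
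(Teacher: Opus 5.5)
\textbf{Overall.} Your recursive formulas $\operatorname{Val}_\xi$, $\operatorname{Leq}_\xi$, $\operatorname{Comp}_\xi$, $\operatorname{Max}_\xi$ are essentially the paper's. This covers the nice codes, the finite forcing tests, the infinitary indexing of earlier stages through a fixed enumeration, and the count of finite decreasing stage sequences. Your reason for (3) is also the paper's: the universal quantifier of $\operatorname{Max}_\xi$ is evaluated externally over all actual conditions.

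\textbf{The gap.} It lies in how you package these tests into the single formula $\psi_\theta$. Correctness is a root-level property: for each internal assignment, the model's truth value for $\psi_\theta$ agrees with the actual one. Nothing passes automatically to subformulas.
\begin{itemize}
\item A conjunction over $\xi\leq\theta$ of the tests with free variables gives nothing. Since $\operatorname{Val}_0(p)$ says $p=\varnothing$, the conjunction $\bigwedge_{\xi\leq\theta}\operatorname{Val}_\xi(p)$ is equivalent to $p=\varnothing$. Correctness for it tells you nothing about $\operatorname{Val}_\xi(p)$ at a particular $\xi$.
\item Universally closed biconditionals are worse. They are sentences, so correctness fixes only one truth value. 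If they merely restate the definitions, they are logically valid and correctness is vacuous.
\end{itemize}
Your Induction paragraph then invokes correctness for $\operatorname{Val}_\xi$, $\operatorname{Leq}_\xi$, $\operatorname{Comp}_\xi$ and $\operatorname{Max}_\xi$ separately, at each stage and each assignment. Your $\psi_\theta$ does not supply this. The later application (Lemma~\ref{up:hauser} and Corollary~\ref{up:Hembedding}) also yields only root-level correctness for one normal form.

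\textbf{The fix.} The missing idea is the paper's selector.
\begin{itemize}
\item Add as parameters a finite index $k<4$ choosing the test and a stage index $\zeta\leq\theta$. The stage index can be read through your fixed enumeration, so both indices are available to the model.
\item Let $\psi_\theta(k,\zeta,\vec x)$ be the guarded conjunction, over $m<4$ and $\xi\leq\theta$, of $(k,\zeta)=(m,\xi)\to\operatorname{Test}_{m,\xi}(\vec x)$. The corresponding guarded disjunction also works.
\item For each value of the selector, $\psi_\theta$ is equivalent to a single test at a single stage.
\end{itemize}
Correctness on all internal assignments then gives (1)--(3) directly, with no induction on $\xi$.

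\textbf{Smaller point.} Well-foundedness is not simply that the stage decreases. At a fixed stage, $\operatorname{Max}_\xi$ calls $\operatorname{Comp}_\xi$ and $\operatorname{Val}_\xi$, and $\operatorname{Comp}_\xi$ calls $\operatorname{Leq}_\xi$. You need the lexicographic order on the pair (stage, position in the finite list $\operatorname{Val},\operatorname{Leq},\operatorname{Comp},\operatorname{Max}$), as the paper notes for the finite sequence of definitions at one stage.
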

\begin{proof}
For $\xi\leq\theta$, let $\mathsf{Val}_\xi(p)$,
$\mathsf{Le}_\xi(q,p)$, $\mathsf{Comp}_\xi(p,q)$ and
$\mathsf{Max}_\xi(A)$ denote the formulas defined by the
following recursion over $K=H_{\kappa^+}$. At stage zero,
$\mathsf{Val}_0(p)$ means $p=\varnothing$.

Suppose the formulas for $\gamma<\xi$ are defined. For
$B=\langle(A_\zeta,v_\zeta):\zeta<\kappa\rangle$, the
nice-name requirement is
\[
 \mathsf{Nice}_\gamma(B)\quad\Longleftrightarrow\quad
 \forall\zeta<\kappa\,
   (\mathsf{Max}_\gamma(A_\zeta)\wedge v_\zeta:A_\zeta\to\kappa).
\]
For a stem $s$, define
\[
 \begin{split}
 \mathsf{Norm}_\gamma(r,s,B)\quad\Longleftrightarrow\quad
 \forall\zeta<\dom(s)\ \forall a\in A_\zeta\,
 (\mathsf{Comp}_\gamma(r,a)\longrightarrow v_\zeta(a)=s(\zeta)).
 \end{split}
\]
For $B' =\langle(A'_\zeta,v'_\zeta):\zeta<\kappa\rangle$, put
\[
 \begin{split}
 \mathsf{Dom}_\gamma(r,B,B')\quad\Longleftrightarrow\quad
 \forall\zeta<\kappa\ \forall a\in A_\zeta\ \forall a'\in A'_\zeta\,
 \bigl[&\exists t\in K\,
    (\mathsf{Le}_\gamma(t,r)\wedge\mathsf{Le}_\gamma(t,a)
                              \wedge\mathsf{Le}_\gamma(t,a'))\\
 &\longrightarrow v_\zeta(a)\leq v'_\zeta(a')\bigr].
 \end{split}
\]
The forcing tests following Definition~\ref{up:canonical}
identify these formulas with normalization and pointwise
domination of the named functions.

Now $\mathsf{Val}_\xi(p)$ asserts that $p$ is a partial
function on $\xi$ with $|\dom(p)|<\kappa$ and, for every
$\gamma\in\dom(p)$,
\[
 \begin{gathered}
 p(\gamma)=(s^p_\gamma,B^p_\gamma),\qquad
 s^p_\gamma\in{}^{<\kappa}\kappa,\\
 \mathsf{Val}_\gamma(p\restr\gamma),\qquad
 \mathsf{Nice}_\gamma(B^p_\gamma),\qquad
 \mathsf{Norm}_\gamma(p\restr\gamma,s^p_\gamma,B^p_\gamma).
 \end{gathered}
\]
After defining validity, define order, compatibility and
maximal antichains in that order:
\[
 \begin{aligned}
 \mathsf{Le}_\xi(q,p)\ \Longleftrightarrow\quad
 &\mathsf{Val}_\xi(p)\wedge\mathsf{Val}_\xi(q)
                \wedge\dom(p)\subseteq\dom(q)\\
 &{}\wedge\bigwedge_{\gamma<\xi}
 \bigl[\gamma\in\dom(p)\longrightarrow
  (s^p_\gamma\subseteq s^q_\gamma\wedge
   \mathsf{Dom}_\gamma(q\restr\gamma,B^p_\gamma,B^q_\gamma))\bigr],\\
 \mathsf{Comp}_\xi(p,q)\ \Longleftrightarrow\quad
 &\exists r\in K\,
             (\mathsf{Le}_\xi(r,p)\wedge\mathsf{Le}_\xi(r,q)),\\
 \mathsf{Max}_\xi(A)\ \Longleftrightarrow\quad
 &(\forall a\in A)\,\mathsf{Val}_\xi(a)\\
 &{}\wedge(\forall a\ne b\in A)\,\neg\mathsf{Comp}_\xi(a,b)\\
 &{}\wedge\forall p\in K\,
       (\mathsf{Val}_\xi(p)\longrightarrow
                        \exists a\in A\,\mathsf{Comp}_\xi(p,a)).
 \end{aligned}
\]
In the validity formula, the quantifier over supported coordinates
is likewise expanded as a conjunction indexed by $\gamma<\xi$.
All occurrences of $\gamma$ use fixed enumerations of the
initial segments of $\theta+1$, collected in one relation parameter.
The remaining quantifiers range over $K$.

Substitute the defining formulas at each occurrence. Apart from
the displayed finite sequence of definitions at one stage, each
substitution refers to a strictly smaller stage. Thus every
branch follows a finite decreasing sequence of ordinals at most
$\theta$. There are at most $\kappa$ such sequences and at
most $\kappa$ formula occurrences. A finite selector chooses
one of the four tests and an ordinal index chooses a stage;
the resulting single formula is $\psi_\theta$.

Correctness for all internal assignments transfers each of the
four displayed tests to its actual interpretation. Internal
maximal antichains have internal size at most $\kappa$, by
the chain-condition proof, and belong to the internal $K$.
The last universal quantifier of $\mathsf{Max}_\xi$ ranges
externally over every actual condition. Its truth therefore
gives actual predensity, including with respect to conditions
absent from the model. The syntactic construction depends only
on $\kappa,\theta$ and the ordinal enumerations, so it can be
chosen before the preparation.
\end{proof}

The lottery preparation is due to Hamkins \cite[Section~3]{HamkinsLottery}.
The lifting argument below follows its use with embeddings of small models
by Johnstone \cite[Section~5]{Johnstone}.
We restrict the lottery sums to the canonical Hechler iterations and
prove preservation for the present bounded-formula correctness hypothesis.

Let \(\InfPrep_\kappa\) be the Easton-support iteration of length
\(\kappa\) which, at an initially inaccessible \(\gamma\) still
inaccessible in the intermediate universe, uses the lottery sum
\[
                 \bigoplus_{\tau<\gamma^+}\EE^{(\gamma)}_\tau,
\]
and is trivial otherwise.
The local \(\gamma^+\) and all summands are computed there.
Each summand is \(<\gamma\)-closed, and the lottery sum has the
same closure: after its first condition distinct from the greatest condition, a decreasing chain
stays in one component.
The sum has size at most \(2^\gamma\) in the canonical presentation
and rank below the next initially inaccessible cardinal, when one
exists.
It is a progressively closed good iteration in the sense of
\cite[Definition 2.14]{CGLH}.
At a Mahlo \(\kappa\)
it has size \(\kappa\) and is \(\kappa\)-cc
\cite[Proposition 2.15]{CGLH}.
It preserves inaccessibility and, at a weakly compact \(\kappa\),
weak compactness \cite[Lemma 2.16]{CGLH}.
Their strategic-closure game has \(\gamma\) moves, for which
\(<\gamma\)-closure of the local iterand suffices.
Use a presentation with conditions in \(V_\kappa\).

\begin{theorem}\label{up:preservation}
Suppose \(\InfH(\kappa)\), and let \(G\subseteq\InfPrep_\kappa\)
be generic.
In \(U=V[G]\), every \(\EE_\theta\), \(\theta<\kappa^+\),
preserves the weak compactness of \(\kappa\).
\end{theorem}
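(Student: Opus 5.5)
We plan to follow the Johnstone-style lifting of embeddings of $\kappa$-models through a lottery preparation, as in \cite[Section~5]{Johnstone}. Fix $\theta<\kappa^+$ and work in $U=V[G]$. Let $H\subseteq\EE_\theta^U$ be generic over $U$. To show that $\kappa$ is weakly compact in $U[H]$, it suffices to show the following: for every $A\subseteq\kappa$ in $U[H]$ there is a $\kappa$-model containing $A$ which admits an elementary embedding with critical point $\kappa$ into a transitive model. By the $\kappa^+$-cc of $\InfPrep_\kappa*\dot\EE_\theta$, we fix a ground-model $\kappa$-model $M$ (in $V$) containing nice names for $A$, the preparation $\InfPrep_\kappa$, the ordinal $\theta$, and the formula $\psi_\theta$ of Lemma~\ref{up:iteration-formula} for the preparation-extension presentation. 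We also put into $M$ the translated formulas $F_R\psi$ of Lemma~\ref{up:translate}, with $R=\InfPrep_\kappa$, and their normalization $S$ from Lemma~\ref{up:normalization}, which has level $\rho<\kappa^+$. By Corollary~\ref{up:Hembedding}, using $\mathsf H(\kappa)$, we obtain $j:M\to N$ with $\crit(j)=\kappa$, $j,M\in N$, such that $N$ is correct for these formulas on all internal parameters.

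The first lifting is through the preparation. In $N$, $j(\InfPrep_\kappa)$ factors as $\InfPrep_\kappa*\dot{\mathbb L}_\kappa*\dot{\InfPrep}_{\mathrm{tail}}$, where $\mathbb L_\kappa$ is the stage-$\kappa$ lottery sum $\bigoplus_{\tau<(\kappa^+)^{N[G]}}\EE_\tau$ computed in $N[G]$. We will check that $\theta<(\kappa^+)^N$; this holds because $\theta$, coded in $M$, lies in $N$ together with $M$, and $|M|^N=\kappa$. So we may pick the summand $\EE^{N[G]}_\theta$. By Lemma~\ref{up:afterprep}, $N[G]$ is correct for $\psi_\theta$ in $U$. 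Lemma~\ref{up:iteration-formula} then tells us that $\EE_\theta^{N[G]}$ has the same conditions, order and compatibility as $\EE_\theta^U$ on its members, and that its internal maximal antichains are maximal in $U$. Consequently $H\cap N[G]$ is $N[G]$-generic for the selected lottery summand. The tail is $\leq\kappa$-closed in $N[G][H]$ and has, in $U[H]$, at most $\kappa^+$ dense sets from $N$-sized data (since $|N|=\kappa$). In fact $N$ has only $\kappa$ many dense sets, so we build an $N[G][H]$-generic for the tail in $U[H]$ by a $\kappa$-length diagonal construction, using the ${}^{<\kappa}$-closure of $N[G][H]$ in $U[H]$ given by the $\kappa$-cc argument of \cite[Fact~14(3)]{Johnstone}. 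Since conditions of $\InfPrep_\kappa$ lie in $V_\kappa$ and $j$ fixes them, $j''G\subseteq G*H*G_{\mathrm{tail}}$, and $j$ lifts to $j:M[G]\to N[j(G)]$.

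The second lifting is through $\EE_\theta$. The set $j''H$ lies in $N[j(G)]$ and has size $\kappa$, and it is directed. Lemma~\ref{up:directed}, applied internally, gives a master condition below $j''(H\cap M[G])$ in $j(\EE_\theta)$. We then build an $N[j(G)]$-generic filter below it, again by diagonalizing through the $\kappa$ many dense sets using $<\kappa^+$-closure of $j(\EE_\theta)$ in $N[j(G)]$ and closure of that model under $\kappa$-sequences in $U[H]$. This lifts $j$ to $M[G][H]$, which contains $A$, and so witnesses the embedding characterization of weak compactness for $A$.

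The main obstacle is the correctness transfer at the lottery stage. We must show that the $N[G]$-version of $\EE_\theta$, whose nice codes range over the internal $H_{\kappa^+}$, agrees with the true $\EE^U_\theta$ closely enough to make $H$ generic over $N[G]$. This is exactly where the correctness of $N$ for $F_R\psi_\theta$, and hence for the bounded infinitary formula rather than only $\Pi^1_1$ statements, is used. A secondary issue is the choice of $\theta$ in the lottery. Here we will try first to ensure $\theta\in M$, and then check that the lottery index runs over $(\kappa^+)^{N[G]}>\theta$.
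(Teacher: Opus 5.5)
Your outline matches the paper's proof: a $\kappa$-model $M$ holding the normalized translated formulas; Corollary~\ref{up:Hembedding} for a target $N$; Lemmas~\ref{up:afterprep} and~\ref{up:iteration-formula} to make $H\cap N[G]$ generic for the $\theta$th lottery summand at stage $\kappa$; a $\kappa$-length tail construction; and a master condition from Lemma~\ref{up:directed}. There is, however, a gap in the second lift, caused by taking $M$ to be an \emph{arbitrary} $\kappa$-model containing the names.

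\textbf{The gap.} To lift $j_G:M[G]\to N[j(G)]$ through $\EE_\theta$, you need a poset $Q_0\in M[G]$ (namely $\EE_\theta^{M[G]}$, so that $j(Q_0)$ is defined) such that $H\cap Q_0$ is an $M[G]$-generic filter on $Q_0$. Without this, $M[G][H\cap Q_0]$ and the map $\tau^{H\cap Q_0}\mapsto j(\tau)^h$ are not available. You also do not know that $H\cap Q_0$ is directed \emph{inside} $Q_0$, which your master condition requires. Validity in $\EE_\theta$ involves $\mathsf{Max}_\gamma$, a quantification over all of $H_{\kappa^+}$, and $M[G]$ evaluates it only over its small internal $H_{\kappa^+}$. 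So $M[G]$ may accept invalid conditions, compute the order wrongly, and regard non-maximal antichains as maximal. Corollary~\ref{up:Hembedding} gives correctness of the target $N$, not of the source $M$, so nothing in your setup excludes this. Your ``main obstacle'' paragraph addresses only $N[G]$.

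\textbf{The paper's fix.} Take $M$ to be the transitive collapse of some $X\prec H_\Lambda$ with $|X|=\kappa$, $\kappa+1\subseteq X$ and ${}^{<\kappa}X\subseteq X$, where $X$ contains a name for $\EE_\theta$ and all the syntax.
\begin{enumerate}
\item Objects of hereditary size at most $\kappa$ are fixed by the collapse. Hence $Q_0$ is $X[G]\cap\EE_\theta^U$ with the true order, and it is $<\kappa$-directed closed in $M[G]$ by elementarity.
\item By the $\kappa^+$-cc, every maximal antichain lying in $X$ is a subset of $X$, so $G*(H\cap Q_0)$ is $M$-generic.
\item Correctness of $N[G]$ then gives $Q_0\subseteq\EE_\theta^{N[G]}$, so $H\cap Q_0=(H\cap N[G])\cap Q_0\in N[j(G)]$.
\item Using $e_M\in N$, the set $j_G''(H\cap Q_0)$ has internal size $\kappa<j(\kappa)$ there.
\end{enumerate}

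\textbf{Two smaller corrections.} First, $N[j(G)]$ has size $\kappa$, so it cannot be closed under $\kappa$-sequences in $U[H]$. Only closure under $<\kappa$-sequences holds, and that is all the $\kappa$-length constructions use. Second, closure under $<\kappa$-sequences passes from $N[G]$ to $N[G][H\cap N[G]]$ by the $<\kappa$-distributivity of $\EE_\theta$, not by a $\kappa$-cc argument, since $\EE_\theta$ is only $\kappa^+$-cc.
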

\begin{proof}
For $\theta=0$ use the preparation's preservation of weak
compactness cited above. Fix $0<\theta<\kappa^+$, a name
$\dot A$ for a subset of $\kappa$, and a condition of the
composed forcing. Choose a sufficiently large $H_\Lambda$ and
\[
 X\prec H_\Lambda,\qquad |X|=\kappa,\qquad
 \kappa+1\subseteq X,\qquad{}^{<\kappa}X\subseteq X,
\]
containing the condition, $\dot A$, $\theta$, the preparation
and its order, and a name for the entire $\EE_\theta$.
Also include $\psi_\theta$, the three formulas
\eqref{up:translated-formulas}, a hierarchy normal form of
their relation-code interpretation, and all restriction data.
All these syntax objects are chosen before taking $X$.
Let $\pi:X\cong M$.

If $x\in X$ has hereditary size at most $\kappa$, a
surjection $\kappa\twoheadrightarrow\tc(\{x\})$ in $X$
shows that $\pi(x)=x$. Thus $\theta$, the preparation,
its conditions and the syntax data are fixed. The same
calculation in $X[G]$ fixes each canonical condition it contains.
The composed forcing is $\kappa^+$-cc, so each maximal
antichain in $X$ is a subset of $X$. Therefore actual generics
$G*g$ induce an $M$-generic $G*g_0$ with
\[
 A=\dot A^{G*g}\in M[G][g_0],\qquad
 Q_0=\EE_\theta^{M[G]}\in M[G].
\]
Elementarity and Lemma~\ref{up:directed} give
\[
 M[G]\models\text{``$Q_0$ is $<\kappa$-directed closed''}.
\]

Apply Corollary~\ref{up:Hembedding} to the chosen normal form.
It gives $j:M\to N$, with
\[
 \crit(j)=\kappa,\qquad j,M,e_M\in N,\qquad
 e_M:\kappa\twoheadrightarrow M.
\]
The preparation, its order and a $\kappa$-enumeration belong
to $N$. Lemma~\ref{up:afterprep} makes $N[G]$ correct for
$\psi_\theta$. Lemma~\ref{up:iteration-formula} transfers
validity, order, compatibility and maximal-antichain predensity
at every $\xi\leq\theta$.

Put $Q_N=\EE_\theta^{N[G]}$ and $g_N=g\cap Q_N$.
To verify directedness, for $p,q\in g_N$ take an internal
maximal antichain refining
\[
 \{r\in Q_N:r\leq p,q\text{ or }r\perp p\text{ or }r\perp q\}.
\]
It is actually predense. The actual generic meets it;
compatibility absoluteness excludes the last two cases, so
$g_N$ is directed. The same predensity statement for every
internal maximal antichain proves $N[G]$-genericity.

Set $\lambda=j(\kappa)$ and $\nu=(\kappa^+)^{N[G]}$.
A source surjection $\kappa\twoheadrightarrow\theta$ also
belongs to $N$, so $\theta<\nu$. In $j(\InfPrep_\kappa)$,
choose the $\theta$th component at stage $\kappa$:
\[
 j(\InfPrep_\kappa)\text{ at }\kappa
       =\bigoplus_{\tau<\nu}\EE_\tau^{N[G]}.
\]
Use $g_N$ as its generic. In $U[g]$, enumerate the dense sets
of the preparation tail strictly above $\kappa$ by
$\langle D_\alpha:\alpha<\kappa\rangle$. Recursively choose
$r_{\alpha+1}\leq r_\alpha$ in $D_\alpha$; at a limit
$\delta<\kappa$, closure under sequences of length less than $\kappa$
puts the preceding sequence in the model, where closure of the forcing
gives a lower bound. The resulting tail generic gives $J$ with
\[
 j''G\subseteq J,\qquad
 j_G:M[G]\longrightarrow W=N[J],\qquad
 j_G(\dot x^G)=j(\dot x)^J.
\]
The models have size $\kappa$ in $U[g]$. Closure of $N[G]$ under
sequences of length less than $\kappa$ is preserved by the
$<\kappa$-distributive forcing $\EE_\theta$ and then by evaluation
of names for the tail generic, using \cite[Fact~14(1)]{Johnstone}.
Thus $({}^{<\kappa}W)^{U[g]}\subseteq W$. Since $M$ regards $\kappa$ as Mahlo,
$N$ regards $\lambda$ as Mahlo. The internal good preparation
preserves its inaccessibility, so $W\models\lambda$ is
inaccessible.

The collapse identifies $Q_0$ with the conditions in
$X[G]$ from the actual order. Correctness in $N[G]$ gives
$Q_0\subseteq Q_N$, with the same order and compatibility, and
\[
 g_0=g_N\cap Q_0.
\]
Name evaluation in $W$ forms $j_G,M[G]$ as sets, and the
critical-stage generic gives $g_0\in W$. Replacement in $W$
therefore forms
\[
 D=j_G''g_0\in W.
\]
This set is internally directed. Evaluating the names listed
by $e_M$ gives an internal surjection
$\kappa\twoheadrightarrow M[G]$, so in $W$,
\[
 |D|\leq|Q_0|\leq\kappa<\lambda.
\]
Elementarity and the directed closure of $Q_0$ give
\[
 W\models\exists q^*\in j_G(Q_0)\ \forall p\in g_0\quad
                                      q^*\leq j_G(p).
\]
Starting below $q^*$, enumerate the dense sets of $j_G(Q_0)$
in $W$ by $\kappa$ and repeat the descending recursion above.
Closure of $W$ under sequences of length less than $\kappa$ in $U[g]$
and its internal $<\lambda$-closure give the limit conditions. This yields a $W$-generic $h$ and the lift
\[
 j_{G,g_0}:M[G][g_0]\longrightarrow W[h],\qquad
 j_G''g_0\subseteq h,\qquad\crit(j_{G,g_0})=\kappa.
\]
The source has size $\kappa$ and is closed under sequences of length
less than $\kappa$ in $U[g]$: use the $\kappa$-cc of the preparation,
the fact that the iteration adds no such sequences of names, and
\cite[Fact~14]{Johnstone} for evaluation. It contains the
arbitrary $A\subseteq\kappa$. The weak-compactness embedding
criterion \cite[Fact~4]{Johnstone} proves the conclusion.
\end{proof}

\begin{corollary}\label{up:final-wc}
Suppose $\mathsf H(\kappa)$. After
$\InfPrep_\kappa*\EE_{\kappa^+}$, the cardinal $\kappa$
is weakly compact.
\end{corollary}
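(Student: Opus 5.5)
The plan is to verify the embedding criterion for weak compactness in $V[G][g]$, where $g$ is generic for $\EE_{\kappa^+}$ over $U=V[G]$. I will reduce this to Theorem~\ref{up:preservation} using localization. The steps are as follows.

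Fix $A\subseteq\kappa$ in $U[g]$. By Lemma~\ref{lem:localization}, which applies over $U$ because the preparation preserves inaccessibility of $\kappa$, there is $\theta<\kappa^+$ with $A\in U[g_\theta]$. Here $g_\theta=g\cap\EE_\theta$, and Lemma~\ref{up:canonical-nice} identifies $\EE_\theta$ with $\PP_\theta$. Theorem~\ref{up:preservation} shows that $\kappa$ is weakly compact in $U[g_\theta]$.

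Lemma~\ref{lem:factorization} and Lemma~\ref{lem:iteration} show that the remaining iteration over $U[g_\theta]$ is forcing equivalent to a $<\kappa$-closed order. So it adds no new subsets of $V_\kappa$ and no new bounded sequences, and $V_\kappa$ is the same in $U[g_\theta]$ and $U[g]$. I then use the $\Pi^1_1$-reflection formulation of weak compactness, via trees. A $\kappa$-tree $T\in U[g]$ with levels of size $<\kappa$ is coded by a subset of $\kappa$. By localization it lies in some $U[g_{\theta'}]$, where $\kappa$ is weakly compact, so it has a cofinal branch there, and that branch persists to $U[g]$.

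Inaccessibility of $\kappa$ in $U[g]$ follows from the closure in Lemma~\ref{lem:iteration} together with preservation in the preparation. Inaccessibility plus the tree property gives weak compactness.

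The only delicate point is that the tree property must be checked for every tree in the final model, and not just for ground-model trees. This is handled by applying the localization lemma to the tree itself, as above, rather than to an arbitrary $A$. A second concern is that localization needs the $\kappa^+$-cc of the full iteration over $U$. This follows from Lemma~\ref{lem:iteration} applied in $U$, where $\kappa$ remains inaccessible.
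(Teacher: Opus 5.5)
Your argument is correct and is essentially the paper's proof: you localize the final $\kappa$-tree (with its level map) to some $U_\theta$, note that its levels stay of size less than $\kappa$ there because their enumerations are short ordinal sequences, obtain a cofinal branch from Theorem~\ref{up:preservation}, observe that the branch persists, and combine this with inaccessibility. One slip to fix: the tail iteration does add new subsets of $\kappa$ (each Hechler coordinate adds one), so you should say only that it adds no new elements of $V_\kappa$ and no new sequences of length less than $\kappa$, which is all your argument uses; the opening paragraph about the embedding criterion and an arbitrary $A$ can be dropped.
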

\begin{proof}
Let $G*\langle g_\alpha:\alpha<\kappa^+\rangle$ be generic,
and set $U_\theta=V[G][g\restr\theta]$. The final $\kappa$
is inaccessible by the preparation and Lemma~\ref{lem:iteration}.
Code a final $\kappa$-tree $T$ and its level map on $\kappa$.
Lemma~\ref{lem:localization} gives $\theta<\kappa^+$ with
this code in $U_\theta$. For each $\alpha<\kappa$, a final
enumeration $e:\mu\twoheadrightarrow T_\alpha$, $\mu<\kappa$,
is a short ordinal sequence, hence belongs to $U_\theta$.
Therefore
\[
 U_\theta\models\text{``$T$ is a $\kappa$-tree''}.
\]
By Theorem~\ref{up:preservation}, there is a cofinal branch
$b\in U_\theta$ through $T$. It remains cofinal in the final
extension. Inaccessibility and the tree property give weak
compactness of $\kappa$ there.
\end{proof}

\begin{corollary}\label{up:NAC}
If $\mathsf H(\kappa)$ holds, then there is a forcing extension
in which $\kappa$ is weakly compact and $\mathrm{NAC}(\kappa)$
holds. Consequently
\[
 \operatorname{Con}(\mathrm{ZFC}+\exists\kappa\,\mathsf H(\kappa))
 \quad\Longrightarrow\quad
 \operatorname{Con}(\mathrm{ZFC}+\exists\kappa\,
                                  \mathrm{NAC}(\kappa)).
\]
\end{corollary}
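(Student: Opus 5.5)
We force over $V$ with $\InfPrep_\kappa*\EE_{\kappa^+}$ and work in the final extension $V[G][g]$, where $g=\langle g_\alpha:\alpha<\kappa^+\rangle$. For $\theta\le\kappa^+$ write $U_\theta=V[G][g\restriction\theta]$. Weak compactness of $\kappa$ in the final model is Corollary~\ref{up:final-wc}. It remains to show $\NAC(\kappa)$: every $C$-sequence $\vec C=\langle C_\beta:\beta<\kappa\rangle$ in $V[G][g]$ must fail to be amenable. The consistency implication then follows, since all of this is a set forcing over a model of $\mathsf{ZFC}+\mathsf H(\kappa)$.

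Fix such a $\vec C$. First, Lemma~\ref{lem:localization} gives a $\theta<\kappa^+$ with $\vec C\in U_\theta$. Each entry is club in its index in $U_\theta$, because closedness and unboundedness of a fixed set are absolute.

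Second, Theorem~\ref{up:preservation} shows that $\kappa$ is weakly compact in $U_\theta$. By Lemma~\ref{lem:factorization} and Lemma~\ref{lem:iteration}, the coordinate $g_\theta$ is $\D_\kappa^{U_\theta}$-generic over $U_\theta$. Apply Corollary~\ref{cor:capture} in $U_\theta$ to $\langle C_\beta:\beta\in\Inacc^{U_\theta}(\kappa)\rangle$. This produces, in $U_{\theta+1}$, a club $D$ and a stationary $T\subseteq\Inacc^{U_\theta}(\kappa)$ with $D\cap\beta\subseteq C_\beta$ for every $\beta\in T$.

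Third, by Lemma~\ref{lem:factorization} the remaining forcing over $U_{\theta+1}$ is the full tail iteration on $[\theta+1,\kappa^+)$. Lemma~\ref{lem:iteration} makes this tail forcing-equivalent to a $<\kappa$-closed order in $U_{\theta+1}$. Lemma~\ref{lem:stationarity} then shows that $T$ stays stationary in $V[G][g]$, and $D$ stays club there because clubness is absolute upward. So $T\subseteq\operatorname{Cap}(D,\vec C)$ is stationary, and $\vec C$ is not amenable.

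Since $\vec C$ was arbitrary and $\kappa$ is regular uncountable in the final model, $\kappa\notin\A_\kappa$, which is $\NAC(\kappa)$.

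The step needing the most care is the second: applying the capture corollary requires that $\kappa$ be weakly compact in $U_\theta$ itself, not merely in the final model. This is exactly what Theorem~\ref{up:preservation} provides, for every proper initial segment. We also need to identify $g_\theta$ as a generic for the ground-model Hechler order $\D_\kappa^{U_\theta}$, which is how the iteration was defined. The remaining steps are direct citations.
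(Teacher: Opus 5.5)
Your proposal is correct and matches the paper's proof step for step: localize $\vec C$ to some $U_\theta$ via Lemma~\ref{lem:localization}, capture it at the next Hechler coordinate with Corollary~\ref{cor:capture}, and preserve the stationary capture set through the $<\kappa$-closed tail with Lemma~\ref{lem:stationarity}. Your explicit citation of Theorem~\ref{up:preservation} for the weak compactness of $\kappa$ in $U_\theta$ is exactly the fact the paper uses when it says that this model regards $\kappa$ as weakly compact.
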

\begin{proof}
Force with $\InfPrep_\kappa*\EE_{\kappa^+}$ and write
$U_\theta=V[G][g\restr\theta]$. Corollary~\ref{up:final-wc}
gives final weak compactness. If $\vec C$ is a final
$C$-sequence, Lemma~\ref{lem:localization} gives
$\theta<\kappa^+$ with $\vec C\in U_\theta$. This model
regards $\kappa$ as weakly compact. At the next full Hechler
coordinate, Corollary~\ref{cor:capture} gives
\[
 U_{\theta+1}\models\text{``$D\subseteq\kappa$ is club,
 $T\subseteq\kappa$ is stationary''},\qquad
 D\cap\beta\subseteq C_\beta\quad(\beta\in T).
\]
The remaining quotient is forcing equivalent to a
$<\kappa$-closed order. Lemma~\ref{lem:stationarity}
preserves $T$, and $D$ remains club. Thus in the final model
\[
 T\subseteq\operatorname{Cap}(D,\vec C),\qquad
 \operatorname{Cap}(D,\vec C)\notin\NS_\kappa.
\]
Since $\vec C$ was arbitrary, $\NAC(\kappa)$ holds.
\end{proof}

\section{Proofs of the main theorems}\label{sec:conclusion}

\begin{proof}[Proof of Theorem~\ref{thm:lower}]
Lemma~\ref{fd:SA} gives inaccessibility of $\kappa$ in $L$.
If $I_{\mathrm{ineff}}^L(\kappa)\nsubseteq\A_\kappa^V$,
choose
\[
 T\in I_{\mathrm{ineff}}^L(\kappa)\setminus\A_\kappa^V.
\]
Theorem~\ref{lb:relative-bound} yields
$I_\xi^L(\kappa)\subseteq\SA_\kappa^V$ for every
$1\leq\xi<(\kappa^+)^L$, proving \eqref{intro:ideal}.
If $\kappa$ is nonineffable in $L$, one may take $T=\kappa$,
because $\NAC(\kappa)$. Thus
\[
 L\models\text{``$\kappa$ is ineffable or $\mathsf H(\kappa)$''}.
\]
Corollary~\ref{lb:dichotomy} gives
$L\models\exists\lambda\leq\kappa\,\mathsf H(\lambda)$.
\end{proof}

Corollary~\ref{up:NAC} proves Theorem~\ref{thm:upper}, with weak
compactness of the final witness as an additional conclusion.

\begin{proof}[Proof of Theorem~\ref{thm:main}]
By Theorem~\ref{thm:lower}, every model of
$\mathsf{ZFC}+\exists\kappa\,\NAC(\kappa)$ has a
constructible inner model satisfying
$\mathsf{ZFC}+\exists\lambda\,\mathsf H(\lambda)$.
Theorem~\ref{thm:upper} gives the converse consistency
implication by set forcing. Hence
\[
 \operatorname{Con}(\mathsf{ZFC}+\exists\kappa\,\NAC(\kappa))
 \quad\Longleftrightarrow\quad
 \operatorname{Con}(\mathsf{ZFC}+\exists\lambda\,\mathsf H(\lambda)).
\]
Both implications are formalized by the usual inner-model
and forcing interpretations over $\mathsf{ZFC}$.
\end{proof}

By Corollary~\ref{up:final-wc}, the theory asserting a weakly compact
cardinal with $\NAC$ has the same consistency strength.
By \cite[Corollary~3.5]{CodyHoly}, if $\delta$ is subtle,
some $\lambda<\delta$ satisfies $\mathsf H$, and
$V_\delta$ is a transitive model of the corresponding existence
theory.

\section*{Acknowledgments}
The author thanks Assaf Rinot, who supervised the author's postdoctoral
research at Bar-Ilan University from 2019 to 2021, for discussions on
amenable $C$-sequences and for a substantial simplification of the proof
of Lemma~\ref{lem:iteration-size}.

\section*{Declaration of AI-assisted preparation}
OpenAI's Codex and ChatGPT were used to assist with literature
searches, exploration and checking of mathematical arguments,
organization, English-language revision, and preparation of
the \LaTeX\ manuscript.


\end{document}